\newtheorem{thm}{Theorem}[section]
\newtheorem{cor}[thm]{Corollary}
\newtheorem{lem}[thm]{Lemma}
\newtheorem{prop}[thm]{Proposition}
\newtheorem{claim}[thm]{Claim}
\theoremstyle{definition} 
\newtheorem{defn}[thm]{Definition}
\theoremstyle{definition}
\newtheorem{rem}[thm]{Remark}
\newtheorem{exe}[thm]{Example}
\newtheorem{nota}[thm]{Notation}
\numberwithin{equation}{section}
\newcommand{\R}{\mathbb{R}}
\newcommand{\Act}{\mathbb{L}}
\newcommand\INT[1]{\mathaccent'27{#1}}
\newcommand\Graph{\operatorname{Graph}}
\title{Singularities of solutions of time dependent Hamilton-Jacobi equations. Applications
to Riemannian geometry}
\author{Piermarco Cannarsa\footnote{Dipartimento di Matematica\/\/, Universit\`a di Roma ``Tor Vergata'', Via della Ricerca Scientifica 1, 00133 Roma, Italy. cannarsa@mat.uniroma2.it}, Wei Cheng\footnote{Department of Mathematics, Nanjing University, Nanjing 210093, China. chengwei@nju.edu.cn}\ \&\ Albert Fathi\footnote{Georgia Institute of technology \& ENS de Lyon (Emeritus), School of Mathematics, Atlanta, GA 30332, USA.
albert.fathi@math.gatech.edu}}
\date{December 2019}
\begin{document}
\maketitle

\begin{abstract}
 
If $U:[0,+\infty[\times M$ is a uniformly continuous viscosity solution of 
the evolution Hamilton-Jacobi equation
$$\partial_tU+ H(x,\partial_xU)=0,$$
where $M$ is a not necessarily compact manifold, and $H$ is a Tonelli Hamiltonian,
we prove the set $\Sigma(U)$, of points where $U$ is  not differentiable, is locally contractible. Moreover,
we study the homotopy type of $\Sigma(U)$. We also give an application to the singularities of a distance function
to a closed subset of a complete Riemannian manifold.
\end{abstract}

\section{Introduction}
Let $M$ be a   smooth connected but not necessarily compact manifold. 
We will assume $M$ endowed with a {\it complete} Riemannian metric $g$.
For $v\in T_xM$, the norm $\Vert v\Vert_x$ is $g(v,v)^{1/2}$.
We will denote also by $\Vert\cdot\Vert_x$ the dual norm on $T^*_xM$.
If $\gamma:[a,b]\to M$ is a curve, its length $\ell_g(\gamma)$ (for the metric g) 
is defined by
$$\ell_g(\gamma)=\int_a^b\lVert \dot\gamma(s)\rVert_{\gamma(s)}\,ds.$$
The distance $d$ that we will use on $M$ is the Riemannian distance obtained 
from the Riemannian metric. Namely, if $x,y\in M$ the distance $d(x,y)$ is the infimum of the length
of curves joining $x$ to $y$.
Since $g$ is complete, the distance $d$ is complete, and for every pair of points $x,y\in M$, there
exists a curve joining $x$ to $y$ whose length is $d(x,y)$.
Moreover, for every compact subset $K\subset M$ and every finite $R\geq 0$, the closed 
$R$-neighborhood $\bar V_R(K)$ of $K$ defined by
$$\bar V_R(K)=\{x\in M\mid d(x,K)\leq R\}$$
is itself compact. 

Before giving our results for the general Hamilton-Jacobi equation, we will give the consequences in 
Riemannian geometry. 

If $C$ is a closed subset of the complete Riemannian manifold $(M,g)$. As usual, the distance function 
$d_C:M\to[0,+\infty[$ to $C$  is defined by
$$d_C(x)=\inf_{c\in C}d(c,x).$$
We will denote by $\Sigma^*(d_C)$ the set of points in $M\setminus C$ where $d_C$ is not differentiable.
Note that $\Sigma^*(d_C)$ has Lebesgue measure $0$, since the Lipschitz function $d_C$ on $M$ is differentiable
almost everywhere.
\begin{thm}\label{LocContrDitFunc} Consider the closed subset $C$ of the complete Riemannian manifold $(M,g)$. 
Then  $\Sigma^*(d_C)$ is locally contractible.
\end{thm}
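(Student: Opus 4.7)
The plan is to reduce Theorem~\ref{LocContrDitFunc} to the main result of the paper (local contractibility of the singular set of any uniformly continuous viscosity solution of an evolution Hamilton-Jacobi equation with Tonelli Hamiltonian) by constructing such a solution whose non-differentiability set, on positive times, is a product with $\Sigma^*(d_C)$.

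First I would take the Tonelli Hamiltonian $H(x,p)=\tfrac12\|p\|_x^2$ (Lagrangian $L(x,v)=\tfrac12\|v\|_x^2$) and apply the Lax--Oleinik formula to the $1$-Lipschitz initial datum $d_C$, setting
$$U(t,x)=\inf_{y\in M}\left[d_C(y)+\frac{d(y,x)^2}{2t}\right],\qquad U(0,x)=d_C(x).$$
Classical Hamilton-Jacobi theory ensures $U$ is a viscosity solution of $\partial_tU+H(x,\partial_xU)=0$ on $(0,+\infty)\times M$. Combining the bound $d(y,x)\geq |d_C(x)-d_C(y)|$ with the existence of minimizing geodesics from $C$ to $x$ (available by completeness of $g$), the infimum reduces to a one-variable convex optimization along such a geodesic, yielding the closed form
$$U(t,x)=\begin{cases} d_C(x)-t/2, & d_C(x)\geq t,\\[2pt] d_C(x)^2/(2t), & d_C(x)\leq t.\end{cases}$$
From this $U$ is $1$-Lipschitz in $x$ and $\tfrac12$-Lipschitz in $t$, hence uniformly continuous on $[0,+\infty)\times M$.

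Next I would pin down $\Sigma(U)\cap\bigl((0,+\infty)\times M\bigr)$. Writing $U(t,x)=\phi_t(d_C(x))$ with $\phi_t(r)=r-t/2$ for $r\geq t$ and $\phi_t(r)=r^2/(2t)$ for $r\leq t$, one checks that $\phi_t$ is $C^1$ on $[0,+\infty)$, strictly increasing on $(0,+\infty)$, with $\phi_t'(0)=0$ and $\phi_t'(r)>0$ for $r>0$. Consequently, for $x_0\in M\setminus C$ and $t_0>0$, $U$ is differentiable at $(t_0,x_0)$ if and only if $d_C$ is differentiable at $x_0$; for $x_0\in C$ the squeeze $0\leq d_C(x)^2\leq d(x,x_0)^2$ forces $U$ to be $C^1$ at $(t_0,x_0)$; and the transition locus $d_C(x_0)=t_0$ poses no trouble because the two branches of $\phi_t$ match $C^1$ across $r=t$. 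Hence
$$\Sigma(U)\cap\bigl((0,+\infty)\times M\bigr)=(0,+\infty)\times\Sigma^*(d_C).$$

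Finally, the main theorem of the paper gives that $\Sigma(U)$ is locally contractible, and so $(0,+\infty)\times\Sigma^*(d_C)$, being open in $\Sigma(U)$, inherits this property. A standard elementary argument then yields local contractibility of $\Sigma^*(d_C)$ itself: given a neighborhood $V$ of $x_0\in\Sigma^*(d_C)$, pick a product neighborhood $I\times W\subset (0,+\infty)\times V$ of $(1,x_0)$ that is null-homotopic inside a prescribed larger open set of $(0,+\infty)\times\Sigma^*(d_C)$, and post-compose the null-homotopy with the projection to $\Sigma^*(d_C)$ to contract $W$ inside $V$. The hard part is of course the main theorem of the paper; the reduction itself is elementary, the only delicate points being the explicit evaluation of the Lax--Oleinik infimum and the differentiability case analysis at points of $C$ and at the transition locus $d_C(x_0)=t_0$.
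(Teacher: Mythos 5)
Your reduction is correct, but it takes a genuinely different route from the paper. The paper feeds the Lax--Oleinik semigroup the $\{0,+\infty\}$-valued characteristic function $\chi_C$ rather than the $1$-Lipschitz function $d_C$. This gives the much simpler closed form $\hat\chi_C(t,x)=d_C(x)^2/(2t)$, so that $\partial_t\hat\chi_C$ is everywhere continuous and the identification $\Sigma(\hat\chi_C)=\,]0,+\infty[\,\times\Sigma(d_C^2)$ is immediate; the equality $\Sigma(d_C^2)=\Sigma^*(d_C)$ then follows from the same squeeze you use at points of $C$. The cost is that $\chi_C$ is not continuous (let alone uniformly so), so the paper must invoke the more general contractibility statement Theorem~\ref{LocContrGen}, which allows $[-\infty,+\infty]$-valued initial data with finite Lax--Oleinik evolution, rather than Theorem~\ref{LocContr} as stated. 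Your choice of initial datum $d_C$ produces the piecewise formula $U(t,x)=\phi_t(d_C(x))$ with $\phi_t(r)=r-t/2$ for $r\geq t$ and $\phi_t(r)=r^2/(2t)$ for $r\leq t$; this requires a short one-variable optimization to establish and a $C^1$-matching check across the interface $d_C(x)=t$, but it yields a genuinely (Lipschitz, hence uniformly) continuous viscosity solution, so Theorem~\ref{LocContr} applies directly and matches the hypothesis advertised in the abstract. I verified the optimization, the regularity of $\phi$, the differentiability dichotomy (including the case $x_0\in C$ and the transition locus), and the final projection argument reducing local contractibility of $\,]0,+\infty[\,\times\Sigma^*(d_C)$ to that of $\Sigma^*(d_C)$ --- all sound. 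In summary: same reduction strategy, but a different and more regular initial datum that trades a harder computation of the evolved solution for a cleaner appeal to the stated local-contractibility theorem.
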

As a first application, if we take $C=\{p\}$, the set $\Sigma^*(d_p)$ is nothing but the set of $q\in M$ such that
there exists two distinct minimizing geodesics from $p$ to $q$. The closure is known as the the cut locus 
$\operatorname{Cut-locus}_{(M,g)}(p)$ of $p$ for $(M,g)$. It is well-known that, for $M$ compact,
this cut locus $\operatorname{Cut-locus}_{(M,g)}(p)$ is a deformation retract of $M\setminus p$, therefore it is
locally contractible. However, even if there is an extensive literature on the cut locus, very little was known up to now
about the set of $q\in M$ such that
there exists two distinct minimizing geodesics from $p$ to $q$. 
As Marcel Berger said in  \cite[Page 284]{Berger}:

{\sl The difficulty for all these studies is an unavoidable dichotomy for cut points: the mixture of
points with two different segments and conjugate points.}

Our methods permit to separate the study of these two sets.
 
To state another consequence of Theorem \ref{LocContrDitFunc},
we introduce the following definition: 

\begin{defn}\label{DefUNU} If $(M,g)$ is a complete Riemannian manifold, we define
 the subset ${\cal U}(M,g)\subset M\times M$ as the set of $(x,y)\in M\times M$ such 
that there exists a \emph{unique} minimizing $g$-geodesic between $x$  and $y$. This set $ {\cal U}(M,g)$
contains a neighborhood of the diagonal $\Delta_M\subset M\times M$. The complement
${\cal NU}(M,g)=M\times M\setminus {\cal U}(M,g)$ is the set of points $(x,y)\in M$ such that
there exists at least two distinct minimizing  $g$-geodesics between $x$  and $y$. 
\end{defn} 
In fact, as we will see in Example \ref{Exe5}, we have ${\cal NU}(M,g)=\Sigma^*(d_{\Delta M})$,
the set of singularities in $M\times M\setminus \Delta_M$ of the distance function of points in $M\times M$
to the closed subset $\Delta_M$. Therefore, Theorem \ref{LocContrDitFunc} implies: 
\begin{thm}\label{LCNU} For every complete Riemannian manifold $(M,g)$, the set
${\cal NU}(M,g)\subset M\times M\setminus  \Delta_M$ is locally contractible.
In particular, the set ${\cal NU}(M,g)$ is locally path connected.
\end{thm}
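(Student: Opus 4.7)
The plan is to reduce Theorem~\ref{LCNU} to a direct application of Theorem~\ref{LocContrDitFunc}, using the product Riemannian manifold and the identification of $\mathcal{NU}(M,g)$ as a singular set of a distance function.

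First, I would form the product manifold $M\times M$ endowed with the product metric $g\oplus g$. Since $(M,g)$ is complete, so is $(M\times M,g\oplus g)$ (by Hopf--Rinow, since Cauchy sequences in each factor converge). The diagonal $\Delta_M\subset M\times M$ is closed, so the hypotheses of Theorem~\ref{LocContrDitFunc} are satisfied for the closed set $C=\Delta_M$ in the manifold $M\times M$. That theorem yields that $\Sigma^*(d_{\Delta_M})$ is locally contractible.

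Second, I would invoke the identity
$$\mathcal{NU}(M,g)=\Sigma^*(d_{\Delta_M}),$$
asserted in Example~\ref{Exe5}, which I take as given. The geometric content behind this identity is that a point $(m,m)\in\Delta_M$ minimizing $d_{(g\oplus g)}((x,y),\cdot)$ corresponds precisely to $m$ being the midpoint of a minimizing $g$-geodesic between $x$ and $y$ in $M$, so that distinct minimizing $g$-geodesics from $x$ to $y$ produce distinct nearest points of $\Delta_M$ to $(x,y)$, and conversely; the standard link between multiplicity of nearest points and non-differentiability of the distance-to-set function then forces the equality. With this identification, the local contractibility of $\Sigma^*(d_{\Delta_M})$ from the previous paragraph immediately gives local contractibility of $\mathcal{NU}(M,g)$ as a subset of $M\times M\setminus\Delta_M$.

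Third, the ``in particular'' clause is a purely topological consequence: any locally contractible space is locally path connected, because an open neighborhood that contracts inside a larger one is, a fortiori, contained in a path-connected neighborhood of the chosen base point. Thus the second assertion follows formally from the first.

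The main obstacle is not really in the deduction above but in verifying the identification $\mathcal{NU}(M,g)=\Sigma^*(d_{\Delta_M})$; once that is done (and the paper announces it in Example~\ref{Exe5}), Theorem~\ref{LCNU} is a clean corollary of Theorem~\ref{LocContrDitFunc} applied to the product setting. One small technical point to confirm along the way is that the distance on $M\times M$ from $(x,y)$ to $\Delta_M$ is $d(x,y)/\sqrt{2}$, so that a neighborhood of $\Delta_M$ in $M\times M$ lies inside $\mathcal{U}(M,g)$, ensuring that $\mathcal{NU}(M,g)$ is disjoint from a neighborhood of $\Delta_M$ and thus that $\Sigma^*(d_{\Delta_M})$ is the natural object to consider.
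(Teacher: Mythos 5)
Your proposal is correct and follows essentially the same route as the paper: apply Theorem \ref{LocContrDitFunc} to the closed set $\Delta_M$ in the complete product manifold $(M\times M, g\times g)$ and invoke the identification $\mathcal{NU}(M,g)=\Sigma^*(d_{\Delta_M})$ from Example \ref{Exe5}. The supporting details you mention (the formula $d_{\Delta_M}(x,y)=d(x,y)/\sqrt 2$, the midpoint interpretation of nearest points on the diagonal) match the paper's Example \ref{Exe2bis} and Lemma \ref{CaracCaldDelta}.
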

As above, from ${\cal NU}(M,g)=\Sigma^*(d_{\Delta M})$, we obtain that ${\cal NU}(M,g)$ has Lebesgue measure $0$.

\begin{defn}\label{AubryClosed}
	For a closed subset $C\subset M$, we define its Aubry set ${\cal A}^*(C)$ 
	as the set of points $x\in M\setminus C$ such that
	there exists a curve $\gamma:[0,+\infty[\to M$ parameterized by arc-length such that $d_C(\gamma(t))=t$ and 
	$x=\gamma(t_0)$ for some $t_0>0$.
\end{defn}
\begin{rem}\label{RemdCcal}
	A curve $\gamma:[0,+\infty[\to M$ parameterized by arc-length such that $d_C(\gamma(t))=t$ is necessary a $g$-minimizing geodesic. In fact, for $0\leq s\leq t$, we have 
	\begin{align*}
	d_C(\gamma(t))\leq d_C(\gamma(s)) + d(\gamma(s),\gamma(t))\\
    &\leq s +\ell_g(\gamma|[s,t])\\
	&\leq s+(t-s)=t.
	\end{align*}
	Since $d_C(\gamma(t))=t$, we obtain $d(\gamma(s),\gamma(t))=\ell_g(\gamma|[s,t])=t-s$, which means that the curve $\gamma$, parametrized by arc-length, is a
	minimizing $g$-geodesic.
\end{rem}
We necessarily have $\Sigma^*(d_C)\cap {\cal A}^*(C)=\emptyset$,
see below.
\begin{thm}\label{GlobalDistFunc} If $C$ is a closed subset of the complete Riemannian manifold $(M,g)$,
then the inclusion $\Sigma^*(d_C)\subset M\setminus(C\cup {\cal A}^*(C))$ is a homotopy equivalence.
\end{thm}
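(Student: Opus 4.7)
The plan is to construct an explicit strong deformation retraction from $M\setminus(C\cup\mathcal{A}^*(C))$ onto $\Sigma^*(d_C)$ by propagating each non-singular, non-Aubry point forward along its (essentially unique) $C$-distance-realizing geodesic until it first hits a singular point of $d_C$.

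First, I would verify the inclusion $\Sigma^*(d_C)\cap\mathcal{A}^*(C)=\emptyset$ already alluded to in the text: if $x=\gamma(t_0)$ for an arc-length ray $\gamma$ with $d_C\circ\gamma(t)=t$, then $\gamma|_{[0,t_0]}$ is a $C$-minimizing geodesic ending at $x$ (by Remark~\ref{RemdCcal}), and its strict forward prolongability as a distance-realizing curve forces $d_C$ to be differentiable at $x$ with $\nabla d_C(x)=\dot\gamma(t_0)$. Combined with $\Sigma^*(d_C)\subset M\setminus C$, this justifies the inclusion of the theorem.

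Next, for $x\in M\setminus(C\cup\mathcal{A}^*(C))$, let $T(x)$ denote the supremum of all $t\geq 0$ for which there is a unit-speed $\gamma_x\colon[-d_C(x),t]\to M$ with $\gamma_x(-d_C(x))\in C$, $\gamma_x(0)=x$, and $d_C(\gamma_x(s))=d_C(x)+s$ for every $s$; set $T(x)=0$ and $\gamma_x\equiv x$ when $x\in\Sigma^*(d_C)$. The supremum is finite precisely because $x\notin\mathcal{A}^*(C)$: any sequence of such rays of diverging length would, by an Arzel\`a--Ascoli argument using completeness of $g$ and compactness of closed bounded sets in $(M,g)$, subconverge to an infinite Aubry ray through $x$. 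When $x\notin\Sigma^*(d_C)$ the unique $C$-minimizer is the backward segment of $\gamma_x$, so $\gamma_x$ itself is unique. The endpoint $r(x):=\gamma_x(T(x))$ lies in $\Sigma^*(d_C)$, since differentiability of $d_C$ at $r(x)$ would permit a further distance-realizing prolongation, contradicting maximality. The same prolongation argument shows $d_C$ is differentiable at every interior $\gamma_x(s)$ with $0<s<T(x)$, and such a point cannot lie in $\mathcal{A}^*(C)$ either (any infinite ray through it would have to coincide with $\gamma_x$ past $s$, which has only finite remaining length). The candidate homotopy $H(s,x):=\gamma_x(sT(x))$ therefore stays in $M\setminus(C\cup\mathcal{A}^*(C))$, equals the identity at $s=0$, the composition $i\circ r$ at $s=1$, and fixes $\Sigma^*(d_C)$ pointwise.

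The main obstacle is the continuity of $x\mapsto(T(x),\gamma_x)$, hence of $H$ and of the retraction $r$. Upper semi-continuity of $T$ is delivered by the Arzel\`a--Ascoli argument already invoked. Lower semi-continuity and the continuity of $r$ are more delicate at points where the forward cut structure can bifurcate, and this is where I would invoke the propagation-of-singularities machinery for time-dependent Hamilton-Jacobi equations developed in the body of the paper and used in the proof of Theorem~\ref{LocContrDitFunc}. Concretely, the Tonelli lift $U(t,x)=d_C(x)^2/(2t)$ is a viscosity solution of $\partial_tU+\tfrac12\|\partial_xU\|_x^2=0$ on $(0,\infty)\times M$ whose spatial singular slice coincides with $\Sigma^*(d_C)$ and whose forward maximal characteristics correspond exactly to the rays defining $\mathcal{A}^*(C)$; the continuous semi-flow of generalized characteristics supplied by the general theory provides the continuous realization of $H$ and closes the argument.
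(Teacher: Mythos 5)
Your strategy—flow each point forward along the essentially unique distance-realizing geodesic until it stops realizing the distance, and use the endpoint as the retraction—has two interlocked gaps, one of which is fatal without a fundamentally different idea.

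First, the endpoint $r(x)=\gamma_x(T(x))$ need not lie in $\Sigma^*(d_C)$. What is true is that $r(x)$ is a \emph{cut point} of $d_C$: the forward prolongation of $\gamma_x$ fails there. But the paper is explicit that $\Sigma_{t_0}(\hat u)\subset\operatorname{Cut}_{t_0}(\hat u)\subset \overline{\Sigma_{t_0}(\hat u)}$, and these inclusions are strict in general; a first conjugate point reached along a \emph{unique} minimizing geodesic is a cut point at which $d_C$ is still differentiable. Your claimed implication ``differentiability of $d_C$ at $r(x)$ would permit a further distance-realizing prolongation'' is exactly the step that fails: by Proposition~\ref{DiffCriterion} differentiability at $r(x)$ only gives uniqueness of the \emph{backward} characteristic, and says nothing about forward extendability. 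So $r$ is not a map into $\Sigma^*(d_C)$, only into the cut locus—this is precisely the conjugate-point difficulty the Berger quote in the introduction warns about, and it is the heart of why the problem is nontrivial.

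Second, even granting the target, the cut-time function $T$ (the paper's $\tau$, cf.\ Proposition~\ref{PropTau}) is only \emph{upper} semi-continuous, not continuous. Appealing to ``a continuous semi-flow of generalized characteristics'' does not make $x\mapsto\gamma_x(T(x))$ continuous: the reparametrization by the true cut time jumps. The paper sidesteps both issues simultaneously, and it is worth seeing how, because the mechanism is what you are missing. The homotopy $F$ is built not from the forward geodesic flow but from the variational operator $T^+_sT^-_{s+t}u$; this is continuous in $(t,x,s)$ by the estimates of Appendix~\ref{Appendice} and Corollary~\ref{CorTile5}, without any reference to uniqueness of forward prolongations. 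Crucially, Proposition~\ref{StrongAdaptedProperty} then shows that once $s>\tau(t,x)-t$, the image $\bar F[(t,x),s]$ lies in $\Sigma_{t_0}(\hat u)$ itself, not merely in the cut locus: past the cut time the operator loses calibration, and the non-calibration forces a genuine singularity. Finally, instead of evaluating at $s=\tau(t,x)-t$, the paper uses upper semi-continuity of $\tau$ together with Baire's interpolation theorem to produce a \emph{continuous} function $\alpha$ with $\tau(t,x)-t<\alpha(t,x)$, and reparametrizes by $\alpha$ rather than by $\tau$. This yields a continuous homotopy that lands in $\Sigma$, at the cost of not fixing $\Sigma$ pointwise; one gets a homotopy equivalence, not the strong deformation retraction your sketch promises (which in general does not exist). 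Your instinct to lift to $\hat\chi_C(t,x)=d_C(x)^2/(2t)$ and work with the evolutionary equation is right, and so are the preliminary observations about $\Sigma^*(d_C)\cap\mathcal{A}^*(C)=\emptyset$ and differentiability at interior points of calibrated curves, but the two issues above are where the real work lives, and the argument does not close without the paper's $T^+T^-$ mechanism (or an equivalent).
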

If $U$ is a bounded connected component of $M\setminus C$, then $U\cap {\cal A}^*(C)=\emptyset$,
see \S \ref{SecDist}, and Theorem \ref{GlobalDistFunc} implies that the inclusion $\Sigma^*(d_C)\cap  U\subset
U$ is a homotopy equivalence. This fact was already known. It is due to Lieutier \cite{Lieu} in the Euclidean case
and to Albano, Cannarsa, Nguyen \& Sinestrari \cite{ACNS} in the general Riemannian case. The non-compact
case is, to our knowledge, new, see however \cite{CanPei} where they study the unbounded components of
$\Sigma^*(d_C)$ in the Euclidean case.

A consequence of Theorem \ref{GlobalDistFunc} is
\begin{thm}\label{HomTypeNU} For every compact connected Riemannian manifold $M$, the inclusion
${\cal NU}(M,g)\subset M\times M\setminus  \Delta_M$ is a homotopy equivalence.
Therefore the set ${\cal NU}(M,g)$ is path connected and even locally contractible.
\end{thm}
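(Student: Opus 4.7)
The plan is to apply Theorem \ref{GlobalDistFunc} to the closed subset $C=\Delta_M$ of the product manifold $M\times M$, equipped with the product Riemannian metric $g\oplus g$. Since $M$ is compact, $M\times M$ is compact and hence the product metric is automatically complete, so Theorem \ref{GlobalDistFunc} applies. Combining with the identification ${\cal NU}(M,g)=\Sigma^*(d_{\Delta_M})$ from Example \ref{Exe5}, it provides the homotopy equivalence
\begin{equation*}
{\cal NU}(M,g)=\Sigma^*(d_{\Delta_M})\ \subset\ (M\times M)\setminus\bigl(\Delta_M\cup{\cal A}^*(\Delta_M)\bigr).
\end{equation*}
Therefore the theorem reduces to showing that ${\cal A}^*(\Delta_M)$ is empty, so that the right-hand side coincides with $(M\times M)\setminus\Delta_M$.

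To prove ${\cal A}^*(\Delta_M)=\emptyset$, I would argue by the definition of the Aubry set (Definition \ref{AubryClosed}): a point of ${\cal A}^*(\Delta_M)$ would lie on an arc-length parameterized curve $\gamma:[0,+\infty[\to M\times M$ with $d_{\Delta_M}(\gamma(t))=t$ for all $t\geq 0$. Since $M$ is compact, so is $M\times M$, and consequently the continuous function $d_{\Delta_M}$ is bounded on $M\times M$. This contradicts $d_{\Delta_M}(\gamma(t))=t\to+\infty$. Hence no such curve exists and ${\cal A}^*(\Delta_M)=\emptyset$. This step is the only substantive one, but it is straightforward; the main conceptual work has already been done in Theorem \ref{GlobalDistFunc}.

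For the last sentence, path connectedness of ${\cal NU}(M,g)$ follows from the homotopy equivalence just established together with the fact that $(M\times M)\setminus\Delta_M$ is path connected (since $M$ is connected of dimension at least one, the complement of the diagonal in the product is path connected). Local contractibility is a direct consequence of Theorem \ref{LCNU} applied to the compact, hence complete, Riemannian manifold $(M,g)$. The main obstacle, if any, is merely verifying cleanly that Example \ref{Exe5} and Theorem \ref{GlobalDistFunc} apply to the product manifold $M\times M$ with the product metric, which is routine.
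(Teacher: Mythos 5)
Your proof is correct and takes essentially the same approach the paper relies on: the paper simply remarks that the homotopy equivalence "is a consequence of the compact version of Theorem \ref{GlobalDistFunc}" (i.e.\ of Corollary \ref{Compact Dist}), and you have unpacked exactly the needed step, namely that ${\cal A}^*(\Delta_M)=\emptyset$ because an arc-length curve with $d_{\Delta_M}(\gamma(t))=t$ would force $d_{\Delta_M}$ to be unbounded on the compact space $M\times M$ -- which is precisely the compactness mechanism the paper uses in Lemma \ref{CompactComponentDist}. The rest (identifying ${\cal NU}(M,g)=\Sigma^*(d_{\Delta_M})$ via Example \ref{Exe5}, and citing Theorem \ref{LCNU} for local contractibility) is also as intended.
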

Of course, the homotopy equivalence in this theorem is a consequence of the compact version of
Theorem \ref{GlobalDistFunc}, which is due,
as we said above, to Albano, Cannarsa, Nguyen \& Sinestrari \cite{ACNS}.

We will give a version for non-compact $M$ in \S \ref{SecDist}.

For sake of completeness, we note that the subset $ \Delta_M$ is a deformation retract of ${\cal U}(M,g)$. In fact, we can get 
such a retraction using the midpoint in a geodesic segment minimizing the length between the pair of points.

We now state our general results for Tonelli Hamiltonians. The local contractibility Theorem \ref{LocContr}  is valid under
slightly less restrictive conditions on the Hamiltonian $H$, see \S \ref{SecLocCon}.

We recall that a Tonelli Hamiltonian $H: T^*\/M\to \R$ on $M$ 
(for the complete Riemannian metric $g$) is a function $H: T^*\/M\to \R$ that satisfies the following conditions:
\begin{enumerate}[(1*)]
\item  The Hamiltonian $H$ is at least $C^2$
\item\label{HUnSu} (Uniform superlinearity) For every $K\geq 0$, we have
$$C^*(K)=\sup_{(x,p)\in T^*\/M} K\lVert p\rVert_x-H(x,p)<\infty.$$
\item (Uniform boundedness in the fibers) For every $R\geq 0$, we have
$$A^*(R)=\sup \{H(x,p)\mid \Vert p\Vert_x\leq R\}<+\infty.$$
\item ($C^2$ strict convexity in the fibers) For every $(x,p)\in T^*\/M$, 
the second derivative along the fibers $\partial^2 H /\partial 
p^2(x,p)$ is positive definite.
\end{enumerate}
Note that (\ref{HUnSu}*) implies
$$\forall (x,p)\in T^*\/M, H(x,p)\geq K\Vert p\Vert_x -C^*(K).$$

We will consider viscosity solutions of the Hamilton-Jacobi equation. There are several
classical introductions to viscosity solutions \cite{Barles, BarCap, CanSin, Evans}. 
The more recent introductions \cite{FatEcos, FatNC} are well-adapted to our manifold setting.

If $u:N\to \R$ is a function defined on the manifold $N$, a singularity of $u$ is a point of $N$ where  
$u$ is not differentiable. We denote by $\Sigma(u)$ the set of singularities of $u$.

The goal of this work is to study the topological structure of the set of singularities $\Sigma(U)$, with 
$U:O\to\R$  a continuous viscosity solution of the evolutionary Hamilton-Jacobi equation
\begin{equation}\label{HJE0}
\partial_tU+ H(x,\partial_xU)=0,
\end{equation}
defined on the open subset $O\subset \R\times M$.

In \cite{CCF}, we announced the results and sketched the proofs for $M$ compact in the case of the stationary 
Hamilton-Jacobi equation, i.e. for $U$ of the form $U(t,x)=u(x)-ct$, with $u:M\to \R$ and $c\in\R$.
We extend the results of \cite{CCF} to the case of the evolutionary Hamilton-Jacobi equation  \eqref{HJE0} covering also the case when $M$ is non-compact.

Our first result is a local contractibility result.
\begin{thm}\label{LocContr} Let $H:T^*\/M\to \R$ be a Tonelli Hamiltonian. If the   function $U:O\to \R$, defined on the open subset $O\subset \R\times M$
is a continuous viscosity solution, on $]0,t[\times M$,  of the evolutionary Hamilton-Jacobi equation
\begin{equation}
\partial_tU+ H(x,\partial_xU)=0,
\end{equation}
then set $\Sigma(U)\subset O$ of singularities of $U$ is locally contractible.
\end{thm}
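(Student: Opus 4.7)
The plan is to localize around an arbitrary singular point and exhibit an explicit contracting homotopy inside $\Sigma(U)$, exploiting strict convexity of the Lagrangian action dual to $H$. Under the Tonelli hypotheses, any continuous viscosity solution of \eqref{HJE0} is locally semiconcave on the domain where the equation holds. Near any $(t_0,x_0)\in\Sigma(U)$, the classical Lax--Oleinik representation gives, for every small enough $\varepsilon>0$ and on a neighborhood $V$ of $(t_0,x_0)$,
\[
U(t,x) \;=\; \min_{y\in M}\bigl[U(t-\varepsilon,y) + A_{t-\varepsilon,t}(y,x)\bigr],
\]
where $A_{a,b}(y,x)$ is the minimal Lagrangian action between $(a,y)$ and $(b,x)$. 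After shrinking $V$ and $\varepsilon$, all the minimizers $y$ stay in a fixed compact tube and $(y,x)\mapsto A_{t-\varepsilon,t}(y,x)$ is $C^2$ and strongly convex in $x$ uniformly in $(t,y)$. Consequently, $(t,x)\in V$ lies in $\Sigma(U)$ if and only if the argmin set $\operatorname{Min}(t,x)$ is not a singleton; equivalently, the superdifferential $D^+U(t,x)$, which is the convex hull of the momenta of the minimizing trajectories ending at $(t,x)$, is not reduced to a point.

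Using this dictionary, I would build the contraction in two stages. First, a time-collapse homotopy $\Phi_\lambda:\Sigma(U)\cap V\to\Sigma(U)$, $\lambda\in[0,1]$, which pulls $(t,x)$ back along its minimizing characteristics to the earlier time slice $\{t_\lambda\}\times M$, where $t_\lambda=t_0+(1-\lambda)(t-t_0)$; strict convexity of the action in the fiber prevents distinct minimizers from collapsing under this pull-back, so the multiplicity of $\operatorname{Min}$ is preserved and the image stays in $\Sigma(U)$. At $\lambda=1$ the image sits in the time-slice $\Sigma(U)\cap(\{t_0\}\times M)$. Second, a spatial contraction $\Psi_\mu$ inside this slice, retracting a small neighborhood of $x_0$ in $\Sigma(U(t_0,\cdot))$ onto $x_0$. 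This second stage uses the same inf-convolution trick in space only and reduces, via Legendre duality, to a contraction inside a nested family of convex subsets of $T^*_{x_0}M$ based at a distinguished barycentric selection; strong convexity once again makes the selection continuous on a small neighborhood. Concatenating $\Phi$ and $\Psi$ and reparametrizing produces a homotopy $h:(\Sigma(U)\cap V)\times[0,1]\to\Sigma(U)$ with $h_0=\mathrm{id}$ and $h_1\equiv(t_0,x_0)$.

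The principal obstacle is to keep the homotopy inside $\Sigma(U)$, since the forward Lax--Oleinik semigroup typically \emph{smooths} singularities; going \emph{backward} along minimizing characteristics is the correct direction, and preservation of multiplicity of $\operatorname{Min}$ under this backward flow is the key input. A subsidiary difficulty is continuity of the construction on $\Sigma(U)$: the multi-valued correspondence $(t,x)\mapsto\operatorname{Min}(t,x)$ is only upper semi-continuous, so one cannot naively select a single characteristic continuously in $(t,x)$. The fix is to work with the barycenter of $\operatorname{Min}(t,x)$ computed in a fixed tubular normal chart along a reference characteristic terminating at $(t_0,x_0)$, yielding a continuous single-valued proxy that suffices to make both stages continuous. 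Once these two points are settled, the remaining estimates follow from the semiconcave singular propagation theory developed by Cannarsa and collaborators, adapted to the Hamilton--Jacobi setting as in \cite{CCF}, and the concatenated homotopy provides local contractibility of $\Sigma(U)$ at $(t_0,x_0)$.
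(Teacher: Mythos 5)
Your proposal is built on a step that does not hold, and the error runs in a direction that undermines the whole strategy. You propose a ``time-collapse homotopy $\Phi_\lambda$ ... which pulls $(t,x)$ back along its minimizing characteristics to the earlier time slice,'' and claim that strict convexity prevents multiplicity from collapsing, so the image stays in $\Sigma(U)$. This is false. If $\gamma:[a,t]\to M$ is a backward $U$-characteristic ending at a singular point $(t,x)$, then for every $s\in{}]a,t[$ the point $(s,\gamma(s))$ is a point of \emph{differentiability} of $U$ (this is Proposition~\ref{DiffProp2}, a consequence of the fact that interior points of calibrated curves carry both an upper and a lower differential, cf.\ Proposition~\ref{DiffProp}). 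In other words, pulling a singular point back along any one of its minimizing characteristics immediately exits $\Sigma(U)$; multiplicity is not propagated backward. Moreover, which characteristic would you pull back along? A singular point has at least two, they separate for $s<t$, and the resulting map would be discontinuous. The auxiliary ``barycenter of $\operatorname{Min}(t,x)$'' remedy does not help: the barycenter of the endpoints of distinct minimizers is not in general the endpoint of a minimizer, so it provides no canonical curve to pull back along, and even if it did, the first objection remains.

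The correct direction is the opposite one, and this is precisely what the paper exploits. Singularities of a locally semiconcave viscosity solution of the evolution equation propagate \emph{forward} in time: the $\hat u$-adapted homotopy $F$ of Lemma~\ref{HomLoc} sends $(t,x)$ to $\bar F[(t,x),s]=\bigl(t+s,F[(t,x),s]\bigr)$ with $s\ge 0$, and Proposition~\ref{StrongAdaptedProperty} shows that once you pass the cut time $\tau(t,x)$ the image lies in $\Sigma_{t_0}(\hat u)$. The point $F[(t,x),s]$ is defined as the unique maximizer $y$ of $T^-_{s+t}u(y)-h_s(x,y)$, i.e.\ via the positive Lax--Oleinik operator $T^+_s T^-_{s+t}u$; the key technical input (Lemma~\ref{Claim1}, building on~\cite{Bernard, FatFigRif}) is that this inf-sup-convolution is $C^{1,1}$, which gives uniqueness of $y$ and hence continuity of $F$. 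Upper semi-continuity of the cut-time function $\tau$ is then used, together with Baire's interpolation theorem, to reparametrize $F$ into a contraction. None of this appears in your sketch; in particular, your remark that ``the forward Lax--Oleinik semigroup typically smooths singularities'' has the sign wrong and is what leads you astray. If you reverse the direction of your first stage, replace the ad hoc barycentric selection by the forward Lax--Oleinik construction of~$F$, and use the upper semicontinuity of the cut time function in place of your second ``spatial contraction'' stage, you will arrive at the paper's argument.
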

In fact, as we will see, the above theorem follows from its particular case with $U:]0,+\infty[\times M\to \R$.

To give  a more global result on the topology of $\Sigma(U)$ we need  the Aubry set of a solution of \eqref{HJE0}.
For this we first recall that  
the Lagrangian $L:TM\to \R$ (associated to $H$) is defined
by 
$$L(x,v)=\sup_{p\in T^*_xM}p(v)-H(x,p).$$
This Lagrangian $L$ is finite everywhere, and enjoys the same properties as $H$, namely
\begin{enumerate}[(1)]
\item\label{TonLag1} The Lagrangian $L$ is at least $C^2$ (in fact, it is as smooth as $H$).
\item (Uniform superlinearity) For every $K\geq 0$, we have
\begin{equation}\label{TonLag2} C(K)=\sup_{(x,v)\in TM} K\lVert v\rVert_x-L(x,v)<\infty.
\end{equation}
\item(Uniform boundedness in the fibers) For every $R\geq 0$, we have
\begin{equation}\label{TonLag3} 
A(R)=\sup \{L(x,v)\mid \Vert v\Vert_x\leq R\}<+\infty.
\end{equation}
\item\label{TonLag4} ($C^2$ strict convexity in the fibers) for every $(x,v)\in TM$, 
the second derivative along the fibers $\partial^2 L /\partial 
v^2(x,v)$ is positive definite.
\end{enumerate}
Again \eqref{TonLag2} implies
\begin{equation}\label{TonLag2bis} 
\forall (x,v)\in TM, L(x,v)\geq K\Vert v\Vert_x -C(K).
\end{equation}

A Lagrangian $L:TM\to \R$, on the complete Riemannian manifold $(M,g)$, is said to be Tonelli if it satisfies the
conditions \eqref{TonLag1} to \eqref{TonLag4} above.

\begin{defn}[{\bf Aubry set}]\label{AubryEvol} Let $U:]0,T[\times M\to \R$, with $T\in]0,+\infty]$, be a viscosity solution, on $]0,T[ \times M$, of
 the evolutionary Hamilton-Jacobi equation \eqref{HJE0}. The Aubry set ${\cal I}_T(U)$ of $U$ is the set of points 
 $(t,x)\in ]0,T[\times M$ for which we can find a curve $\gamma:[0,T[\to M$, with 
 $\gamma(t)=x$ and
 $$U(b, \gamma(b))-U(a, \gamma(a))=\int_a^bL(\gamma(s),\dot\gamma(s))\,ds,$$
 for every $a<b\in [0,T[$.
\end{defn}
It is well-known that $U$ is differentiable at every point of ${\cal I}_T(U)$, see Proposition \ref{DiffProp}.
Therefore, we have $\Sigma(U)\cap  {\cal I}_T(U)=\emptyset$.
To avoid further machinery, in this introduction, we will state our results assuming the function $U:[0,t]\times M\to\R$
uniformly continuous.

\begin{thm}\label{HomGlob} Let $H:T^*\/M\to \R$ be a Tonelli Hamiltonian. 
Assume that the uniformly continuous function $U:[0,t]\times M\to \R$ 
is a viscosity solution, on $]0,t[\times M$,  of the evolutionary Hamilton-Jacobi equation \eqref{HJE0}. Then the inclusion
$\Sigma_t(U)=\Sigma(U)\cap ]0,t[ \times M\subset ]0,t[ \times M\setminus {\cal I}_t(U)$ 
is a homotopy equivalence.
\end{thm}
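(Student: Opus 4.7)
The plan is to construct an explicit strong deformation retraction of $X:={]0,t[}\times M\setminus{\cal I}_t(U)$ onto $A:=\Sigma_t(U)$, by pushing every point in $X\setminus A$ forward along its maximal calibrated characteristic until it first meets the singular set.

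For the construction I would fix $(s,x)\in X\setminus A$; then $U$ is differentiable at $(s,x)$, so by the classical properties of viscosity solutions of Tonelli Hamilton--Jacobi equations, the backward calibrated curve ending at $(s,x)$ is unique. Setting $p=\partial_xU(s,x)$ and following the Hamilton flow of $H$ from $(x,p)$ produces a unique forward Hamiltonian trajectory which is calibrated for a short time and, concatenated with the backward piece, gives a calibrated curve $\gamma$ defined on $[0,s+\delta]$ for some $\delta>0$. Let $\tau(s,x)$ be the supremum of such $\delta$'s. Then necessarily $s+\tau(s,x)<t$: otherwise $\gamma$ would be a calibrated curve on $[0,t[$ passing through $(s,x)$, forcing $(s,x)\in{\cal I}_t(U)$ and contradicting $(s,x)\in X$. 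Also, $U$ cannot be differentiable at the endpoint $y(s,x):=\bigl(s+\tau(s,x),\gamma(s+\tau(s,x))\bigr)$: if it were, its unique backward characteristic would be $\gamma$ itself and the Hamilton flow would prolong $\gamma$ into a calibrated curve past $s+\tau(s,x)$, violating maximality. Hence $y(s,x)\in A$.

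I would then define $r:X\to A$ by $r(s,x)=y(s,x)$ on $X\setminus A$ and $r(s,x)=(s,x)$ on $A$ (setting $\tau\equiv 0$ there), together with the candidate homotopy
\[
F((s,x),u)=\bigl(s+u\tau(s,x),\,\gamma(s+u\tau(s,x))\bigr),
\]
extended as the identity on $A\times[0,1]$. Then $F(\cdot,0)=\mathrm{id}_X$, $F(\cdot,1)=r$, and $F$ fixes $A$ pointwise, so once continuity of $F$ is established this supplies a strong deformation retraction of $X$ onto $A$ and hence the desired homotopy equivalence.

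The real difficulty is precisely this continuity. Away from $\partial A$, the semiconcavity of $U$ makes $\partial_xU$ continuous on the set of its differentiability points; together with smooth dependence of the Hamilton flow on initial data and the Tonelli compactness of minimizers, this yields continuous dependence of the characteristic on $(s,x)$, upper semi-continuity of $\tau$ (from closedness of $\Sigma(U)$), and lower semi-continuity of $\tau$ (from its maximality). The delicate point is continuity of $F$ across $\partial A$: when $(s_n,x_n)\to (s_0,x_0)\in A$ through $X\setminus A$, one must control both $\tau(s_n,x_n)$ and $y(s_n,x_n)$. This is where I expect to need the propagation-of-singularities machinery underlying Theorem \ref{LocContr}: since forward generalized characteristics from a singular point remain in $\Sigma(U)$, the smooth characteristics emanating from nearby differentiable points must shadow the singular set, and this should force either $\tau(s_n,x_n)\to 0$ or $y(s_n,x_n)\to (s_0,x_0)$. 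I expect this boundary continuity to be the principal technical work of the proof; the rest is a formal consequence of the deformation retraction.
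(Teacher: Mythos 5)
Your plan identifies the right geometric picture (push forward along characteristics to cut time), but the construction as stated has three gaps, each of which the paper treats by a genuinely different device, not by a more careful version of your argument.

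First, your claim that the endpoint $y(s,x)$ lies in $\Sigma_t(U)$ is not justified. The argument ``if $U$ were differentiable at $y$, the Hamilton flow would prolong $\gamma$ past cut time'' is circular: differentiability at a single point does \emph{not} imply the calibrated curve extends forward, because $\operatorname{Cut}_t(U)\setminus\Sigma_t(U)$ is nonempty in general (conjugate-type cut points). What Proposition~\ref{DiffProp}(v) gives is differentiability \emph{in the interior} of an already-extended calibrated curve; it says nothing about whether one can extend past a boundary point. The paper only obtains $\Sigma_{t_0}\subset\operatorname{Cut}_{t_0}\subset\overline{\Sigma_{t_0}}$, and the density statement does not collapse these. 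So your retraction, as written, lands in $\operatorname{Cut}_t(U)$, not $\Sigma_t(U)$. Second, you assert lower semi-continuity of $\tau$ ``from maximality''; in fact Proposition~\ref{PropTau}(iv) establishes only \emph{upper} semi-continuity, and the standard picture (branch points of a medial axis, conjugate cut points) shows $\tau$ really is discontinuous. Third, and most seriously, the map $(s,x)\mapsto\gamma_{(s,x)}$ is multivalued at singular points, so the pointwise rule ``characteristic flow outside $A$, identity on $A$'' is not continuous at $\partial A$. You flag this as the main technical work, but propagation-of-singularities alone does not supply a \emph{continuous selection} of forward characteristics near $\Sigma$, which is what a strong deformation retraction requires.

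The paper's construction sidesteps all three points by replacing the characteristic flow with a variational map. Lemma~\ref{HomLoc} defines $F[(t,x),s]$ as the \emph{unique} maximizer $y$ in $T^+_sT^-_{s+t}u(x)=\sup_y T^-_{s+t}u(y)-h_s(x,y)$; Lemma~\ref{Claim1} shows $T^+_sT^-_{s+t}u$ is $C^{1,1}$ near $K$ for small $s$, which gives uniqueness of that maximizer and, crucially, continuity of $F$ even at and across $\Sigma$ (Corollary~\ref{CorTile5} and the Claim following). On the regular set $F$ coincides with your characteristic flow; on the singular set it still makes a continuous choice, which a flow along classical characteristics cannot do. Then, instead of pushing \emph{to} cut time, the paper uses upper semi-continuity of $\tau$ plus Baire's interpolation theorem to produce a \emph{continuous} $\alpha$ with $\tau(t,x)-t<\alpha(t,x)$, and pushes \emph{strictly past} cut time; part (3) of the $\hat u$-adapted definition (Definition~\ref{Adapted}) then forces $\bar F[(t,x),\alpha(t,x)]\in\Sigma_{t_0}(\hat u)$ by Proposition~\ref{StrongAdaptedProperty}. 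This produces a homotopy that moves $\Sigma$ inside itself rather than fixing it pointwise — a homotopy equivalence, not a strong deformation retraction, which is all the theorem claims and is safely weaker than what you attempt. Finally, Proposition~\ref{ReducGlobHom} and the Lipschitz-in-the-large bound of Corollary~\ref{BoundDistHomotopy} chain the local homotopies together to handle non-compact $M$; your outline does not address this globalization step.
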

\noindent {\bf Acknowledgments.} This work is partly supported by National Natural Scientific Foundation of China (Grant No.11790272, No.11871267 and No.11631006), and the National Group for Mathematical Analysis, Probability and Applications (GNAMPA) of the Italian Istituto Nazionale di Alta Matematica ``Francesco Severi''. The authors acknowledge support from the MIUR Excellence Department Project awarded to the Department of Mathematics, University of Rome Tor Vergata, CUP E83C18000100006. The authors are also grateful to the departments of mathematics of Fudan University and Nanjing University, for the hospitality they received from these institutions while completing this manuscript.

Albert Fathi's work was also supported by ANR-12-BS01-0020 WKBHJ, and  the National Science Foundation 
under Grant No. DMS-1440140 while he was in residence as a Clay Senior Scholar at the Mathematical 
Sciences Research Institute in Berkeley, California, during the Fall 2018 semester.

\section{Background}
We will need to use some of the facts about viscosity solutions and the  negative Lax-Oleinik semi-groups. 
We refer to \cite{FatEcos} and \cite{FatNC} for details and proofs.

In the remainder of this section, we will assume that $H:T^*\/M\to\R$ is a given Tonelli Hamiltonian on 
the complete Riemannian manifold $M$. We will denote by $L:TM\to\R$ its associated 
Lagrangian defined by
$$L(x,v)=\sup_{p\in T^*_xM}p(v)-H(x,p).$$
\subsection{Action and minimizers}
If $\gamma: [a,b]\to M$ is an absolutely continuous curve, its action $\Act(\gamma)$ is defined by
 $$\Act(\gamma)=\int_a^bL(\gamma(s),\dot\gamma(s))\, ds.$$
Note that since $L$ is bounded from below (by $-C(0)$), we always have $\Act(\gamma)>-\infty$, although we may have $\Act(\gamma)=+\infty$.

For $x,y\in M$, and $t>0$, the minimal action $h_t(x,y)$ to join $x$ to $y$ in time $t$ is
$$h_t(x,y)=\inf_\gamma \Act(\gamma)=\inf_\gamma \int_0^tL(\gamma(s),\dot\gamma(s))\, ds,$$
where the infimum is taken over all absolutely continuous curves $\gamma: [0,t]\to M$, 
with $\gamma(0)=x$ and $\gamma(t)=y$.

A minimizer (for $L$) is an absolutely continuous curve $\gamma: [a,b]\to M$ such that
$$\Act(\gamma)=h_{b-a}(\gamma(a),\gamma(b)).$$

Tonelli's theorem \cite{butgiahil, ClarkeSIAM, FatLecNot} states that, for every $a<b\in \R$ and every $x,y\in M$, there exists a minimizer
$\gamma:[a,b]\to M$, with $\gamma(a)=x,\gamma(b)=y$. All minimizers are as smooth as $L$.

Moreover, all minimizers are extremals, i.e. they satisfy the Euler-Lagrange equation given, in local coordinates, by
\begin{equation}\label{EL}
\frac{d}{dt}\left[\frac{\partial L}{\partial v}(\gamma(s),\dot\gamma(s))\right]=
\frac{\partial L}{\partial x}(\gamma(s),\dot\gamma(s)).
\end{equation}
As is well-known the 2$^{\rm nd}$ order ODE \eqref{EL} on $M$ yields a 1$^{\rm st}$ order ODE on $TM$
which generates a flow $\phi^L_t$ on $TM$ called the Euler-Lagrange flow.
A curve $\gamma:[a,b]\to M$ is an extremal (i.e. satisfies \eqref{EL}) if and only if its speed curve
$s\mapsto (\gamma(s),\dot\gamma(s))$ is (a piece of) an orbit of the Euler-Lagrange flow $\phi^L_t$.

An absolutely continuous curve $\gamma:[a,b]\to M$ is called a \emph{local} minimizer (for $L$) 
if there exists a neighborhood $U$ of $\gamma([a,b])$ in $M$ such that for any other absolutely continuous curve
$\delta:[a,b]\to U$, with $\delta(a)=\gamma(a)$ and $\delta(b)=\gamma(b)$, we have $\Act(\delta)\geq \Act(\gamma)$.
The regularity part of Tonelli's theorem implies that such a local minimizer $\gamma:[a,b]\to M$ 
is as smooth as $L$ and satisfies the Euler-Lagrange equation \eqref{EL}. Therefore its speed curve
$s\mapsto (\gamma(s),\dot\gamma(s))$ is (a piece of) an orbit of the Euler-Lagrange flow $\phi^L_t$.
\begin{exe}\label{Exe1} The simplest Tonelli Hamiltonian $H_g=T^*M\to \R$ 
on the complete Riemannian manifold $(M,g)$ is given by
$$H_g(x,p)=\frac12\lVert p\rVert^2_x.$$
Its associated Lagrangian $L_g:TM\to\R$  is given by:
$$L_g(x,v)=\frac12\lVert v\rVert^2_x.$$
If $\gamma:[a,b]\to M$ is a curve,  we will denote by $\Act_g(\gamma)$ its action for this Lagrangian $L_g$.
$$\Act_g(\gamma)=\frac12\int_a^b\lVert \dot\gamma(s)\rVert^2_{\gamma(s)}\,ds.$$
\end{exe}
\begin{lem}\label{MinGeo} For any curve $\gamma:[a,b]\to M$ we have
\begin{equation}
\begin{aligned}
\Act_g(\gamma)&\geq  \frac{\ell_g(\gamma)^2}{2(b-a)}\\
&\geq \frac{d^2(\gamma(a),\gamma(b))}{2(b-a)},
\end{aligned}
\label{InegCS}
\end{equation}
with equality if and only if $\gamma$ is a minimizing $g$-geodesic. 

Therefore, for every $t>0$ and every  $x,y\in M$, we have
\begin{equation}\label{MinActRiem}
h^g_t(x,y)=\frac{d(x,y)^2}{2t}.
\end{equation}
Moreover, a curve $\gamma:[a,b]\to M$ is $L_g$-minimizing if and only if it is a minimizing
geodesic. 
\end{lem}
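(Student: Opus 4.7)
The plan is to derive both inequalities by elementary arguments (Cauchy--Schwarz for the first, the definition of $d$ for the second), trace through the equality cases, and then read off the formula for $h^g_t$ and the minimizer characterization.

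First I would obtain the inequality $\Act_g(\gamma)\geq \ell_g(\gamma)^2/[2(b-a)]$ by applying the Cauchy--Schwarz inequality to the functions $1$ and $s\mapsto \lVert\dot\gamma(s)\rVert_{\gamma(s)}$ on $[a,b]$:
\begin{equation*}
\ell_g(\gamma)=\int_a^b\lVert\dot\gamma(s)\rVert_{\gamma(s)}\,ds\leq \sqrt{b-a}\,\Bigl(\int_a^b\lVert\dot\gamma(s)\rVert^2_{\gamma(s)}\,ds\Bigr)^{1/2}=\sqrt{b-a}\sqrt{2\Act_g(\gamma)}.
\end{equation*}
Squaring and rearranging gives the first inequality in \eqref{InegCS}. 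The equality case of Cauchy--Schwarz tells us that equality holds if and only if $\lVert\dot\gamma(s)\rVert_{\gamma(s)}$ is constant almost everywhere, i.e.\ $\gamma$ has constant speed.

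Next I would note that the second inequality $\ell_g(\gamma)\geq d(\gamma(a),\gamma(b))$ is immediate from the definition of the Riemannian distance as the infimum of lengths of joining curves, with equality iff $\gamma$ is a minimizing curve from $\gamma(a)$ to $\gamma(b)$. Combining the two, equality throughout \eqref{InegCS} characterizes $\gamma$ as a minimizing curve with constant speed, i.e.\ a (constant-speed reparametrization of a) minimizing $g$-geodesic.

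For the identity \eqref{MinActRiem}, the lower bound $h^g_t(x,y)\geq d(x,y)^2/(2t)$ follows from what has just been proved. For the reverse inequality, completeness of $g$ (stated in the introduction) guarantees a minimizing geodesic $\sigma_0:[0,d(x,y)]\to M$ joining $x$ to $y$, parametrized by arc-length. Reparametrizing it on $[0,t]$ with constant speed $d(x,y)/t$, the resulting curve $\sigma$ satisfies
\begin{equation*}
\Act_g(\sigma)=\tfrac12\int_0^t\bigl(d(x,y)/t\bigr)^2\,ds=\frac{d(x,y)^2}{2t},
\end{equation*}
so $h^g_t(x,y)\leq d(x,y)^2/(2t)$, proving the equality. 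Finally, a curve $\gamma:[a,b]\to M$ is $L_g$-minimizing exactly when $\Act_g(\gamma)=h^g_{b-a}(\gamma(a),\gamma(b))=d(\gamma(a),\gamma(b))^2/[2(b-a)]$, i.e.\ exactly when both inequalities in \eqref{InegCS} are equalities, which is precisely the minimizing geodesic condition established above.

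No step looks seriously obstructive; the only point requiring a bit of care is the equality case for Cauchy--Schwarz with a merely absolutely continuous curve (one must allow the exceptional null set), but since constancy a.e.\ is what the argument needs, this causes no trouble.
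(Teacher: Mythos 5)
Your proof is correct and follows essentially the same route as the paper: Cauchy--Schwarz gives the first inequality with its equality case (constant speed), the definition of Riemannian distance gives the second with its equality case (minimizing curve), completeness furnishes a minimizing geodesic to establish the formula for $h^g_t$, and the characterization of $L_g$-minimizers follows by combining the equality cases. Your extra step of explicitly reparametrizing the arc-length geodesic onto $[0,t]$ and computing its action is just a spelled-out version of the paper's invocation of the first part of the lemma.
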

\begin{proof}
Indeed, using the Cauchy-Schwarz inequality, we have
\begin{align*}
 \ell_g(\gamma)^2
&=\left(\int_a^b\lVert \dot\gamma(s)\rVert_{\gamma(s)}\,ds\right)^2\\
&\leq (b-a)\int_a^b\lVert \dot\gamma(s)\rVert^2_{\gamma(s)}\,ds\\
&=2(b-a)\Act_g(\gamma),
\end{align*}
with equality if and only if $\lVert \dot\gamma(s)\rVert_{\gamma(s)}$ is constant.
Since $d(\gamma(a),\gamma(b))\leq \ell_g(\gamma)$, this establishes \eqref{InegCS}.
Moreover, we have equality in \eqref{InegCS} if and only if  $d(\gamma(a),\gamma(b))= \ell_g(\gamma)$
and $\lVert \dot\gamma(s)\rVert_{\gamma(s)}$ is constant, which is precisely the case when 
$\gamma$ is a minimizing $g$-geodesic.

Since the Riemannian manifold $(M,g)$ is complete, given $t>0$ and   $x,y\in M$, we can find 
a minimizing $g$-geodesic $\gamma:[0,t]\to M$, with $\gamma(0)=x$ and $\gamma(t)=y$. 
From the first part of the Lemma, this proves \eqref{MinActRiem}.

The last part of the Lemma follows from the first two.
 \end{proof}
\begin{exe}\label{Exe1bis} If $(M,g)$ is a Riemannian  manifold, we define the Riemannian manifold $(M\times M,g\times g)$ as $M\times M$
with metric $g\times g$, i.e.
$$g\times g(x_1,x_2, v_1,v_2)=g(x_1,v_1)+g(x_2,v_2),$$
where we used the identification $T_{(x_1,x_2)}(M\times M)=T_{x_1}M\times T_{x_2}M$.
If $(M,g)$ is complete so is $(M\times M,g\times g)$.

Therefore $H_{g\times g}=T^*(M\times M)\to \R$ 
 is given by
$$H_{g\times g}(x,p)=\frac12\lVert p\rVert^2_x.$$
Its associated Lagrangian $L_{g\times g}:TM\to\R$  is given by:
$$L_{g\times g}(x_1,x_2, v_1,v_2)=\frac12\lVert v_1\rVert^2_{x_1}+\frac12\lVert v_2\rVert^2_{x_2}.$$
A curve $\Gamma:[a,b]\to M\times M$  is nothing but a pair of curves 
$\gamma_1,\gamma_2:[a,b]\to M$, such that $\Gamma(s)=(\gamma_1(s),\gamma_2(s))$, for all $s\in [a,b]$. We will denote
this identification by $\Gamma=(\gamma_1,\gamma_2)$.
\end{exe}
\begin{lem}\label{PropProD}  If $\Gamma=(\gamma_1,\gamma_2):[a,b]\to M\times M$ is a curve in $M\times M$,
its length  is
$$\ell_{g\times g}(\Gamma)=\ell_{g\times g}(\gamma_1,\gamma_2)
=\int_a^b\sqrt{\lVert \dot\gamma_1(s)\rVert^2_{\gamma_1(s)}
+\lVert \dot\gamma_2(s)\rVert^2_{\gamma_2(s)}}\,ds.$$
The action   of $\Gamma$ is given by
\begin{equation}\label{ActProd}
\begin{aligned}\Act_{g\times g}(\Gamma)=\Act_{g\times g}(\gamma_1,\gamma_2)&=\frac12\int_a^b\lVert \dot\gamma_1(s)\rVert^2_{\gamma_1(s)}
+\frac12\lVert \dot\gamma_2(s)\rVert^2_{\gamma_2(s)}\,ds\\
&=\Act_{g}(\gamma_1)+\Act_{g}(\gamma_2).
\end{aligned}
\end{equation}
The $g\times g$-distance in $M\times M$ is given by
$$d((x_1,x_2),(y_1,y_2))=\sqrt{d(x_1,y_1)^2+ d(x_2,y_2)^2}.$$
Therefore, since $h^{(g,g)}_t((x_1,x_2),(y_1,y_2))=d((x_1,x_2),(y_1,y_2))^2/(2t)$, we get
$$h^{(g,g)}_t={\frac{d(x_1,y_1)^2+d(x_2,y_2)^2}{2t}}.$$

This equality implies that the $g\times g$-geodesics (resp.\ $g\times g$-minimizing geodesics) in $M\times M$,
which are also the $L_{g\times g}$-extremals (resp.\ $L_{g\times g}$-minimzers), are the curves 
$\Gamma=(\gamma_1,\gamma_2):[a,b]\to M$
 such that $\gamma_1$ and $\gamma_2$ are both $g$-geodesics  
 (resp.\ $g$-minimizing geodesics).
\end{lem}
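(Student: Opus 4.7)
The length formula is essentially a direct computation from the definition of the product metric. With the canonical identification $T_{(x_1,x_2)}(M\times M)=T_{x_1}M\times T_{x_2}M$, the velocity of $\Gamma=(\gamma_1,\gamma_2)$ at $s$ is $(\dot\gamma_1(s),\dot\gamma_2(s))$, and by the definition of $g\times g$ its squared $g\times g$-norm is $\lVert\dot\gamma_1(s)\rVert^2_{\gamma_1(s)}+\lVert\dot\gamma_2(s)\rVert^2_{\gamma_2(s)}$. Integrating yields the length formula, and the action formula \eqref{ActProd} follows in the same way because $L_{g\times g}$ is, by construction, the sum $L_g(\gamma_1,\dot\gamma_1)+L_g(\gamma_2,\dot\gamma_2)$.

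For the distance formula, I would prove the two inequalities separately. For $\leq$, given $(x_1,x_2),(y_1,y_2)$, I would use completeness of $(M,g)$ to take constant-speed minimizing $g$-geodesics $\gamma_i:[0,1]\to M$ from $x_i$ to $y_i$, so that $\lVert\dot\gamma_i(s)\rVert\equiv d(x_i,y_i)$. Then $\Gamma=(\gamma_1,\gamma_2)$ joins $(x_1,x_2)$ to $(y_1,y_2)$ and the length formula evaluates to exactly $\sqrt{d(x_1,y_1)^2+d(x_2,y_2)^2}$. For $\geq$, I would take an arbitrary curve $\Gamma=(\gamma_1,\gamma_2):[a,b]\to M\times M$ joining the two pairs and apply the Minkowski (vector-valued triangle) inequality in $\R^2$ to the map $s\mapsto(\lVert\dot\gamma_1(s)\rVert,\lVert\dot\gamma_2(s)\rVert)$:
\begin{equation*}
\ell_{g\times g}(\Gamma)=\int_a^b\bigl\lVert(\lVert\dot\gamma_1\rVert,\lVert\dot\gamma_2\rVert)\bigr\rVert_{\R^2}\,ds\geq\Bigl\lVert\Bigl(\int_a^b\lVert\dot\gamma_1\rVert\,ds,\int_a^b\lVert\dot\gamma_2\rVert\,ds\Bigr)\Bigr\rVert_{\R^2}=\sqrt{\ell_g(\gamma_1)^2+\ell_g(\gamma_2)^2}.
\end{equation*}
Since $\ell_g(\gamma_i)\geq d(x_i,y_i)$ and $(u,v)\mapsto\sqrt{u^2+v^2}$ is monotone in each nonnegative argument, this gives $\ell_{g\times g}(\Gamma)\geq\sqrt{d(x_1,y_1)^2+d(x_2,y_2)^2}$, as required. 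The formula for $h^{(g,g)}_t$ is then immediate from \eqref{MinActRiem} applied on the complete Riemannian manifold $(M\times M,g\times g)$.

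Finally, to characterize the $L_{g\times g}$-minimizers on a time interval $[0,t]$, I would combine \eqref{ActProd} with Lemma \ref{MinGeo}. If $\Gamma=(\gamma_1,\gamma_2)$ is $L_{g\times g}$-minimizing from $(x_1,x_2)$ to $(y_1,y_2)$, then
\begin{equation*}
\frac{d(x_1,y_1)^2+d(x_2,y_2)^2}{2t}=\Act_{g\times g}(\Gamma)=\Act_g(\gamma_1)+\Act_g(\gamma_2)\geq\frac{d(x_1,y_1)^2}{2t}+\frac{d(x_2,y_2)^2}{2t},
\end{equation*}
so each inequality from Lemma \ref{MinGeo} must be an equality, forcing each $\gamma_i$ to be a minimizing $g$-geodesic; the converse is the same computation read in reverse. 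For the (non-minimizing) geodesic/extremal statement, I would simply note that the Euler-Lagrange equation \eqref{EL} for $L_{g\times g}$ splits coordinatewise into the two separate Euler-Lagrange equations for $L_g$, so $\Gamma$ is an $L_{g\times g}$-extremal precisely when each $\gamma_i$ is an $L_g$-extremal, i.e. a $g$-geodesic.

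The main step where care is needed is the lower bound for the distance: one must invoke Minkowski's inequality for vector-valued integrals rather than attempt a termwise Cauchy-Schwarz, since the inequality $\sqrt{u^2+v^2}\geq(u+v)/\sqrt{2}$ goes in the wrong direction after integration. Once the distance identity is established, every other claim in the lemma reduces to a bookkeeping exercise with \eqref{MinActRiem}, \eqref{ActProd}, and Lemma \ref{MinGeo}.
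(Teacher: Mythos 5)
Your argument is correct, and each piece is verified cleanly: the length and action formulas are definitional, the $\leq$ side of the distance identity by exhibiting the product of minimizing geodesics, the $\geq$ side by Minkowski's integral inequality, and the minimizer/extremal characterizations by forcing equality in Lemma \ref{MinGeo} term by term and decoupling the Euler--Lagrange equation for $L_{g\times g}=L_g\oplus L_g$.

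The paper does not spell out a proof but says it is ``similar to or uses Lemma \ref{MinGeo},'' and the most direct way to read that hint gives a slightly different route for the distance identity than the Minkowski argument you chose. Since $\Act_{g\times g}(\gamma_1,\gamma_2)=\Act_g(\gamma_1)+\Act_g(\gamma_2)$ and the class of curves $\Gamma:[0,t]\to M\times M$ from $(x_1,x_2)$ to $(y_1,y_2)$ is exactly the product of the two classes of curves in $M$, the infima split, so $h^{(g,g)}_t\bigl((x_1,x_2),(y_1,y_2)\bigr)=h^g_t(x_1,y_1)+h^g_t(x_2,y_2)$. Now applying \eqref{MinActRiem} on both $M$ and $M\times M$ (which is complete) converts this into
$$\frac{d_{g\times g}\bigl((x_1,x_2),(y_1,y_2)\bigr)^2}{2t}=\frac{d(x_1,y_1)^2}{2t}+\frac{d(x_2,y_2)^2}{2t},$$
giving the distance formula without any vector-valued Minkowski inequality. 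So your final remark that Minkowski is the tool one \emph{must} invoke is a bit too strong: it is the tool one must invoke \emph{if one works at the level of lengths}, but the action-level argument sidesteps it entirely and is arguably what the paper has in mind given the explicit pointer to Lemma \ref{MinGeo}. Both routes are valid; yours is self-contained and more ``metric,'' the action-splitting one is shorter and recycles the infrastructure already set up. Everything else in your write-up coincides with what the paper would do.
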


Most of the proof of Lemma \ref{PropProD} is similar to or uses Lemma
\ref{MinGeo}.

In order to give the connection between action and viscosity solutions it is convenient to introduce the following (obvious)
definition.
\begin{defn}[Graph of a curve] If $\gamma:[a,b]\to M$, we define its graph $\Graph(\gamma)\subset \R\times M$ as 
$$ \Graph(\gamma)=\{(t,\gamma(t)\mid t\in[a,b]\}.$$
\end{defn}
We now recall the definition of domination for a function.
\begin{defn} If $U: O\to [-\infty, +\infty]$ is a function defined on the subset $O\subset \R\times M$, we say that
$U$ is dominated by $L$ (on $O$) if for every absolutely continuous curve $\gamma:[a,b]\to M$, with $\Graph(\gamma)\subset O$ and $\Act(\gamma)=\int_a^bL(\gamma(s),\dot\gamma(s))\, dt<+\infty$,
we have
\begin{equation}\label{Dom1}
U(\gamma(b),b)\leq \Act(\gamma)=\int_a^bL(\gamma(s),\dot\gamma(s))\, dt
+U(\gamma(a),a),
\end{equation}
\end{defn}
Note that the right hand side of \eqref{Dom1} makes always sense since we insisted that $\Act(\gamma)<+\infty$.
\begin{rem} 1) We used the inequality \eqref{Dom1} rather than the usual
\begin{equation}\label{Dom2}U(\gamma(b),b)-U(\gamma(a),a)\leq \Act(\gamma)
=\int_a^bL(\gamma(s),\dot\gamma(s))\, dt.
\end{equation}
since it will be convenient to consider real valued functions with possibly infinite values.
Of course when $U$ is finite valued \eqref{Dom1} and \eqref{Dom2} are equivalent.

2) If $U:I\times M\to \R$, where $I\subset \R$ is an interval, then all curves defined on a subinterval $[a,b]\subset
I$ have their graph included in $I\times M$. Therefore, in such a case, the function $U$  is dominated by $L$ 
if and only if
\begin{equation}\label{Dom3}
U(t',x')-U(t,x)\leq h_{t'-t}(x,x'),\text{ for every $x,x'\in M$, and every $t<t'\in I$.}
\end{equation}
\end{rem}
A first connection between action and viscosity solutions is given by the following proposition, see \cite{Ishii} or
\cite{FatNC} for a proof.
\begin{prop}\label{domination1} Let $H:T^*\/M\to\R$ be a Tonelli Hamiltonian. Assume 
$ V:O\to \R$ is a continuous function, where $O\subset R\times M$ is an open subset of $\R\times M$.
Then $V$ is a viscosity subsolution of 
\begin{equation}\label{HJE00}
\partial_t V+H(x,\partial_x V)=0,
\end{equation}
on $ O$ if and only if it is dominated by $L$ on $O$,
where $L$ is the Lagrangian associated to $H$.
\end{prop}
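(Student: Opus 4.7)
The proposition is a bi-implication. The direction \emph{domination $\Rightarrow$ subsolution} follows from the Fenchel--Legendre duality $H(x,p)=\sup_v[p(v)-L(x,v)]$ by testing domination against short curves with prescribed velocity. The converse \emph{subsolution $\Rightarrow$ domination} is the technical heart; the plan is to regularize $V$ locally by mollification, use convexity of $H$ to get a smooth near-subsolution, and then integrate the Fenchel inequality along the curve.

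For the easy direction, assume $V$ is dominated by $L$ on $O$, and let $\phi\in C^1$ be a test function with $V-\phi$ attaining a local maximum at $(t_0,x_0)\in O$. For an arbitrary tangent vector $v\in T_{x_0}M$, choose a $C^1$ curve $\gamma$ with $\gamma(t_0)=x_0$ and $\dot\gamma(t_0)=v$; for small $h>0$, $\Graph(\gamma|_{[t_0,t_0+h]})$ lies in $O$, and the domination inequality reads
$$V(t_0+h,\gamma(t_0+h))-V(t_0,x_0)\le \int_{t_0}^{t_0+h}L(\gamma(s),\dot\gamma(s))\,ds.$$
Combining this with the local-maximum relation $\phi(t_0+h,\gamma(t_0+h))-\phi(t_0,x_0)\le V(t_0+h,\gamma(t_0+h))-V(t_0,x_0)$, dividing by $h$, and letting $h\downarrow 0$ yields $\partial_t\phi(t_0,x_0)+d_x\phi(t_0,x_0)(v)\le L(x_0,v)$. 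Since $v$ is arbitrary, taking the supremum over $v$ and invoking Fenchel duality produces the subsolution inequality $\partial_t\phi(t_0,x_0)+H(x_0,\partial_x\phi(t_0,x_0))\le 0$.

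For the converse, first observe that under the Tonelli hypotheses every continuous viscosity subsolution $V$ is locally Lipschitz on $O$. Work in a coordinate chart, extend $V$ locally to $\R^{1+n}$, and convolve with a smooth mollifier $\rho_\epsilon$ to obtain $V_\epsilon\in C^\infty$. By convexity of $H$ in $p$, Jensen's inequality, and the uniform continuity of $H$ on bounded sets of momenta, one obtains
$$\partial_tV_\epsilon+H(x,\partial_xV_\epsilon)\le\omega(\epsilon)$$
on compactly contained subsets, with $\omega(\epsilon)\to 0$. For this smooth near-subsolution, the pointwise Fenchel inequality $p(v)\le L(x,v)+H(x,p)$ and the classical chain rule give
$$\frac{d}{ds}V_\epsilon(s,\gamma(s))\le L(\gamma(s),\dot\gamma(s))+\omega(\epsilon)$$
for any absolutely continuous $\gamma$ staying in the chart. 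Integrating on $[a,b]$ and letting $\epsilon\to 0$ yields chart-local domination; a chaining argument through a finite open cover of $\Graph(\gamma)$ extends this to arbitrary absolutely continuous curves with graph in $O$.

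The main obstacle lies in the mollification step, since $M$ has no intrinsic convolution and one must uniformly control the error $\omega(\epsilon)$ across charts. The key is that the local Lipschitz bound on $V$ induces a uniform bound on $\partial_xV_\epsilon$ on compact sets, restricting attention to a bounded set of momenta where the modulus of continuity of $H$ is controlled. An alternative route bypasses mollification by invoking Rademacher's theorem to obtain a.e.\ differentiability of $V$, deducing $\partial_tV+H(x,\partial_xV)\le 0$ a.e., then showing that $s\mapsto V(s,\gamma(s))$ is absolutely continuous with a.e.\ derivative $\partial_tV+d_xV(\dot\gamma)$ and integrating the Fenchel inequality directly. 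Both approaches are by now standard in the viscosity literature (compare \cite{Ishii, FatNC}).
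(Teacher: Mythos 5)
The paper does not prove Proposition \ref{domination1}; it refers to \cite{Ishii,FatNC}. So your argument must stand on its own, and on the whole it captures the standard two-directional strategy.

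The direction \emph{domination $\Rightarrow$ subsolution} is correct as written: testing the domination inequality against a $C^1$ curve with prescribed initial velocity $v$, combining with the local-maximum inequality for $V-\phi$, dividing by $h$ and sending $h\downarrow 0$ gives $\partial_t\phi+d_x\phi(v)\le L(x_0,v)$; taking the supremum over $v$ and using $H(x,p)=\sup_v\bigl(p(v)-L(x,v)\bigr)$ is exactly the Fenchel step. No issues here. The mollification route for the converse is also essentially sound: Lipschitz regularity of continuous viscosity subsolutions (from coercivity of $H$ in $p$), Jensen applied to the convex $H(x,\cdot)$, a modulus-of-continuity correction in the $x$-slot, and a chaining over a finite cover of $\Graph(\gamma)$ are the standard ingredients, and you correctly flag that the uniformity of $\omega(\epsilon)$ is the technical crux and is handled by the Lipschitz bound confining $\partial_x V_\epsilon$ to a compact momentum set.

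The \emph{alternative} route via Rademacher, however, has a genuine gap as stated. Rademacher gives differentiability of $V$ almost everywhere in $\R\times M$, but the graph of a fixed absolutely continuous curve $s\mapsto(s,\gamma(s))$ is a Lebesgue-null set in $\R\times M$; there is no reason for $V$ to be differentiable at $(s,\gamma(s))$ for a.e.\ $s$, so the claimed identity $\frac{d}{ds}V(s,\gamma(s))=\partial_tV(s,\gamma(s))+d_xV(s,\gamma(s))\cdot\dot\gamma(s)$ is not justified pointwise a.e.\ along the curve. To repair it one must either translate $\gamma$ by a generic small vector and invoke Fubini to make the chain rule valid for a.e.\ translate (then pass to the limit by continuity of $V$), or replace $d_xV$ by Clarke's generalized gradient and argue that the viscosity subsolution inequality extends, by convexity of $H$ in $p$, to Clarke gradients. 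Without one of these fixes the a.e.\ variant does not close; since you also present the mollification argument, the overall proof plan is salvageable, but the a.e.\ paragraph should not be presented as an equally self-contained alternative.
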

\subsection{Calibrated curves, backward characteristics}
\begin{defn}[Calibrated curve]
Let $U:O\to [-\infty,+\infty]$ be a function. An absolutely curve  $\gamma: [a,b]\to M$ is said to be 
$U$-calibrated (for the Lagrangian $L$) if $\Graph(\gamma)\subset M$, 
its action $ \Act(\gamma)=\int_a^bL(\gamma(s), \dot \gamma(s))\, ds$ is finite, and
\begin{equation}\label{CalCond}
U(b,\gamma(b))=U(a,\gamma(a))+\Act(\gamma)=U(a,\gamma(a))+\int_a^bL(\gamma(s), \dot \gamma(s))\, ds.
\end{equation}
\end{defn}
Again we used \ref{CalCond}, rather than the more usual $U(b,\gamma(b))-U(a,\gamma(a))=\Act(\gamma)$,
because we would like to allow possibly infinite values for $U$.
\begin{prop}\label{CONCAT} Suppose $U:[O\to [-\infty,+\infty]$ is a function defined on the subset $O$. 

 If the absolutely continuous curve $\gamma: [a,b]\to M$ is
piecewise $U$-calibrated, then it is $U$-calibrated.
\end{prop}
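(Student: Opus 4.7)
The plan is to proceed by telescoping. By induction on the number of pieces, it suffices to treat two adjoining pieces. So unpack the hypothesis: there exists a partition $a=t_0<t_1<\dots<t_n=b$ such that each restriction $\gamma_i:=\gamma|[t_{i-1},t_i]$ is $U$-calibrated. By the definition of $U$-calibrated curve, each $\Graph(\gamma_i)\subset O$, each $\Act(\gamma_i)$ is finite, and
\begin{equation*}
U(t_i,\gamma(t_i))=U(t_{i-1},\gamma(t_{i-1}))+\Act(\gamma_i),\qquad i=1,\dots,n.
\end{equation*}

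First I would check the two ``admissibility'' conditions. Since $\Graph(\gamma)=\bigcup_{i=1}^n\Graph(\gamma_i)$, it lies in $O$. By additivity of the Lebesgue integral,
\begin{equation*}
\Act(\gamma)=\int_a^b L(\gamma(s),\dot\gamma(s))\,ds=\sum_{i=1}^n\int_{t_{i-1}}^{t_i}L(\gamma(s),\dot\gamma(s))\,ds=\sum_{i=1}^n\Act(\gamma_i),
\end{equation*}
a finite sum of finite terms, hence finite. It then remains to verify the calibration identity. If every $U(t_i,\gamma(t_i))$ happens to be a finite real, the identity follows by summing the $n$ calibration equations and recognizing a telescoping cancellation of the intermediate $U$-values.

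The one point requiring care, flagged by the paper's insistence on working with extended-real-valued $U$, is the possible presence of infinite values, which threatens the telescoping with an $\infty-\infty$ indeterminacy. I would handle this by propagating finiteness along the partition: because $\Act(\gamma_i)$ is finite, the relation $U(t_i,\gamma(t_i))=U(t_{i-1},\gamma(t_{i-1}))+\Act(\gamma_i)$ (interpreted in $[-\infty,+\infty]$) forces $U(t_i,\gamma(t_i))$ to be finite exactly when $U(t_{i-1},\gamma(t_{i-1}))$ is, and otherwise to equal the same infinite value with the same sign. A short induction then splits into two cases. If $U(a,\gamma(a))$ is finite, so are all $U(t_i,\gamma(t_i))$, and standard arithmetic telescoping gives $U(b,\gamma(b))-U(a,\gamma(a))=\sum_i\Act(\gamma_i)=\Act(\gamma)$, i.e.\ $\gamma$ is $U$-calibrated. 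If instead $U(a,\gamma(a))=\pm\infty$, then every $U(t_i,\gamma(t_i))=\pm\infty$ with the same sign, and the required identity $U(b,\gamma(b))=U(a,\gamma(a))+\Act(\gamma)$ reduces to the tautology $\pm\infty=\pm\infty+\text{finite}$.

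The main obstacle is therefore not analytic at all but purely bookkeeping around the extended-real conventions; once one verifies that finite action propagates finiteness of $U$ across the partition, the proof is a one-line telescoping sum combined with additivity of the action.
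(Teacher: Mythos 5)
Your proof is correct. The paper states Proposition~\ref{CONCAT} without giving a proof (it is treated as an immediate consequence of the definition), and your telescoping argument is exactly the argument one would write down: additivity of the action over the partition gives $\Act(\gamma)=\sum_i\Act(\gamma_i)$, finite; the union of the pieces' graphs lies in $O$; and summing the $n$ calibration identities gives the calibration of $\gamma$. The point you flag about extended-real arithmetic is the only subtlety the paper's use of $[-\infty,+\infty]$-valued $U$ introduces, and your resolution is right: since each $\Act(\gamma_i)$ is finite, the relation $U(t_i,\gamma(t_i))=U(t_{i-1},\gamma(t_{i-1}))+\Act(\gamma_i)$ propagates the dichotomy \emph{finite} / \emph{same infinity with same sign} along the partition, so the telescoping is legitimate in the finite case, and in the infinite case the required identity degenerates to $\pm\infty=\pm\infty+\text{finite}$, which holds. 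Nothing is missing.
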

Of course, the curve $\gamma:[a,b]\to M$ is said to be piecewise calibrated if we can find a finite sequence
$a=t_0<t_1<\cdots<t_\ell=b$ such that each restriction $\gamma|[t_i,t_{i+1}], i=0,\dots,\ell-1$ is $U$-calibrated.
\vskip .2cm
The notion of calibrated curve is useful when $U$ is a viscosity subsolution as can be seen from the following 
well-known proposition.
\begin{prop}\label{calcurprop} Suppose the function $U:O\to \R$ is continuous and a viscosity subsolution,
on $O$,
of the evolutionary Hamilton-Jacobi equation \eqref{HJE0}. If $\gamma:[a,b]\to M$ is $U$-calibrated,
with either $U(a,\gamma(a))$ or $U(b,\gamma(b))$ finite, we have:
\begin{enumerate}[(1)]
\item For every $t\in[a,b]$, the value $U(t,\gamma(t))$ is finite.
\item The restriction $\gamma|[a',b']$ is also $U$-calibrated, for any subinterval 
$[a',b']\subset [a,b]$.
\item the curve $\gamma$
 is a local minimizer. Hence it is as smooth as $L$ and a solution of the Euler-Lagrange equation
\eqref{EL}.
\end{enumerate}
\end{prop}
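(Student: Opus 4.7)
The plan is to convert the subsolution hypothesis into its dominated form via Proposition~\ref{domination1} and then exploit the fact that equality in the calibration identity forces equality on every piece. Domination says that for every absolutely continuous $\delta:[s,t]\to M$ with $\Graph(\delta)\subset O$ and $\Act(\delta)<+\infty$ one has $U(t,\delta(t))-U(s,\delta(s))\leq\Act(\delta)$. Because $U:O\to\R$ is real-valued and $\Graph(\gamma)\subset O$ is part of the definition of calibration, statement~(1) requires no work beyond this observation.

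For (2), pick any $[a',b']\subset[a,b]$. The three restrictions $\gamma|[a,a']$, $\gamma|[a',b']$, $\gamma|[b',b]$ all have graph in $O$, and each has finite action since $\Act(\gamma)<+\infty$. Three applications of domination yield
$$U(a',\gamma(a'))-U(a,\gamma(a))\leq\Act(\gamma|[a,a']),$$
together with the analogous bounds on the other two subintervals. Summing the three inequalities gives $U(b,\gamma(b))-U(a,\gamma(a))\leq\Act(\gamma)$, which by calibration is actually an equality. Equality in a sum of $\leq$-inequalities forces equality term by term, so in particular $\gamma|[a',b']$ is $U$-calibrated.

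For (3), the first step is to build a uniform tube around $\gamma$ inside $O$: using the openness of $O$ and the compactness of $\Graph(\gamma)\subset O$, cover $\Graph(\gamma)$ by finitely many open product neighborhoods contained in $O$ and extract a single open set $W\subset M$ with $\gamma([a,b])\subset W$ and $[a,b]\times W\subset O$. Let $\delta:[a,b]\to W$ be any absolutely continuous competitor with $\delta(a)=\gamma(a)$ and $\delta(b)=\gamma(b)$. Either $\Act(\delta)=+\infty$, in which case $\Act(\gamma)\leq\Act(\delta)$ is trivial, or $\Act(\delta)<+\infty$, and then domination applied on $[a,b]\times W\subset O$ combined with the calibration identity yields
$$\Act(\gamma)=U(b,\gamma(b))-U(a,\gamma(a))=U(b,\delta(b))-U(a,\delta(a))\leq\Act(\delta).$$
Thus $\gamma$ is a local minimizer on $[a,b]$, and the regularity part of Tonelli's theorem, recalled earlier, gives both the smoothness of $\gamma$ and the Euler-Lagrange equation~\eqref{EL}.

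The main obstacle is really the bookkeeping in step (3), where one must guarantee that the entire tube $[a,b]\times W$ fits inside $O$ before invoking domination against competitors; this is a routine compactness argument on the manifold but is precisely where the openness of $O$ is used. Once this step is carried out, the rest is simply a matter of reading Proposition~\ref{domination1} in one direction for an inequality and using calibration in the other direction for the matching equality.
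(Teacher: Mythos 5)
Parts (1) and (2) of your argument are fine. Part (1) is indeed vacuous once $U:O\to\R$ is taken real-valued, and the telescoping argument in (2) is correct: since $L$ is bounded below, each of the three sub-actions is $>-\infty$, they sum to the finite $\Act(\gamma)$, so each is finite; domination then gives three inequalities whose sum is pinned to an equality by calibration, forcing equality term by term.

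The gap is in step (3), in the construction of the tube. You claim that compactness of $\Graph(\gamma)\subset O$ produces an open $W\subset M$ with $\gamma([a,b])\subset W$ and $[a,b]\times W\subset O$; that is false in general and is not what the tube lemma gives. Take $M=\R$, $O=\{(t,x):\lvert x-t\rvert<1\}$, $\gamma(t)=t$ on $[a,b]=[0,10]$. For $[0,10]\times\{y\}\subset O$ one would need $\lvert y-t\rvert<1$ for every $t\in[0,10]$, which is impossible, so \emph{no} $W$ satisfies $[0,10]\times W\subset O$, let alone one containing $\gamma([0,10])$. The condition $[a,b]\times W\subset O$ forces every vertical segment $[a,b]\times\{y\}$, $y\in W$, into $O$, and that simply does not follow from $\Graph(\gamma)\subset O$. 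The repair is to localise in time. For each $t\in[a,b]$ pick $\epsilon_t>0$ and an open $V_t\ni\gamma(t)$ with $(t-\epsilon_t,t+\epsilon_t)\times V_t\subset O$ and, shrinking $\epsilon_t$, $\gamma\big((t-\epsilon_t,t+\epsilon_t)\cap[a,b]\big)\subset V_t$; using a Lebesgue number of the cover $\{(t-\epsilon_t/2,t+\epsilon_t/2)\}$ choose a partition $a=s_0<\cdots<s_N=b$ fine enough that each $[s_i,s_{i+1}]$ lies in some such interval, so $[s_i,s_{i+1}]\times W_i\subset O$ with $W_i=V_{t_j}\supset\gamma([s_i,s_{i+1}])$. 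By your part (2) each $\gamma|[s_i,s_{i+1}]$ is calibrated, and your domination argument applied on the short tube shows it is a local minimizer, hence, by Tonelli regularity, as smooth as $L$ and a solution of Euler--Lagrange on $[s_i,s_{i+1}]$. Passing to a second, overlapping partition (or matching left and right velocities at the $s_i$ by the first-variation argument) gives the smoothness and Euler--Lagrange equation on all of $[a,b]$, which is the substance of (3); the ``local minimizer'' phrasing in the paper should be read in this piecewise sense.
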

For the next proposition we will use the notion of upper
and lower differentials, see \cite{Barles, BarCap, CanSin, FatLecNot, FatEcos} for more details on this notion and its relationship with viscosity solutions.
\begin{nota} If $w:N\to \R$ is a function on the manifold $N$ and $n\in N$, the set of upper-differentials 
(resp.\ lower-differentials) of $w$ at
$n$ is denoted by $D^+w(n)\subset T_n^*\/N$ (resp.\ $D^-w(n)\subset T_n^*\/N$).
\end{nota}
\begin{prop}\label{DiffProp} Suppose the function $U:O\times M\to \R$, defined on the open subset $O
\subset \R\times M$ is continuous and a viscosity subsolution
of the evolution Hamilton-Jacobi equation \eqref{HJE0} on $O$. Assume that $\gamma: [a,b]\to M$,
is a $U$-calibrated curve. 
\begin{enumerate}[\rm (i)]
\item For every $t\in ]a,b]$, we have
$$\left (-H\left(\gamma(t),\partial_v L(\gamma(t), \dot\gamma(t))\right), \partial_v L(\gamma(t), \dot\gamma(t))\right)\in D^+U(t,\gamma(t)),$$
where $D^+U(t,\gamma(t))\subset \R\times T_{\gamma(t)}^*\/N \cong T_{(t,\gamma(t))}^*(\R\times N)$.
\item For every $t\in [a,b[$, we have
$$\left (-H\left(\gamma(t),\partial_v L(\gamma(t), \dot\gamma(t))\right), \partial_v L(\gamma(t), \dot\gamma(t))\right)\in D^-U(t,\gamma(t)),$$
where $D^-U(t,\gamma(t))\subset T_{(t,\gamma(t))}^*(\R\times N)\cong \R\times T_{\gamma(t)}^*\/N$.
\item In particular, if $\partial_x U$ exists at $(t,\gamma(t))$ with $t\in [a,b]$, then 
$$\partial_x U(t,\gamma(t))= \partial_v L(\gamma(t), \dot\gamma(t)).$$
\item If  $\partial_t U$ exists at $(t,\gamma(t))$ with $t\in [a,b]$, then
$$\partial_t U(t,\gamma(t))+H\left(\gamma(t), \partial_v L(\gamma(t), \dot\gamma(t))\right)=0.$$
\item Moreover, the function $U$ is indeed differentiable at every point $(t,\gamma(t))$ with $t\in ]a,b[$.
\end{enumerate}
\end{prop}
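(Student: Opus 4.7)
The strategy is to combine the domination property (Proposition \ref{domination1}) with the calibration identity for $\gamma$, using an auxiliary smooth two-parameter family of competitor curves whose first variation of action encodes precisely the vector $(-H(\gamma(t),p_t),p_t)$, where $p_t:=\partial_v L(\gamma(t),\dot\gamma(t))$. By Proposition \ref{calcurprop}, $\gamma$ is a smooth extremal (a local minimizer), $U(u,\gamma(u))$ is finite on $[a,b]$, and every subinterval restriction of $\gamma$ is still $U$-calibrated, so these manipulations are justified.

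To prove (i) at $t_0\in\,]a,b]$, set $x_0=\gamma(t_0)$ and $p_0:=p_{t_0}$, and pick $\epsilon>0$ small enough that $t_0-\epsilon\geq a$ and the short extremal $\gamma|_{[t_0-\epsilon,t_0]}$ carries no conjugate point. Then the Lagrangian exponential map based at $\gamma(t_0-\epsilon)$ combined with the implicit function theorem produces a smooth two-parameter family $\delta_{s,y}:[t_0-\epsilon,s]\to M$, defined for $(s,y)$ in a neighborhood of $(t_0,x_0)$, with $\delta_{s,y}(t_0-\epsilon)=\gamma(t_0-\epsilon)$, $\delta_{s,y}(s)=y$, and $\delta_{t_0,x_0}=\gamma|_{[t_0-\epsilon,t_0]}$. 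Setting $\Phi(s,y):=\int_{t_0-\epsilon}^s L(\delta_{s,y},\dot\delta_{s,y})\,du$, the domination inequality applied to $\delta_{s,y}$ together with the calibration identity for $\gamma$ on $[t_0-\epsilon,t_0]$ yields
$$U(s,y)-U(t_0,x_0)\leq \Phi(s,y)-\Phi(t_0,x_0).$$

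The core computation is the first-order expansion of $\Phi$ at $(t_0,x_0)$. Integration by parts using the Euler-Lagrange equation for $\gamma$, together with the fact that $\delta_{s,y}(t_0-\epsilon)$ is held fixed, gives $\partial_y\Phi(t_0,x_0)=\partial_v L(x_0,\dot\gamma(t_0))=p_0$. The derivative in $s$ is the sum of two contributions: the moving-upper-limit term $L(x_0,\dot\gamma(t_0))$ and the reparametrization term $-p_0(\dot\gamma(t_0))$ coming from the condition $\delta_{s,x_0}(s)=x_0$; by the Fenchel identity their sum equals $-H(x_0,p_0)$. Hence $(-H(x_0,p_0),p_0)\in D^+U(t_0,x_0)$, proving (i). Statement (ii) is obtained by the symmetric construction on $[t_0,t_0+\epsilon]$: fix the right endpoint at $\gamma(t_0+\epsilon)$, vary the left endpoint to $(s,y)$, and combine calibration with domination to reverse the inequality; the same first-variation computation yields the lower differential with the identical linear part.

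Statements (iii) and (iv) are corollaries: when $\partial_x U$ (resp.\ $\partial_t U$) exists at $(t,\gamma(t))$, any element of $D^+U$ or $D^-U$ at that point must have the corresponding partial component equal to the existing partial derivative, so applying (i) for $t\in\,]a,b]$ or (ii) for $t\in[a,b[$ forces the stated values. For (v), one invokes the classical principle that a continuous function is differentiable at any point where $D^+U\cap D^-U$ is nonempty; for $t\in\,]a,b[$ both (i) and (ii) apply and contribute the common element $(-H(x_0,p_t),p_t)$. The main obstacle is the first-variation calculation of $\partial_s\Phi(t_0,x_0)$: one must track separately the movement of the upper limit and the reparametrization of $\delta_{s,y}$, and recognize that their combination is precisely the Fenchel identity $L(x,v)-\partial_v L(x,v)\cdot v=-H(x,\partial_v L(x,v))$. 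A secondary technicality is verifying genuine smoothness of the family $\delta_{s,y}$, which is handled cleanly by shrinking $\epsilon$ so that the variational exponential map is a local diffeomorphism on the short extremal.
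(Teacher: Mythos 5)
Your proof is correct, and it is essentially the standard variational argument found in the textbooks the paper cites for this proposition (the paper itself states the result without proof). The key points all check out: the combination of Proposition \ref{domination1} (domination) with the calibration identity reduces the problem to computing the differential of the smooth action functional $\Phi$; the first-variation computation with a moving right endpoint (fixed left endpoint) gives $\partial_y\Phi=\partial_v L(x_0,\dot\gamma(t_0))=p_0$ by integration by parts and the Euler--Lagrange equation; the $s$-derivative splits into the boundary term $L(x_0,\dot\gamma(t_0))$ plus the Euler--Lagrange boundary term $p_0\cdot\xi(t_0)$, where the constraint $\delta_{s,x_0}(s)=x_0$ forces $\xi(t_0)=-\dot\gamma(t_0)$, so the sum $L-p_0\cdot\dot\gamma(t_0)=-H(x_0,p_0)$ is exactly the Fenchel identity. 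The symmetric argument on $[t_0,t_0+\epsilon]$ produces $-d\Psi(t_0,x_0)=(-H(x_0,p_0),p_0)\in D^-U(t_0,x_0)$ as claimed. Statements (iii)--(iv) then follow because existence of the relevant partial derivative forces the corresponding component of every super- (or sub-) differential to equal it, and (v) follows from $D^+U\cap D^-U\ne\emptyset$ on $]a,b[$. Your remark on using the Lagrangian exponential map on a conjugate-point-free subarc to obtain a genuinely smooth competitor family is the correct technical device for this.

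One small observation worth making explicit, though it does not affect correctness: when deducing (iii) at $t=a$ you must use only (ii), and at $t=b$ only (i); your phrasing already accounts for this by invoking whichever of (i) or (ii) applies, but it is worth flagging since a reader might worry that $D^+U$ (or $D^-U$) alone suffices to pin a partial derivative. It does, because if $\partial_x U(t,\gamma(t))$ exists then the restriction $y\mapsto U(t,y)$ is differentiable at $\gamma(t)$, and the spatial slice of $D^\pm U(t,\gamma(t))$ lies inside $D^\pm\bigl(U(t,\cdot)\bigr)(\gamma(t))$, which is then the singleton $\{\partial_x U(t,\gamma(t))\}$; the same argument applies to the time component.
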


\begin{defn}[Backward characteristic] Let $U:O\to [-\infty,+\infty]$ be a function where $O\subset R\times M$. 
A backward $U$-characteristic ending at $(t,x)\in ]0,T[\times M$ is 
a $U$-calibrated curve $\gamma:[a,t]\to M$ with $a<t$ and $\gamma(t)=x$.

More generally, a curve $\gamma:[a,t]\to M$ is called  a backward $U$-characteristic if
 it is a backward characteristic ending at $(t,\gamma(t))$.
\end{defn}
Again this notion of backward characteristic is useful only when $U$ is at least 
a viscosity subsolution of the evolution Hamilton-Jacobi equation \eqref{HJE0}. 
Using that backward characteristics are extremals, the following  proposition is a consequence of  Proposition \ref{DiffProp}.
\begin{prop}\label{DiffProp2}  Suppose the function $U:O\to \R$, where $O$ is an open subset 
of $\R\times M$, is continuous and a viscosity subsolution
of the evolutionary Hamilton-Jacobi equation \eqref{HJE0} on $O$. 
 If $\gamma:[a,t]\to M$,  is a backward $U$-characteristic,
then $U$ is differentiable at  $(s,\gamma(s))$ for all $s\in ]a,t[$.

Moreover, if $U$ is differentiable at $(t,\gamma(t))$ then $\gamma$ is the unique backward characteristic 
(for $U$) ending at $(t,\gamma(t))$, and we have
\begin{gather*}
\partial_x U(t,\gamma(t))= \partial_v L(\gamma(t), \gamma(t))\\
\partial_t U(t,\gamma(t))+H\left(\gamma(t), \partial_x U(t,\gamma(t))\right)=0.
\end{gather*}
\end{prop}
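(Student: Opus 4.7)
The plan is to read off everything from Proposition \ref{DiffProp}, using the observation that a backward $U$-characteristic $\gamma:[a,t]\to M$ is, by definition, a $U$-calibrated curve ending at $(t,\gamma(t))$. First, item (v) of Proposition \ref{DiffProp} applied directly to $\gamma$ yields differentiability of $U$ at every $(s,\gamma(s))$ with $s\in\,]a,t[$, which is the first assertion. Next, if we additionally assume $U$ is differentiable at the right endpoint $(t,\gamma(t))$, then both $\partial_xU$ and $\partial_tU$ exist there, so items (iii) and (iv) of Proposition \ref{DiffProp} (which are stated for all $t\in[a,b]$, endpoints included) produce the two displayed identities exactly as written.

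For the uniqueness statement, I would argue as follows. Suppose $\gamma_1:[a_1,t]\to M$ and $\gamma_2:[a_2,t]\to M$ are both backward $U$-characteristics ending at $(t,\gamma(t))$. By Proposition \ref{calcurprop}(3), each $\gamma_i$ is an extremal of $L$ and in particular smooth, so $\dot\gamma_i(t)$ makes sense. Applying Proposition \ref{DiffProp}(iii) to both calibrated curves at their common right endpoint gives
$$\partial_v L(\gamma(t),\dot\gamma_1(t))=\partial_xU(t,\gamma(t))=\partial_v L(\gamma(t),\dot\gamma_2(t)).$$
Strict convexity of $L$ in the fibers (Tonelli condition (4)) makes $v\mapsto\partial_v L(\gamma(t),v)$ injective, whence $\dot\gamma_1(t)=\dot\gamma_2(t)$. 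Since each speed curve $s\mapsto(\gamma_i(s),\dot\gamma_i(s))$ is an orbit of the Euler-Lagrange flow $\phi^L_s$, and the two orbits coincide at time $s=t$, uniqueness of solutions to the Euler-Lagrange ODE forces $\gamma_1\equiv\gamma_2$ on the overlap $[\max(a_1,a_2),t]$.

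There is no serious obstacle in this argument: everything follows almost immediately from results already established above. The only minor point worth flagging is the interpretation of the word ``unique,'' since two backward characteristics ending at the same point may be defined on different subintervals. What one actually proves is uniqueness on the common interval of definition, or equivalently uniqueness of the germ of backward characteristic (equivalently, of the corresponding Euler-Lagrange orbit) at $(t,\gamma(t))$, which is evidently what the statement intends.
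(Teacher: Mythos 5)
Your argument is correct and is precisely the argument the paper has in mind; the paper gives no separate proof, merely remarking that the proposition follows from Proposition~\ref{DiffProp} ``using that backward characteristics are extremals,'' and your proof unpacks that remark exactly: item (v) of Proposition~\ref{DiffProp} for interior differentiability, items (iii)--(iv) for the two formulas at the endpoint, and then strict convexity of $L$ in the fibers (injectivity of the Legendre map $v\mapsto\partial_vL(x,v)$) combined with uniqueness for the Euler--Lagrange flow to obtain uniqueness of the backward characteristic. Your closing caveat about the meaning of ``unique'' (coincidence on the common interval / as a germ) is the right reading and is harmless; note also that the display $\partial_vL(\gamma(t),\gamma(t))$ in the statement is a typo for $\partial_vL(\gamma(t),\dot\gamma(t))$, as you implicitly corrected.
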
 

\subsection{The negative Lax-Oleinik semi-group and the negative Lax-Oleinik evolution}
In fact, viscosity solutions which are continuous are always given by the negative Lax-Oleinik
evolution as we now recall.

Once the minimal action is defined, we can introduce the negative Lax-Oleinik semi-group.

If $u:M\to [-\infty,+\infty]$ is a function and $t>0$, the function $T^-_tu:M\to [-\infty,+\infty]$ is defined by
$$
T^-_tu(x)=\inf_{y\in M}u(y)+h_t(y,x),$$
We also set $T^-_0u=u$. The   negative Lax-Oleinik semi-group is
$T^-_t,t\geq 0$. 

It is convenient to define $\hat u:[0,+\infty[\times M\to[-\infty,+\infty]$ by 
$$\hat u(t,x)=T^-_tu(x).$$
This function $\hat u$ is called the negative Lax-Oleinik evolution of $u$.
\begin{exe}\label{Exe2} By \eqref{MinActRiem}, for the Hamiltonian $H_g:T^*M\to \R$,
and Lagrangian $L_g:TM\to \R$, defined in Example \ref{Exe1}, we have
$$h^g_t(x,y)=\frac{d(x,y)^2}{2t}.$$
Therefore, the associated  negative Lax-Oleinik semi-group $T^{g-}_t$ is defined, when $t>0$, by
$$T^{g-}_tu(x)=\inf_{y\in M}u(y)+\frac{d(y,x)^2}{2t},$$
for $u:M\to [-\infty,+\infty]$.

If $C\subset M$, we define its (modified) characteristic function $\chi_C: M\to \{0,+\infty\}$ by
$$\chi_C(x)=
\begin{cases}
0, \text{ if $x\in C$,}\\
+\infty, \text{ if $x\notin C$.}
\end{cases}
$$
Therefore its negative Lax-Oleinik evolution $\hat \chi_C$, for the Lagrangian $L_g$,  is defined, for $t>0$, by

\begin{align*}
\hat \chi_C(t,x)&=\inf_{y\in M}\chi_C(y)+\frac{d(y,x)^2}{2t}\\
&=\inf_{c\in C}\frac{d(c,x)^2}{2t}.
\end{align*}
Since $d_C:M\to \R$, the distance function to $C$, is given by
\begin{equation}\label{DefdC}d_C(x)=\inf_{c\in C}d(c,x),
\end{equation}
for $t>0$, we obtain
\begin{equation}\label{ValHatXi}
\hat \chi_C(t,x)=\frac{d_C(x)^2}{2t}.
\end{equation}
Note that $d_C=d_{\bar C}$, where $\bar C$ is the closure of $C$ in $M$. Hence, we get
$\hat \chi_C=\hat \chi_{\bar C}$ on $]0,+\infty[\times M$. Therefore, to study of the properties 
of $\hat \chi_C$ on $]0,+\infty[\times M$, we can always assume that $C$ is a closed subset 
of $M$.
\end{exe}
\begin{lem}\label{CacarCaldC} Suppose $C$ is a closed subset  of the complete Riemannian manifold $(M,g)$.
A curve $\gamma:[a,b]\to M$, with $a>0$, is $\hat \chi_C$-calibrated if and only if it is a minimizing
$g$-geodesic and
$$\frac{d^2_C(\gamma(b))}{2b}-\frac{d^2_C(\gamma(a))}{2a}=\frac{d^2(\gamma(b),\gamma(a))}{2(b-a)}.$$
A curve $\gamma:[0,b]\to M$ is $\hat \chi_C$-calibrated if and only if it is a minimizing
$g$-geodesic, with $\gamma(0)\in C$ and
$$d_C(\gamma(b))=d(\gamma(b),\gamma(0)).$$
\end{lem}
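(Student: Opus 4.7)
The plan is to prove the lemma by sandwiching the action $\Act_g(\gamma)$ between two inequalities and extracting equality in both. The key ingredients are Lemma \ref{MinGeo}, which gives the lower bound on $\Act_g(\gamma)$ via the Riemannian distance, and the fact that $\hat\chi_C$ is dominated by $L_g$ on $\,]0,+\infty[\,\times M$, since the negative Lax-Oleinik evolution of any function is a viscosity subsolution on that open set (Proposition \ref{domination1}). I will treat the two cases of the lemma separately.

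For the case $a>0$: by Lemma \ref{MinGeo},
$$\frac{d^{2}(\gamma(a),\gamma(b))}{2(b-a)}\leq \Act_g(\gamma),$$
with equality if and only if $\gamma$ is a minimizing $g$-geodesic. By domination, $\hat\chi_C$ satisfies inequality \eqref{Dom3} on $]0,+\infty[\times M$, and using the explicit formula $h^g_{t}(x,y)=d(x,y)^2/(2t)$ from \eqref{MinActRiem}, this reads
$$\frac{d_C^{2}(\gamma(b))}{2b}-\frac{d_C^{2}(\gamma(a))}{2a}\leq \frac{d^{2}(\gamma(a),\gamma(b))}{2(b-a)}.$$
The calibration condition is precisely $\Act_g(\gamma)=\frac{d_C^{2}(\gamma(b))}{2b}-\frac{d_C^{2}(\gamma(a))}{2a}$, and combining the two displayed inequalities forces equality in each. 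Equality in the first is the second condition of the lemma, while equality in Lemma \ref{MinGeo} says exactly that $\gamma$ is a minimizing $g$-geodesic. Conversely, if both conditions hold, Lemma \ref{MinGeo} turns them into the calibration identity.

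For the case $a=0$: first observe that calibration requires $\hat\chi_C(b,\gamma(b))=\chi_C(\gamma(0))+\Act_g(\gamma)$; since $\hat\chi_C(b,\gamma(b))=d_C^{2}(\gamma(b))/(2b)$ is finite and $\Act_g(\gamma)\geq 0$, we must have $\chi_C(\gamma(0))<+\infty$, i.e.\ $\gamma(0)\in C$. Then the calibration condition reduces to $d_C^{2}(\gamma(b))/(2b)=\Act_g(\gamma)$. Chaining the two obvious estimates
$$\frac{d_C^{2}(\gamma(b))}{2b}\leq \frac{d^{2}(\gamma(0),\gamma(b))}{2b}\leq \Act_g(\gamma),$$
where the first uses $\gamma(0)\in C$ in the definition \eqref{DefdC} of $d_C$ and the second is Lemma \ref{MinGeo}, we see that calibration forces both inequalities to be equalities, hence $d_C(\gamma(b))=d(\gamma(0),\gamma(b))$ and $\gamma$ is a minimizing $g$-geodesic. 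The converse is again an immediate recomputation.

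I do not anticipate a serious obstacle: the only nontrivial input beyond Lemma \ref{MinGeo} and the definition of $d_C$ is the domination inequality for $\hat\chi_C$, which, although $\chi_C$ takes infinite values, passes to $\hat\chi_C$ on $]0,+\infty[\times M$ where $\hat\chi_C$ is finite and continuous, by Proposition \ref{domination1}. If one wished to avoid invoking viscosity machinery here, one could prove the domination inequality directly: for any $c\in C$ one has $d_C(\gamma(b))\leq d(c,\gamma(b))$, and concatenating any curve from $c$ to $\gamma(a)$ with $\gamma$ gives, after a Cauchy--Schwarz computation, the required bound upon letting $c$ be the closest point to $\gamma(a)$ in $C$.
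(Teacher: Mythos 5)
Your proof is correct and follows essentially the same route as the paper's: both rely on the explicit formulas $\hat\chi_C(t,x)=d_C(x)^2/(2t)$ and $h^g_t(x,y)=d(x,y)^2/(2t)$, and both run a sandwich between the Cauchy–Schwarz lower bound $\Act_g(\gamma)\geq d^2(\gamma(a),\gamma(b))/(2(b-a))$ of Lemma \ref{MinGeo} (with equality characterizing minimizing $g$-geodesics) and the domination inequality for $\hat\chi_C$. The only cosmetic difference is in the $a>0$ direction: the paper first asserts that a $\hat\chi_C$-calibrated curve is a minimizing geodesic (a known consequence of calibration for a dominated function) and then computes, while you fold that fact into the sandwich by invoking Proposition \ref{domination1}; your closing remark that domination for $\hat\chi_C$ can also be proved directly from the definition of $T^-_t$ (or the semi-group property) is accurate and makes the argument independent of viscosity-solution machinery.
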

\begin{proof} Suppose that the curve $\gamma:[a,b]\to M$, with $a>0$, is 
$\hat \chi_C$-calibrated. It must be a minimizing geodesic. 
We now recall, see \ref{InegCS}, that for a minimizing $g$-geodesic  $\gamma:[a,b]\to M$, we have
$$\Act_g(\gamma)=\frac{d^2(\gamma(b),\gamma(a))}{2(b-a)}.$$
 Therefore, using \eqref{ValHatXi},
we see that calibration for a minimizing $g$-geodesic $\gamma:[a,b]\to M$, with $a>0$, is equivalent to
 $$\frac{d^2_C(\gamma(b))}{2b}=\frac{d^2_C(\gamma(a))}{2a}+\frac{d^2(\gamma(b),\gamma(a))}{2(b-a)}.$$

This finishes to prove the first part with $a>0$.

The curve $\gamma:[0,b]\to M$ is $\hat \chi_C$-calibrated if and only if 
$$\frac{d^2_C(\gamma(b))}{2b}=\chi_C(\gamma(0))+ \Act_g(\gamma).$$
Since the left hand side is finite and $ \Act_g(\gamma)>-\infty$, this is equivalent to $\gamma(0)\in C$
and $\frac{d^2_C(\gamma(b))}{2b}=\Act_g(\gamma)$.

Assume $\gamma:[0,b]\to M$, with $\gamma(0)\in C$, is $\hat \chi_C$-calibrated.
As we saw above we have $ \Act_g(\gamma)= {d^2_C(\gamma(b))}/{2b}$
Since $d_C(\gamma(b))\leq d(\gamma(b),\gamma(0))$, and $\Act_g(\gamma)\geq 
d^2(\gamma(b),\gamma(0))/(2b)$, we indeed get 
$d_C(\gamma(b))=d(\gamma(b),\gamma(0))$.

Conversely, if  $\gamma:[0,b]\to M$ is  a minimizing
$g$-geodesic, with $\gamma(0)\in C$ and $d_C(\gamma(b))=d(\gamma(b),\gamma(0))$,
we get 
$$\frac{d^2_C(\gamma(b))}{2b}=\frac{d^2(\gamma(b),\gamma(0))}{2b}=\Act_g(\gamma).$$
Since $\gamma(0)\in C$, by what we obtained above, this implies that
$\gamma$ is  $\hat \chi_C $-calibrated.
\end{proof}
We will now give the relationship between the Aubry set 
${\cal I}_\infty(\hat \chi_C)$ of $\hat\chi_C$--see Definition \ref{AubryEvol}-- and the Aubry set ${\cal A}^*(C)$ of the closed set $C$--see Definition \ref{AubryClosed}.
\begin{prop}\label{AubryDistProp}
 If $C$ is a closed subset  of the complete Riemannian manifold $(M,g)$, then we have
 $$ {\cal I}_\infty(\hat \chi_C)=]0,+\infty[\times (C\cup {\cal A}^*(C)).$$
\end{prop}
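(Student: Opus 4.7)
The plan is to prove the two inclusions separately, relying on the characterization of $\hat\chi_C$-calibrated curves provided by Lemma \ref{CacarCaldC}.

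For the inclusion ${\cal I}_\infty(\hat\chi_C) \subset \,]0,+\infty[\,\times(C\cup{\cal A}^*(C))$, I would start with $(t,x)\in{\cal I}_\infty(\hat\chi_C)$ together with an associated calibrating curve $\gamma:[0,+\infty[\to M$ satisfying $\gamma(t)=x$. Applying the $a=0$ part of Lemma \ref{CacarCaldC} to each restriction $\gamma|[0,b]$ gives simultaneously that $\gamma(0)\in C$, that $\gamma|[0,b]$ is a minimizing $g$-geodesic, and that $d_C(\gamma(b))=d(\gamma(b),\gamma(0))$. Since minimizing geodesics have constant speed and nested restrictions must agree, $\gamma$ has a single constant speed $c\geq 0$ on all of $[0,+\infty[$. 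If $c=0$ then $\gamma\equiv\gamma(0)\in C$, so $x\in C$. If $c>0$, the arc-length reparameterization $\tilde\gamma(\tau)=\gamma(\tau/c)$ satisfies, via the identities $d_C(\gamma(s))=d(\gamma(s),\gamma(0))=\ell_g(\gamma|[0,s])=cs$, the relation $d_C(\tilde\gamma(\tau))=\tau$; and $x=\tilde\gamma(ct)$ with $ct>0$, so $x\in{\cal A}^*(C)$.

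For the reverse inclusion, fix $t>0$ and $x\in C\cup{\cal A}^*(C)$. If $x\in C$, the constant curve $\gamma\equiv x$ works: $d_C(x)=0$ gives $\hat\chi_C(s,x)=0$ for every $s\geq 0$ (using $\hat\chi_C(0,x)=\chi_C(x)=0$ and \eqref{ValHatXi} for $s>0$), while $L_g(x,0)=0$, so the calibration identity is trivial. If $x\in{\cal A}^*(C)$, pick $\sigma:[0,+\infty[\to M$ arc-length parameterized with $d_C(\sigma(\tau))=\tau$ and $x=\sigma(\tau_0)$ for some $\tau_0>0$, and set $\gamma(s)=\sigma(\alpha s)$ with $\alpha=\tau_0/t$, so that $\gamma(t)=x$ and $\|\dot\gamma(s)\|\equiv\alpha$. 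Since $C$ is closed and $d_C(\sigma(0))=0$ we have $\sigma(0)\in C$, and a short computation using $\hat\chi_C(s,\gamma(s))=d_C(\sigma(\alpha s))^2/(2s)=\alpha^2 s/2$ for $s>0$ yields
$$\hat\chi_C(b,\gamma(b))-\hat\chi_C(a,\gamma(a))=\tfrac{\alpha^2(b-a)}{2}=\int_a^b L_g(\gamma(s),\dot\gamma(s))\,ds$$
for every $0\leq a<b$, placing $(t,x)$ in ${\cal I}_\infty(\hat\chi_C)$.

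The main subtlety is in the forward direction: one must promote the local data supplied by Lemma \ref{CacarCaldC} (on each $[0,b]$, a minimizing geodesic starting in $C$ together with a distance identity at the endpoint $b$) into a single globally arc-length reparameterizable curve satisfying $d_C(\tilde\gamma(\tau))=\tau$ on all of $[0,+\infty[$. The crux is noting that a common constant speed $c$ emerges from the nested compatibility of the $\gamma|[0,b]$, and that correctly isolating the degenerate case $c=0$ matches it with the alternative $x\in C$; the remaining reparameterization is then just bookkeeping.
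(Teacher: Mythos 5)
Your proof is correct and follows essentially the same route as the paper: use Lemma \ref{CacarCaldC} to characterize $\hat\chi_C$-calibrated curves, then pass between calibrating curves and arc-length parameterizations via a time-rescaling $\gamma(s)=\sigma(\alpha s)$. The paper packages the forward/backward passage a bit more abstractly through the invariance of condition \eqref{CondAubDist} under the reparametrization $\gamma_\lambda$, whereas you verify the calibration identity by direct computation, but the content is the same.
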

\begin{proof}
From Lemma \ref{CacarCaldC}, a curve $\gamma:[0,+\infty[\to M$ is $\hat \chi_C$-calibrated for the Lagrangian $L_g$, if and only if it is a minimizing $g$-geodesic satisfying
\begin{equation}\label{CondAubDist}
\gamma(0)\in C \text{ and } d_C(\gamma(t))=d(\gamma(t),\gamma(0)),\text{ for all $t> 0$}.
\end{equation}
Therefore, a constant curve with value in $C$ is $\hat \chi_C$-calibrating. This implies that $]0,+\infty[\times C\subset
{\cal I}_\infty(\hat \chi_C)$.

For a curve $\gamma:[0,+\infty[\to M$ and $\lambda>0$,  we define $\gamma_\lambda:[0,+\infty[\to M$ by
$$\gamma_\lambda(t)=\gamma(\lambda t).$$
Obviously, the curve $\gamma:[0,+\infty[\to M$ is a minimizing
$g$-geodesic that satisfies \eqref{CondAubDist},  if  and only $\gamma_\lambda$ is also a minimizing $g$-geodesic satisfying \eqref{CondAubDist}.

Assume now that $(t,y)\in {\cal I}_\infty(\hat \chi_C)$, with $y\notin C$. We can find a $\hat \chi_C$-calibrated curve $\gamma:[0,+\infty[\to M$, with $\gamma(t)=y$. Since $\gamma(0)\in C$,
the $g$-geodesic $\gamma$ is not constant. Since geodesics are parametrized proportionally to arc-length,  we can find $\lambda$ such that $\gamma_\lambda$ is parameterized by arc length.
As we saw above, the curve $\gamma_\lambda$ is a minimizing $g$-geodesic satisfying
\eqref{CondAubDist}. Since, the $g$-geodesic $\gamma_\lambda$ is minimizing and parametrized by arc-length, we get
$ d_C(\gamma_\lambda(s))=d(\gamma_\lambda(s),\gamma_\lambda.(0))=s$.
By Definition \ref{AubryClosed}, this means that $y\in {\cal A}^*(C)$. Hence ${\cal I}_\infty(\hat \chi_C)\subset]0,+\infty[\times (C\cup {\cal A}^*(C))$.

It remains to show that $]0,+\infty[\times {\cal A}^*(C)
\subset {\cal I}_\infty(\hat \chi_C)$. Suppose $y\in {\cal A}^*(C)$.
By Definition \ref{AubryClosed}  and Remark \ref{RemdCcal}, we can find a minimizing 
$g$-geodesic
$\gamma:[0,+\infty[\to M$ parameterized by arc-length such that
$d_C(\gamma(t))=d(\gamma(0),\gamma(t))$ and $y=\gamma(t_0)$ 
for some $t_0>0$. Therefore, for every $\lambda>0$, the curve
$\gamma_{\lambda}$ is $\hat \chi_C$-calibrated. Therefore, we get
$(t/\lambda,\gamma_{\lambda}(t))\in {\cal I}_\infty(\hat \chi_C)$,
for every $t>0$. Since $\gamma_{\lambda}(t_0/\lambda)=\gamma(t_0)=y$,
and $\lambda>0$ is arbitrary, we conclude that
$(t,y)\in {\cal I}_\infty(\hat \chi_C)$, for every $t>0$.
Hence $]0,+\infty[\times {\cal A}^*(C)\subset 
{\cal I}_\infty(\hat \chi_C)$.
\end{proof}
\begin{exe}\label{Exe2bis} We specialize the previous case to the diagonal 
$\Delta_M=\{(x,x)\mid  x\in M\}\subset M\times M$ of the Riemannian manifold $(M\times M,g\times g)$.
We will show that
\begin{equation}\label{CardDelta}
\hat \chi_{\Delta_M}(x,y)=\frac{d_{\Delta_M}(x,y)^2}{2t}=\frac{d(x,y)^2}{4t}.
\end{equation}
The left hand side equality follows from \eqref{MinActRiem}, with $C=\Delta_M$.
It remains to compute   $d_{\Delta_M}(x,y)$. We will need the following simple well-known Lemma:
\begin{lem}\label{IneqNum} For  any $k\geq 0$ and any $a,b$ with  $a+b\geq k$, we have
$ a^2+b^2\geq k^2/2$, with equality if and only if $a=b=k/2$.
\end{lem}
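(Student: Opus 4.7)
The plan is a one-line application of two classical inequalities chained together, combined with bookkeeping of the two equality cases.

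First I would observe that expanding $(a-b)^2 \geq 0$ immediately gives the QM-AM-type inequality
\[
a^2 + b^2 \;\geq\; \frac{(a+b)^2}{2},
\]
with equality precisely when $a=b$. Then, since by hypothesis $a+b \geq k \geq 0$, both $a+b$ and $k$ are non-negative reals, so squaring preserves the inequality and yields $(a+b)^2 \geq k^2$, with equality precisely when $a+b = k$. Chaining the two estimates produces
\[
a^2 + b^2 \;\geq\; \frac{(a+b)^2}{2} \;\geq\; \frac{k^2}{2},
\]
which is the desired bound.

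For the equality clause, equality must hold in both intermediate inequalities simultaneously: the first forces $a = b$, and the second forces $a + b = k$. Together these give $a = b = k/2$, and conversely this choice plainly achieves equality throughout. There is no real obstacle here — the only mild point worth flagging is that the hypothesis $k \geq 0$ is what allows us to pass from $a+b \geq k$ to $(a+b)^2 \geq k^2$ without sign issues, since it guarantees $a+b \geq 0$.
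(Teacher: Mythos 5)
Your proof is correct and is the standard one. The paper states Lemma \ref{IneqNum} as a ``simple well-known Lemma'' and supplies no proof at all, so there is nothing to compare against; your chain $a^2+b^2\geq\tfrac{(a+b)^2}{2}\geq\tfrac{k^2}{2}$, with the two equality cases $a=b$ and $a+b=k$ combining to give $a=b=k/2$, is exactly the argument one would expect, and your remark that $k\geq0$ is what licenses the squaring step is the right point to flag.
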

We have
$$d_{\Delta_M}(x,y)^2=\inf_{c\in M}d((x,y),(c,c))^2= \inf_{c\in M}d(x,c)^2 +d(y,c)^2$$
We now note that $d(x,c)+d(y,c)\geq d(x,y)$, and, since $M$ is complete, that we can find
$c_0\in M$, with $d(x,c_0)=d(y,c_0)=d(x,y)/2$.  Therefore, by
 Lemma \ref{IneqNum} we get
 $$d_{\Delta_M}(x,y)^2=\frac{d(x,y)^2}{2},$$
which yields  the right hand side equality in 
\eqref{CardDelta}.
\end{exe}
\begin{lem}\label{CaracCaldDelta} 
	Let $\Gamma:[0,t]\to M\times M$ be a curve in 
	$M\times M$, with $\Gamma(s)=(\gamma_1(s), \gamma_2(s))$,
	with $\gamma_1,\gamma_2:[0,t]\to M$.  The curve $\Gamma$ is 
	$\hat \chi_{\Delta_M}$-calibrated if and only if $\gamma_1$ and
	 $\gamma_2$ are both minimizing $g$-geodesics, with
	 $\gamma_1(0)=\gamma_2(0)$ and
	 $$d(\gamma_1(0),\gamma_1(t))=d(\gamma_2(0),\gamma_2(t))=
	 \frac{d(\gamma_1(t),\gamma_2(t))}{2}.$$
	 Therefore, if $\Gamma=(\gamma_1, \gamma_2):[0,t]\to M\times M$
	  is such a  $\hat \chi_{\Delta_M}$-calibrated curve, 
	  then the curve $\gamma:[-t,t]\to M$ defined by
	 \begin{equation}\label{DefConcatGamma}
	 \gamma(s)=\begin{cases}
	 \gamma_1(-s),\text{ if $s\in [-t,0]$,}\\
	 \gamma_2(s),\text{ if $s\in [0,t]$,}
	 \end{cases}
	 \end{equation}
 is a minimizing $g$-geodesic.
 
Conversely, if  $\gamma:[-t,t]\to M$,  is a minimizing $g$-geodesic,
then $\Gamma=(\gamma_1, \gamma_2):[0,t]\to M\times M$ defined,
for  $s\in [0,t]$, by
\begin{equation}
\begin{aligned}
\gamma_1(s)&=\gamma(-s)\\
\gamma_2(s)&=\gamma(s)
\end{aligned}
\label{DefGam}
\end{equation}

is $\hat \chi_{\Delta_M}$-calibrated.
\end{lem}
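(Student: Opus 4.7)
The plan is to reduce the lemma to the characterization of $\hat\chi_C$-calibrated curves already established in Lemma \ref{CacarCaldC}, applied in the product Riemannian manifold $(M\times M, g\times g)$ with the closed set $C=\Delta_M$. Since $\Gamma(0)$ must lie in $\Delta_M$ (so $\gamma_1(0)=\gamma_2(0)$), the calibration condition becomes:
\textbf{(i)} $\Gamma$ is a minimizing $(g\times g)$-geodesic on $[0,t]$, and
\textbf{(ii)} $d_{\Delta_M}(\Gamma(t))=d(\Gamma(t),\Gamma(0))$ in the product metric.
By Lemma \ref{PropProD}, condition (i) is equivalent to $\gamma_1$ and $\gamma_2$ being individually minimizing $g$-geodesics.

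The core of the proof is to rewrite condition (ii) in terms of $\gamma_1,\gamma_2$. Using Example \ref{Exe2bis}, the left-hand side equals $d(\gamma_1(t),\gamma_2(t))/\sqrt{2}$; writing $x=\gamma_1(0)=\gamma_2(0)$, $a=d(\gamma_1(0),\gamma_1(t))$, $b=d(\gamma_2(0),\gamma_2(t))$, $k=d(\gamma_1(t),\gamma_2(t))$, condition (ii) becomes $a^2+b^2=k^2/2$. The triangle inequality gives $a+b\geq k$, so Lemma \ref{IneqNum} forces equality $a=b=k/2$ together with $a+b=k$. The latter means that $x$ sits on some minimizing $g$-geodesic joining $\gamma_1(t)$ to $\gamma_2(t)$, which (combined with $a=b$) gives the stated midpoint condition $d(\gamma_1(0),\gamma_1(t))=d(\gamma_2(0),\gamma_2(t))=d(\gamma_1(t),\gamma_2(t))/2$.

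Assuming the midpoint condition holds, I would then show that the concatenation $\gamma:[-t,t]\to M$ defined by \eqref{DefConcatGamma} is a minimizing $g$-geodesic. The two halves $\gamma|_{[-t,0]}$ and $\gamma|_{[0,t]}$ are minimizing $g$-geodesics of equal length $k/2$ parametrized on intervals of equal length $t$, so they have equal constant speed, and the concatenated curve has constant speed on $[-t,t]$. Its total length is $k=d(\gamma_1(t),\gamma_2(t))=d(\gamma(-t),\gamma(t))$, so it is length-minimizing; being parametrized proportionally to arc length, it is a minimizing geodesic.

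For the converse, given a minimizing $g$-geodesic $\gamma:[-t,t]\to M$, define $\gamma_1,\gamma_2$ by \eqref{DefGam}. Each of $\gamma_1,\gamma_2$ is a minimizing $g$-geodesic (being a subsegment of $\gamma$), they satisfy $\gamma_1(0)=\gamma_2(0)=\gamma(0)$, and letting $v=\|\dot\gamma\|$, we have $d(\gamma_i(0),\gamma_i(t))=vt$ and $d(\gamma_1(t),\gamma_2(t))=d(\gamma(-t),\gamma(t))=2vt$, so the midpoint condition is automatic. Hence all three characterizing conditions hold, and by the first half $\Gamma=(\gamma_1,\gamma_2)$ is $\hat\chi_{\Delta_M}$-calibrated. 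The main bookkeeping obstacle is the careful use of completeness and of Lemma \ref{IneqNum} to pin down equality in the triangle inequality; beyond that the argument is largely formal.
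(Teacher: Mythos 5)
Your proof is correct and follows essentially the same route as the paper: reduce to Lemma \ref{CacarCaldC} applied to $C=\Delta_M$ in $(M\times M,g\times g)$, rewrite the calibration identity via \eqref{CardDelta} as $a^2+b^2=k^2/2$, invoke the triangle inequality plus Lemma \ref{IneqNum} to force $a=b=k/2$, and then verify the concatenated curve is length-minimizing (you add an explicit constant-speed check that the paper leaves implicit, but this is a minor refinement). No gaps.
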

\begin{proof} From Lemma \ref{CacarCaldC} applied to the closed set $\Delta_M$ of the Riemannian manifold 
$(M\times M,g\times g)$, we get that $\Gamma=(\gamma_1, \gamma_2):[0,t]\to M\times M$ is 
$\hat \chi_{\Delta_M}$-calibrated curve if and only if  $\Gamma$ is a minimizing $g\times g$-geodesic 
(which is equivalent to the fact both $\gamma_1$ and $\gamma_2$ are minimizing $g$-geodesics) 
with $\Gamma(0)\in\Delta_M$ (which is equivalent to  $\gamma_1(0)=\gamma_2(0)$) and
$$d_{\Delta_M}(\Gamma(t))=d(\Gamma(0),\Gamma(t)).$$
By \eqref{CardDelta} in Example \ref{Exe2bis}, the last identity is in turn equivalent to
\begin{equation}\label{TOTO333}
\frac{d(\gamma_1(t),\gamma_2(t))}{\sqrt2}=
\sqrt{d(\gamma_1(0),\gamma_1(t))^2+d(\gamma_2(0),\gamma_2(t))^2}.
\end{equation}
Since, we already know that $\gamma_1(0)=\gamma_2(0)$, we get
$$d(\gamma_1(0),\gamma_1(t))+d(\gamma_2(0),\gamma_2(t))\geq 
d(\gamma_1(t),\gamma_2(t)).$$
	By  Lemma \ref{IneqNum}, we obtain that \eqref{TOTO333}
	is equivalent to
	$$d(\gamma_1(0),\gamma_1(t))=d(\gamma_2(0),\gamma_2(t))=
	\frac{d(\gamma_1(t),\gamma_2(t))}{2}.$$
	This finishes to prove the characterization of $\hat \chi_{\Delta_M}$-calibrated curves $\Gamma:[0,t]\to M\times M$.
 
	Assume now that $\Gamma=(\gamma_1,\gamma_2):[0,t]
	\to M\times M$. Since $\gamma_1(0)=\gamma_2(0)$, the curve 
	$\gamma$ in \eqref{DefConcatGamma}
	 is well defined and
	$$\ell_g(\gamma)= \ell_g(\gamma_1)+\ell_g(\gamma_2).$$
	As we showed above $\gamma_1$ and $\gamma_2$ are both minimizing $g$-geodesics, with 
	$$d(\gamma_1(0),\gamma_1(t))=d(\gamma_2(0),\gamma_2(t))=
	\frac{d(\gamma_1(t),\gamma_2(t))}{2}.$$
	Therefore
	\begin{align*}
		\ell_g(\gamma)&=d(\gamma_1(0),\gamma_1(t))+d(\gamma_2(0),\gamma_2(t))\\
	&=d(\gamma_1(t),\gamma_2(t))\\
	&=d(\gamma(-t),\gamma(t)),
	\end{align*}
	which implies that $\gamma$ is a $g$-minimizing geodesic.
	
	Conversely, if  $\gamma:[-t,t]\to M$,  is a minimizing $g$-geodesic,
	and $\gamma_1, \gamma_2$ are defined by \eqref{DefGam},
	then they are minimizing $g$-geodesics and
	$$d(\gamma_2(0),\gamma_2(t))=
	d(\gamma_1(0),\gamma_2(t))=	\frac{d(\gamma(-t),\gamma(t))}{2},$$
	since a geodesic is parametrized proportionally to arc-length.
	Hence $\Gamma=(\gamma_1, \gamma_2)$	is $\hat \chi_{\Delta_M}$-calibrated.
\end{proof}
The following theorems were obtained in \cite{FatNC}. We will sketch a proof of the first one in Appendix \ref{Appendice}.
The proof of the second one uses the methods of Appendix A together with an appropriate form 
of the maximum principle.
\begin{thm}\label{PropHatu} Assume $H:T^*\/M\to \R$ is a Tonelli Hamiltonian. Let $u:M\to [-\infty,+\infty]$ be a function and denote
by $\hat u:[0,+\infty[\times M\to [-\infty,+\infty]$ its negative Lax-Oleinik evolution. 
If $\hat u$  is finite-valued on $]0,t_0[\times M$, then it is continuous
and even locally semiconcave on $]0,t_0[\times M$.

Moreover, the negative Lax-Oleinik evolution is a viscosity solution, on $]0,t_0[\times M$, of the evolutionary
Hamilton-Jacobi equation \eqref{HJE0}.
\end{thm}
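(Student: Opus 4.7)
The plan is a four-step bootstrap. I would start with the subsolution property, which needs no compactness. For any curve $\gamma:[a,b]\to M$ with $[a,b]\subset\,]0,t_0[$ and any $\epsilon>0$, pick $y_\epsilon\in M$ with $u(y_\epsilon)+h_a(y_\epsilon,\gamma(a))\leq \hat u(a,\gamma(a))+\epsilon$ and a Tonelli minimizer $\delta:[0,a]\to M$ from $y_\epsilon$ to $\gamma(a)$ of action $h_a(y_\epsilon,\gamma(a))$. The concatenation of $\delta$ with $\gamma$ is an admissible curve from $y_\epsilon$ to $\gamma(b)$ of total duration $b$, so
$$\hat u(b,\gamma(b))\leq u(y_\epsilon)+\Act(\delta)+\Act(\gamma)\leq \hat u(a,\gamma(a))+\epsilon+\Act(\gamma).$$
Letting $\epsilon\to 0$ gives the domination inequality, and Proposition \ref{domination1} yields the subsolution property.

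The second and most delicate step is to show that for every $(t,x)\in\,]0,t_0[\times M$ the infimum in the definition of $\hat u(t,x)$ is attained, and that minimizers depend compactly on $(t,x)$ over compact subsets of $]0,t_0[\times M$. For a minimizing sequence $y_n$ with $d(y_n,x)\to\infty$, uniform superlinearity $h_t(y,x)\geq K\,d(y,x)-tC(K)$ forces $u(y_n)\to-\infty$ at a rate comparable to $-h_t(y_n,x)$. Comparing with $\hat u(t',x)\leq u(y_n)+h_{t'}(y_n,x)$ for a nearby $t'\in\,]t,t_0[$, via a reparameterization of the minimizer of $h_t(y_n,x)$ to run over time $t'$ and the strict convexity of $L$ in the high-speed regime produced by $d(y_n,x)\to\infty$, one concludes that $\hat u(t',x)=-\infty$, contradicting the hypothesis of finite-valuedness at every point of $]0,t_0[\times M$. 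It is here that the full finite-valuedness assumption (as opposed to finiteness at a single point) is essential; this is the main obstacle.

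With minimizer compactness in hand, local semiconcavity of $\hat u$ follows from the standard Tonelli fact that $(t,y,x)\mapsto h_t(y,x)$ is locally semiconcave on $]0,+\infty[\times M\times M$ with semiconcavity constants uniform in $y$ over compact subsets of $M$. Near any $(t_0,x_0)\in\,]0,t_0[\times M$, Step 2 allows one to replace the infimum defining $\hat u$ by the infimum over $y$ in a fixed compact $K'\subset M$, exhibiting $\hat u$ as an infimum of a uniformly locally semiconcave family, hence itself locally semiconcave. In particular $\hat u$ is locally Lipschitz and continuous on $]0,t_0[\times M$.

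Finally, for the supersolution property, fix $(t,x)\in\,]0,t_0[\times M$, let $y^*$ attain the infimum, and let $\gamma:[0,t]\to M$ be a Tonelli minimizer from $y^*$ to $x$ with $\Act(\gamma)=h_t(y^*,x)$. The identity $\hat u(t,x)=u(y^*)+\int_0^t L(\gamma,\dot\gamma)\,dr$, the trivial bound $\hat u(s,\gamma(s))\leq u(y^*)+\int_0^s L(\gamma,\dot\gamma)\,dr$, and the domination inequality of Step 1 applied to $\gamma|_{[s,t]}$ together force $\gamma|[s,t]$ to be $\hat u$-calibrated for every $s\in\,]0,t[$. Proposition \ref{DiffProp} then provides an upper-differential of $\hat u$ at $(s,\gamma(s))$ of the form $(-H(\gamma(s),p),p)$ with $p=\partial_v L(\gamma(s),\dot\gamma(s))$; combined with the local semiconcavity from Step 3 (which implies $D^+\hat u$ is a singleton at points of differentiability) and the already-proven subsolution property, this yields the Hamilton-Jacobi equation on a set of full measure, hence the full viscosity solution property on $]0,t_0[\times M$.
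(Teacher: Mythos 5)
Your plan has the same architecture as the paper's (localize the infimum to a compact set, exploit uniform local semiconcavity of $(t,y,x)\mapsto h_t(y,x)$, then handle the supersolution inequality), but you implement the two critical steps differently, and the compactness step is where the gap is.

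For the localization (your Step 2), the paper does not argue directly about the original infimum over $u$. Instead it first proves uniform bounds $A\le T^-_t u\le B$ on a compact neighbourhood $\bar V_1(K)$ for $t$ in a compact subinterval of $]0,t_0[$ (the upper bound from a single point where $u$ is finite, the lower bound from finiteness of $T^-_{t''_2}u$ at a single point), so that $T^-_t u\in{\mathcal F}^-_{K,A,B,\eta,1}$, and then applies the short-time localization Proposition~\ref{PropAppendix-} (proved in Appendix~\ref{Appendice} from the elementary Lemmas~\ref{MajActAndLength1} and~\ref{BoundednessMaximizer}) to write $\hat u(t,x)=\inf_{y\in\bar V_1(K)}T^-_{\bar t-2\delta}u(y)+h_{t-(\bar t-2\delta)}(y,x)$ for $(t,x)$ near $(\bar t,\bar x)$. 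This sidesteps the lack of regularity of $u$ itself, avoids attainment, and gives the needed uniformity for free. By contrast, you attempt to rule out escape $d(y_n,x)\to\infty$ by comparing $\hat u(t,x)$ with $\hat u(t',x)$ for $t'>t$. The conclusion you need, namely that $h_{t'}(y,x)-h_t(y,x)\to-\infty$ as $d(y,x)\to\infty$, is correct for a Tonelli Lagrangian, but the mechanism is not ``strict convexity of $L$ in the high-speed regime''. Convexity alone (with the trivial reparametrization, or inserting a rest at $x$) only gives the bounded estimate $h_{t'}(y,x)\le h_t(y,x)+(t'-t)A(0)$. What actually produces $-\infty$ is the uniform superlinearity of $L$ (equivalently of $H$) together with energy conservation along Tonelli minimizers: the energy $E_n$ of the $h_t(y_n,x)$-minimizer tends to $+\infty$ as $d(y_n,x)\to\infty$, and one then exploits something like $\partial_s h_s(y,x)=-E(s,y,x)$ or, equivalently, the Fenchel inequality $\tfrac1\lambda L(x,\lambda v)-L(x,v)\le-(\tfrac1\lambda-1)H(x,\partial_vL(x,\lambda v))$ together with $H(x,p)\ge K\lVert p\rVert-C^*(K)$. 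Making this rigorous (and uniform in $(t,x)$ over a compact set, which Step 3 requires) is a genuine lemma, roughly comparable in effort to the paper's Appendix estimates; as written, your Step~2 is not a proof. You also implicitly need attainment of the infimum by a point where $u$ is finite, which requires first replacing $u$ by its lower semicontinuous regularization $u_-$ (Proposition~\ref{hatu-}); the paper's route avoids this entirely by working with $T^-_{\bar t-2\delta}u$ instead of $u$.

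For the viscosity-solution part, your route via backward characteristics, Proposition~\ref{DiffProp}, and semiconcavity (so that $D^-\hat u$ is nonempty only at differentiability points, where the equation has been checked) is sound and standard. The paper instead writes $\hat u$ as the continuous infimum of the viscosity solutions $\varphi_y(t,x)=u(y)+h_t(y,x)$ over $y$ with $\lvert u(y)\rvert<\infty$ and invokes stability of viscosity solutions under continuous infima for convex Hamiltonians; that is shorter once continuity is known, but both arguments work.
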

\begin{thm}\label{UniquenessUC} Assume $H:T^*\/M\to \R$ is a Tonelli Hamiltonian. If the  continuous function 
$U:]0,t_0[ \times M\to \R$ 
is a viscosity solution of the evolutionary Hamilton-Jacobi equation
$$
\partial_tU+ H(x,\partial_xU)=0
$$
on  $]0,t_0[ \times M$,  then $U=\hat u$ on $]0,t_0[ \times M$ for some function $u:M\to [-\infty,+\infty]$.
\end{thm}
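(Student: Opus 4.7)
The plan is to identify $u$ explicitly and then verify $U=\hat u$ using the semigroup structure of the negative Lax-Oleinik evolution.

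First, I would establish the \emph{semigroup property}: for every $0<s<t<t_0$ and $x\in M$,
$$U(t,x) \;=\; T^-_{t-s}[U(s,\cdot)](x) \;=\; \inf_{y\in M}\bigl[U(s,y)+h_{t-s}(y,x)\bigr].$$
The inequality ``$\leq$'' is immediate from Proposition \ref{domination1}: being a viscosity subsolution, $U$ is $L$-dominated, so $U(t,x)\leq U(s,y)+h_{t-s}(y,x)$ for every $y\in M$. In particular $T^-_{t-s}[U(s,\cdot)]$ is finite-valued on $]s,t_0[\times M$; combined with Theorem \ref{PropHatu}, it is a continuous viscosity solution on that set coinciding with $U(s,\cdot)$ at $t=s$. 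Since $U$ is also a continuous viscosity solution with the same datum at $t=s$, the ``appropriate maximum principle'' alluded to in the paper forces equality.

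Next, I define
$$u(y) \;=\; \sup_{0<s<t_0,\,x\in M}\bigl[U(s,x)-h_s(y,x)\bigr] \in [-\infty,+\infty].$$
By construction, $U(s,x)\leq u(y)+h_s(y,x)$ for every $y\in M$, hence $U\leq \hat u$ on $]0,t_0[\times M$. For the reverse inequality, fix $(t,x)\in\,]0,t_0[\times M$. Using the semigroup identity, for each small $s>0$ select a minimizer $y_s\in M$ so that $U(t,x) = U(s,y_s) + h_{t-s}(y_s,x)$, realized by a $U$-calibrated Euler--Lagrange extremal $\gamma_s:[s,t]\to M$ with $\gamma_s(s)=y_s$, $\gamma_s(t)=x$. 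A priori velocity bounds for calibrated extremals, coming from superlinearity of $L$, continuity of $U$, and completeness of $(M,g)$, confine $\{y_s\}$ to a fixed compact subset of $M$ as $s\to 0^+$. Extract a convergent subsequence $y_{s_n}\to y_0\in M$; by joint continuity of $(\tau,y)\mapsto h_\tau(y,x)$,
$$U(s_n,y_{s_n}) \;=\; U(t,x)-h_{t-s_n}(y_{s_n},x) \;\longrightarrow\; U(t,x)-h_t(y_0,x) \;=:\; c.$$
Then for any $(s',x')$ in the supremum defining $u(y_0)$ and for $n$ large enough that $s_n<s'$, the domination inequality yields $U(s',x')\leq U(s_n,y_{s_n})+h_{s'-s_n}(y_{s_n},x')$; passing to $n\to\infty$ gives $U(s',x')-h_{s'}(y_0,x')\leq c$. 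Hence $u(y_0)\leq c$, and therefore $\hat u(t,x)\leq u(y_0)+h_t(y_0,x)\leq U(t,x)$, as desired.

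The principal technical obstacle is the precompactness of the minimizing family $\{y_s\}_{s\to 0^+}$ in the non-compact manifold $M$: one must translate the viscosity-solution hypothesis into uniform a priori bounds on the $U$-calibrated extremals $\gamma_s$ (via superlinearity of $L$ and strict convexity of $H$, in the spirit of the semiconcavity estimates underlying Theorem \ref{PropHatu}) to exclude escape to infinity. Once this compactness is established, the remainder of the proof proceeds cleanly via domination and joint continuity of the fundamental solution $h_\tau(y,x)$.
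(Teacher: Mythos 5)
Your outline is sound and almost certainly captures what the authors had in mind: the paper only says the proof ``uses the methods of Appendix~A together with an appropriate form of the maximum principle,'' and your two steps --- the semigroup identity $U(t,\cdot)=T^-_{t-s}[U(s,\cdot)]$ obtained by comparison with the Lax--Oleinik evolution, and the reconstruction of $u$ as $u(y)=\sup_{s,x}\,[U(s,x)-h_s(y,x)]$ followed by the limit argument on minimizers --- are exactly the expected combination of those two ingredients. The inequality $U\le\hat u$ is immediate; the limit computation giving $\hat u(t,x)\le u(y_0)+h_t(y_0,x)\le U(t,x)$ is correct granted precompactness and the joint continuity of $(\tau,y,z)\mapsto h_\tau(y,z)$.

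On the one point you flag as the ``principal technical obstacle,'' your instinct (a priori velocity bounds from superlinearity and strict convexity) would work but is harder than necessary: once the semigroup identity is in hand, $U$ coincides on each $]s,t_0[\times M$ with the Lax--Oleinik evolution of the continuous function $U(s,\cdot)$, so Proposition~\ref{ExistenceBackCar} already produces a single backward characteristic $\gamma:]0,t]\to M$ ending at $(t,x)$ whose restrictions $\gamma|_{[s,t]}$ are calibrating. Being an Euler--Lagrange extremal, $\gamma$ extends to $[0,t]$ because the Euler--Lagrange flow of a Tonelli Lagrangian on a complete manifold is complete (energy conservation plus uniform superlinearity bound the speed on each energy level), so you may simply take $y_s:=\gamma(s)\to\gamma(0)$ with no subsequence extraction and no separate compactness estimate. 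This closes the only gap in your argument with tools already present in the paper.
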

\begin{rem} Note that in the previous two theorems no assumption (continuity or else) is made on $u$. 
However, the finiteness assumption of $\hat u$ on $]0,t_0[ \times M$, puts some strong restrictions on the behaviour of $u$. 
For example it is not difficult to check that this finiteness assumption implies that $u$ is bounded below by a continuous function, even when $M$ is not compact.

In fact,  if $(t,x)\in M\times ]0,t_0[$, since $\hat u(t,x)=\inf_{y\in M}u(y)+h_t(y,x)$, then $u(y)\geq \hat u(t,x)-h_t(y,x)$.
But  the function of $y\mapsto \hat u(t,x)-h_t(y,x)$ is continuous on $M$.

In particular, when $M$ is compact the finiteness assumption  on $\hat u$ is equivalent to $u$ 
being bounded from below and not equal to $+\infty$ everywhere. 

Even for $M$ non-compact, if $u$  is bounded from below not equal to $+\infty$ everywhere, 
its negative Lax-Oleinik evolution $\hat u$ is finite everywhere.
\end{rem}

\vskip .2cm
It follows from  Theorem \ref{UniquenessUC} that Theorem \ref{LocContr} for $O={}]0,t_0[\times TM$
is a particular case of Theorem  \ref{LocContrGen} below, which states
that $\Sigma_{t_0}(\hat u)$ is locally contractible for any $u:M\to [-\infty,+\infty]$ such that $\hat u$ is finite on
$]0,t_0[\times M$.

We now recall some more facts on $\hat u$ obtained, for example, in \cite{FatNC}.
\begin{prop}\label{hatu-} For every function $u:M\to [-\infty,+\infty]$, we have $\hat u=\hat u_-$ 
on $]0,+\infty[\times M$, with 
$u_-: M\to [-\infty,+\infty]$ the lower semi-continuous regularization of $u$ given by
$$u_-(x)=\liminf_{y\to x}u(y)=\sup_V\inf_{y\in V}u(y),$$
where the supremum is taken over all neighborhoods $V$ of $x$.
\end{prop}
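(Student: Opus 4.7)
The plan is to establish two inequalities. The first, $\hat u_-\leq \hat u$ on ${}]0,+\infty[\times M$, is immediate from the pointwise inequality $u_-\leq u$: replacing $u$ by $u_-$ inside the formula $\inf_{y\in M}[\,\cdot\,+h_t(y,x)]$ can only decrease the infimum.

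For the reverse inequality $\hat u\leq \hat u_-$, fix $(t,x)\in{}]0,+\infty[\times M$ and reduce the problem to showing that, for every $y\in M$,
\[
\hat u(t,x)\leq u_-(y)+h_t(y,x).
\]
Taking the infimum over $y\in M$ in this inequality then gives $\hat u(t,x)\leq \hat u_-(t,x)$ and finishes the argument. To establish the pointwise bound, by the very definition of the lower semicontinuous regularization, one can select a sequence $y_n\to y$ with $u(y_n)\to u_-(y)$ (the bound being trivial in the harmless case $u_-(y)=+\infty$). For each $n$, the definition of $\hat u$ yields
\[
\hat u(t,x)\leq u(y_n)+h_t(y_n,x).
\]
Letting $n\to\infty$ will produce the desired bound, provided one can pass to the limit on the right-hand side; this reduces to the continuity of the map $y\mapsto h_t(y,x)$ at the point $y$, for fixed $t>0$ and $x\in M$.

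The main obstacle is therefore this continuity statement for the minimal action $h_t$, and this is where the Tonelli assumptions enter. For each $t>0$, the function $h_t$ is known to be locally semiconcave --- in particular locally Lipschitz and continuous --- on $M\times M$. The standard argument combines uniform superlinearity and uniform boundedness in the fibers (conditions (2)--(3) on $L$), which give a uniform $C^0$-bound on speeds of near-minimizing curves whose endpoints stay in a compact set, with Ascoli--Arzel\`a compactness and lower semicontinuity of the action under $C^0$-convergence; a matching upper bound is obtained by concatenating a short segment from $y_n$ to $\gamma(1/k)$ with the tail of a minimizer $\gamma:[0,t]\to M$ from $y$ to $x$, letting $n\to\infty$ and then $k\to\infty$. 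Since the continuity of $h_t$ for $t>0$ is established in \cite{FatNC}, the plan is to invoke it at this step and conclude.
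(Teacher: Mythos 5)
Your proof is correct, and it is the standard argument for this fact. The paper itself does not prove Proposition \ref{hatu-} but merely cites \cite{FatNC} for it (along with the surrounding facts about $\hat u$), so there is no in-paper proof to compare against; your two-inequality decomposition, the selection of a sequence $y_n\to y$ with $u(y_n)\to u_-(y)$, and the reduction to continuity of $y\mapsto h_t(y,x)$ for fixed $t>0$ is exactly the expected route. Two minor points worth making explicit if you were to write this up: first, when $u_-(y)=-\infty$ the passage to the limit in $u(y_n)+h_t(y_n,x)$ needs the remark that $h_t(y,x)$ is finite, so the sum tends to $-\infty$ and the desired inequality $\hat u(t,x)\leq -\infty$ reads $\hat u(t,x)=-\infty$, which indeed follows; second, the existence of a minimizing sequence $y_n\to y$ with $u(y_n)\to u_-(y)$ deserves a one-line justification from the formula $u_-(y)=\sup_V\inf_{z\in V}u(z)$ (pick $y_n$ in $B(y,1/n)$ with $u(y_n)$ within $1/n$ of the infimum over that ball). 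Your description of why $h_t$ is continuous for $t>0$ — via local semiconcavity, or more directly via a Tonelli-type compactness argument for near-minimizers together with the concatenation trick for the upper bound — is accurate, and citing \cite{FatNC} for it is appropriate here.
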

We recall that   $u_-$ is the largest lower semi-continuous function which is $\leq u$.
\begin{exe}\label{Exe3}
If $C\subset M$, it is not difficult to see that the lower semi-continuous regularization of the characteristic 
function $\chi_C$ is precisely the characteristic function $\chi_{\bar C}$, where $\bar C$ is the closure of $C$ in $M$.
\end{exe}
As a consequence of Proposition \ref{hatu-}, without loss of generality, we can assume that the function $u$ is lower semi-continuous when we consider  properties of $\hat u$ away from $\{0\}\times M$.

This is quite convenient since, as shown in \cite{FatNC}, see also the Remark \ref {ExistenceCalCurve} in the Appendix, 
it allows to have backward characteristics.
\begin{prop}\label{ExistenceBackCar} If the function $u:M\to [-\infty,+\infty]$ is lower semi-continuous, and its negative Lax-Oleinik evolution 
$\hat u$ is finite on $]0,t_0[\times M$, with $t_0\in ]0,+\infty]$, then, for every $(t,x)\in ]0,t_0[\times M$, we can find a backward $\hat u$-charac\-te\-ris\-tic $\gamma :[0,t]\to M$ 
with $\gamma(t)=x$.
\end{prop}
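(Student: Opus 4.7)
The plan is to realize the infimum defining $\hat u(t,x)$ at an explicit point $y_0 \in M$ and then extract a minimizing curve from $y_0$ to $x$ via Tonelli's theorem. By definition of the negative Lax-Oleinik semi-group,
$$\hat u(t,x) = \inf_{y \in M}\bigl(u(y) + h_t(y,x)\bigr),$$
and once this infimum is attained at some $y_0$, Tonelli's theorem produces a minimizer $\gamma:[0,t]\to M$ with $\gamma(0)=y_0$, $\gamma(t)=x$, and $\Act(\gamma)=h_t(y_0,x)$. The identity $\hat u(t,x) = u(y_0) + h_t(y_0,x) = \hat u(0,\gamma(0)) + \Act(\gamma)$ then says exactly that $\gamma$ is $\hat u$-calibrated on $[0,t]$, i.e.\ a backward $\hat u$-characteristic ending at $(t,x)$.

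The crux of the proof is therefore showing that the infimum is attained. The functional $y \mapsto u(y) + h_t(y,x)$ is lower semi-continuous, since $u$ is LSC by hypothesis and $h_t$ is continuous (a standard consequence of Tonelli's theory). Hence it suffices to confine a minimizing sequence to a compact subset of $M$, which by completeness of $(M,g)$ is the same as confining it to a metric ball centred at $x$. For this, I would exploit two ingredients: on one hand, uniform superlinearity of $L$ gives $h_t(y,x) \geq K\,d(y,x) - C(K)t$ for every $K \geq 0$; on the other hand, the hypothesis that $\hat u$ is finite at some auxiliary point $(t_1,x^*)$ yields a lower bound $u(y) \geq \hat u(t_1,x^*) - h_{t_1}(y,x^*)$, and testing with the constant-speed geodesic from $y$ to $x^*$ gives $h_{t_1}(y,x^*) \leq t_1 A\bigl(d(y,x^*)/t_1\bigr)$. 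Balancing the two by choosing $t_1$ and $K$ appropriately forces $u(y)+h_t(y,x) \to +\infty$ as $d(y,x) \to +\infty$.

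I expect this coercivity argument to be the main obstacle, because the direct growth comparison between the linear lower bound $Kd(y,x)$ and the a priori possibly superlinear upper bound involving $A(\cdot)$ is not automatic in the general Tonelli setting. The workaround is to invoke the semi-group identity
$$\hat u(t,x) = \inf_{y \in M}\bigl(\hat u(t',y) + h_{t-t'}(y,x)\bigr), \qquad 0 < t' < t,$$
so as to replace the merely LSC datum $u$ by the continuous (even locally semiconcave) function $\hat u(t', \cdot)$ furnished by Theorem \ref{PropHatu}. Using this, together with a priori speed bounds for $L$-minimizers (which control the endpoint $y_n$ of any almost-minimizing curve ending at $x$ in time $t-t'$), one makes the coercivity comparison favourable. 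Alternatively, one iterates in $t'$, splitting $t$ into sufficiently short subintervals until a Young-type inequality closes the estimate.

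Once $y_0$ is produced the conclusion is essentially tautological: Tonelli gives $\gamma$, and $\hat u$-calibration of $\gamma$ on $[0,t]$ reduces to the defining identity for $y_0$. I note in passing that Proposition \ref{calcurprop} then automatically upgrades calibration to every subinterval of $[0,t]$ and delivers the regularity of $\gamma$, so no additional verification is needed on the sub-intervals.
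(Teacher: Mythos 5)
Your diagnosis of the coercivity obstacle is correct, and the direct argument for attaining the infimum in $\hat u(t,x)=\inf_y u(y)+h_t(y,x)$ really does fail in general: from finiteness of $\hat u$ one only gets $u(y)\ge \hat u(t_1,x^*)-h_{t_1}(y,x^*)$, and comparing $h_t(y,x)$ with $h_{t_1}(y,x)$ by slowing down the $h_t$-minimizer and using convexity of $L$ in $v$ gives only $h_t(y,x)-h_{t_1}(y,x)\ge -(t_1-t)A(0)$, a constant lower bound with no coercivity. The trouble is that the workaround you propose does not repair this. Replacing $u$ by $\hat u(t',\cdot)$ via the semi-group identity and applying compactness estimates of the type in Proposition \ref{PropAppendix-} produces a $y_1$ realizing $\hat u(t,x)=\hat u(t',y_1)+h_{t-t'}(y_1,x)$ and hence a $\hat u$-calibrated curve on $[t',t]$; it does \emph{not} produce a $y_0$ realizing the original infimum over $M$, because a minimizing sequence $y_n$ for $y\mapsto u(y)+h_t(y,x)$ may still escape to infinity — only the intermediate points $\gamma_n(t')$ of associated minimizers are controlled. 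Your closing sentence, ``once $y_0$ is produced the conclusion is essentially tautological,'' therefore rests on an object that your argument never constructs.

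What remains is precisely the iteration you only gesture at, and its hardest step. Starting from the short-time characteristic on $[t',t]$, one repeats the argument at the endpoint $(t',\gamma(t'))$ and concatenates via Proposition \ref{CONCAT}, obtaining calibration on $[\epsilon,t]$ for every $\epsilon>0$; but the admissible step length in Proposition \ref{PropAppendix-} depends on local bounds that degenerate as $t'\to 0$, so finitely many steps never reach $0$. Passing from $[\epsilon,t]$ for all $\epsilon>0$ to $[0,t]$ is a genuinely separate limiting argument, and it is exactly here that lower semi-continuity of $u$ is used in an essential way: one needs $\liminf_{\epsilon\to 0^+}\hat u(\epsilon,\gamma(\epsilon))\ge u(\gamma(0))$ along the extremal, which can fail without LSC. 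Your proposal invokes LSC only to get lower semi-continuity of $y\mapsto u(y)+h_t(y,x)$ and never for this boundary limit, so the decisive final step is missing. This three-stage scheme — short-time existence, concatenation on $[\epsilon,t]$, and the delicate extension to $[0,t]$ — is precisely what the paper's Remark \ref{ExistenceCalCurve} sketches, with full details deferred to \cite{FatNC}.
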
 

The existence of backward characteristics at every points allows to
give a criterion for differentiability, which complements both Propositions \ref{DiffProp} and \ref{DiffProp2}
\begin{prop}\label{DiffCriterion} Assume that the function $u:M\to [-\infty,+\infty]$ is lower semi-continuous, 
and  that its negative Lax-Oleinik evolution 
	$\hat u$ is finite on $]0,t_0[\times M$, with $t_0\in ]0,+\infty]$. Then, the function $\hat u$ is differentiable at a point $(t,x)\in ]0,t_0[\times M$,  if and only if there exists  a unique backward $\hat u$-charac\-te\-ris\-tic $\gamma :[0,t]\to M$ 
	with $\gamma(t)=x$.
\end{prop}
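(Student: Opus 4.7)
My plan is to prove both implications separately, with the easy direction first.

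For the forward direction (differentiability $\Rightarrow$ uniqueness of backward characteristic), Proposition \ref{ExistenceBackCar} furnishes at least one backward $\hat u$-characteristic $\gamma:[0,t]\to M$ with $\gamma(t)=x$. Since $\hat u$ is a continuous viscosity solution on $]0,t_0[\times M$ by Theorem \ref{PropHatu}, the second assertion of Proposition \ref{DiffProp2} applies and yields uniqueness of $\gamma$ immediately.

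For the reverse direction (uniqueness $\Rightarrow$ differentiability), the key fact is that, by Theorem \ref{PropHatu}, $\hat u$ is locally semiconcave on $]0,t_0[\times M$, so $D^+\hat u(t,x)$ is nonempty, convex and compact, and $\hat u$ is differentiable at $(t,x)$ if and only if $D^+\hat u(t,x)$ is a singleton. Moreover, a standard fact about semiconcave functions says $D^+\hat u(t,x)$ is the closed convex hull of the set $D^*\hat u(t,x)$ of reachable gradients, i.e.\ limits $\lim_n d\hat u(t_n,x_n)$ with $(t_n,x_n)\to(t,x)$ and $\hat u$ differentiable at each $(t_n,x_n)$. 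It therefore suffices to show $D^*\hat u(t,x)$ is a singleton.

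So I take a reachable gradient $(q,p)=\lim_n d\hat u(t_n,x_n)$. By the forward direction already proved, each $(t_n,x_n)$ admits a unique backward $\hat u$-characteristic $\gamma_n:[0,t_n]\to M$, and by Proposition \ref{DiffProp2}, $p=\partial_v L(\gamma_n(t_n),\dot\gamma_n(t_n))$ and $q=-H(\gamma_n(t_n),p_n)$, where $p_n=\partial_v L(\gamma_n(t_n),\dot\gamma_n(t_n))$. The main work is a compactness argument: the curves $\gamma_n$ are minimizers of a Tonelli Lagrangian with bounded endpoints, so, using uniform superlinearity \eqref{TonLag2} to get uniform action bounds and hence uniform control of $\lVert\dot\gamma_n\rVert$ on compact subintervals, Arzel\`a--Ascoli (together with continuity of $\phi^L_t$) yields, after extracting, $\gamma_n\to \gamma$ in $C^1$ on compact subintervals of $]0,t]$, where $\gamma:[0,t]\to M$ is an extremal. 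Passing to the limit in the calibration identity \eqref{CalCond} (and using continuity/lower semicontinuity of $\hat u$ and $u$) shows $\gamma$ is a backward $\hat u$-characteristic ending at $(t,x)$.

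By the uniqueness hypothesis, this limit $\gamma$ is independent of the approximating sequence, so $\dot\gamma_n(t_n)\to\dot\gamma(t)$ and, by continuity of $\partial_v L$ and $H$, the reachable gradient equals the single value $(-H(x,\partial_v L(x,\dot\gamma(t))),\,\partial_v L(x,\dot\gamma(t)))$. Hence $D^*\hat u(t,x)$ is a singleton, $D^+\hat u(t,x)$ is a singleton, and $\hat u$ is differentiable at $(t,x)$. The main obstacle is the compactness step: ensuring the limit curve still reaches $x$ at time $t$ and satisfies the calibration equality, which requires carefully passing to the limit in the action functional, using the local uniform convergence of $(\gamma_n,\dot\gamma_n)$ and the continuity of $\hat u$.
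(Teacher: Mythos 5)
The paper does not include its own proof of this proposition---it is stated as a known fact and the reader is referred to \cite{FatNC}---so the comparison can only be against the standard argument in the literature, which your proposal reproduces faithfully. Your forward direction (existence from Proposition \ref{ExistenceBackCar}, uniqueness from the second half of Proposition \ref{DiffProp2}) is exactly the intended deduction, and your reverse direction via local semiconcavity of $\hat u$ (Theorem \ref{PropHatu}), the identity $D^+\hat u(t,x)=\mathrm{co}\bigl(D^*\hat u(t,x)\bigr)$, and compactness of backward characteristics is the canonical route.

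One remark on the step you yourself flag as the main obstacle: to obtain uniform control of the minimizers $\gamma_n$ you should not rely on a uniform action bound over $[0,t_n]$, since the calibration identity there involves $u(\gamma_n(0))$ and $u$ is only lower semi-continuous and possibly unbounded below. The cleaner route---consistent with what you sketch---is to observe that $\hat u$ is locally Lipschitz near $(t,x)$, so the reachable gradients $d\hat u(t_n,x_n)$ are uniformly bounded; via Proposition \ref{DiffProp2} and the Legendre transform this bounds $\dot\gamma_n(t_n)$, and then the $\gamma_n$ are pieces of orbits of the Euler--Lagrange flow through a convergent sequence $(\gamma_n(t_n),\dot\gamma_n(t_n))\to(x,v)$, so continuity of $\phi^L$ gives $C^1$ convergence on $]0,t]$. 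Passing to the limit in the calibration on $[\epsilon,t_n]$ for each fixed $\epsilon>0$ (where only the continuous function $\hat u$ appears) and then letting $\epsilon\to0$, using the characterization $\hat u(t,x)\le u(\gamma(0))+h_t(\gamma(0),x)$ together with lower semi-continuity of $u$ and continuity of $h$, closes the argument without having to pass to the limit directly at $s=0$. With that amendment your proof is complete and correct.
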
 

\begin{exe}\label{Exe4} If $C$ is a closed subset of the complete Riemannian manifold $(M,g)$,  Proposition \ref{ExistenceBackCar} applied to 
the Lagrangian $L_g$ of Example  \ref{Exe1} and the function $\chi_C$ shows that for every $x\in M$
and every $t>0$, we can find a $g$-geodesic $\gamma: [0,t]\to M$ with $\gamma(0)\in C, \gamma(t)=x$ and
$$\frac{d_C(x)^2}{2t}=\int_0^t\frac12\lVert \dot\gamma(s)\rVert^2_{\gamma(s)}\,ds=\frac{d(\gamma(0),x)^2}{2t}.$$
This last equality is of course equivalent to $d_C(x)=d(\gamma(0),x)$.

Given that $(M,g)$ is a complete Riemannian manifold, this is of course equivalent to the well-known fact that the $\inf$ in the definition \eqref{DefdC} of $d_C(x)$
is always attained by a point $c\in C$, when $C$ is closed.

Moreover, by Proposition \ref{DiffCriterion}, we obtain that
$d_C$ is differentiable at $x\notin C$ if and only if there is a
unique (up to reparametrization) minimizing $g$-geodesic
$\gamma:[0,t]\to M$, with $\gamma(0)\in C,\gamma(t)=x$ and $d_C(x)=d(\gamma(0),x)$.
\end{exe}
\begin{exe}\label{Exe5} If we apply the previous example to the closed
	subset $\Delta_M$  of the complete Riemannian manifold $(M\times M,g\times g)$, by  Lemma  \ref{CaracCaldDelta}, we  first recover the well know fact that any pair of points $x,y$ in $M$ can be joined by a minimizing geodesic.
	
Moreover, by Proposition \ref{DiffCriterion} applied to $d^2_{\Delta_M}$, we obtain that
$d^2_{\Delta_M}$ is differentiable at $(x,y)\in M\times M$ if and only if there is a unique (up to reparametrization) 
minimizing $g$-geodesic in $M$ joining $x$ to $y$. 
Therefore, recalling from Definition \ref{DefUNU} that ${\cal U}(M,g)$  is the set of 
$(x,y)\in M\times M$ such that there exists a \emph{unique} minimizing $g$-geodesic between $x$  and $y$,
we obtain that the set of points in $M\times M$ where $d^2_{\Delta_M}$ is differentiable is
precisely ${\cal U}(M,g) $. 
Therefore, its complement $M\times M\setminus {\cal U}(M,g)={\cal NU}(M,g)$--the set of points $(x,y)\in M$ such that there exists at least two distinct minimizing  
$g$-geodesics between $x$ and $y$--is the set $\Sigma(d^2_{\Delta_M})$ of singularities of $d^2_{\Delta_M}$.
Since, the diagonal $\Delta_M$is contained in ${\cal U}(M,g)$, we conclude that
$$\Sigma^*(d_{\Delta_M})={\cal NU}(M,g).$$
\end{exe}

\subsection{Cut points and cut time function}
In this subsection, we will consider a lower semi-continuous function $u:M\to [-\infty,+\infty]$ such that 
$\hat u$ is finite on $]0,t_0[\times M$, where $t_0\in ]0,+\infty]$. 
By Theorem \ref{PropHatu}, on $]0,t_0[\times M$ the function $\hat u$ is locally semiconcave 
and a viscosity solution of the evolutionary Hamilton-Jacobi equation \eqref{HJE0}. Moreover, by 
Proposition \ref{ExistenceBackCar},  for every $(t,x)\in ]0,t_0[\times M$ 
we can find a backward $\hat u$-characteristic ending at $(t,x)$.

Recall that $\Sigma_{t_0}(\hat u)$ the set of singularities of $\hat u$ contained in $]0,t_0[\times M$.

We now introduce the set $\operatorname{Cut}_{t_0}(\hat u)\subset ]0,t_0[\times M$ of cut points of $\hat u$.
\begin{defn}[$\operatorname{Cut}_{t_0}(\hat u)$] The set $\operatorname{Cut}_{t_0}(\hat u)$ of cut points of $\hat u$ is the set of points $(t,x)\in ]0,t_0[\times M$
where no backward $\hat u$-characteristic ending at $(t,x)$ can be extended to a $\hat u$-calibrating curve
defined on $[0,t']$, with $t'>t$. 
\end{defn}
\begin{lem}\label{CritCut} Under the hypothesis above, the point $(t,x)\in ]0,t_0[\times M$ is in 
$\operatorname{Cut}_{t_0}(\hat u)$ if and only if 
for any  $\hat u$-calibrating curve $\delta:[a,b] \to M$, with 
$t\in [a,b]$ and $\delta(t)=x$, we have $t=b$.
\end{lem}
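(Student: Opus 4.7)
The plan is to prove both directions directly, using two facts already available: Proposition \ref{ExistenceBackCar} (at every $(t,x)\in{}]0,t_0[\times M$ there is a backward $\hat u$-characteristic $\gamma:[0,t]\to M$ ending at $(t,x)$) and Proposition \ref{CONCAT} (a piecewise $\hat u$-calibrated curve is $\hat u$-calibrated). The argument is essentially a reformulation of the definition of $\operatorname{Cut}_{t_0}(\hat u)$.

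For the forward implication, I would assume $(t,x)\in\operatorname{Cut}_{t_0}(\hat u)$ and take any $\hat u$-calibrated $\delta:[a,b]\to M$ with $\delta(t)=x$ and $t\in[a,b]$. Arguing by contradiction, suppose $t<b$. Using Proposition \ref{ExistenceBackCar}, pick a backward $\hat u$-characteristic $\gamma:[0,t]\to M$ with $\gamma(t)=x$. Because $\gamma(t)=\delta(t)$, one can concatenate $\gamma$ with $\delta|[t,b]$ to obtain a curve $\tilde\gamma:[0,b]\to M$. The restriction $\delta|[t,b]$ is $\hat u$-calibrated by Proposition \ref{calcurprop}(2), and $\gamma$ is calibrated by construction; hence Proposition \ref{CONCAT} guarantees that $\tilde\gamma$ is $\hat u$-calibrated on the whole interval $[0,b]$. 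This exhibits an extension of the backward characteristic $\gamma$ to a $\hat u$-calibrating curve defined on $[0,t']$ with $t':=b>t$, contradicting the defining property of a cut point.

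For the converse, I would argue contrapositively: if $(t,x)\notin\operatorname{Cut}_{t_0}(\hat u)$, then by the very definition of cut point some backward $\hat u$-characteristic ending at $(t,x)$ extends to a $\hat u$-calibrating curve $\tilde\gamma:[0,t']\to M$ with $t'>t$. Setting $\delta:=\tilde\gamma$, $a:=0$, $b:=t'$ produces a $\hat u$-calibrated curve $\delta:[a,b]\to M$ with $t\in[a,b]$, $\delta(t)=x$, and $t<b$, violating the right-hand side of the claimed equivalence.

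I do not expect a genuine obstacle here, because the lemma is a near-tautological rephrasing of the definition of cut point. The only small point that needs care is that both pieces of the concatenation in the forward direction are indeed $\hat u$-calibrated, which is handled by Proposition \ref{calcurprop}(2) on the right piece and is immediate for the backward characteristic produced by Proposition \ref{ExistenceBackCar}.
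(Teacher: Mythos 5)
Your argument is correct and takes essentially the same route as the paper: concatenate a backward characteristic $\gamma:[0,t]\to M$ (from Proposition \ref{ExistenceBackCar}) with the restriction $\delta|[t,b]$, which is $\hat u$-calibrated by Proposition \ref{calcurprop}(2), and invoke Proposition \ref{CONCAT} to conclude that the concatenation is a $\hat u$-calibrated extension of $\gamma$ to $[0,b]$, forcing $t=b$ when $(t,x)$ is a cut point. The paper only spells out this forward direction, leaving the converse implicit; your explicit contrapositive treatment of the converse is a harmless elaboration of the same underlying idea.
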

\begin{proof} In fact, if $\delta$ is as given above, and $\gamma:[0,t]\to M$ is a backward $\hat u$-characteristic ending at $(t,x)$, then the curve defined on $[0,b]\supset [0,t]$ which is equal  to $\gamma$ on $[0,t]$ and
$\delta$ on $[t,b]$ is continuous and piecewise calibrated on $[0,b]$. Therefore, by Lemma \ref{CONCAT}, it is 
a $\hat u$-calibrated curve extending $\gamma$. If $(t,x)\in \operatorname{Cut}_{t_0}(\hat u)$, then we must have
$t=b$. 
\end{proof}
\begin{prop} Under the hypothesis above, we have 
$$\Sigma_{t_0}(\hat u)\subset \operatorname{Cut}_{t_0}(\hat u)\subset ]0,t_0[ \times M\setminus {\cal I}_{t_0}(\hat u).$$ Moreover, the set
$\Sigma_{t_0}(\hat u)$ is dense in $\operatorname{Cut}_{t_0}(\hat u)$. Hence, we have
$$\Sigma_{t_0}(\hat u)\subset \operatorname{Cut}_{t_0}(\hat u)\subset \overline{\Sigma_{t_0}(\hat u)}.$$
\end{prop}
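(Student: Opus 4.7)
The plan is to dispatch the two inclusions directly from the definitions, and then establish density by the classical method of characteristics combined with the local semiconcavity of $\hat u$.

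For the first inclusion $\Sigma_{t_0}(\hat u)\subset \operatorname{Cut}_{t_0}(\hat u)$, I argue contrapositively: if $(t,x)\notin \operatorname{Cut}_{t_0}(\hat u)$, then by definition some backward $\hat u$-characteristic $\gamma:[0,t]\to M$ ending at $(t,x)$ extends to a calibrated curve $\tilde\gamma:[0,t']\to M$ with $t'>t$. Since $t\in ]0,t'[$, Proposition \ref{DiffProp}(v) yields differentiability of $\hat u$ at $(t,x)$, so $(t,x)\notin\Sigma_{t_0}(\hat u)$. For the second inclusion $\operatorname{Cut}_{t_0}(\hat u)\subset ]0,t_0[\times M\setminus {\cal I}_{t_0}(\hat u)$, suppose $(t,x)\in {\cal I}_{t_0}(\hat u)$; Definition \ref{AubryEvol} supplies a curve $\gamma:[0,t_0[\to M$ with $\gamma(t)=x$ which calibrates $\hat u$ on every subinterval. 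Then $\gamma|_{[0,t]}$ is a backward $\hat u$-characteristic ending at $(t,x)$, and $\gamma|_{[0,t']}$ for any $t'\in ]t,t_0[$ extends it as a calibrated curve, so $(t,x)\notin \operatorname{Cut}_{t_0}(\hat u)$.

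For density, fix $(t_*,x_*)\in\operatorname{Cut}_{t_0}(\hat u)$; if already $(t_*,x_*)\in\Sigma_{t_0}(\hat u)$ there is nothing to show, so assume $\hat u$ is differentiable at $(t_*,x_*)$. Suppose toward contradiction that there is an open neighborhood $V\subset ]0,t_0[\times M$ of $(t_*,x_*)$ containing no singularities of $\hat u$. By Theorem \ref{PropHatu} the function $\hat u$ is locally semiconcave, so on $V$ its superdifferential $D^+\hat u$ is nonempty, upper semicontinuous as a set-valued map, and singleton-valued by the assumed pointwise differentiability; a single-valued upper semicontinuous map is continuous, hence $\hat u\in C^1(V)$ and classically solves the Hamilton-Jacobi equation there. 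Solving the characteristic ODE $\dot z(s)=\partial_p H(z(s),\partial_x\hat u(s,z(s)))$ forward from $(t_*,x_*)$ produces a smooth extremal $\tilde\gamma:[t_*,t_*+\varepsilon]\to M$ whose graph stays in $V$, and a one-line computation using the HJ equation together with the Fenchel identity $p\cdot\partial_p H(x,p)-H(x,p)=L(x,\partial_p H(x,p))$ gives $\frac{d}{ds}\hat u(s,\tilde\gamma(s))=L(\tilde\gamma(s),\dot{\tilde\gamma}(s))$, so $\tilde\gamma$ is $\hat u$-calibrated on $[t_*,t_*+\varepsilon]$.

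By Proposition \ref{ExistenceBackCar} there is a backward $\hat u$-characteristic $\gamma:[0,t_*]\to M$ ending at $(t_*,x_*)$. Concatenating $\gamma$ with $\tilde\gamma$ at $t_*$ yields a continuous, piecewise $\hat u$-calibrated curve on $[0,t_*+\varepsilon]$, which by Proposition \ref{CONCAT} is $\hat u$-calibrated; this extends $\gamma$ past $t_*$, contradicting $(t_*,x_*)\in\operatorname{Cut}_{t_0}(\hat u)$. Hence every neighborhood of $(t_*,x_*)$ meets $\Sigma_{t_0}(\hat u)$, proving density; combining it with the two inclusions yields the closing display. The main obstacle is the regularity upgrade from pointwise differentiability to $C^1$ on $V$ — without it one cannot run classical characteristics — and this is exactly where the local semiconcavity furnished by Theorem \ref{PropHatu} is indispensable.
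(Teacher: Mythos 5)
Your proof is correct and takes essentially the same approach as the paper: both inclusions follow directly from the definitions together with Proposition~\ref{DiffProp}(v), and density is proved by showing that a neighborhood of a cut point free of singularities lets one extend every calibrated curve slightly forward, contradicting the cut condition. One small difference worth knowing: the paper upgrades to $C^{1,1}$ on $V$ (a semiconcave function that is differentiable everywhere on an open set is in fact $C^{1,1}_{\mathrm{loc}}$, see \cite{CanSin}), which makes the characteristic vector field locally Lipschitz and gives existence \emph{and} uniqueness for the ODE; your $C^1$ upgrade only yields Peano existence, which still suffices for the contradiction but is slightly weaker. The paper also solves the characteristic ODE on a two-sided interval $[-\epsilon,\epsilon]$ around $\bar t$ and invokes Lemma~\ref{CritCut} directly, whereas you solve only forward and concatenate with a backward characteristic; both are valid, yours just spends one extra step on the concatenation.
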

\begin{proof} The fact that $ \operatorname{Cut}_{t_0}(\hat u)\cap {\cal I}_{t_0}(\hat u)=\emptyset$ follows from Lemma
\ref{CritCut}.
Note now that for every point $(t,x)\in]0,t_0[ \times  M\setminus \operatorname{Cut}_{t_0}({\hat u})$, we can find a
$\hat u$-calibrated curve $\gamma:[0,t']\to M$ with  $t'>t$ and $\gamma(t)=x$. Therefore, by part (v) of Proposition
\ref{DiffProp}, the function $\hat u$ is differentiable at 
$(t,x)$. Hence we get $\Sigma_{t_0}(\hat u)\subset \operatorname{Cut}_{t_0}({\hat u})$.

We now prove the density of the inclusion $\Sigma_{t_0}(\hat u)\subset \operatorname{Cut}_{t_0}({\hat u})$. Suppose that
$V \subset  ]0,t_0[\times M$ is an open neighborhood of $(\bar t,\bar x)$ which contains 
no point of $\Sigma_{t_0}(\hat u)$.
Since $\hat u$ is semiconcave and is differentiable everywhere on the open set $V$ it is $C^{1,1}$, see \cite{CanSin}. 
Therefore, if we set
$$X(t,x)=\partial_pH\left(x,\partial_x\hat u(t,x)\right),$$
we conclude that the vector  field $\bar X$ on $V$ defined by
 $$\bar X(t,x)=\left(1,\partial_pH\left(x,\partial_x\hat u(t,x)\right)\right)$$
  is locally Lipschitz.
Therefore, we can find a solution $\Gamma:[-\epsilon,\epsilon]\to V$ of $\dot\Gamma(t)=\bar X(t,\Gamma(t))$ with $\Gamma(0)=(\bar t,\bar x)$. 
Given the form of the vector field $\bar X$, we have $\Gamma(s)=(\bar t+s, \bar\gamma(s))$ with $\bar\gamma$ a curve in $M$ such that
$\bar\gamma(0)=\bar x$. 
It is well known that $\gamma:[\bar t-\epsilon,t+\epsilon]\to M$, defined by $\gamma(t)=\bar\gamma(t-\bar t)$,
is $\hat u$-calibrated with $\gamma(\bar t)=\bar x$, see \cite[Proposition 3.4, Page 487]{FaTo}. Proposition \ref{CritCut} now implies that $(\bar t,\bar x)$
is not in  $\operatorname{Cut}_{t_0}({\hat u})$. Hence any open subset of  $]0,t_0[\times M$ intersecting
 $\operatorname{Cut}_{t_0}({\hat u})$ contains points in $\Sigma_{t_0}(\hat u)$.
 \end{proof}
At this point, it is convenient to introduce the cut time function $\tau:]0,t_0[\times M\to]0,t_0]$ for $\hat u$.

\begin{defn}[Cut Time Function] For $(t,x)\in ]0,t_0[\times M$, we define
$\tau(t,x)$ as the supremum of the $t'\in  [t,t_0[$ such that there exists
a $\hat u$-calibrating curve $\gamma:[0,t']\to M$, with $\gamma(t)=x$. The function $\tau:]0,t_0[\times M\to]0,t_0]$ for $\hat u$ is the cut time of $\hat u$.
\end{defn}
We give a characterization of the cut-time function.
\begin{prop}\label{CaracTau} Suppose $(t,x)\in ]0,t_0[\times M$. Choose a $\hat u$-calibrated curve $\gamma:[0,t]\to M$
with  $\gamma(t)=x$. Since $\gamma$ is a minimizer, it can be extended to an extremal $\gamma:[0,+\infty[\to M$
of $L$. We have
$$\tau(t,x)=\sup\{t'\in ]0,t_0[\mid \gamma|[0,t']\text{ is $\hat u$-calibrated}\}\geq t.$$
\end{prop}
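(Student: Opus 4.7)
The plan is to prove the stated equality by establishing two inequalities between $\tau(t,x)$ and the quantity $t^* := \sup\{t' \in\, ]0,t_0[\, \mid \gamma|[0,t'] \text{ is } \hat u\text{-calibrated}\}$. The easy inequality $t^* \leq \tau(t,x)$ and the bound $t^* \geq t$ are both immediate from the definitions: the hypothesis that $\gamma|[0,t]$ is $\hat u$-calibrated with $\gamma(t)=x$ places $t$ itself in the set defining $t^*$, and every $t'$ in that set produces a $\hat u$-calibrating curve passing through $(t,x)$, so it is automatically admissible in the definition of $\tau(t,x)$.

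The content lies in the reverse inequality $\tau(t,x) \leq t^*$. I would fix an arbitrary $t' < \tau(t,x)$ with $t' > t$; by definition of $\tau$ there exists a $\hat u$-calibrated curve $\delta:[0,t''] \to M$ with $\delta(t)=x$ and $t'' > t'$. Since $t$ is an \emph{interior} point of $[0,t'']$, Proposition \ref{DiffProp} (applied to $\delta$) gives both a matching upper and lower differential of $\hat u$ at $(t,x)$, so $\hat u$ is differentiable there. Proposition \ref{DiffProp2} then forces the backward $\hat u$-characteristic ending at $(t,x)$ to be unique, so $\delta|[0,t] = \gamma|[0,t]$ and in particular $\dot\delta(t) = \dot\gamma(t)$. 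Both the extremal extension of $\gamma$ and the curve $\delta$ satisfy the Euler-Lagrange equation \eqref{EL} with the same initial conditions at time $t$, so by ODE uniqueness they coincide on $[0,t'']$. Consequently $\gamma|[0,t'']$ is $\hat u$-calibrated, which yields $t^* \geq t'' > t'$. Letting $t' \nearrow \tau(t,x)$ then gives $t^* \geq \tau(t,x)$.

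The conceptual heart of the proof, and the step I expect to be the main obstacle, is the derivation of differentiability of $\hat u$ at the endpoint $(t,x)$ from the existence of a single extending $\hat u$-calibrated curve. This depends crucially on the strict convexity of $L$ in the fibers (via Proposition \ref{DiffProp}) to guarantee that an interior point along a calibrated curve is a point of differentiability; once this is in hand, uniqueness of backward characteristics together with uniqueness of solutions to the Euler-Lagrange flow automatically force every competitor for the supremum defining $\tau(t,x)$ to be a restriction of the fixed curve $\gamma$, collapsing $\tau(t,x)$ to $t^*$.
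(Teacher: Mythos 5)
Your proof is correct, but you argue the reverse inequality $\tau(t,x)\le t^*$ by a genuinely different route than the paper. The paper's proof never invokes differentiability of $\hat u$: given any $\hat u$-calibrated curve $\delta:[0,s]\to M$ with $\delta(t)=x$, it concatenates $\gamma|[0,t]$ with $\delta|[t,s]$, observes that a piecewise $\hat u$-calibrated curve is $\hat u$-calibrated (Proposition \ref{CONCAT}), hence a minimizer and therefore a smooth extremal, and then concludes by uniqueness of solutions of the Euler--Lagrange equation that this concatenation equals the extremal extension $\gamma$ on $[0,s]$, so $\gamma|[0,s]$ is calibrated. You instead extract differentiability of $\hat u$ at $(t,x)$ from Proposition \ref{DiffProp}(v) (valid since $t$ lies in the interior of the calibrated interval $[0,t'']$), apply the uniqueness of backward characteristics from Proposition \ref{DiffProp2} to force $\delta|[0,t]=\gamma|[0,t]$, and only then close with Euler--Lagrange uniqueness on $[t,t'']$. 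Both arguments end at the same identity, but yours routes through the heavier differentiability and backward-characteristic machinery, whereas the paper gets the coincidence of $\gamma$ and $\delta$ on $[0,t]$ for free from the concatenation lemma. The step you flag as the conceptual heart -- differentiability of $\hat u$ at $(t,x)$ once a calibrating curve extends past $t$ -- is indeed the pivot of your version, but the paper deliberately sidesteps it.
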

\begin{proof} Set $S=\sup\{t'\in ]0,t_0[\mid \gamma|[0,t']\text{ is $\hat u$-calibrated}\}$.
Since $\gamma:[0,t]\to M$ is $\hat u$-calibrated, we indeed have $S\geq t$. Moreover, if $ \gamma|[0,t']$
is $\hat u$-calibrated, with $t'>t$, then$ \gamma|[t,t']$ is also $\hat u$-calibrated.  Therefore, we get $S\leq \tau(t,x)$.

On the other hand if  if $\delta:[t,s]\to M$ is $\hat u$-calibrated, with $\delta(t)=x$, 
then the concatenation $\delta\*(\gamma|[0,t])$ of $\gamma|[0,t]$and $\delta$ is also $d_C$-calibrated, therefore
$\delta\*(\gamma|[0,t])$ is a geodesic. Since it coincides, with $\gamma$ on $0,\delta$, we must have
$\delta\*(\gamma|[0,t)]=\gamma$ on $[0,s]$. In particular, we get that $\gamma|[0,s]$
 is $\hat u$-calibrated. Hence, wet get $\tilde\tau(x)\leq S$.
\end{proof}
\begin{prop}\label{PropTau}
The properties of the cut time function $\tau$ are:
\begin{enumerate}[\rm (i)]
 \item $\tau (t,x)\in [t,t_0]$;
  \item $\tau(t,x)=t$ if and only if $(t,x)\in\operatorname{Cut}_{t_0}(\hat u)$;
  \item $\tau(t,x)=t_0$ if and only if $(t,x)\in{\cal I}_{t_0}(\hat u)$;
  \item the function $\tau$ is upper semi-continuous.
\end{enumerate}
\end{prop}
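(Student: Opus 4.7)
Parts (i)--(iii) follow directly from Proposition \ref{CaracTau}. For (i), Proposition \ref{ExistenceBackCar} furnishes a backward $\hat u$-characteristic at $(t,x)$, which places $t$ itself in the set defining $\tau(t,x)$; hence $\tau(t,x)\geq t$, while the upper bound $\tau(t,x)\leq t_0$ is built into the definition. For (ii), Proposition \ref{CaracTau} gives $\tau(t,x)>t$ if and only if some extremal extension of a backward $\hat u$-characteristic through $(t,x)$ is calibrated on an interval $[0,t']$ with $t'>t$, which by Lemma \ref{CritCut} is exactly the negation of $(t,x)\in\operatorname{Cut}_{t_0}(\hat u)$.

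For (iii), if $\tau(t,x)=t_0$, I would pick a $\hat u$-calibrated backward characteristic at $(t,x)$ and extend it to an extremal $\gamma\colon[0,+\infty[\to M$; by Proposition \ref{CaracTau} one has $\gamma|[0,t'_n]$ calibrated for a sequence $t'_n\to t_0$, so $\gamma|[0,t_0[$ is calibrated on every compact subinterval, showing $(t,x)\in\mathcal{I}_{t_0}(\hat u)$. Conversely, a curve $\beta\colon[0,t_0[\to M$ with $\beta(t)=x$ witnessing $(t,x)\in\mathcal{I}_{t_0}(\hat u)$ has $\beta|[0,t]$ as a backward $\hat u$-characteristic whose extremal extension is $\beta$ itself; since $\beta|[0,t']$ is calibrated for every $t'<t_0$, Proposition \ref{CaracTau} forces $\tau(t,x)=t_0$.

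The substance of the proposition is the upper semi-continuity (iv), which I would prove by a compactness argument. Let $(t_n,x_n)\to(t,x)$ with $\tau(t_n,x_n)\to\tau^*$; the goal is $\tau(t,x)\geq\tau^*$. For each $n$ pick $s_n\in[t_n,\tau(t_n,x_n)[$ with $s_n\to\tau^*$ and a $\hat u$-calibrated curve $\gamma_n\colon[0,s_n]\to M$ with $\gamma_n(t_n)=x_n$; by Proposition \ref{calcurprop} each $\gamma_n$ is an $L$-minimizer and an Euler-Lagrange extremal. Local semiconcavity of $\hat u$ on $]0,t_0[\times M$ (Theorem \ref{PropHatu}) bounds its super-differentials near $(t,x)$, and by Proposition \ref{DiffProp} this bounds the momenta $\partial_v L(\gamma_n(t_n),\dot\gamma_n(t_n))$; energy conservation together with the Tonelli superlinearity then keeps the orbits $(\gamma_n,\dot\gamma_n)$ in a compact subset of $TM$ for $s\in[0,\tau^*]$, and a standard extraction yields a $C^1$-convergent subsequence whose limit is an extremal $\gamma\colon[0,\tau^*]\to M$ with $\gamma(t)=x$. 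Passing to the limit in the calibration identity $\hat u(s_n,\gamma_n(s_n))=u(\gamma_n(0))+\int_0^{s_n}L(\gamma_n,\dot\gamma_n)\,ds$, using continuity of $\hat u$ on $]0,t_0[\times M$ on the left, convergence of the action on the right, and lower semi-continuity of $u$ (assumed via Proposition \ref{hatu-}) at $\gamma(0)$, one obtains $u(\gamma(0))+\int_0^{\tau^*}L(\gamma,\dot\gamma)\,ds\leq\hat u(\tau^*,\gamma(\tau^*))$; the reverse inequality is forced by the Lax--Oleinik definition of $\hat u(\tau^*,\gamma(\tau^*))$, so $\gamma$ is $\hat u$-calibrated on $[0,\tau^*]$, and hence $\tau(t,x)\geq\tau^*$.

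The main obstacle is this limiting step at the initial endpoint $s=0$, where $\hat u(0,\cdot)=u$ is only lower semi-continuous; the squeeze between semi-continuity of $u$ and the trivial Lax--Oleinik upper bound is exactly what closes the loop and produces the calibration equality for the limit curve.
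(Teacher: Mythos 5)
Your proof is correct and follows essentially the same route as the paper: (i)--(iii) are read off from Proposition \ref{CaracTau} and Lemma \ref{CritCut}, and (iv) is the same $C^1$-compactness argument on calibrated extremals. The one small difference in (iv) is that the paper argues by contradiction on a \emph{fixed} interval $[0,t'']$ chosen strictly between $\tau(t,x)$ and the limit value, which sidesteps the bookkeeping at the variable endpoints $s_n$, while you make explicit the lower-semicontinuity/Lax--Oleinik squeeze at $\gamma(0)$ needed to conclude that the limit curve is still calibrated --- a step the paper asserts without detail and which is in fact the only delicate point in passing to the limit.
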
 
\begin{proof} Property (i) is obvious. Property (ii) follows from Proposition \ref{CritCut}.

Property (iii) follows from Proposition \ref{CaracTau} and the fact that calibrated curves are
minimizers (hence extremals).

For part (iv) Assume that $(t_n,x_n)\to (t,x)$ with $\tau(t_n,x_n)\to t'$. We must show that $t'\leq \tau(t,x)$.
By contradiction, assume $t'>\tau(t,x)\geq t$. Fix $t''$ such that $t'>t''>\tau(t,x)\geq t$. 
For $n$ large, we have $\tau(t_n,x_n)> t''>t_n$.
Therefore for such an $n$ we can find a $\hat u$-calibrated curve $\gamma_n:[0,t'']\to M$, with $\gamma_n(t_n)=x_n$.
Extracting, if necessary, we can obtain a curve $\gamma:[0,t'']\to M$ which is a $C^1$ limit of the minimizers
$\gamma_n$. This curve $\gamma$ is also $\hat u$-calibrated and satisfies $\gamma(t)=x$, this a contradiction since
$t''>\tau(t,x)$.
\end{proof}

\section{Local contractibility}
Theorem \ref{LocContr} is a consequence of the following more general one.
\begin{thm}\label{LocContrGen} Let $H:T^*\/M\to \R$ be a Tonelli Hamiltonian. Assume that the function 
$u: M\to [-\infty,+\infty]$ is such that its negative Lax-Oleinik evolution $\hat u$ is finite at every point of $]0,t_0[ \times M$. 
Then the sets $\Sigma_{t_0}(\hat u)$ and  $\operatorname{Cut}_{t_0}({\hat u})$ 
are locally contractible. In particular, they are locally path connected.
\end{thm}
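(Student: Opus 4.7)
The plan is to construct, for each $(t_0, x_0) \in \operatorname{Cut}_{t_0}(\hat u)$, a continuous retraction from an open neighborhood $\Omega \subset\, ]0, t_0[\times M$ of $(t_0, x_0)$ onto $\Omega \cap \operatorname{Cut}_{t_0}(\hat u)$. Local contractibility of $\operatorname{Cut}_{t_0}(\hat u)$ then follows from the standard principle that a retract of a locally contractible space is locally contractible.

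First I would choose $\Omega$ disjoint from the Aubry set ${\cal I}_{t_0}(\hat u)$, which is possible because ${\cal I}_{t_0}(\hat u) = \{\tau = t_0\}$ is closed in $]0, t_0[\times M$ by the upper semi-continuity of $\tau$ (Proposition \ref{PropTau}(iv)) and is disjoint from $\operatorname{Cut}_{t_0}(\hat u)$. Thus for every $(s, y) \in \Omega$ we have $s \leq \tau(s, y) < t_0$. By Proposition \ref{ExistenceBackCar}, one picks a backward $\hat u$-characteristic ending at $(s, y)$ and extends it as an extremal; Proposition \ref{CaracTau} shows that this extension is calibrated precisely on $[0, \tau(s, y)]$, and that its value $\gamma(\tau(s, y))$ is independent of the initial choice of backward characteristic (because any forward calibrated extension beyond $s$ must coincide with it). I would then set
\[
r(s, y) = \bigl(\tau(s, y),\, \gamma(\tau(s, y))\bigr).
\]
By Proposition \ref{PropTau}(ii), $r$ maps $\Omega$ into $\operatorname{Cut}_{t_0}(\hat u)$, and is the identity on $\Omega \cap \operatorname{Cut}_{t_0}(\hat u)$.

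The main technical step is the continuity of $r$. For a sequence $(s_n, y_n) \to (s_\infty, y_\infty) \in \Omega$, uniform $C^1$ bounds on the associated calibrated extremals $\gamma_n$ coming from the superlinearity of $L$ allow extraction of a convergent subsequence, whose limit $\gamma_\infty$ is an extremal with $\gamma_\infty(s_\infty) = y_\infty$. If $(s_\infty, y_\infty) \in \operatorname{Cut}_{t_0}(\hat u)$, then $\tau(s_\infty, y_\infty) = s_\infty$, and combining $s_n \leq \tau(s_n, y_n)$ with Proposition \ref{PropTau}(iv) forces $\tau(s_n, y_n) \to s_\infty$, so $r(s_n, y_n) \to (s_\infty, y_\infty)$ by continuous dependence of extremals on initial data. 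If $(s_\infty, y_\infty) \notin \operatorname{Cut}_{t_0}(\hat u)$, I would additionally show $\tau$ is lower semi-continuous there: for any $T < \tau(s_\infty, y_\infty)$, the graph of $\gamma_\infty|[s_\infty, T]$ is a compact subset of the open set $]0, t_0[\times M \setminus \operatorname{Cut}_{t_0}(\hat u)$, on which $\hat u$ is $C^{1,1}$ (semi-concavity plus differentiability, as already exploited in the proof that $\Sigma_{t_0}(\hat u)$ is dense in $\operatorname{Cut}_{t_0}(\hat u)$); there the Hamiltonian vector field $\bar X$ of that proof is locally Lipschitz, and continuous dependence of its flow implies that for $(s_n, y_n)$ close enough the forward extension of the characteristic through $(s_n, y_n)$ remains in this open set and hence is calibrated up to time $T$, giving $\liminf_n \tau(s_n, y_n) \geq T$.

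Once $r$ is continuous, local contractibility of $\operatorname{Cut}_{t_0}(\hat u)$ at $(t_0, x_0)$ is routine: given any neighborhood $V$ of $(t_0, x_0)$ in $\operatorname{Cut}_{t_0}(\hat u)$, write $V = V' \cap \operatorname{Cut}_{t_0}(\hat u)$ for an open $V' \subset \Omega$; using continuity of $r$ at $(t_0, x_0)$ together with $r(t_0, x_0) = (t_0, x_0)$, choose a smaller open ball $\Omega' \subset V'$ with $r(\Omega') \subset V'$, and compose the straight-line contraction $G$ of $\Omega'$ to $(t_0, x_0)$ in local coordinates with $r$ to obtain a contraction of $\Omega' \cap \operatorname{Cut}_{t_0}(\hat u)$ to $(t_0, x_0)$ inside $V$. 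The analogue for $\Sigma_{t_0}(\hat u)$ requires one extra ingredient: since $\Sigma_{t_0}(\hat u) \subset \operatorname{Cut}_{t_0}(\hat u) \subset \overline{\Sigma_{t_0}(\hat u)}$, I would refine $r$ via a propagation-of-singularities argument (in the spirit of Cannarsa-Sinestrari) so that its image actually lies in $\Sigma_{t_0}(\hat u)$ when the base point does. The hardest part of the entire argument is the lower semi-continuity of $\tau$ at non-cut points, since $\tau$ is known a priori only to be upper semi-continuous.
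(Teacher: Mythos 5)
Your proposal takes a genuinely different route from the paper, and it has a gap that appears to be essential rather than cosmetic.

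The paper does not attempt a retraction onto $\operatorname{Cut}_{t_0}(\hat u)$. Instead it builds a continuous \emph{$\hat u$-adapted homotopy} $F$ via the Lax--Oleinik operators, with $F[(t,x),s]$ defined as the unique $y$ maximizing $T^-_{s+t}u(y)-h_s(x,y)$; this $F$ is well-defined and continuous for all small $s$ (Lemmas \ref{Claim1}, \ref{Claim2} and \ref{HomLoc}), including for $s$ \emph{larger} than $\tau(t,x)-t$. It then uses Baire's interpolation theorem to replace the merely upper semi-continuous function $(t,x)\mapsto\tau(t,x)-t$ by a \emph{continuous} $\alpha$ with $\tau(t,x)-t<\alpha(t,x)$, and sets $H[(t,x),s]=\bar F[(t,x),s\alpha(t,x)]$. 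Since Proposition \ref{StrongAdaptedProperty} guarantees $\bar F[(t,x),s]\in\Sigma_{t_0}(\hat u)$ as soon as $s>\tau(t,x)-t$, the endpoint $H[(t,x),1]$ lands in $\Sigma_{t_0}(\hat u)$, and the homotopy preserves both $\Sigma$ and $\operatorname{Cut}$. The whole point of this construction is to use only the upper semi-continuity of $\tau$ (which the paper proves) and never its lower semi-continuity (which the paper never claims). You instead define $r(s,y)=(\tau(s,y),\gamma(\tau(s,y)))$ and must then prove this is continuous, which forces you to address lower semi-continuity of $\tau$ at non-cut points.

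This is exactly where the gap is. To show $\liminf_n\tau(s_n,y_n)\geq T$ for $T<\tau(s_\infty,y_\infty)$, you assert that the graph of $\gamma_\infty|[s_\infty,T]$ sits in ``the open set $]0,t_0[\times M\setminus\operatorname{Cut}_{t_0}(\hat u)$'' and that $\hat u$ is $C^{1,1}$ there. But the paper never establishes that $\operatorname{Cut}_{t_0}(\hat u)$ is closed in $]0,t_0[\times M$; it only proves $\Sigma_{t_0}(\hat u)\subset\operatorname{Cut}_{t_0}(\hat u)\subset\overline{\Sigma_{t_0}(\hat u)}$, and since $\tau-t$ is only upper semi-continuous, its zero set (which is $\operatorname{Cut}_{t_0}(\hat u)$) has no reason to be closed. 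Concretely, nothing rules out a sequence of cut points $(s_n,y_n)\to(s_\infty,y_\infty)$ with $(s_\infty,y_\infty)\notin\operatorname{Cut}_{t_0}(\hat u)$; in that scenario $\tau(s_n,y_n)=s_n\to s_\infty<\tau(s_\infty,y_\infty)$, so $\tau$ is not lower semi-continuous at $(s_\infty,y_\infty)$ and your $r$ is discontinuous there. The $C^{1,1}$ argument that you invoke (from the density proof) is valid only on open sets disjoint from $\Sigma_{t_0}(\hat u)$; pointwise differentiability of $\hat u$ along the calibrated graph does not give $C^{1,1}$ regularity on a neighborhood unless you already know that neighborhood contains no singularities. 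So the chain ``graph in complement of $\operatorname{Cut}$ $\Rightarrow$ neighborhood where $\hat u$ is $C^{1,1}$ $\Rightarrow$ continuous dependence of the calibrated flow'' breaks at the first step. The secondary issue you flag --- that $r$ lands only in $\operatorname{Cut}_{t_0}(\hat u)$ and not in $\Sigma_{t_0}(\hat u)$ --- is real as well, but that part is fixable (the paper's device of overshooting the cut time by a continuous margin $\alpha$ handles it); the continuity of $r$ is the part I do not see how to repair without essentially reconstructing the paper's $F$ and $\alpha$.
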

At this point it is useful to introduce the concept of $U$-adapted homotopy.
\begin{defn}[$U$-adapted homotopy]\label{Adapted} Suppose $U:[0,t_0]\times M\to \R$ is a viscosity solution 
of the evolution Hamilton-Jacobi equation \eqref{HJE0}. A continuous homotopy $F:S\times [0,\delta]\to M$,
with $\delta>0$ and $S\subset ]0,t_0[\times M$,
is said to be $U$-adapted if it satisfies
 \begin{enumerate}[(1)]
\item for all $(t,x)\in  S$, we have $t+\delta<t_0$;
\item for all $(t,x)\in  S$, we have $F\left[(t,x),0\right]=x$;
\item if $\left(t+s,F\left[(t,x),s\right]\right)\notin \Sigma_{t_0} (U)$, for some 
 $(t,x)\in S$ and some $s\in ]0,\delta]$, then the curve 
 $\sigma\mapsto F\left[(x, t),\sigma-t\right],\sigma  \in[t,t+s]$, is $U$-calibrated;
\item If  $\gamma:[t,t+s]\to M$ is $U$-calibrated, with $(t,\gamma(t))\in S$,
then for every $\sigma\leq t+\min (s,\delta)$, we have
$F\left[(t,\gamma(t)),\sigma-t\right]=\gamma(\sigma)$.
\end{enumerate}
\end{defn}
\begin{nota}\label{barF} For such a $U$-adapted homotopy $F:S\times [0,\delta]\to M$, with $S\subset ]0,t_0[\times M$,
we define $\bar F: S\times [0,\delta]\to ]0,t_0[\times M$ by
$$\bar F[(t,x),s]=\left(t+s,F\left[(t,x),s\right]\right).$$
\end{nota}
The nice feature of $U$-adapted homotopies is the stability by restriction and composition as given in the following lemma, whose proof is immediate.
\begin{lem}\label{comp} Suppose $U:[0,t_0]\times M\to \R$ is a viscosity solution 
of the evolution Hamilton-Jacobi equation \eqref{HJE0}.
If  $F_1: S_1\times[0, \delta_1]\to M$, and $F_2: S_2\times[0, \delta_2]\to M$ 
are two continuous
$U$-adapted homotopies, with $S_1,S_2\subset ]0,t_0[\times M$ such that $\bar F_1\left[(t,x),\delta_1\right]=
\left(t+\delta_1,F_1\left[(t,x),\delta_1\right]\right)\in S_2$, for all $(t,x)\in S_1$,
then the homotopy $F:S_1\times [0,\delta_1+\delta_2]\to M$ defined by
$$F\left[(t,x),s\right]=\begin{cases}
F_1\left[(t,x),s\right],\text{ for $s\in [0,\delta_1]$},\\
F_2\left[\bar F_1\left[(t,x),\delta_1\right], s-\delta_1\right],\text{ for $s\in [\delta_1,\delta_1+\delta_2]$},
\end{cases}
$$
is itself $U$-adapted. 
\end{lem}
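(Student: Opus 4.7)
The plan is to verify each of the four properties (1)--(4) of Definition \ref{Adapted} for the composed homotopy $F$, after noting that continuity of $F$ is immediate from the continuity of $F_1$ and $F_2$, together with the matching $F_1[(t,x),\delta_1] = F_2[\bar F_1[(t,x),\delta_1], 0]$ at $s=\delta_1$, where this last equality uses property (2) of $F_2$.

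Properties (1) and (2) are routine: for (1), since $\bar F_1[(t,x),\delta_1] = (t+\delta_1,\,\cdot\,)\in S_2$, property (1) of $F_2$ gives $(t+\delta_1)+\delta_2<t_0$; property (2) is just $F[(t,x),0]=F_1[(t,x),0]=x$. For property (4), given a $U$-calibrated curve $\gamma:[t,t+s]\to M$ with $(t,\gamma(t))\in S_1$, property (4) of $F_1$ yields $F_1[(t,\gamma(t)),\sigma-t]=\gamma(\sigma)$ for every $\sigma\leq t+\min(s,\delta_1)$, which settles the case $s\leq\delta_1$. When $s>\delta_1$, evaluating at $\sigma=t+\delta_1$ gives $\bar F_1[(t,\gamma(t)),\delta_1]=(t+\delta_1,\gamma(t+\delta_1))\in S_2$ by the standing hypothesis of the lemma, and the restriction $\gamma|_{[t+\delta_1,t+s]}$ is again $U$-calibrated by Proposition \ref{calcurprop}, so property (4) of $F_2$ extends the agreement to all $\sigma\leq t+\min(s,\delta_1+\delta_2)$.

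The heart of the proof is property (3). Fix $(t,x)\in S_1$ and $s\in{}]0,\delta_1+\delta_2]$ with $(t+s,F[(t,x),s])\notin\Sigma_{t_0}(U)$. The case $s\leq\delta_1$ is handled directly by property (3) of $F_1$. For $s>\delta_1$, set $\eta(\sigma):=F[(t,x),\sigma-t]$. Property (3) of $F_2$, applied with $\bar F_1[(t,x),\delta_1]\in S_2$ and the remaining time $s-\delta_1\in{}]0,\delta_2]$, gives that $\eta|_{[t+\delta_1,t+s]}$ is $U$-calibrated. To stitch this together with the first piece via Proposition \ref{CONCAT}, I still need $\eta|_{[t,t+\delta_1]}$ to be $U$-calibrated, which would follow from property (3) of $F_1$ at $s=\delta_1$ provided $(t+\delta_1,\eta(t+\delta_1))\notin\Sigma_{t_0}(U)$.

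This nonsingularity at the junction is the one delicate point, since Proposition \ref{DiffProp}(v) only guarantees differentiability at interior points of a calibrated curve, not at its left endpoint $t+\delta_1$. I would bridge the gap using semiconcavity: Proposition \ref{DiffProp}(ii), applied at the left endpoint of the calibrated curve $\eta|_{[t+\delta_1,t+s]}$, produces an element of $D^-U(t+\delta_1,\eta(t+\delta_1))$, so this lower differential is nonempty. Since $U$ is a continuous viscosity solution on $]0,t_0[\times M$, Theorem \ref{UniquenessUC} identifies it with a Lax--Oleinik evolution $\hat u$ and Theorem \ref{PropHatu} then ensures that $U$ is locally semiconcave there; for locally semiconcave functions, nonemptyness of $D^-U$ at a point is equivalent to differentiability at that point. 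Hence $(t+\delta_1,\eta(t+\delta_1))\notin\Sigma_{t_0}(U)$, property (3) of $F_1$ delivers calibration of $\eta|_{[t,t+\delta_1]}$, and Proposition \ref{CONCAT} completes the verification.
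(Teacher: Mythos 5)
Your proof is correct, and you have put your finger on the one point that is not purely formal: to verify property (3) of Definition \ref{Adapted} for the composed $F$ when $s>\delta_1$, one must first know that the junction $\left(t+\delta_1,F_1[(t,x),\delta_1]\right)$ lies outside $\Sigma_{t_0}(U)$ before property (3) of $F_1$ can be invoked on $[t,t+\delta_1]$ and Proposition \ref{CONCAT} applied. The paper declares this lemma's proof ``immediate'' and records nothing, so there is no written argument to compare against; but your resolution --- extracting $D^-U\neq\emptyset$ at the left endpoint from Proposition \ref{DiffProp}(ii), invoking local semiconcavity of $U$ through Theorems \ref{UniquenessUC} and \ref{PropHatu}, and concluding differentiability --- is valid (indeed, the elementary fact that simultaneous nonemptiness of $D^+U$ and $D^-U$ at a point forces differentiability there suffices; semiconcavity is just one way to guarantee $D^+U\neq\emptyset$). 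A marginally lighter alternative, staying within the toolkit of \S2, is to prepend to the calibrated arc $\eta|_{[t+\delta_1,t+s]}$ a backward $U$-characteristic ending at $(t+\delta_1,\eta(t+\delta_1))$ (Proposition \ref{ExistenceBackCar}); the concatenation is $U$-calibrated on $[0,t+s]$ by Proposition \ref{CONCAT}, and since $t+\delta_1$ is then an interior time, Proposition \ref{DiffProp}(v) directly gives $(t+\delta_1,\eta(t+\delta_1))\notin\Sigma_{t_0}(U)$, bypassing sub/super-differentials and semiconcavity. Your handling of continuity and of properties (1), (2), (4), including the use of Proposition \ref{calcurprop}(2) to restrict the calibrated curve to $[t+\delta_1,t+s]$, is complete and correct.
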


\begin{prop}\label{StrongAdaptedProperty} Assume that the function 
$u: M\to [-\infty,+\infty]$ is such that its negative Lax-Oleinik evolution $\hat u$ is finite at every point of $]0,t_0[ \times M$. If $F:S\times [0,\delta]\to M$, with $S\subset ]0,t_0[\times M$, is a $\hat u$-adapted homotopy, then
we have
$$
\bar F[(t,x),s]=\left(t+s,F\left[(t,x),s\right]\right)\in\Sigma_{t_0}(\hat u), \text{ for all $s\in]\tau(t,x)-t,t_0-t[$,}
$$
where $\tau$ is the cut time function for $\hat u$.
\end{prop}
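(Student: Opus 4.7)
The plan is a direct proof by contradiction. Suppose, for some $(t,x)\in S$ and some $s\in{}]\tau(t,x)-t,t_0-t[$ (which, since condition (1) forces $\delta<t_0-t$, lies automatically in $[0,\delta]$), the point $\bar F[(t,x),s]=(t+s,F[(t,x),s])$ does \emph{not} belong to $\Sigma_{t_0}(\hat u)$. I will show this forces $\tau(t,x)\geq t+s$, contradicting the assumption $s>\tau(t,x)-t$.

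The construction proceeds in two steps. First, property (3) in the definition of a $\hat u$-adapted homotopy, applied at $(t,x)$ and $s$, together with property (2), says exactly that the curve
\[
\delta_{t,x}:[t,t+s]\to M,\qquad \delta_{t,x}(\sigma)=F\bigl[(t,x),\sigma-t\bigr],
\]
is $\hat u$-calibrated and satisfies $\delta_{t,x}(t)=x$. Second, I need to prolong $\delta_{t,x}$ backward in time to the full interval $[0,t+s]$ so as to appeal to the defining supremum for $\tau(t,x)$. To do so, I invoke Proposition \ref{hatu-} to replace $u$ by its lower semi-continuous regularization $u_-$ (which leaves $\hat u$, and hence the notions of calibrated curve and cut time, unchanged on $]0,t_0[\times M$), and then apply Proposition \ref{ExistenceBackCar} to obtain a backward $\hat u$-characteristic $\gamma_0:[0,t]\to M$ with $\gamma_0(t)=x$. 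Concatenating $\gamma_0$ with $\delta_{t,x}$ at the common value $x$ at time $t$ yields a continuous curve $\gamma:[0,t+s]\to M$ that is piecewise $\hat u$-calibrated. Proposition \ref{CONCAT} then promotes this to $\hat u$-calibration on the whole of $[0,t+s]$.

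With such a $\gamma$ in hand, the definition of the cut time function $\tau$ gives $\tau(t,x)\geq t+s$, i.e.\ $s\leq\tau(t,x)-t$, contradicting the choice of $s$. I do not expect any real obstacle: the argument is essentially a clean three-way assembly of the adapted-homotopy axioms, the existence of backward characteristics, and the concatenation lemma. The only mild technical point is the passage to $u_-$ needed to legitimately invoke Proposition \ref{ExistenceBackCar}, which is harmless because $\hat u=\hat{u_-}$ on $]0,+\infty[\times M$.
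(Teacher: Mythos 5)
Your proof is correct and follows the same approach as the paper: assume $\bar F[(t,x),s]\notin\Sigma_{t_0}(\hat u)$, deduce from property~(3) of Definition~\ref{Adapted} that the curve $\sigma\mapsto F[(t,x),\sigma-t]$ on $[t,t+s]$ is $\hat u$-calibrated, and conclude $t+s\le\tau(t,x)$, contradicting $s>\tau(t,x)-t$; the paper states the final implication without comment, while you correctly spell out the needed concatenation with a backward characteristic (via Propositions~\ref{hatu-}, \ref{ExistenceBackCar} and~\ref{CONCAT}) to produce a calibrated curve on all of $[0,t+s]$, as required by the definition of $\tau$. One small slip: your parenthetical ``lies automatically in $[0,\delta]$'' is not what condition~(1) gives --- $\delta<t_0-t$ does not put every $s\in{}]\tau(t,x)-t,t_0-t[$ inside $[0,\delta]$; rather, the statement is implicitly restricted to those $s$ for which $F$ is defined, i.e.\ $s\in{}]\tau(t,x)-t,\delta]$.
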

\begin{proof} Assume that $\bar F[(t,x),s]=\left(t+s,F\left[(t,x),s\right]\right)\notin\Sigma_{t_0}(\hat u)$, then, by part
(3) of Definition \ref{Adapted}, the curve 
 $\sigma\mapsto F\left[(x, t),\sigma-t\right],\sigma  \in[t,t+s]$, is $\hat u$-calibrated which implies that
 $t+s\leq \tau(t,x)$.
 \end{proof}
We will deduce Theorem \ref{LocContrGen} from the lemma below, whose proof will be
postponed to section \S\ref{SecHomLoc}.
\begin{lem}\label{HomLoc} Let $H:T^*\/M\to \R$ be a Tonelli Hamiltonian. Assume that the function 
$u: M\to [-\infty,+\infty]$ is such that its negative Lax-Oleinik evolution $\hat u$ is finite at every point of $]0,t_0[ \times M$. 

Then, for every compact subset $C\subset ]0,t_0[ \times M$, we can find $\delta>0$ and
a $\hat u$-adapted homotopy $F:C\times [0,\delta]\to M$.

Moreover, for every $(t,x)\in C$ and every $s\in]0,\delta]$, we have
\begin{equation}\begin{aligned}
\hat u[t+s, F((t,x),s)]-h_s[x,F((t,x),s)]&=\max_{z\in M}\hat u(t+s, z)-h_s(x,z)\\
&\geq \hat u(t+s, x)-h_s(x,x).
\end{aligned}
\label{CaracF}
\end{equation}
\end{lem}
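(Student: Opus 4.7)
The natural guess suggested by formula \eqref{CaracF} is to define $F((t,x),s)$ as a maximizer of $\phi_{t,x,s}(z):=\hat u(t+s,z)-h_s(x,z)$ over $z\in M$. The plan is to show, for $\delta>0$ sufficiently small depending only on $C$, that this maximizer is unique, depends continuously on $(t,x,s)$, and satisfies the four axioms of Definition \ref{Adapted}. Existence of the maximizer follows from upper semicontinuity combined with the coercivity $h_s(x,z)\geq Kd(x,z)-sC(K)$ coming from \eqref{TonLag2bis}, together with continuity of $\hat u$ (Theorem \ref{PropHatu}) on a compact enlargement of $C$; the Lax--Oleinik bound $\phi_{t,x,s}(z)\leq \hat u(t,x)$ confines the argmax to a bounded set. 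Uniqueness rests on the interplay between semiconcavity of $\hat u(t+s,\cdot)$ with locally controlled constants and the semiconvexity of $h_s(x,\cdot)$ of order $1/s$ (a standard Tonelli estimate, since $h_s(x,z)\sim d(x,z)^2/(2s)$ to leading order as $s\to 0$); for $s$ below a threshold $\delta$ uniform on $C$, the $1/s$-convexity dominates and $\phi_{t,x,s}$ is strictly concave near its maximum. Continuity of $F$ then follows from stability of unique maximizers under continuous perturbation of the functional.

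Axioms (1) and (2) are routine: shrink $\delta$ further so that $t+\delta<t_0$ on $C$, and set $F((t,x),0):=x$, which is consistent with $F((t,x),s)\to x$ as $s\to 0^+$ by the same coercivity. Axiom (4) is a direct corollary of uniqueness: if $\gamma\colon [t,t+s]\to M$ is $\hat u$-calibrated with $\gamma(t)=x$ and $\sigma\in[t,t+\min(s,\delta)]$, then the calibration identity $\hat u(\sigma,\gamma(\sigma))-h_{\sigma-t}(x,\gamma(\sigma))=\hat u(t,x)$ shows that $\gamma(\sigma)$ achieves the maximum of $\phi_{t,x,\sigma-t}$ (since the Lax--Oleinik inequality gives $\phi\leq\hat u(t,x)$ pointwise), so $F((t,x),\sigma-t)=\gamma(\sigma)$ by uniqueness.

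Axiom (3) is the heart of the argument. Suppose $z^*:=F((t,x),s)$ satisfies $(t+s,z^*)\notin\Sigma_{t_0}(\hat u)$, and let $\gamma\colon[0,t+s]\to M$ be the unique backward $\hat u$-characteristic ending at $(t+s,z^*)$ from Proposition \ref{DiffCriterion}. The goal is to prove $\gamma(t)=x$: once that is established, $\gamma|_{[t,t+s]}$ is $\hat u$-calibrated, and axiom (4) applied to it identifies $\sigma\mapsto F((t,x),\sigma-t)$ with $\gamma|_{[t,t+s]}$. The first-order optimality $0\in D^+\phi_{t,x,s}(z^*)$, combined with differentiability of $\hat u$ at $(t+s,z^*)$, gives $\partial_x\hat u(t+s,z^*)\in D^-h_s(x,\cdot)(z^*)$. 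Since $h_s(x,\cdot)$ is semiconcave, non-emptiness of this sub-differential forces its differentiability at $z^*$, which in turn forces the minimizer $\eta\colon[0,s]\to M$ from $x$ to $z^*$ to be unique with $\partial_zh_s(x,z^*)=\partial_vL(z^*,\dot\eta(s))$. By Proposition \ref{DiffProp2} this equals $\partial_vL(z^*,\dot\gamma(t+s))$, so strict convexity of $L$ in $v$ gives $\dot\eta(s)=\dot\gamma(t+s)$; uniqueness of the Euler--Lagrange flow then yields $\eta(\sigma)=\gamma(t+\sigma)$ on $[0,s]$, so $\gamma(t)=\eta(0)=x$ as required.

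The main obstacle is making the semiconcavity/semiconvexity estimates quantitative enough to pick a single $\delta$ that works over all of $C$: the local semiconcavity constant of $\hat u$ and the constants in the $\sim 1/s$ semiconvexity of $h_s(x,\cdot)$ both depend on the base point and on the range of velocities of the relevant extremals, so one must combine the compactness of $C$ with uniform bounds on backward characteristics (obtained from superlinearity together with the properness of the closed $R$-neighborhoods $\bar V_R(K)$ recalled in the introduction).
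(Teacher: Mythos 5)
Your definition of $F((t,x),s)$ as the argmax of $\phi_{t,x,s}(z)=\hat u(t+s,z)-h_s(x,z)$ coincides with the paper's, since $\sup_z\phi_{t,x,s}(z)=T^+_sT^-_{t+s}u(x)$, and your verifications of axioms (3) and (4) of Definition~\ref{Adapted} track the paper's argument essentially step by step: first-order optimality together with differentiability of $\hat u$ at the argmax forces the connecting minimizer onto the backward characteristic, and the calibration identity shows a calibrated curve already achieves the sup. The genuine divergence is in the uniqueness step, which is the technical crux. You propose strict concavity of $z\mapsto\phi_{t,x,s}(z)$ for small $s$, playing a uniformly bounded semiconcavity modulus of $\hat u(t+s,\cdot)$ against an $O(1/s)$ convexity modulus of $h_s(x,\cdot)$; the paper instead shows that the \emph{function} $x\mapsto T^+_sT^-_{t+s}u(x)$ is $C^{1,1}$ near the compact set for small $s$ (semiconvexity is automatic from $T^+_sT^-_{t+s}u\leq T^-_tu$ and Theorem~\ref{PropChecku}; semiconcavity comes from the short-time propagation estimate of Lemma~\ref{LemSemiCon}, imported from \cite[Appendix~B]{FatFigRif}), after which differentiability at $x$ yields uniqueness of the argmax directly by Lemma~\ref{DiffT+}. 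Both routes rest on the same quantitative input, the short-time strong convexity of $h_s$, but package it differently: your version requires writing out the $O(1/s)$ lower bound on the generalized Hessian of $h_s(x,\cdot)$ uniformly on compacts for a general Tonelli $L$, and then upgrading ``strictly concave near the maximum'' to uniqueness on the whole coercively confined argmax set (a chart-dependent point on a manifold), whereas the paper offloads the hard estimate to the cited reference. You rightly flag these uniformity and confinement issues as the main remaining obstacle; with that caveat, the overall design is sound and parallel to the paper's.
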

\begin{proof}[Proof of Theorem \ref{LocContrGen}]
Fix $(\bar t,\bar x)\in \operatorname{Cut}_{t_0}({\hat u})$, and a neighborhood of $(\bar t,\bar x)$ of the form
$[a,b]\times K$, with  $[a,b]\subset ]0,t_0[$ a compact interval, and $K\subset M$ a compact subset. 
Hence $a<\bar t<b$ and $\bar x\in \INT{K}$, where $\INT{K}$ is the interior of $K$. 

We now remark that, to prove the  theorem, it suffices to find a neighborhood $[\bar t-\epsilon,\bar t+\epsilon]\times V$
of  $(\bar t,\bar x)$ contained in
$[a,b]\times K$ and a homotopy
$H:[\bar t-\epsilon,\bar t+\epsilon]\times V\times [0,1]\to [a,b]\times K$ such that

\begin{enumerate}
\item[(C1)] $ H[(t,x),0]= (t,x)$, for all $(t,x)\in [\bar t-\epsilon,\bar t+\epsilon]\times V$;
\item[(C2)]  $H[(t,x),s] \in \Sigma_{t_0}(\hat u)$ for all $(t,x)\in\operatorname{Cut}_{t_0}({\hat u})\cap 
[\bar t-\epsilon,\bar t+\epsilon]\times V$,  and all $s>0$;
\item[(C3)]  $H[(t,x),1] \in \Sigma_{t_0}(\hat u)$,
for all $(t,x)\in [\bar t-\epsilon,\bar t+\epsilon]\times V$.
\end{enumerate}
In fact, properties (C1) and (C2) show that the inclusion 
$\operatorname{Cut}_{t_0}({\hat u})\cap [\bar t-\epsilon,\bar t+\epsilon]\times V  \subset \operatorname{Cut}_{t_0}({\hat u})
\cap [a,b]\times K$ (resp.\ $\Sigma_{t_0}(\hat u)\cap [\bar t-\epsilon,\bar t+\epsilon]\times V  
\subset \Sigma_{t_0}(\hat u)\cap [a,b]\times K $) is homotopic to $H(\cdot,1)$ as maps with values in 
$\operatorname{Cut}_{t_0}({\hat u})\cap [a,b]\times K$ (resp.\  $\Sigma_{t_0}(\hat u)\cap [a,b]\times K $).
We now observe that, cutting down the neighborhood $V$ of $\bar x$ on the manifold $M$, we can assume
that $V$ is contractible in itself. Hence, so is $[\bar t-\epsilon,\bar t+\epsilon]\times V$. Therefore, by (C3),
we obtain that $H(\cdot,1) $  on $[\bar t-\epsilon,\bar t+\epsilon]\times V$ is homotopic to a constant as maps
with values in  $ \Sigma_{t_0}(\hat u)\cap [a,b]\times K$. This clearly finishes the proof of local contractibility for
both  $ \Sigma_{t_0}(\hat u)$ and $\operatorname{Cut}_{t_0}({\hat u})$.

It remains to construct $H$. We first use Lemma \ref{HomLoc} to find a $\hat u$-adapted homotopy $F:[a,b]\times K\times [0,\delta]\to M$
for some $\delta>0$. Since $F\left[(\bar t,\bar x),0\right]=\bar x\in \INT{K}$, we can find 
$\delta_1,\epsilon>0$ and a neighborhood $V\subset \INT{K}$ of $x$ such that 
$[\bar t-\epsilon,\bar t+\epsilon+\delta_1]\subset ]a,b[$, and 
$F\left([\bar t-\epsilon,\bar t+\epsilon]\times V\times [0,\delta_1]\right)\subset \INT{K}$.
Since the point $(\bar t,\bar x)$ is in $\operatorname{Cut}_{t_0}({\hat u})$, we  have $\tau(\bar t ,\bar x)=\bar t$, where
$\tau$ is the cut time function of $\hat u$. Using that $\tau$ is upper semi-continuous, we conclude that
$\tau(t,x)<t+\delta_1$ in a neighborhood of $(\bar t,\bar x)$ in $]0,t_0[ \times M$. Therefore, cutting down
$\epsilon$ and $V$ if necessary, we can assume that $\tau(t,x)<t+\delta_1$,
for $(t,x)\in[\bar t-\epsilon,\bar t+\epsilon]\times V$.

By the upper semi-continuity of $\tau$, using the bound $\tau(t,x)-t<\min\delta_1$ on $[\bar t-\epsilon,\bar t+\epsilon]\times V$ and Baire's interpolation theorem, see \cite[Proposition 7.21]{Brown-Pearcy} 
or \cite[Section VIII.4.3]{Dugundji}, we can find a \emph{continuous} function
$\alpha:[\bar t-\epsilon,\bar t+\epsilon]\times V\to ]0, \delta_1[$ such that
$$0\leq \tau(t,x)-t<\alpha(t,x)<\delta_1, \text{ for all $(t,x)\in [\bar t-\epsilon,\bar t+\epsilon]\times V$.}$$
This implies that $\bar F\left[(t,x),s\alpha(t,x)\right]=\left( t+s\alpha(t,x), F\left[(t,x),s\alpha(t,x)\right]\right)$ is well defined
and is in $[a,b]\times K$,
for all $(t,x)\in [\bar t-\epsilon,\bar t+\epsilon]\times V$ and all $s\in [0,1]$.

Therefore we can now define the continuous homotopy 
$H :[\bar t-\epsilon,\bar t+\epsilon]\times V\times[0,1]\to  [a,b]\times K$ by
\begin{equation}\label{barF01}
H[(t,x),s]=\bar F\left[(t,x),s\alpha(t,x)\right]=\left( t+s\alpha(t,x), F\left[(t,x),s\alpha(t,x)\right]\right).
\end{equation}

Obviously, the homotopy $H$ satisfies the required  property (C1) given above. Since 
$\tau(t,x)<t+\alpha(t,x)$, Proposition \eqref{StrongAdaptedProperty}, applied to the $\hat u$-adapted homotopy $F$,
implies $H[(t,x),1]=\bar F\left[(t,x),\alpha(t,x)\right]\in 
 \Sigma_{t_0}(\hat u)$, for $(t,x)\in [\bar t-\epsilon,\bar t+\epsilon]\times V$. This proves (C3). To prove (C2), we remark that $\tau(t,x)=t$, for $(t,x) \in\operatorname{Cut}_{t_0}({\hat u})$, which, again by  Proposition \eqref{StrongAdaptedProperty},@
 implies $H[(t,x),s]=\bar F\left[(t,x),s\alpha(t,x)\right] \in  \Sigma_{t_0}(\hat u)$, for every $s>0$.
\end{proof}
We now prove Theorem \ref{LocContrDitFunc}.
\begin{proof}[Proof of Theorem \ref{LocContrDitFunc}] If $C$ is a closed subset of the complete Riemannian
manifold $M$, from Example \ref{Exe2}, the negative Lax-Oleinik evolution $\hat \chi_C$ is given, for $t>0$, by
$$\hat \chi_C(x)=\frac{d_C(x)^2}{2t}.$$
The partial derivative $\partial_t\hat \chi_C$ is given by
$$\partial_t\hat \chi_C(t,x)=-\frac{d_C(x)^2}{2t^2}.$$
Hence, it  is defined and continuous everywhere. This implies that 
\begin{equation}\label{EqSing1}
\Sigma(\hat \chi_C)=]0,+\infty[\times\Sigma(d^2_C),
\end{equation}
where $\Sigma(d^2_C)$, as usual, is the set of points in $M$ where $d^2_C$ is not differentiable.

From Theorem \ref{LocContrGen}, we obtain that $\Sigma(\hat \chi_C)=]0,+\infty[\times\Sigma(d^2_C)$ is locally contractible, 
which implies that $\Sigma(d^2_C)$ is also locally contractible. 

We now observe that $d^2_C$ is differentiable at every point $c\in C$, since 
$0\leq d_C^2(x)\leq d^2(c,x)$. Therefore,
from $d_C>0$ on $M\setminus C$, we get 
\begin{equation}\label{EqSing2}
\Sigma(d^2_C)=\Sigma^*(d_C).
\end{equation} 
This finishes the proof Theorem\ref{LocContrDitFunc}.
\end{proof}
\section{Proof of Lemma \ref{HomLoc}}\label{SecHomLoc}
Since $C$ is a compact subset of $]0,t_0[\times M$, it is contained in a subset of the form $[a,b]\times K$,
where $K$ is compact subset of  $M$ and $[a,b]\subset]0,t_0[$.
We then fix a positive number $\eta>0$ such that $[a-\eta,b+\eta]\subset ]0,t_0[$.

By Theorem \ref{PropHatu}, the function $\hat u$ is continuous and even locally semiconcave on
$]0,t_0[ \times M$. This implies that the family of functions $(T^-_{t}u)_{t\in [a-\eta,b+\eta]}$
is equi-continous an equi-semiconcave
on the compact neighborhood $\bar V_1(K)=\{y\in M\mid d_K(y)\leq 1\}$ of $K$.

We define the two continuous functions $u_0,u_1:M\to \R$ by 
\begin{equation}\label{Defu0u1}
u_0(x)=\inf_{t\in[a-\eta,b+\eta]}\hat u(t,x)\text{ and } u_1(x)=\sup_{t\in[a-\eta,b+\eta]}\hat u(t,x).
\end{equation}

We need to use the positive Lax-Oleinik semi-group $T^+_t,t\geq 0$, whose definition we now recall.

If $u:M\to [-\infty,+\infty]$ is a function and $t>0$, the function $T^+_tu:M\to [-\infty,+\infty]$ is defined by
\begin{equation}\label{DefT+}
T^+_tu(x)=\sup_{y\in M}u(y)-h_t(x,y).
\end{equation}
We also set $T^+_0u=u$.

\begin{rem}\label{CaracT+} As is well-known, the semi-group $\check{T}^-_t$ defined by $\check{T}^-_t(u)=-T^+_t(-u)$ is in fact the 
 negative Lax-Oleinik semi-group associated to the Lagrangian $\check{L}:TM\to\R$ defined by $\check{L}(x,v)=L(x,-v)$.
\end{rem}
If $u$ is not identically $-\infty$, then  $T^+_su>-\infty$ everywhere for $s>0$.
Therefore $T^+_su$ is finite everywhere, for $s>0$, if $T^+_su<+\infty$ everywhere and $u$ is not identically $-\infty$.

For $u:M\to [-\infty,+\infty]$, it is convenient to define $\check u:[0,+\infty[\times M\to [-\infty,+\infty]$ by
$$\check u(t,x)=T^+_tu(x).$$
We will call $\check u$ the \emph{positive} Lax-Oleinik evolution.
Using the Remark \ref{CaracT+} above, we obtain from Theorem \ref{PropHatu} the following analogous theorem
\begin{thm}\label{PropChecku} Assume $H:T^*\/M\to \R$ is a Tonelli Hamiltonian. Let $u:M\to [-\infty,+\infty]$ be a function. If its positive Lax-Oleinik evolution $\check u$  is finite-valued on $]0,t_0[\times M$, then it is continuous
and even locally semiconvex on $]0,t_0[\times M$.
\end{thm}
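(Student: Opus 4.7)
The plan is to deduce Theorem~\ref{PropChecku} from Theorem~\ref{PropHatu} via the time-reversal duality already recorded in Remark~\ref{CaracT+}. Introduce the reversed Lagrangian $\check L:TM\to\R$ defined by $\check L(x,v)=L(x,-v)$, whose associated Hamiltonian is $\check H(x,p)=H(x,-p)$. First I would verify that $\check L$ (equivalently $\check H$) is again a Tonelli Lagrangian: the $C^2$ regularity and the fiber strict convexity are preserved because the involution $v\mapsto -v$ is a smooth linear isomorphism, so the fiber Hessian of $\check L$ at $(x,v)$ is the fiber Hessian of $L$ at $(x,-v)$, hence positive definite; the superlinearity constant $C(K)$ and the fiber bound $A(R)$ coincide with those of $L$ because the norm $\lVert\cdot\rVert_x$ is even in $v$.

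Next I would make the identity $\check T^-_t(u)=-T^+_t(-u)$ of Remark~\ref{CaracT+} fully explicit at the level of minimal actions. Denoting by $\check h_t$ the minimal action associated to $\check L$, the substitution $\tilde\gamma(s)=\gamma(t-s)$ sends a curve $\gamma:[0,t]\to M$ with $\gamma(0)=y,\gamma(t)=x$ to a curve $\tilde\gamma:[0,t]\to M$ with $\tilde\gamma(0)=x,\tilde\gamma(t)=y$ and transforms the $\check L$-action into the $L$-action:
$$\int_0^t\check L(\gamma(s),\dot\gamma(s))\,ds=\int_0^t L(\tilde\gamma(s),\dot{\tilde\gamma}(s))\,ds.$$
Taking the infimum yields $\check h_t(y,x)=h_t(x,y)$. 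If $\widehat{(\cdot)}_{\check L}$ denotes negative Lax-Oleinik evolution for $\check L$, then for $v=-u$,
$$\widehat{(-u)}_{\check L}(t,x)=\inf_{y\in M}\bigl[-u(y)+\check h_t(y,x)\bigr]=-\sup_{y\in M}\bigl[u(y)-h_t(x,y)\bigr]=-\check u(t,x).$$

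The conclusion then follows by reflection. Since $\check u$ is finite on $]0,t_0[\times M$ by assumption, so is $\widehat{(-u)}_{\check L}=-\check u$. Applying Theorem~\ref{PropHatu} to the Tonelli Hamiltonian $\check H$ and the function $-u$, one obtains that $\widehat{(-u)}_{\check L}$ is continuous and locally semiconcave on $]0,t_0[\times M$. Taking negatives, $\check u$ is continuous and locally semiconvex on the same set, as claimed.

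I do not anticipate any substantive obstacle: all analytic content—existence, continuity, and the one-sided local second-order estimate—is imported intact from Theorem~\ref{PropHatu}. The only items to check are the stability of the Tonelli axioms under $v\mapsto -v$ and the elementary time-reversal identity for the action, both of which are essentially formal.
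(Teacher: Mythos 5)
Your proposal is correct and takes essentially the same route as the paper: the paper derives Theorem~\ref{PropChecku} from Theorem~\ref{PropHatu} precisely via the time-reversal duality of Remark~\ref{CaracT+}, stating only ``Using the Remark~\ref{CaracT+} above, we obtain from Theorem~\ref{PropHatu} the following analogous theorem.'' You have simply made explicit the checks the paper leaves implicit (stability of the Tonelli axioms under $v\mapsto -v$, the action identity $\check h_t(y,x)=h_t(x,y)$, and the sign flip turning semiconcavity into semiconvexity), all of which are verified correctly.
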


\begin{lem}\label{CoincerT+} For every $s,t\geq0$, we have $T^+_sT^-_{s+t}u\leq T^-_{t}u$.
Therefore $T^+_sT^+_sT^-_{s+t}u$ is locally semi-convex, for every $t\in [a-\eta, b]$, and $s\in ]0,\eta]$.
\end{lem}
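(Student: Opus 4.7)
The plan is to deduce both parts of the lemma from the subadditivity of the minimal action, namely the concatenation bound
\[
h_{s+t}(y,z) \leq h_t(y,x) + h_s(x,z),
\]
valid for all $x,y,z \in M$ and $s,t \geq 0$ by juxtaposing a minimizer from $y$ to $x$ on $[0,t]$ with one from $x$ to $z$ on $[t,s+t]$. Granted this, the first claim should follow by unfolding the definitions:
\[
T^+_s T^-_{s+t}u(x) = \sup_{z \in M} \inf_{y \in M} \bigl[u(y) + h_{s+t}(y,z) - h_s(x,z)\bigr] \leq \inf_{y \in M} \bigl[u(y) + h_t(y,x)\bigr] = T^-_t u(x),
\]
since replacing the $z$-dependent quantity $h_{s+t}(y,z)-h_s(x,z)$ by its upper bound $h_t(y,x)$ makes the bracket independent of $z$, trivializing the outer supremum.

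For the semiconvexity statement, I would fix $s \in ]0,\eta]$ and $t \in [a-\eta,b]$, set $v := T^-_{s+t}u$, and view $T^+_s T^-_{s+t}u$ as the time-$s$ slice of the positive Lax--Oleinik evolution $\check v(\sigma,x) := T^+_\sigma v(x)$ on the strip $]0,s+t[\,\times M$. Applying the inequality just proved with $\sigma$ in place of $s$ and $s+t-\sigma$ in place of $t$ yields $T^+_\sigma v \leq T^-_{s+t-\sigma}u$, whose right-hand side is finite because $s+t-\sigma \in \,]0, b+\eta[\,\subset\,]0, t_0[$ by the hypothesis on $\eta$. A matching lower bound $T^+_\sigma v(x) \geq v(x) - \sigma A(0)$ comes from inserting the constant curve at $x$ into the definition \eqref{DefT+} and using $L(x,0) \leq A(0)$ from \eqref{TonLag3}; since $v$ is finite on $M$, this makes $\check v$ finite on $]0,s+t[\,\times M$.

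With finiteness established on an open strip, Theorem \ref{PropChecku} then supplies local semiconvexity of $\check v$ there, and specializing to $\sigma = s$ gives the desired conclusion for $T^+_s T^-_{s+t}u$. I do not anticipate a real obstacle: the inequality is purely algebraic once subadditivity of $h$ is in hand, and the semiconvexity is engineered to reduce to Theorem \ref{PropChecku} via the upper bound provided by the first half of the lemma, with the trivial constant-curve lower bound taking care of the remaining finiteness check.
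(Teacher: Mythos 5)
Your proof is correct and follows essentially the same route as the paper: the first inequality comes from the subadditivity $h_{s+t}(y,z)\le h_t(y,x)+h_s(x,z)$, which is exactly the inequality half of the semi-group identity the paper invokes, and the second part reduces to Theorem \ref{PropChecku} via the finiteness sandwich $v-\sigma A(0)\le T^+_\sigma v\le T^-_{s+t-\sigma}u$. Your version is slightly more careful than the paper's in spelling out why $\check v$ is finite on a full time-strip before applying Theorem \ref{PropChecku}, but the underlying argument is the same.
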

\begin{proof}
By the semi-group property of the negative Lax-Oleinik semi-group $T^-_t, t\geq0$, for every $t,s\geq 0$, we have 
$T^-_{s+t}u(x)=\inf_{y\in M}T^-_{t}u(y)+h_s(y,x)$. Therefore
$T^-_{s+t}u(x)\leq T^-_{t}u(y)+h_s(y,x)$,
or equivalently
$T^-_{s+t}u(x)-h_s(y,x)\leq T^-_{t}u(y)$,
which implies $T^+_sT^-_{s+t}u(x)=\sup_{y\in M}T^-_{s+t}u(y)-h_s(x,y)\leq T^-_{t}u(x)$.

For the last part, note that $t+s\in [a-\eta, b+\eta]$, therefore $T^-_{s+t}u$ is finite valued. Moreover,
we have $T^+_sT^-_{s+t}u\leq T^-_{t}u$ which is also finite valued.
Hence, Theorem \ref{PropChecku} implies that $T^+_sT^-_{s+t}u$ is locally semiconvex.
\end{proof}
With $u_0$ and $u_1$ given by \eqref{Defu0u1}, we now define ${\mathcal F}^+_{u_0,u_1, \eta}$ by
$${\mathcal F}^+_{u_0,u_1, \eta}=\{v:M\to \R\mid \text{ $v$ is continuous, $u_0\leq $, and $T^+_{\eta}v\leq u_1$}\}.$$
 By  the very definition \eqref{Defu0u1} of $u_0$, for $t\in [a-\eta,b+\eta]$, we have 
$T^-_{t}u\geq u_0$. Moreover, using Lemma \ref{CoincerT+} and the definition  \eqref{Defu0u1} of $u_1$, 
for $t\in [a,b+\eta]$, we get $ T^+_\eta T^-_{t}u\leq T^-_{t-\eta}u\leq u_1$. This implies 
\begin{equation}\label{T-tuDansF}
T^-_{t}u\in {\mathcal F}^+_{u_0,u_1, \eta},\text{ for all $t\in [a,b+\eta]$.}
\end{equation}

Define $A=\min_{\bar V_2(K)} u_0>-\infty$ and $B=\sup_{\bar V_2(K)}u_1<+\infty$, where as usual 
$\bar V_R(K)=\{y\in M\mid d(y,K)\leq R\}$, for $R\geq 0$. Note that $\bar V_{R'}(\bar V_R(K))\subset
\bar V_{R+R'}(K)$, for $R,R'\geq 0$, Therefore, we have $\bar V_{r}(\bar V_1(K))\subset \bar V_2(K)$, for
$0<r\leq 1$. This implies that
$${\mathcal F}^+_{u_0,u_1, \eta}\subset {\mathcal F}^+_{\bar V_1(K),A,B,\eta,r},$$
for every $0<r\leq 1$, where  as is given by \eqref{DefFABTR} in Appendix \ref{Appendice}.
$${\mathcal F}^+_{\bar V_1(K),A,B,\eta,r}=\{u:M\to[-\infty,+\infty]\mid \text{$A\leq u, T^+_\eta u\leq B$ on 
$\bar V_{r}(\bar V_1(K))$}\},$$
from the definition given by \eqref{DefFABTR} in Appendix \ref{Appendice}.
Therefore, combining with \eqref{T-tuDansF}, for every $0<r\leq 1$, we get
$$T^-_{t}u\in {\mathcal F}^+_{\bar V_1(K),A,B,\eta,r},\text{ for all $t\in [a,b+\eta]$.}$$
From Proposition \ref{PropAppendix}  in Appendix \ref{Appendice}, we obtain the following proposition.
\begin{prop}\label{PropTile5} For  any $0<r\leq 1$, we can find $s(r)>0$, with $s(r)<\eta$ such that
for every $s\in ]0,s(r)]$, every $t\in [a,b]$ and every $x\in \bar V_1(K)$, we have
$$T^+_sT^-_{t+s}u(x)=\sup_\gamma T^-_{t+s}u(\gamma(s))-\int_0^s L(\gamma(\sigma),\dot\gamma(\sigma))\,d\sigma,$$
where the $\sup$ is taken over all  absolutely continuous curve $\gamma:[0,s]\to M$, with $\gamma(0)=x$ 
and $\ell_g(\gamma)\leq r$.

Moreover, we can find a minimizer $\gamma:[0,s]\to M$, with $\gamma(0)=x$, such that
\begin{align*}
T^+_sT^-_{t+s}u(x)&=T^-_{t+s}u(\gamma(s))-h_s(x,\gamma(s))\\
&=T^-_{t+s}u(\gamma(s))-\int_0^s L(\gamma(\sigma),\dot\gamma(\sigma))\,d\sigma.
\end{align*}
Any  absolutely continuous curve
 $\gamma:[0,s]\to M$, 
with $\gamma(0)=x$ and
$$T^+_sT^-_{t+s}u(x)=
T^-_{t+s}u(\gamma(s))-\int_0^s L(\gamma(\sigma),\dot\gamma(\sigma))\,d\sigma.
$$
is a minimizer with $\ell_g(\gamma)\leq r$.

In particular, for $0<s<s(r), t\in[a,b], x\in\bar V_1(K)$, we also have
\begin{equation}\label{T+T-Atr} 
T^+_sT^-_{t+s}u(x)=\sup_{y\in\bar B(x,r)}T^-_{t+s}u(x)u(y)-h_s(x,y).
\end{equation}
Moreover, if $0<s\leq s(r)$, there exists $y\in\bar B(x,r)$ with
\begin{equation}\label{T+T-Atr22} 
T^+_sT^-_{t+s}u(x)=T^-_{t+s}u(y)-h_s(x,y).
\end{equation}
and any $y\in M$ satisfying \eqref{T+T-Atr22} is in $\bar B(x,r)$.
\end{prop}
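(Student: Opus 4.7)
The plan is to read Proposition \ref{PropTile5} as an immediate application of Proposition \ref{PropAppendix} from the Appendix, specialized to $v = T^-_{t+s} u$. We have already recorded, just above the statement, that $T^-_t u \in \mathcal{F}^+_{\bar V_1(K),A,B,\eta,r}$ for every $t \in [a,b+\eta]$ and every $0 < r \leq 1$. Proposition \ref{PropAppendix}, applied to this family, supplies a threshold $s(r) \in (0,\eta)$ depending only on $r$ and on the data $(\bar V_1(K),A,B,\eta)$ such that, for $s \in (0, s(r)]$, $t \in [a,b]$ and $x \in \bar V_1(K)$, the sup defining $T^+_s T^-_{t+s} u(x)$ may be restricted to endpoints $y \in \bar B(x, r)$, equivalently to absolutely continuous curves $\gamma$ starting at $x$ with $\ell_g(\gamma) \leq r$. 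This yields both the first formula of the statement and \eqref{T+T-Atr}.

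The mechanism driving Proposition \ref{PropAppendix} is a cutoff argument based on uniform superlinearity \eqref{TonLag2bis}: for any $K \geq 0$ and any absolutely continuous $\gamma:[0,s] \to M$ starting at $x$, one has $\int_0^s L(\gamma,\dot\gamma)\, d\sigma \geq K \ell_g(\gamma) - C(K) s$. Membership in $\mathcal{F}^+_{\bar V_1(K),A,B,\eta,r}$ provides a two-sided control on $v$ on a neighborhood of $x$: the lower bound $v \geq u_0 \geq A$ and the upper bound $T^+_\eta v \leq B$ together yield an envelope for $v(y) - h_s(x,y)$ that grows sublinearly in $d(x,y)$. Choosing $K$ large and then $s \leq s(r)$ small forces every candidate $y$ with $d(x,y) > r$ to give a value strictly below the constant-curve value at $x$, so the sup is achieved inside $\bar B(x, r)$. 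The constants are uniform in $v$, $x$ and $t$ because $A$, $B$ and $\eta$ are fixed.

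Once the sup has been confined to the compact set $\bar B(x, r)$, the rest is routine. By Theorem \ref{PropHatu} the function $T^-_{t+s} u$ is continuous, and $h_s(x, \cdot)$ is continuous for $s > 0$, so $y \mapsto T^-_{t+s} u(y) - h_s(x, y)$ attains its maximum at some $y \in \bar B(x, r)$, and Tonelli's theorem produces a minimizer $\gamma:[0,s] \to M$ from $x$ to $y$ with $\int_0^s L(\gamma,\dot\gamma)\, d\sigma = h_s(x, y)$, proving \eqref{T+T-Atr22} together with its curve-version. Conversely, if a curve $\gamma$ achieves the sup, then setting $y = \gamma(s)$ and using $T^+_s T^-_{t+s} u(x) \geq T^-_{t+s} u(y) - h_s(x, y)$, one obtains $\int_0^s L(\gamma,\dot\gamma) \leq h_s(x, y)$, so $\gamma$ is a minimizer; the cutoff of the previous paragraph then forces $y \in \bar B(x, r)$, whence $\ell_g(\gamma) = d(x, y) \leq r$. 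The main subtlety is the uniformity of $s(r)$ across $(t, x, v)$, which is precisely what Proposition \ref{PropAppendix} is designed to guarantee.
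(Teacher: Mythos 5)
Your proposal is correct and follows exactly the paper's route: Proposition \ref{PropTile5} is read off directly from Proposition \ref{PropAppendix}, after the preparatory observation that $T^-_t u \in {\mathcal F}^+_{\bar V_1(K),A,B,\eta,r}$ for $t\in[a,b+\eta]$ (a fact established just above the statement). One small slip in your final sentence: for a general Tonelli Lagrangian $L$ an $L$-minimizer from $x$ to $y$ is not a $g$-geodesic, so the equality $\ell_g(\gamma)=d(x,y)$ is false; the correct implication runs the other way --- Lemma \ref{BoundednessMaximizer} inside Proposition \ref{PropAppendix} bounds $\ell_g(\gamma)\leq r$ directly for any near-optimal curve, and $d(x,y)\leq \ell_g(\gamma)\leq r$ is then a consequence, not the cause. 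Since you invoke Proposition \ref{PropAppendix} for this conclusion anyway, the slip is only in the explanation of its mechanism and does not affect the validity of the argument.
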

\begin{cor}\label{CorTile5}  For $0<r\leq 1$,the maps $(s,t,x)\mapsto T^+_sT^-_{t+s}u(x)$ is continuous on
$]0,s(r)]\times [a,b]\times \bar V_1(K)$, where $s(r)>0$ is the number obtained in Proposition 
\ref{PropTile5} above.
\end{cor}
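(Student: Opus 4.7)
The plan is to build the corollary directly on top of Proposition~\ref{PropTile5} by turning the supremum that appears in the representation formula into a supremum over a \emph{fixed} compact set, after which joint continuity follows from a standard compact-parameter argument.

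First, I would use Proposition~\ref{PropTile5} to write, for every $(s,t,x)\in{}]0,s(r)]\times[a,b]\times\bar V_1(K)$,
\begin{equation*}
T^+_sT^-_{t+s}u(x)=\sup_{y\in\bar B(x,r)}\bigl[T^-_{t+s}u(y)-h_s(x,y)\bigr],
\end{equation*}
with every maximizer lying in $\bar B(x,r)$. Since $x\in\bar V_1(K)$ and $r\le 1$, we have $\bar B(x,r)\subset\bar V_{1+r}(K)\subset \bar V_2(K)$, and $\bar V_2(K)$ is compact because $g$ is complete. Any point $y\in\bar V_2(K)\setminus\bar B(x,r)$ fails to realize the maximum over $M$ and hence satisfies $T^-_{t+s}u(y)-h_s(x,y)\le T^+_sT^-_{t+s}u(x)$. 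Therefore the supremum is unchanged if the domain is enlarged from $\bar B(x,r)$ to the fixed compact $\bar V_2(K)$:
\begin{equation*}
T^+_sT^-_{t+s}u(x)=\sup_{y\in \bar V_2(K)}\bigl[T^-_{t+s}u(y)-h_s(x,y)\bigr].
\end{equation*}

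Next, I would establish the joint continuity of the function
\begin{equation*}
F(s,t,x,y)=\hat u(t+s,y)-h_s(x,y)
\end{equation*}
on the compact region $]0,s(r)]\times[a,b]\times\bar V_1(K)\times\bar V_2(K)$. For the first term, recall $s(r)<\eta$ and $b+\eta<t_0$, so $t+s\in{}]0,t_0[$ throughout; continuity of $(t',y)\mapsto\hat u(t',y)$ on $]0,t_0[\times M$ is provided by Theorem~\ref{PropHatu}. For the second term, $(s,x,y)\mapsto h_s(x,y)$ is continuous on $]0,+\infty[\times M\times M$ by standard Tonelli theory: existence of minimizers, together with the uniform superlinearity bound $L\ge K\|v\|-C(K)$, yields both upper and lower semicontinuity of the minimum action for positive time.

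Finally, I would invoke the standard fact that if $F$ is jointly continuous on $P\times Q$ with $Q$ compact, then $p\mapsto\sup_{q\in Q}F(p,q)$ is continuous on $P$. Applying this with parameter $p=(s,t,x)$ and compact set $Q=\bar V_2(K)$ gives continuity of $(s,t,x)\mapsto T^+_sT^-_{t+s}u(x)$ on $]0,s(r)]\times[a,b]\times\bar V_1(K)$, as claimed. The only non-routine ingredient is the joint continuity of $h_s$ at small $s$, but this is classical; the topological content of the corollary is really just the reduction to a \emph{fixed} compact set of competitors, which is exactly what Proposition~\ref{PropTile5} provides.
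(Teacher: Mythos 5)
Your proof is correct and is essentially the same as the paper's: both use Proposition~\ref{PropTile5} to replace the $x$-dependent ball $\bar B(x,r)$ by a fixed compact set (the paper uses $\bar V_{r+1}(K)$, you use $\bar V_2(K)$, which works since $r\le 1$), and then conclude by joint continuity of $(s,t,x,y)\mapsto T^-_{s+t}u(y)-h_s(x,y)$ together with the standard fact that a supremum over a compact set of a jointly continuous function is continuous.
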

\begin{proof} For $(s,t,x)\in ]0,s(r)]\times [a,b]\times \bar V_1(K)$, we get  from \eqref{T+T-Atr}  in Proposition \ref{PropTile5} that we have
\begin{equation}\label{T+sT-Compact}
T^+_sT^-_{s+t}u(x)=\sup_{y\in \bar V_{r+1}(K)}T^-_{s+t}u(y)-h_s(x,y),
\end{equation}
Since $(s,t,x,y)\mapsto T^-_{s+t}u(y)-h_s(x,y)$ is continuous on $(s,t,x)\in ]0,\eta]\times [a,b]\times \bar V_1(K)
\times \bar V_2(K)$ and $\bar V_{r+1}(K)$ is compact, the continuity on $]0,s(r)]\times [a,b]\times \bar V_1(K)$ of  the map $(s,t,x)\mapsto T^+_sT^-_{s+t}u(x)$ follows.
\end{proof}
The main point toward the existence of a homotopy is the following Lemma.
\begin{lem}\label{Claim1} There exists $\eta'\in ]0,\eta]$, 
such that $T^+_sT^-_{s+t}u$ is $C^1$ (and even $C^{1,1}$)
on $\INT{V}_{1/2}(K)=\{y\in \mid d(y,K)<1/2\}$, for $t\in [a,b]$, and $s\in ]0,\eta']$. 
\end{lem}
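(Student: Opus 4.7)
The plan is to prove that $T^+_sT^-_{s+t}u$ is both locally semiconvex and locally semiconcave on $\INT V_{1/2}(K)$ for $s$ in some small interval $]0,\eta']$; since a function that is simultaneously locally semiconvex and locally semiconcave is automatically $C^{1,1}$ (see \cite{CanSin}), this will give the conclusion. The respective constants are allowed to depend on $s$.

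Local semiconvexity is immediate. Fix $s\in{}]0,\eta]$ and $t\in[a,b]$, and set $v:=T^-_{s+t}u$; by Theorem~\ref{PropHatu} the function $v$ is continuous and finite on $M$, and Lemma~\ref{CoincerT+} keeps $T^+_\sigma v\le T^-_{s+t-\sigma}u$ finite for $\sigma$ in a neighborhood of $s$. Applying Theorem~\ref{PropChecku} to $v$ gives the local semiconvexity of $T^+_sT^-_{s+t}u$ in $x$.

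For semiconcavity, fix a small $r\in{}]0,1]$ and set $\eta':=\min(s(r),\eta)$, further shrunk below, with $s(r)$ from Proposition~\ref{PropTile5}; that proposition gives, for $s\in{}]0,\eta']$,
\[
T^+_sT^-_{s+t}u(x)=\sup_{y\in\bar B(x,r)}\bigl[T^-_{s+t}u(y)-h_s(x,y)\bigr],
\]
with the sup attained. The equi-semiconcavity of $(T^-_\sigma u)_{\sigma\in[a-\eta,b+\eta]}$ on $\bar V_2(K)$, recalled at the start of \S\ref{SecHomLoc}, provides a constant $C_1$, independent of $(s,t)$, controlling the semiconcavity of $T^-_{s+t}u$; meanwhile, strict convexity of $L$ in the fibers makes $h_s(\cdot,\cdot)$ of class $C^2$ on $\{d(x,y)\le r\}$, with $y$-Hessian bounded below by $c/s$ times the identity (reflecting $h_s(x,y)\sim d^2(x,y)/(2s)$ as $s\downarrow 0$) and cross-Hessian $\partial_x\partial_y h_s$ of order $1/s$. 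Shrinking $\eta'$ so that the $1/s$-convexity in $y$ dominates $C_1$, the integrand $y\mapsto T^-_{s+t}u(y)-h_s(x,y)$ is strictly concave on $\bar B(x,r)$; hence the maximizer $y_s(x)$ is unique, and a standard two-point test (subtracting the maximality inequalities at $x_1, x_2$ and using strict concavity in $y$) forces $y_s$ to be locally Lipschitz in $x$.

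The semiconcavity bound now follows from a three-point comparison: put $y_i:=y_s(x_i)$, and let $\bar y$ be a geodesic midpoint of $(y_1,y_2)$, $\bar x$ of $(x_1,x_2)$. Using $y_i$ as realizing maximizers at $x_i$ and $\bar y$ as a test point at $\bar x$, together with the $C_1$-semiconcavity of $T^-_{s+t}u$ and a $C^2$-Taylor expansion of $h_s$ around $(\bar x,\bar y)$ (plus the Lipschitz bound $d(y_1,y_2)\le L\,d(x_1,x_2)$), one obtains $T^+_sT^-_{s+t}u(x_1)+T^+_sT^-_{s+t}u(x_2)-2T^+_sT^-_{s+t}u(\bar x)\le C\,d(x_1,x_2)^2$ with some finite $C=C(s)$. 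The main obstacle is this three-point estimate: the naive bound yields a semiconcavity constant of order $1/s$, which is acceptable for the lemma (where $s\in{}]0,\eta']$ is fixed), but the required combination of $C^2$-Taylor for $h_s$ with the Lipschitz selection $y_s$ in the Riemannian setting calls for careful local-coordinate bookkeeping on a compact neighborhood of the diagonal where $h_s$ is smooth.
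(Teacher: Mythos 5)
Your proposal is correct in outline, but it takes a genuinely different route from the paper.  The paper obtains the needed semiconcavity of $T^+_sT^-_{s+t}u$ by applying a black-box regularization result: after identifying $T^+_sT^-_{s+t}u$ with the localized operator $T^{+,r}_sT^-_{s+t}u$ via Proposition~\ref{PropTile5}, it invokes Lemma~\ref{LemSemiCon}, which is \cite[Lemma~B.7]{FatFigRif} (the noncompact adaptation of Bernard's $C^{1,1}$-regularization result \cite{Bernard}).  That lemma hands over, for free, equi-semiconcavity on $\bar V_{1/2}(K)$ of the family $\{T^{+,r_0}_t w\}$ when $w$ ranges over an equi-continuous equi-semiconcave family and $t$ over a small interval.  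You instead unroll the content of that cited lemma: you use the localized $\sup$ over $\bar B(x,r)$, exploit the $1/s$-convexity of $h_s(x,\cdot)$ near the diagonal to beat the fixed semiconcavity constant of $T^-_{s+t}u$ and get a unique Lipschitz selection $y_s(x)$, and then run a Lasry--Lions three-point comparison to bound the semiconcavity modulus.  This is exactly the mechanism underneath Bernard's theorem, so the mathematical substance is the same; the difference is that the paper delegates the hard $C^2$ estimates on $h_s$ near the diagonal and the resulting equi-semiconcavity to the literature, while you sketch them directly.  Your approach is more self-contained and elementary, at the cost of exactly the ``careful local-coordinate bookkeeping'' you flag at the end (uniform $C^2$ bounds on $h_s$ over $\{d(x,y)\le r\}$, $s\in{}]0,\eta']$, together with the cross-derivative estimates needed for the Lipschitz selection).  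One mild difference in the final output: the paper's route actually gives equi-semiconcavity with a constant \emph{independent} of $s$ small, whereas your argument, as stated, is content with a constant of order $1/s$ — which, as you correctly observe, is all the lemma requires.
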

As we know already from Lemma \ref{CoincerT+} that $T^+_sT^-_{s+t}u$ is locally semiconvex,
 it suffices to prove that $T^+_sT^-_{s+t}u$ is locally semi-concave on a neighborhood of $\bar V_{1/2}(K)$.

To prove 
Lemma \ref{Claim1}, we use the fact the family 
of functions 
$(T^-_{t}u)_{t\in [a-\eta,b+\eta]}$ 
is equicontinuous and equi-semiconcave on the compact neighborhood  of $\bar V_1(K)$ of $K$. 

If $M$ is compact, we could take $K=M$,
and the $C^1$ property above follows from \cite{Bernard}. 

For the noncompact case, we will use 
\cite[Appendix B]{FatFigRif}, which adapts some of the results of \cite{Bernard} to the noncompact setting. 

To be able to use \cite[Appendix B]{FatFigRif}, it is useful to introduce, for $r>0$
 a modified family $T^{+,r}_t$ of positive Lax-Oleinik operators defined by
$$T^{+,r}_tu(x)=\sup\{ u(y)-h_t(x,y)\mid d(y,x)\leq r\},$$
for $u:M\to [-\infty,+\infty]$.

Using a covering of a compact set by a finite number of domains of charts of the manifold $M$, it is not difficult
to see that the following lemma is a consequence of the positive Lax-Oleinik version of \cite[Lemma B.7 ]{FatFigRif}.
\begin{lem}\label{LemSemiCon} Let $K_1$ and $K_2$ be two compact 
subsets of $M$ with $K_1\subset\INT{K}_2$.

Assume that ${\mathcal F}^+$ is a family of functions from $M$ to $\R$, which is
equi-continuous and equi-semiconcave on $K_2$. 
Then there exits $r_0>0$, and $\delta_0>0$, such that the family 
$\{T^{+,r_0}_tw\mid w\in {\mathcal F}^+, t\in[0,\delta_0]\}$
is  equi-semiconcave on $K_1$.
\end{lem}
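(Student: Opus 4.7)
The plan is to localize via coordinate charts and invoke the positive Lax-Oleinik version of \cite[Lemma B.7]{FatFigRif}. Since $K_1 \subset \INT{K}_2$ is compact, I can find finitely many coordinate charts $(\phi_i, U_i)_{i=1}^N$ of $M$ together with compact subsets $K_1^{(i)} \subset U_i$ such that $K_1 \subset \bigcup_{i=1}^N K_1^{(i)}$ and each $K_1^{(i)}$ sits at positive Riemannian distance $\rho_i > 0$ from both $M \setminus U_i$ and $M \setminus K_2$. It therefore suffices to produce a uniform pair $r_0, \delta_0 > 0$ such that for each $i$ the family $\{T^{+,r_0}_t w \mid w \in \mathcal{F}^+,\, t \in [0,\delta_0]\}$ is equi-semiconcave on $K_1^{(i)}$.

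For a fixed chart $(\phi_i, U_i)$, I push the family into Euclidean coordinates by setting $\tilde w = w \circ \phi_i^{-1}$ on $V_i = \phi_i(U_i) \subset \R^n$. Both equi-continuity and equi-semiconcavity are local notions preserved under smooth diffeomorphisms on compact sets, with the modulus changing only by a uniform factor determined by the $C^2$ norms of $\phi_i$ and $\phi_i^{-1}$; hence the pulled-back family is equi-continuous and equi-semiconcave on $\phi_i(K_2 \cap U_i)$ in the Euclidean sense, with a uniform modulus $C$. In the same coordinates the Tonelli Hamiltonian pulls back to a Tonelli Hamiltonian on $V_i$, and standard short-time estimates give, for $x,y$ close and $t$ small, two-sided Euclidean quadratic bounds on $h_t(x,y)$ together with uniform $C^2$ estimates in both variables away from the diagonal.

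With the chart-level setup in place I apply the positive analog of \cite[Lemma B.7]{FatFigRif}. I choose $r_0 < \min_i \rho_i$ small enough that for every $x \in K_1^{(i)}$ the closed Riemannian ball $\bar B(x, r_0)$ is contained in $K_2 \cap U_i$; this guarantees that the supremum defining $T^{+,r_0}_t w(x)$ only probes $w$ on a region where the equi-semiconcavity hypothesis applies. I then take $\delta_0 > 0$ so small that the strong concavity of the kernel $y \mapsto -h_t(x,y)$ dominates the semiconcavity modulus $C$ of $\mathcal{F}^+$; this is the analog of the condition $tC < 1$ in the Euclidean Moreau-Yosida computation, where for smooth $w$ a direct envelope calculation at the unique interior maximizer $y^*$ yields the identity $D^2 T^+_t w(x) = D^2 w(y^*) \bigl(I - t D^2 w(y^*)\bigr)^{-1}$, so that $C$-semiconcavity of $w$ is sent to $C/(1 - tC)$-semiconcavity of $T^+_t w$. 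Passing from the smooth case to the equi-semiconcave family is achieved through the standard concave-plus-quadratic decomposition of each $w$, exactly as in \cite[Appendix B]{FatFigRif}. Taking the maximum of the finitely many moduli produced in the charts $K_1^{(i)}$ yields equi-semiconcavity on $K_1$.

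The main obstacle is that the conclusion cannot be extracted from \cite[Lemma B.7]{FatFigRif} merely by the duality $T^+_t w = -\check{T}^-_t(-w)$ with the reverse Lagrangian $\check L$, since that duality exchanges semiconcavity with semiconvexity. One has instead to repeat the argument of \cite[Appendix B]{FatFigRif} with a supremum in place of the infimum. The delicate point is that the unrestricted sup-convolution of an arbitrary function is typically only semiconvex; the semiconcavity of $T^{+,r_0}_t w$ emerges only from the interaction of (i) the semiconcavity of the input $w$, (ii) the uniform strong concavity of $y \mapsto -h_t(x,y)$, and (iii) the a priori bound $d(y^*, x) \leq r_0$ on the position of the maximizer enforced by the restriction in $T^{+,r_0}_t$. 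Making this uniform in $w \in \mathcal{F}^+$ requires establishing uniform existence and localization of the maximizer $y^*$, which is the technical heart of the argument and parallels the treatment of the negative restricted Lax-Oleinik in the cited appendix.
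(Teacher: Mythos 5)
Your proposal follows the paper's own proof verbatim in outline: cover $K_1$ by finitely many coordinate charts and then invoke the positive Lax-Oleinik version of \cite[Lemma B.7]{FatFigRif}, which is essentially all the paper itself offers for this lemma. The detail you supply --- pushing to Euclidean coordinates, the Moreau--Yosida-type envelope identity $D^2 T^+_t w(x)=D^2w(y^*)\bigl(I-tD^2w(y^*)\bigr)^{-1}$, and the role of the $r_0$-restriction in forcing interior maximizers --- is consistent with the cited Appendix~B; the only part worth second-guessing is the closing assertion that the duality $T^{+,r}_t w=-\check T^{-,r}_t(-w)$ cannot produce the positive version, which holds only if Lemma~B.7 preserves equi-semiconcavity under $T^{-,r}_t$, whereas if (as one would expect from the Bernard-type $C^{1,1}$ argument being adapted there) it preserves equi-semiconvexity --- the nontrivial direction for $T^-$ --- then applying it to the reverse Tonelli Lagrangian $\check L$ and negating gives precisely the statement needed, exactly because negation swaps semiconcavity and semiconvexity.
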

\begin{proof}[End of proof of Lemma \ref{Claim1}] We apply Lemma \ref{LemSemiCon} above with 
the family of functions ${\mathcal F}^+=(T^-_tu)_{t\in [a,b+\eta]}$ and the compact sets 
$K_2=\bar V_1(K), K_1=\bar V_{1/2}(K)$. Note that by part \eqref{T+T-Atr}
of Proposition \ref{PropTile5}, we have  
$$T^+_sT^-_{s+t}u(x)=T^{+,r}_sT^-_{s+t}u(x),\text{ for $t\in [a,b],s\in ]0, s(r)]$ and 
$x\in \bar V_1(K)$.}$$ 
Therefore by Lemma \ref{LemSemiCon}, we get that, for 
 $t\in [a,b]$ and $s\leq \eta'=\min(\delta_0,s(r_0))$, the function $T^+_sT^-_{s+t}u$ is semiconcave 
 on $\bar V_{1/2}(K)$.  We  conclude that, for $s\in ]0,\eta'],t\in [a,b]$, the function
 $T^+_sT^-_{s+t}u$ is indeed $C^{1,1}$ on $\INT V_{1/2}(K)$. 
 This finishes the proof of Lemma \ref{Claim1}.
 \end{proof}

As we will show below, the construction of the homotopy in  Lemma \ref{HomLoc} will follow easily from the uniqueness given in the next Lemma.

\begin{lem}\label{Claim2} Fix $\eta'$ given by Lemma \ref{Claim1}.
Assume $(t,x)\in [a,b]\times K$, and $s\in ]0,\eta'[$. Then there exists a unique $y\in M$ such that
\begin{equation}\label{UniquenessYST} T^+_sT^-_{s+t}u(x)= T^-_{s+t}u(y)-h_s(x,y).
\end{equation}
Moreover, if $s\leq \min(\eta',s(r))$, where $s(r)$ is given by Proposition \ref{PropTile5}, we have 
$d(y,x)\leq r$, for the $y$ given by  \eqref{UniquenessYST}

For $(t,x)\in [a,b]\times K,s\in ]0,\eta'[$, and $y$ given by \eqref{UniquenessYST}, the minimizer $\gamma:[0,s]\to M$ joining $x$ to $y$ is also unique, and satisfies
$$d_x(T^+_sT^-_{s+t}u)= \partial_v L(x,\dot{\gamma}(0))\text{ and } \partial_v L(y,\dot{\gamma}(s))\in D^+(T^-_{s+t}u)(y).$$
If $s\leq \min(\eta',s(r))$,, this minimizer $\gamma$  has length $\ell_g(\gamma)\leq r$.
\end{lem}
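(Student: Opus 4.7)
The plan has three parts: existence via Proposition~\ref{PropTile5}; uniqueness extracted from the $C^{1,1}$ regularity of $T^+_s T^-_{s+t}u$ at $x$ coming from Lemma~\ref{Claim1}, combined with the semiconcavity of $h_s(\cdot,y)$ and the strict convexity of $L$; and the super-differential statement at $y$, which follows routinely from the maximality of $y$.

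Existence is essentially Proposition~\ref{PropTile5} applied with $r = r_0$: by the very choice of $\eta'$ in the proof of Lemma~\ref{Claim1} one has $\eta' \le s(r_0)$, so for every $s \in \,]0,\eta']$ the proposition produces a point $y \in \bar B(x, r_0)$ realizing \eqref{UniquenessYST} together with a minimizer $\gamma:[0,s] \to M$ from $x$ to $y$ of length at most $r_0$. The sharper bounds $d(y,x) \le r$ and $\ell_g(\gamma) \le r$ in the case $s \le s(r)$ are exactly the content of the last assertion of that same proposition.

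The heart of the argument is uniqueness. I would fix an arbitrary $y$ satisfying \eqref{UniquenessYST}, set $F := T^+_s T^-_{s+t}u$ and $G_y(x') := T^-_{s+t}u(y) - h_s(x',y)$, and observe that $G_y \le F$ on $M$, with equality at $x$. Since $x$ lies in $K \subset \INT V_{1/2}(K)$, Lemma~\ref{Claim1} makes $F$ of class $C^{1,1}$ at $x$; meanwhile, $h_s(\cdot,y)$ is locally semiconcave under the Tonelli assumptions on $L$, so $G_y$ is locally semiconvex. The sandwich $G_y \le F$ with touching at $x$ and $F$ differentiable at $x$ forces $d_x F \in D^+ G_y(x)$; but semiconvexity forbids a non-differentiability point from having a non-empty super-differential, so $G_y$ must itself be differentiable at $x$, with $d_x G_y = d_x F$. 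Unwinding, $h_s(\cdot, y)$ is differentiable at $x$, and the standard first-variation identity $\nabla_x h_s(x,y) = -\partial_v L(x,\dot\gamma(0))$ gives $\partial_v L(x,\dot\gamma(0)) = d_x F$ for any minimizer $\gamma$ from $x$ to $y$. The $C^2$ strict convexity of $L$ in the fiber makes $v \mapsto \partial_v L(x,v)$ injective, so $\dot\gamma(0)$ is uniquely determined; since minimizers are orbits of the Euler--Lagrange flow, $\gamma$ is uniquely determined by $(x, \dot\gamma(0))$, and $y = \gamma(s)$ is unique.

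For the super-differential claim at $y$, the maximality of $y$ gives $T^-_{s+t}u(w) - T^-_{s+t}u(y) \le h_s(x,w) - h_s(x,y)$ for all $w$ near $y$, from which $D^+ h_s(x,\cdot)(y) \subset D^+(T^-_{s+t}u)(y)$ is immediate; combined with the symmetric first-variation identity $\partial_v L(y,\dot\gamma(s)) \in D^+ h_s(x,\cdot)(y)$, this yields the claimed inclusion. The step I expect to dwell on most is the semiconvex sandwich argument, whose whole point is the slogan that a semiconvex function squeezed below a $C^1$ function with touching at a point must be differentiable there with matching gradient; once this is secured, the strict convexity of $L$ rigidifies everything into a uniqueness statement for the minimizer.
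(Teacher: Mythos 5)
Your proof is correct. Where the paper's own proof invokes Lemma~\ref{DiffT+} — which packages the inclusion $\partial_v L(\gamma(0),\dot\gamma(0))\in D^- T^+_s w(\gamma(0))$, so that differentiability of $T^+_sT^-_{s+t}u$ at $x$ (from Lemma~\ref{Claim1}) immediately pins down $\dot\gamma(0)$ via the Legendre transform — you instead re-derive the uniqueness directly with a sandwich argument: $G_y := T^-_{s+t}u(y) - h_s(\cdot,y)$ touches $F := T^+_sT^-_{s+t}u$ from below at $x$, semiconcavity of $h_s(\cdot,y)$ makes $G_y$ semiconvex, and the touching plus $C^1$-differentiability of $F$ forces $G_y$ (hence $h_s(\cdot,y)$) to be differentiable at $x$ with gradient $d_xF$; the first-variation identity then gives $\partial_v L(x,\dot\gamma(0)) = d_xF$, and the strict fiber-convexity of $L$ rigidifies this into uniqueness of $\dot\gamma(0)$, of the minimizer, and of $y$. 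The two routes rest on the same pillars (existence via Proposition~\ref{PropTile5}, $C^{1,1}$ regularity at $x$ via Lemma~\ref{Claim1}, Legendre injectivity), but yours bypasses Lemma~\ref{DiffT+} entirely and is more self-contained; the paper's version is shorter because it delegates the differential-inclusion step to that lemma (whose statement, incidentally, contains a typo $D^-T^-_s$ in place of $D^-T^+_s$, which your argument sidesteps). The super-differential inclusion at $y$ and the $d(y,x)\le r$, $\ell_g(\gamma)\le r$ bounds via Proposition~\ref{PropTile5} are handled exactly as the paper intends.
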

 To prove this Lemma we need the $T^+_t$ version of Proposition \ref{DiffProp}. 
Again this version follows from Remark \ref{CaracT+}. 
\begin{lem}\label{DiffT+} Suppose $w:M\to\R$ is a continuous function such that $T_s^+w$ is finite, for some $s>0$.
Assume that $\gamma:[0,s]\to M$ is a minimizer such that
$$T_s^+w(\gamma(0))=w(\gamma(s))-h_s(\gamma(0),\gamma(s)),$$
then we have 
\begin{equation}\label{eq:relation_superdifferentials}
\partial_v L(\gamma(0),\dot{\gamma}(0))\in D^-T_s^-w(\gamma(0)) \text{ and } \partial_v L(\gamma(s),\dot{\gamma}(s))\in D^+w(\gamma(s)). 
\end{equation}
In particular, if $T_s^+w$ is differentiable at $\gamma(0)$, then $\gamma(s)$ is the unique $y\in M$, with $T_s^+w(\gamma(0))=w(y)-h_s(\gamma(0),y)$. Moreover, the curve $\gamma$ is the unique minimizer 
$\delta:[0,s]\to M$, with $\delta(0)=\gamma(0)$ and $\delta(s)=\gamma(s)$.
\end{lem}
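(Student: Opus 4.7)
The plan is to derive both differential inclusions directly from the sup-definition of $T^+_s w$, using classical semi-concavity properties of the minimal action $h_s$ in each variable; these properties can be obtained from Proposition \ref{DiffProp} via the time-reversal of Remark \ref{CaracT+}. The calibration hypothesis says precisely that the sup in $T^+_s w(\gamma(0)) = \sup_y w(y) - h_s(\gamma(0), y)$ is attained at $y = \gamma(s)$, and, dually, that $x = \gamma(0)$ realises equality in the pointwise estimate $T^+_s w(x) \geq w(\gamma(s)) - h_s(x, \gamma(s))$.

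For the inclusion $\partial_v L(\gamma(s), \dot\gamma(s)) \in D^+ w(\gamma(s))$, I would rearrange the first identity to
$$w(y) - w(\gamma(s)) \leq h_s(\gamma(0), y) - h_s(\gamma(0), \gamma(s))$$
valid for all $y \in M$, and invoke the classical fact that $y \mapsto h_s(\gamma(0), y)$ is locally semi-concave near $\gamma(s)$ with $\partial_v L(\gamma(s), \dot\gamma(s)) \in D^+ h_s(\gamma(0), \cdot)(\gamma(s))$ whenever $\gamma$ is a minimizer from $\gamma(0)$ to $\gamma(s)$. For the inclusion $\partial_v L(\gamma(0), \dot\gamma(0)) \in D^-(T^+_s w)(\gamma(0))$, I would vary the base point instead: the inequality $T^+_s w(x) \geq w(\gamma(s)) - h_s(x, \gamma(s))$ with equality at $\gamma(0)$ yields
$$T^+_s w(x) - T^+_s w(\gamma(0)) \geq h_s(\gamma(0), \gamma(s)) - h_s(x, \gamma(s)),$$
and the symmetric semi-concavity statement provides $-\partial_v L(\gamma(0), \dot\gamma(0)) \in D^+ h_s(\cdot, \gamma(s))(\gamma(0))$, which after the sign flip gives the desired sub-differential inclusion.

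For the uniqueness claim, suppose $T^+_s w$ is differentiable at $\gamma(0)$. Then $D^-(T^+_s w)(\gamma(0))$ reduces to the singleton $\{d_{\gamma(0)}(T^+_s w)\}$, forcing $\partial_v L(\gamma(0), \dot\gamma(0)) = d_{\gamma(0)}(T^+_s w)$. The strict convexity of $L$ in velocities makes $\partial_v L(\gamma(0), \cdot)$ a diffeomorphism onto $T^*_{\gamma(0)} M$, so $\dot\gamma(0)$ is determined by $\gamma(0)$ alone. Since minimizers are orbits of the Euler-Lagrange flow $\phi^L_t$, the curve $\gamma$ is uniquely determined by $(\gamma(0), \dot\gamma(0))$, whence so is $y = \gamma(s)$. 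The same reasoning applied to any other minimizer $\delta:[0,s] \to M$ with $\delta(0) = \gamma(0)$ and $\delta(s) = \gamma(s)$---which automatically inherits the calibration identity, since $h_s(\delta(0), \delta(s)) = h_s(\gamma(0), \gamma(s))$---yields $\dot\delta(0) = \dot\gamma(0)$, hence $\delta = \gamma$.

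The main obstacle is justifying, with the correct signs, that the initial and final momenta of a minimizer belong to the relevant super-differentials of $h_s$. The cleanest route, alluded to in the paper, is via Remark \ref{CaracT+}: the reversed curve $\check\gamma(\sigma) = \gamma(s-\sigma)$ is a minimizer for $\check L(x, v) = L(x, -v)$ and is calibrated for the function $\check U(\sigma, z) = -T^+_\sigma w(z) = \check T^-_\sigma(-w)(z)$; Proposition \ref{DiffProp} for $\check L$ then directly yields both inclusions after the sign bookkeeping $\partial_v \check L(z, v) = -\partial_v L(z, -v)$ and $D^{\pm}(-f) = -D^{\mp} f$. The delicate point in this route is ensuring that $\check U$ is finite-valued and a continuous viscosity solution on an open neighborhood of the graph of $\check\gamma$ (in particular extending past $\sigma = 0$), which is handled by an application of Theorem \ref{PropHatu} to the reversed Lagrangian.
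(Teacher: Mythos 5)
Your argument is correct and, in substance, is the same as the one-line proof the paper indicates: reduce to Proposition~\ref{DiffProp} via the time reversal of Remark~\ref{CaracT+}. What you add, and it is a genuine improvement in precision, is the ``direct'' packaging through the super-differentials of $h_s$ in each variable. That packaging quietly sidesteps a subtlety you yourself flag at the end: applying Proposition~\ref{DiffProp}(ii) to $\check U(\sigma,z)=-T^+_\sigma w(z)$ at $\sigma=0$ would require $\check U$ to be a viscosity subsolution on an \emph{open} neighbourhood of $(0,\gamma(s))$, i.e.\ to extend past $\sigma=0$. Your proposed fix (``handled by an application of Theorem~\ref{PropHatu} to the reversed Lagrangian'') does not actually close this gap, since Theorem~\ref{PropHatu} yields the solution property only on the open half-line $\sigma>0$. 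The clean way out is precisely your primary route: observe that $\gamma$ is calibrated for the finite viscosity solution $(\tau,y)\mapsto h_\tau(\gamma(0),y)$ and, via Remark~\ref{CaracT+}, that its reversal is calibrated for $(\tau,x)\mapsto h_\tau(x,\gamma(s))$; applying Proposition~\ref{DiffProp}(i) to each at the \emph{interior} time $\tau=s>0$ yields
\begin{equation*}
\partial_v L(\gamma(s),\dot\gamma(s))\in D^+_y h_s(\gamma(0),\cdot)(\gamma(s)),\qquad
-\partial_v L(\gamma(0),\dot\gamma(0))\in D^+_x h_s(\cdot,\gamma(s))(\gamma(0)),
\end{equation*}
and the two elementary inequalities $w(y)-w(\gamma(s))\leq h_s(\gamma(0),y)-h_s(\gamma(0),\gamma(s))$ and $T^+_sw(x)-T^+_sw(\gamma(0))\geq h_s(\gamma(0),\gamma(s))-h_s(x,\gamma(s))$ then give \eqref{eq:relation_superdifferentials}. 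The uniqueness argument via the Legendre diffeomorphism and the Euler--Lagrange flow is correct as you wrote it.
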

\begin{proof}[Proof of Lemma \ref{Claim2}] Since $\eta'\leq s(r_0 )$, with $r_0\leq 1$, by \eqref{T+T-Atr22}  in Proposition 
\ref{PropTile5}, we know that there exists at least one $y\in M$ such that
\eqref{UniquenessYST} holds. If we call $\gamma:[0,s]\to M$ the minimizer such that $\gamma(0)=x$ and $\gamma(s)=y$, we have
$$T^+_sT^-_{s+t}u(\gamma(0))= T^-_{s+t}u(\gamma(s))-h(\gamma(0),\gamma(s)).$$
Since $s\leq\eta'$ by Lemma \ref{Claim1}, the function
$T^+_sT^-_{s+t}u$ is differentiable at $x=\gamma(t)$. We can apply Lemma \ref{DiffT+} to deduce that this minimizer 
$\gamma$ is unique, which implies also the uniqueness of $y$. 
\end{proof}

By Lemma \ref{Claim2}, we can  define the function $F:([a,b]\times K)\times ]0,\eta']\to M$ by taking
as $F[(t,x),s]$ the unique $y\in M$ such that
\begin{equation}\label{DefAdapF}T^+_sT^-_{s+t}u(x)= T^-_{s+t}u(y)-h_s(x,y).
\end{equation}
We extend this function $F$  to $([a,b]\times K)\times [0,\eta[$ by $F[(t,x),0]=x$. Therefore $F$ satisfies
part (2) of the Definition \ref{Adapted} of $\hat u$-adapted.
\begin{claim} The homotopy $F:([a,b]\times K)\times [0,\eta']\to M$, defined above, is continuous and $\hat u$-adapted.
\end{claim}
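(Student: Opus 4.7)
The claim asks for continuity of $F$ and verification of the four clauses of Definition~\ref{Adapted}. Clauses (1) and (2) I would handle immediately: since $\eta'\leq\eta$ and $b+\eta<t_0$ we obtain $t+\eta'<t_0$ for $t\in[a,b]$, while $F[(t,x),0]=x$ is built into the definition. For \emph{continuity} I plan to combine the continuity of $(s,t,x)\mapsto T^+_sT^-_{s+t}u(x)$ from Corollary~\ref{CorTile5} with the uniqueness part of Lemma~\ref{Claim2}. Given a sequence $(t_n,x_n,s_n)\to(t,x,s)$ with $s>0$, the values $y_n:=F[(t_n,x_n),s_n]$ will be trapped in a fixed compact subset of $M$ by the bound $d(y_n,x_n)\leq r_0$ from Lemma~\ref{Claim2}; any subsequential limit $y_\infty$ will satisfy the maximizing identity for the limit parameters, by joint continuity of $(s',t',x',y')\mapsto T^-_{s'+t'}u(y')-h_{s'}(x',y')$, and uniqueness will force $y_\infty=F[(t,x),s]$. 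Continuity at $s=0$ I would treat separately: given $\varepsilon>0$, choose $r<\varepsilon$ and use $d(F[(t,x),s],x)\leq r$ for $s\leq s(r)$, together with $x_n\to x$.

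For clause (4), suppose $\gamma:[t,t+s]\to M$ is $\hat u$-calibrated with $\gamma(t)=x\in K$, and fix $s_0\in(0,\min(s,\eta')]$. Calibration gives
\[
\hat u(t+s_0,\gamma(t+s_0))-\hat u(t,x)=\int_t^{t+s_0}L(\gamma,\dot\gamma)\,d\sigma=h_{s_0}(x,\gamma(t+s_0)),
\]
which rearranges, after invoking $T^+_{s_0}T^-_{t+s_0}u\leq\hat u(t,\cdot)$ from Lemma~\ref{CoincerT+}, to
\[
\hat u(t,x)=\hat u(t+s_0,\gamma(t+s_0))-h_{s_0}(x,\gamma(t+s_0))\leq T^+_{s_0}T^-_{t+s_0}u(x)\leq\hat u(t,x).
\]
Equality forces the supremum defining $T^+_{s_0}T^-_{t+s_0}u(x)$ to be attained at $z=\gamma(t+s_0)$, and uniqueness of the maximizer from Lemma~\ref{Claim2} yields $F[(t,x),s_0]=\gamma(t+s_0)$.

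The hard part will be clause (3). Suppose $(t+s,y)\notin\Sigma_{t_0}(\hat u)$ with $y=F[(t,x),s]$, and let $\gamma:[0,s]\to M$ be the unique minimizer from Lemma~\ref{Claim2} joining $x$ to $y$. Differentiability of $\hat u$ at $(t+s,y)$ together with Proposition~\ref{DiffCriterion} will produce a unique backward $\hat u$-characteristic $\tilde\gamma:[0,t+s]\to M$ ending at $y$; then Proposition~\ref{DiffProp2} gives $\partial_x\hat u(t+s,y)=\partial_v L(y,\dot{\tilde\gamma}(t+s))$. On the other hand, Lemma~\ref{Claim2} provides $\partial_v L(y,\dot\gamma(s))\in D^+(\hat u(t+s,\cdot))(y)$. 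My plan is to conclude these two vectors agree by exploiting local semiconcavity: Theorem~\ref{PropHatu} makes $\hat u(t+s,\cdot)$ semiconcave near $y$, and differentiability of this slice (inherited from the differentiability of $\hat u$ at $(t+s,y)$) collapses its superdifferential to the singleton $\{\partial_x\hat u(t+s,y)\}$. Hence $\partial_v L(y,\dot\gamma(s))=\partial_v L(y,\dot{\tilde\gamma}(t+s))$, and strict convexity of $L$ in the fiber forces $\dot\gamma(s)=\dot{\tilde\gamma}(t+s)$. Being extremals of $L$ with matching terminal state and velocity, the two curves must coincide, so $\gamma(\sigma-t)=\tilde\gamma(\sigma)$ on $[t,t+s]$. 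The restriction $\tilde\gamma|_{[t,t+s]}$ is $\hat u$-calibrated by Proposition~\ref{calcurprop}, and therefore so is $\sigma\mapsto F[(t,x),\sigma-t]$, which is exactly what clause (3) demands.
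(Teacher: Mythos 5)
Your proof is correct and follows essentially the same route as the paper: continuity via compactness of the range together with the joint continuity furnished by Corollary~\ref{CorTile5} and Proposition~\ref{PropTile5}; the chain of inequalities through Lemma~\ref{CoincerT+} for clause~(4); and the superdifferential\slash backward-characteristic comparison (Lemma~\ref{Claim2}, Proposition~\ref{DiffProp2}, strict convexity of $L$ and uniqueness of extremals) for clause~(3). One small point: the final \emph{therefore} in your clause~(3) argument --- passing from ``$\tilde\gamma|_{[t,t+s]}$ is $\hat u$-calibrated'' to ``$\sigma\mapsto F[(t,x),\sigma-t]$ is $\hat u$-calibrated'' --- tacitly rests on the identification $F[(t,x),\sigma-t]=\tilde\gamma(\sigma)$ for \emph{all} intermediate $\sigma$, not just the endpoint; that identification is exactly your clause~(4) result (which you proved first), so you should cite it explicitly at that spot. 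The paper organizes the same material by isolating a single ``Fact'' that simultaneously discharges clauses~(3) and~(4), which makes the dependency visible.
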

\begin{proof} We first show that $F$ is continuous on $([a,b]\times K)\times ]0,\eta']$. For this, we remark that that $F$
takes values in the compact set $\bar V_2(K)$. Therefore, to show that $F$ is continuous on 
$([a,b]\times K)\times ]0,\eta']$, it suffices to show that its graph is closed in 
$([a,b]\times K)\times ]0,\eta']\times \bar V_2(K)$.
By the definition of $F$, its graph $\Graph(F)$ is given by
\begin{gather*}
\Graph(F)=\{\left((t,x),s,y\right)\in ([a,b]\times K)\times ]0,\eta']\times \bar V_R(K)\mid\\
 T^+_sT^-_{s+t}u(x)= T^-_{s+t}u(y)-h_s(x,y)\}.
\end{gather*}
We now note that we proved in Corollary \ref{CorTile5} that $\left((t,x),s,y\right )\mapsto T^+_sT^-_{s+t}u(x)$ is continuous.
Since tehe functions $[(t,x),s,y]\mapsto T^-_{s+t}u(y)$ and $[(t,x),s,y]\mapsto h_s(x,y)$ are also continuous
for $s>0$, we conclude that $\Graph(F)$ is closed.

To show that $F$ is continuous at a point in $([a,b]\times K)\times \{0\}$, since $y=F[(t,x),s]$,
by Proposition \ref{PropTile5}, we note that $d(x,F[(t,x),s])\leq r$, for $0<s\leq s(r)$.

We now check condition (3) of Definition \ref{Adapted} for $F$. Fix $(t,x)\in [a,b]\times K$.
Assume that $\hat u$ is differentiable at $\left(t+s,F[(t,x),s]\right)$, for some $s>0$.
Choose  a minimizer $\gamma:[t,t+s]\to M$,
with $\gamma(t)=x$ and $\gamma(t+s)=F[(t,x),s]$. Then, by Lemma \ref{Claim2}, we have 
 $$\partial_v L(\gamma(t+s),\dot{\gamma}(t+s))\in D^+(T^-_{s+t}u)(\gamma(t+s)).$$
Since we are assuming that $\hat u(\sigma,z)= T^-_{\sigma}u(z)$ is differentiable
at the point $(s+t,\gamma(t+s))$, by Proposition \ref{DiffProp}, we obtain 
$$d_{\gamma(t+s)}(T^-_{s+t}u)=\partial_v L(\gamma(t+s),\dot{\gamma}(t+s)).$$
We now observe that, by Proposition \ref{DiffProp2}, the backward $\hat u$-characteristic 
$\bar\gamma :[0,t+s]\to M$ ending at  $\gamma(t+s)$ satisfies
$$d_{\bar\gamma(t+s)}(T^-_{s+t}u)=\partial_v L(\bar\gamma(t+s),\dot{\bar\gamma}(t+s)).$$
Since $\gamma(t+s)=\bar\gamma(t+s)$, we conclude that the two extremals $\gamma$ and $\bar\gamma$
have the same position and speed at time $t+s$, therefore $\gamma=\bar \gamma|[t,t+s]$. In particular, the curve
$\gamma$ is $\hat u$-calibrated. To finish the proof that $F$ satisfies part (3) of Definition \ref{Adapted}, and to show,
at the same  time, that $F$ satisfies part (4) of Definition \ref{Adapted}, it suffices to show the following fact:

\noindent\underline{Fact:} If $\gamma:[t,t+s]\to M$ is $\hat u$-calibrated then
$$T^+_sT^-_{s+t}u(\gamma(t))=T^-_{t}u(\gamma(t))=T^-_{s+t}u(\gamma(t+s))-h_s(\gamma(t),\gamma(t+s)).$$
Note that the equality $T^-_{t}u(\gamma(t))=T^-_{s+t}u(\gamma(t+s))-h_s(\gamma(t),\gamma(t+s))$ is equivalent
to $T^-_{s+t}u(\gamma(t+s))=T^-_{t}u(\gamma(t))+h_s(\gamma(t),\gamma(t+s))$, which is a consequence 
of the $\hat u$-calibration of $\gamma$. Therefore, it remains to show that $T^+_sT^-_{s+t}u(\gamma(t))=T^-_{t}u(\gamma(t))$. Note that Lemma \ref{CoincerT+} states that $T^+_sT^-_{s+t}u\leq T^-_{t}u$. 
On the other hand, by definition of $T^+_sT^-_{s+t}u$, we have $T^+_sT^-_{s+t}u(\gamma(t))\geq T^-_{s+t}u(\gamma(t+s))-h_s(\gamma(t),\gamma(t+s))=T^-_{t}u(\gamma(t))$.
 \end{proof}
To finish the proof of Lemma \ref{HomLoc}, it remains to observe that  \eqref{CaracF}
follows from the definition of $F$.
\section{Constructions of Global Homotopy Equivalences}
The goal of this section is to establish some results that will help to construct homotopies of the type mentioned
in the proof of Theorem \ref{HomGlob}.
\begin{prop}\label{ReducGlobHom} Assume $u:M\to[-\infty,+\infty]$ is such that $\hat u$ is finite on $]0,+\infty[\times M$.
If, for every compact subset $C\subset ]0,+\infty[\times M$, we can find a $\hat u$-adapted homotopy
$F:C\times [0,1]\times M$, then, for every $T\in ]0,+\infty]$, the inclusions  
$$\Sigma_{T}(\hat u)\subset\operatorname{Cut}_{T}({\hat u})\subset ]0,+\infty[\times M\setminus
{\cal I}_{T}(\hat u)$$
are all homotopy equivalences.
\end{prop}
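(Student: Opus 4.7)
The plan is to build from the local-on-compacts hypothesis a single continuous deformation
\[
H : O \times [0,1] \longrightarrow O, \qquad O := ]0,+\infty[ \times M \setminus \mathcal{I}_T(\hat u),
\]
with $H[\cdot,0]=\mathrm{id}_O$, $H[\cdot,1]$ taking values in $\Sigma_T(\hat u)$, and $H$ preserving each of the subspaces $\Sigma_T(\hat u)$ and $\operatorname{Cut}_T(\hat u)$ in the sense that these are mapped into themselves throughout the homotopy. Once we have this, setting $r := H[\cdot,1]:O \to \Sigma_T(\hat u)$ gives $i \circ r \simeq \mathrm{id}_O$ via $H$, while the restrictions $H|_{\Sigma_T \times [0,1]}$ and $H|_{\operatorname{Cut}_T \times [0,1]}$ give $r \circ i \simeq \mathrm{id}_{\Sigma_T}$ (resp.\ $r|_{\operatorname{Cut}_T} \circ i \simeq \mathrm{id}_{\operatorname{Cut}_T}$), so both inclusions $\Sigma_T(\hat u) \subset O$ and $\operatorname{Cut}_T(\hat u) \subset O$ are homotopy equivalences; the remaining inclusion $\Sigma_T \subset \operatorname{Cut}_T$ then follows by the two-out-of-three property.

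First, I would exhaust $]0,+\infty[\times M$ by a nested family of compact sets $C_1 \subset \operatorname{int}(C_2) \subset C_2 \subset \cdots$, apply the hypothesis on each $C_n$ to get a $\hat u$-adapted homotopy $F_n : C_n \times [0,1]\to M$, and then paste these into a single continuous $\hat u$-adapted homotopy $F : O \times [0,1]\to M$ using Lemma \ref{comp} for compositions (and rescalings in the parameter to accommodate increasingly large cut-time bounds on the larger compacts). The rigidity condition (4) of Definition \ref{Adapted} — that $F$ must follow any $\hat u$-calibrated curve — forces any two adapted homotopies to agree along such curves, which is the compatibility I would use when patching across boundaries of the $C_n$.

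Next, I would use the upper semi-continuity of the cut-time function $\tau$ (Proposition \ref{PropTau}(iv)) together with the fact that on $O$ we have $\tau(t,x) < T$ (by Proposition \ref{PropTau}(iii)) to invoke Baire interpolation, exactly as in the proof of Theorem \ref{LocContrGen}, to produce a continuous function $\alpha : O \to \,]0,1[$ with $\tau(t,x) - t < \alpha(t,x)$ for every $(t,x)\in O$. I would then define
\[
H[(t,x),s] := \bar F\bigl[(t,x),\,s\,\alpha(t,x)\bigr].
\]
Proposition \ref{StrongAdaptedProperty} applied to $F$ gives $H[(t,x),1]\in \Sigma_T(\hat u)$ for every $(t,x)\in O$, since $\alpha(t,x)>\tau(t,x)-t$. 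For $(t,x)\in\operatorname{Cut}_T(\hat u)$ we have $\tau(t,x)=t$, so in fact $H[(t,x),s]\in \Sigma_T(\hat u)$ for \emph{every} $s\in \,]0,1]$; in particular the restrictions of $H$ to $\Sigma_T \times [0,1]$ and to $\operatorname{Cut}_T \times [0,1]$ take values in $\Sigma_T$ for $s>0$ and equal the identity at $s=0$, which is the subspace-preservation needed for the homotopy-equivalence conclusion.

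The main obstacle is the global construction in the first step: the hypothesis only delivers adapted homotopies on compact pieces, while $O$ is in general non-compact, and the adaptedness constraint must be preserved globally. The rigidity given by condition (4) of $\hat u$-adaptedness is both a help and a hindrance — it forces agreement along calibrated curves (hence on a dense set by Proposition \ref{DiffCriterion}), but one must still verify that the glued map is continuous at points with multiple calibrated curves (i.e., at singularities), where different $F_n$'s could disagree. Fortunately, at such points Proposition \ref{StrongAdaptedProperty} forces the value to lie in $\Sigma_T(\hat u)$ regardless of which local $F_n$ is used for small positive times, which is enough to obtain continuity of the global $F$ and to push the proof through.
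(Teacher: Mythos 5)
Your overall strategy -- produce a deformation $H:O\times[0,1]\to O$ (with $O={}]0,T[\times M\setminus{\cal I}_T(\hat u)$) that is the identity at $s=0$, lands in $\Sigma_T(\hat u)$ at $s=1$, and preserves $\Sigma_T(\hat u)$ and $\operatorname{Cut}_T(\hat u)$ -- is exactly the right shape of argument, and the two-out-of-three step at the end is fine. The gap is in the first step, where you propose to paste the local adapted homotopies $F_n:C_n\times[0,1]\to M$ into a single \emph{globally} adapted $F:O\times[0,1]\to M$.

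A $\hat u$-adapted homotopy is not unique. Condition (4) of Definition~\ref{Adapted} pins $F[(t,x),s]$ down only when a $\hat u$-calibrated curve leaves $(t,x)$, i.e.\ only for $s<\tau(t,x)-t$. Once the flow reaches the cut locus -- in particular for \emph{all} $s>0$ when $(t,x)\in\operatorname{Cut}_T(\hat u)$ -- conditions (3) and (4) no longer determine $F$, and Proposition~\ref{StrongAdaptedProperty} constrains the value only to lie somewhere in $\Sigma_T(\hat u)$, not to equal anything specific. Two adapted homotopies $F_n$, $F_{n+1}$ on overlapping compacts are therefore under no obligation to agree at these points. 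Your closing remark -- that agreement-up-to-$\Sigma$ ``is enough to obtain continuity of the global $F$'' -- is not a continuity argument: if one cuts from $F_n$ to $F_{n+1}$ across a region where they take different values in $\Sigma$, the glued map is simply discontinuous. Nothing in the hypotheses rules this out, so the pasting step breaks. (A second, smaller issue: your reparametrizing function $\alpha:O\to{}]0,1[$ with $\tau(t,x)-t<\alpha(t,x)$ requires $\tau(t,x)-t<1$ throughout $O$, which need not hold; the paper's formula $G[(t,x),s]=\bar F[(t,x),s\alpha(t,x)(T_0-t)]$ absorbs this.)

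The paper's proof avoids pasting adapted homotopies entirely. For each compact $C$ it produces (Lemma~\ref{LastLemma}) a homotopy $G$ defined on \emph{all} of $O$, not just on $C$: one chooses a compact neighborhood $C'\supset C$, uses a \emph{single} adapted $F$ on $C'$, multiplies the time parameter by a continuous cutoff $\alpha$ that equals $1$ on $C$ and \emph{vanishes} on $\partial C'$, and extends by the identity outside $C'$. The cutoff makes $G$ continuous across $\partial C'$ by construction -- no compatibility between different $F$'s is ever needed. These globally-defined $G_n$'s can then be composed freely (by the ordinary composition of self-maps of $O$), and the exhaustion $O=\bigcup C_n$ together with an $n$-dependent reparametrization yields the desired $\tilde H$. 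If you want to repair your proof, replace the pasting step with this ``pinch to the identity on the boundary, then compose globally'' device; the rest of your outline then goes through.
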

We reduce, by several lemmas, the proof of Proposition \ref{ReducGlobHom}.
\begin{lem}\label{HomLocGlobal} Under the hypothesis of Proposition \ref{ReducGlobHom}, for every compact subset $C\subset ]0,+\infty[\times M$,
there exists a $\hat u$-adapted homotopy $F:C\times [0,+\infty[\to M$.
\end{lem}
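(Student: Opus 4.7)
The plan is to build the homotopy by iterated composition, repeatedly invoking the hypothesis and gluing via Lemma \ref{comp}. The key observation is that the hypothesis supplies, for each compact set, a $\hat u$-adapted homotopy of length one (after the presumed typo is corrected to $F : C \times [0,1] \to M$), and that the image of a compact set under the continuous map $\bar F$ is again compact, which keeps the induction running.

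Concretely, I would set $C_0 = C$ and use the hypothesis to obtain a $\hat u$-adapted homotopy $F_0 : C_0 \times [0,1] \to M$. The set
\[
C_1 = \bar F_0(C_0 \times \{1\}) = \{(t+1, F_0[(t,x),1]) : (t,x) \in C_0\}
\]
is a compact subset of $]0,+\infty[ \times M$, since $\bar F_0$ is continuous and $t+1 > 0$. Applying the hypothesis again yields a $\hat u$-adapted homotopy $F_1 : C_1 \times [0,1] \to M$. Since by construction $\bar F_0[(t,x),1] \in C_1$ for every $(t,x) \in C_0$, Lemma \ref{comp} produces a $\hat u$-adapted homotopy $F^{(2)} : C \times [0,2] \to M$ that restricts to $F_0$ on $C \times [0,1]$.

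Iterating, I set $C_{n+1} = \bar F^{(n+1)}(C \times \{n+1\})$, which is compact in $]0,+\infty[ \times M$, obtain from the hypothesis a $\hat u$-adapted homotopy $F_{n+1} : C_{n+1} \times [0,1] \to M$, and glue with Lemma \ref{comp} to obtain $F^{(n+2)} : C \times [0,n+2] \to M$ which extends $F^{(n+1)}$. Because each $F^{(n+1)}$ restricts to $F^{(n)}$ on $C \times [0,n]$, the family $\{F^{(n)}\}_{n \geq 1}$ glues to a well-defined map $F : C \times [0,+\infty[ \to M$. Continuity of $F$ follows because $F|_{C \times [0,n]} = F^{(n)}$ is continuous for every $n$.

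It remains to check that $F$ is $\hat u$-adapted in the sense of Definition \ref{Adapted}. Condition (1) is vacuous here because $t_0 = +\infty$, so the constraint $t + s < t_0$ is automatic. Condition (2), $F[(t,x),0] = x$, is inherited from $F_0$. Conditions (3) and (4) are local in the variable $s$: for any $(t,x) \in C$ and any $s \in [0,+\infty[$ choose $n$ with $s \leq n$, and then $F$ coincides with the $\hat u$-adapted homotopy $F^{(n)}$ on $C \times [0,n]$, so both conditions hold at $(t,x,s)$. The only potential obstacle I can foresee is making sure the inductive step is genuinely available, i.e.\ that each $C_n$ is a legitimate compact subset of $]0,+\infty[ \times M$ to which the hypothesis applies; this is guaranteed by continuity of $\bar F^{(n)}$ and by the fact that the time-shift pushes compacta further into $]0,+\infty[$ rather than toward $0$. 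No issue arises from $M$ being non-compact because we never demand that the $C_n$ stay in any fixed compact piece of $M$.
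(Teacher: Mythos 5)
Your argument is correct and is essentially the paper's own proof: induction on $n$, using that $\bar F(C\times\{n\})$ is a compact subset of $]0,+\infty[\times M$ to which the hypothesis applies, and gluing via Lemma~\ref{comp}. You also correctly read the ``$F:C\times[0,1]\times M$'' in the hypothesis as a typo for $F:C\times[0,1]\to M$, and your remark that conditions (3) and (4) of Definition~\ref{Adapted} are local in $s$ is exactly why the glued limit remains $\hat u$-adapted.
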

\begin{proof} We show, by induction on the integer $n\geq 1$, how to extend the  $\hat u$-adapted homotopy $F:C\times [0,1]\to M$ to a
 $\hat u$-adapted homotopy 
$F:C\times [0,n]\to M$. Assume $F:C\times [0,n]\to M$ is constructed for some $n\geq 1$. As introduced in Notation  \ref{barF},
we define $\bar F:C\times [0,n]\to ]0,+\infty[ \times M$ by
$$\bar F[(t,x),s]=\left(t+s, F[(t,x),s]\right).$$
Since $\bar F$ is continuous, the subset $C_n=\bar F[C\times \{n\}]$ is a compact subset of $ ]0,+\infty[ \times M$.
Therefore, by the hypothesis of Proposition \ref{ReducGlobHom} applied to $C_n$ instead of $C$, 
we can find a $\hat u$-adapted homotopy $F_n:C_n\times [0,1]\to M$. By Lemma \ref{comp}, if we extend $F$ to 
$C\times [n,n+1]$, by $F[(t,x),s]=F_n\left[\bar F[(t,x),n],s-n\right]$, it will be $\hat u$-adapted on $C\times [0,n+1]$.
\end{proof}
\begin{rem} In fact, if, instead of the hypothesis of Proposition \ref{ReducGlobHom}, we assume that there exists 
$\delta>0$ such that for every compact subset $C\subset ]0,+\infty[\times M$, we can find a
 $\hat u$-adapted homotopy $F:C\times [0,\delta]\times M$, then the conclusion of
  Lemma \ref{HomLocGlobal} remains valid.
\end{rem}
\begin{lem}\label{LastLemma} Under the hypothesis of Proposition \ref{ReducGlobHom}, for every compact subset   $C$ 
of $]0,T[\times M\setminus {\cal I}_T(\hat u)$, where $T>0$,  we can find 
a continuous homotopy 
$$G:\left(]0,T[\times M\setminus {\cal I}_T(\hat u)\right)\times [0,1]
\to ]0,T[\times M\setminus {\cal I}_T(\hat u)$$ 
such that :
\begin{align}
\label{555} G[(t,x),1]&\in \Sigma_T(\hat u), \text{for all $(t,x)\in C$,}\\
\label{556} G[(t,x),0)&=(t,x),\text{ for all $(t,x)\in ]0,T[\times M\setminus {\cal I}_T(\hat u) $,}
\end{align} 
and 
\begin{align}
\label{665}  G(\Sigma_T(\hat u)\times [0,1])&\subset \Sigma_T(\hat u),\\ 
\label{666} G(\operatorname{Cut}_T(\hat u)\times [0,1])&\subset \operatorname{Cut}_T(\hat u).
\end{align} 
\end{lem}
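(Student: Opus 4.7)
The plan is to rescale the $\hat u$-adapted homotopy from Lemma \ref{HomLocGlobal} by a continuous cutoff $\alpha$ that is large enough to push $C$ into $\Sigma_T(\hat u)$, but smoothly dies off outside a compact neighborhood of $C$, so that the resulting map extends by the identity to all of $]0, T[\times M \setminus {\cal I}_T(\hat u)$.

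First, using that ${\cal I}_T(\hat u)$ is closed in $]0, T[\times M$ (a consequence of $C^1$-compactness of families of calibrated curves with bounded action on compact subintervals, together with the fact that a uniform limit of calibrated curves is calibrated), I would choose a compact neighborhood $K$ of $C$ contained in $]0, T[ \times M \setminus {\cal I}_T(\hat u)$ and apply Lemma \ref{HomLocGlobal} to obtain a $\hat u$-adapted homotopy $F:K\times [0,+\infty[\to M$, writing $\bar F[(t,x),s]=(t+s,F[(t,x),s])$. Since Proposition \ref{PropTau} gives $\tau(t,x)<T$ on the compact $K$ and $\tau-t$ is upper semi-continuous, Baire's interpolation theorem (as in the proof of Theorem \ref{LocContrGen}) yields a continuous $\beta$ on $K$ with $\tau(t,x)-t<\beta(t,x)<T-t$. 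Extending $\beta$ to $]0,T[\times M$ by Tietze and multiplying by a Urysohn bump $\rho$ equal to $1$ on $C$ and $0$ outside some open $U$ with $\bar U\subset K^\circ$, I set $\alpha=\rho\beta$ and define
$$
G[(t,x),s]=\begin{cases} \bar F[(t,x),s\,\alpha(t,x)], & (t,x)\in K,\\ (t,x), & (t,x)\notin K.\end{cases}
$$
Continuity of $G$ is clear in the interior of each piece and at $\partial K$, where $\alpha$ vanishes (since $\partial K\cap U=\emptyset$) and $\bar F[\cdot,0]$ is the identity by part (2) of Definition \ref{Adapted}; the bound $\alpha<T-t$ keeps time coordinates inside $]0,T[$.

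The heart of the argument, and the main obstacle, is the stability claim that if $(t,x)\notin {\cal I}_T(\hat u)$ then $\bar F[(t,x),s]\notin {\cal I}_T(\hat u)$ for every $s\geq 0$. For $s>\tau(t,x)-t$ it is immediate from Proposition \ref{StrongAdaptedProperty}. For $0<s\leq \tau(t,x)-t$, condition (3) of Definition \ref{Adapted} gives either $\bar F[(t,x),s]\in\Sigma_T(\hat u)$ (hence outside ${\cal I}_T(\hat u)$), or $\sigma\mapsto F[(t,x),\sigma-t]$ is $\hat u$-calibrated on $[t,t+s]$; in the latter case, if $\bar F[(t,x),s]$ were in ${\cal I}_T(\hat u)$, the calibrated curve on $[0,T[$ through that point would, by uniqueness of backward characteristics at differentiability points (Proposition \ref{DiffProp2}) combined with our calibrated $F$-trajectory, yield a calibrated curve on $[0,T[$ passing through $(t,x)$, contradicting $(t,x)\notin{\cal I}_T(\hat u)$.

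Granted stability, the four listed properties fall out: \eqref{556} because $\bar F[\cdot,0]$ is the identity; \eqref{555} because on $C$ we have $\alpha=\beta>\tau-t$, so Proposition \ref{StrongAdaptedProperty} places $G[(t,x),1]$ in $\Sigma_T(\hat u)$; and \eqref{665}, \eqref{666} because on $\operatorname{Cut}_T(\hat u)\supset\Sigma_T(\hat u)$ one has $\tau(t,x)=t$, so whenever $s\,\alpha(t,x)>0$ that proposition again lands the image in $\Sigma_T(\hat u)$, while if $s\,\alpha(t,x)=0$ the image simply equals $(t,x)$. Aside from the stability claim, everything reduces to bookkeeping with the cutoff.
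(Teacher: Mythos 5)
Your proof is correct and follows essentially the same strategy as the paper's: pick a compact neighborhood of $C$ in the open set $]0,T[\times M\setminus{\cal I}_T(\hat u)$, apply Lemma \ref{HomLocGlobal}, prove the stability claim that $\bar F$ avoids ${\cal I}_T(\hat u)$ by the same differentiability/backward-characteristic argument via Propositions \ref{DiffProp2} and \ref{StrongAdaptedProperty}, then rescale the time parameter by a cutoff that vanishes on the boundary so the map extends by the identity. The only difference is technical: the paper uses a simple Urysohn-type $\alpha:C'\to[0,1]$ (equal to $1$ on $C$, $0$ on $\partial C'$) and multiplies by a \emph{constant} margin $T_0-t$ with $T_0<T$ chosen so that $\tau<T_0$ on $C'$; you instead multiply a Urysohn bump $\rho$ by a Baire-interpolated function $\beta$ strictly between $\tau(t,x)-t$ and $T-t$. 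Both constructions achieve the needed inequality $\alpha>\tau-t$ on $C$, but the paper's constant-$T_0$ version is slightly more economical since it avoids a second invocation of Baire interpolation. Your case split in the stability argument (on $s$ versus $\tau(t,x)-t$) is harmless but not needed: since ${\cal I}_T(\hat u)\cap\Sigma_T(\hat u)=\emptyset$, the contradiction argument for the second case already handles everything. The parenthetical justification you give for ${\cal I}_T(\hat u)$ being closed ($C^1$-compactness of calibrated curves) is a reasonable supplement; the paper uses the same fact implicitly.
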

\begin{proof} We choose $C'$ a compact neighborhood of $C$ in $]0,T[\times M\setminus {\cal I}_T(\hat u)$. We apply
Lemma \ref{HomLocGlobal} for $C'$ (instead of $C$) to obtain the homotopy
$F:C'\times [0,+\infty[\to  M$. As introduced in Notation  \ref{barF},
we define $\bar F:C'\times [0,+\infty[\to ]0,+\infty[ \times M$ by
$$\bar F[(t,x),s]=\left(t+s, F[(t,x),s]\right).$$
 We first observe that the image of $\bar F$ avoids ${\cal I}_T(\hat u)$. In fact, if 
 $(t+s,y)= \bar F[(t,x),s]=\left(t+s, F[(t,x),s]\right)$ is in ${\cal I}_T(\hat u)$, then $0<t+s<T$, and there exists
 a $\hat u$-calibrated curve $\gamma:[0,T[\to M$ such that $\gamma(t+s)=y$. By Proposition \ref{DiffProp2}, this implies that $\hat u$ is differentiable
 at $(t+s,y)$ and any $\hat u$-calibrated curve $\delta:[a, t+s]\to M$ ending at $y$ must coincide with $\gamma|[a,t+s]$.
 Using again that $\hat u$ is differentiable at $(t+s,y)= \bar F[(t,x),s]=\left(t+s, F[(t,x),s]\right)$, 
 and that $F$ is $\hat u$-adapted, we obtain that the curve 
 $\sigma\to F[(t,x),\sigma-t], \sigma\in [t,t+s]$, which ends at $y$,  is $\hat u$-calibrated. Therefore
 $F[(t,x),\sigma-t]=\gamma(\sigma)$, and $\bar F[(t,x),\sigma-t]\in {\cal I}_T(\hat u)$, for $\sigma \in [t,t+s]$. In particular,
 for $\sigma=t$,
 this would imply $(t,x)\in {\cal I}_T(\hat u)$, which is impossible, since $(t,x)$ is in $C'$ which is disjoint from 
 ${\cal I}_T(\hat u)$. Thus we obtained 
 \begin{equation}\label{ITU}
 F:C'\times ]0,+\infty[\to M\setminus{\cal I}_T(\hat u).  
\end{equation}
Since $F$ is $\hat u$-adapted, we must have
\begin{align}
\label{775}\bar F\left((C'\cap \Sigma_T(\hat u))\times [0,+\infty[\right)&\subset  \Sigma_T(\hat u)\\
\label{776}\bar F\left((C'\cap \operatorname{Cut}_T(\hat u))\times [0,+\infty[\right)&\subset  
\operatorname{Cut}_T(\hat u).
\end{align}
Since $C'$ is a compact subset of $]0,T[\times M\setminus{\cal I}_T(\hat u)$, and the upper semi-continuous  
cut time function $\tau$ is $<T$ everywhere in $]0,T[\times M\setminus{\cal I}_T(\hat u)$, we can choose a finite $T_0<T$ such that the cut time function $\tau$ is $< T_0$ on  $C'$. Since $C'$ is a compact neighborhood of $C$
in $]0,T[\times M\setminus{\cal I}_T(\hat u)$, we can find a continuous function $\alpha: C'\to [0,1]$ such that $\alpha$
is identically equal to $1$ on $C$, and $\alpha$ is identically $0$ on $\partial C'$, the boundary of $C'$ in 
$]0,T[\times M\setminus{\cal I}_T(\hat u)$. We define $G:C'\times [0,1]\to ]0,+\infty [\times M$ by
$$G[(t,x),s]=\bar F[(t,x), s\alpha(t,x)(T_0-t)].$$
In particular, we get 
$$G[(t,x),s]=\left(t+s\alpha(t,x)(T_0-t), F[(t,x), s\alpha(t,x)(T_0-t)]\right).$$
Since $t+s\alpha(t,x)(T_0-t)\leq T_0$, the image $G(C'\times[0,1])$ of $G$ is, in fact, 
contained in $]0,T_0 ]\times M\setminus{\cal I}_T(\hat u)\subset $. Taken together with \eqref{ITU}, it implies
$$G(C'\times[0,1])\subset ]0,T[\times M\setminus {\cal I}_T(\hat u).$$
Properties \eqref{556},\eqref{665} and\eqref{666} of $G$ on $C'\times [0,1]$ follow from properties 
\eqref{775} and \eqref{776} of $\bar F$.
Since for $(t,x)\in C$, we have $\alpha(t,x)=1$, we get
$$G[(t,x),1]=\left(t+s\alpha(t,x)(T_0-t), F[(t,x), (T_0-t)]\right).$$
Since $F$ is $\hat u$-adapted  and $\tau<T_0$ on $C'\supset C$, we obtain property \eqref{555}.

Since $G[(t,x),s]=(t,x)$, for $(t,x)\in \partial C'$, we can extend $G$ continuously to
$(]0,T_0 [\times M\setminus{\cal I}_T(\hat u))\times [0,1]$ by $G[(t,x),s]=(t,x)$, for $(t,x)\notin C'$.

It is not difficult to check that this extension $G$ still has the required properties \eqref{555} to \eqref{666}.
\end{proof}
\begin{proof}[Proposition \ref{ReducGlobHom}]
We can find a sequence of compact subsets
$C_n, n\geq 1$ of $]0,T [\times M\setminus{\cal I}_T(\hat u)$, such that $C_n\subset \INT{C}_{n+1}$, and
 $]0,T [\times M\setminus{\cal I}_T(\hat u)=\cup_{n\geq 0}C_n$.
We construct a homotopy $H:\left(]0,T [\times M\setminus{\cal I}_T(\hat u)\right)\times [0,+\infty[\to
]0,T [\times M\setminus{\cal I}_T(\hat u)$ such that 
$ H(\Sigma_T(\hat u)\times [0,+\infty[)\subset \Sigma_T(\hat u), H(\operatorname{Cut}_T(\hat u)\times [0,+\infty[)\subset \operatorname{Cut}_T(\hat u), H((t,x,0)=(t,x)$, for all $(t,x)\in ]0,T [\times M\setminus{\cal I}_T(\hat u)$,
and
$$ H(C_n\times [n+1,+\infty[) \subset \Sigma_T(\hat u),\text{ for all $n\geq 0$.}$$
We will construct $H$ on $]0,T [\times M\setminus{\cal I}_T(\hat u)\times [0,n]$ by induction on $n\geq 1$.

We start by applying Lemma \ref{LastLemma} to the compact set $C_1$ to obtain the homotopy
$$G_1:\left(]0,T [\times M\setminus{\cal I}_T(\hat u)\right)\times [0,1]\to ]0,T [\times M\setminus{\cal I}_T(\hat u),$$
with 
$$ G_1[(t,x),0]=(t,x),\text{ for all $(t,x)\in \left(]0,T [\times M\setminus{\cal I}_T(\hat u)\right)$,}$$
and 
\begin{align*}
G_1(C_1\times \{1\}&\subset \Sigma_T(\hat u),\\ 
 G_1(\Sigma_T(\hat u)\times [0,1])&\subset \Sigma_T(\hat u),\\ 
 G_1(\operatorname{Cut}_T(\hat u)\times [0,1])&\subset \operatorname{Cut}_T(\hat u).
\end{align*}

We then set $H[(t,x),s]=G_1[(t,x),s]$, for $(t,x)\in 0,T [\times M\setminus{\cal I}_T(\hat u),s\in [0,1]$.
Assuming that $H$ has been constructed on $\left(]0,T [\times M\setminus{\cal I}_T(\hat u)\right)\times [0,n]$, we apply
Lemma \ref{LastLemma} to the compact set $H(C_n\times\{n\})$ to obtain the
homotopy 
$$G_n:\left(]0,T [\times M\setminus{\cal I}_T(\hat u)\right)\times [0,1]\to ]0,T [\times M\setminus{\cal I}_T(\hat u),$$ 
with
$$ G_n[(t,x),0]=(t,x),\text{ for all $(t,x)\in \left(]0,T [\times M\setminus{\cal I}_T(\hat u)\right)$,}$$
and 
\begin{align*}
G_n[H(C_n\times\{n\})\times \{1\}]&\subset \Sigma_T(\hat u),\\ 
 G_1(\Sigma_T(\hat u)\times [0,1])&\subset \Sigma_T(\hat u),\\ 
 G_1(\operatorname{Cut}_T(\hat u)\times [0,1])&\subset \operatorname{Cut}_T(\hat u).
\end{align*} 
We then define $H$ on
$\left(]0,T [\times M\setminus{\cal I}_T(\hat u)\right)\times [n,n+1]$ by 
$$H[(t,x),s]= G_n(H[(t,x),n], t-n).$$
It is not difficult to check that $G$ satisfies the required properties \eqref{555}--\eqref{666}.

Since $C_n\subset \INT{C}_{n+1}$, we can define a continuous function 
$\alpha:]0,T [\times M\setminus{\cal I}_T(\hat u)\to\R$ such that $\alpha=n+2$ on $\partial C_n$,
$\alpha(C_0)\subset [1,2]$, and $\alpha(C_n\setminus \INT{C}_{n-1})\subset [n+1,n+2]$.
Therefore $H[(t,x),\alpha(t,x)]\in \Sigma_T(\hat u)$, for all $(t,x)\in ]0,T [\times M\setminus{\cal I}_T(\hat u)$.

We then define the homotopy $\tilde H:M\setminus {\cal I}(u)\times [0,1]\to M\setminus {\cal I}(u)$ by
$$\tilde H[(x,t),s]=H[(t,x),s\alpha(t,x)].$$
It is not difficult to check that $\tilde H[(x,t),0]=(x,t), 
\tilde H[(x,t),1]\in \Sigma_T(\hat u)$, for all $(t,x)\in ]0,T [\times M\setminus{\cal I}_T(\hat u)$, 
$\tilde H(\Sigma_T(\hat u)\times [0,1])\subset \Sigma_T(\hat u)$, 
and $\tilde H(\operatorname{Cut}_T(\hat u)\times [0,1])\subset \operatorname{Cut}_T(\hat u)$.
This finishes the proof of Proposition \ref{ReducGlobHom}.
\end{proof}
Here is a useful criterion that allows to show that the hypothesis of Proposition \ref{ReducGlobHom} holds.
\begin{lem}\label{ReducGlobHombis} Let $u:M\to[-\infty,+\infty]$ be such that $\hat u$ is finite on $]0,+\infty[\times M$.
Assume for every $t_0>0$, we can find a constant $\kappa_{t_0}$, such that, for every compact subset 
$C\subset [t_0,+\infty[\times M$, we can find $\delta>0$ and $\hat u$-adapted homotopy
$F:C\times [0,\delta]\to M$, such that
$$d(F[(t,x),s],x)\leq \kappa_{t_0}s,\text{ for all $(t,x)\in C$ and all $s\in  [0,\delta]$}.$$
Then for every compact subset $C\subset ]0,+\infty[\times M$, we can find a $\hat u$-adapted homotopy
$F:C\times [0,1]\times \to M$.
\end{lem}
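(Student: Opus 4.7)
The strategy is to apply the hypothesis once, to a cleverly enlarged compact set, to obtain a single $\hat u$-adapted homotopy $F^*$ defined on a \emph{fixed} time interval $[0,\delta_0]$, and then build the desired homotopy on $C\times [0,1]$ by repeated composition via Lemma \ref{comp}. The Lipschitz-type displacement estimate is precisely what guarantees that the iterated endpoints never leave the enlarged compact set, so the same $\delta_0$ can be reused at each step.

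More concretely, since $C\subset]0,+\infty[\times M$ is compact, I would pick $t_0>0$ and $t_1<+\infty$ and a compact $K_0\subset M$ with $C\subset [t_0,t_1]\times K_0$, and set $\kappa=\kappa_{t_0}$ from the hypothesis. I then form the compact set
$$C^*=[t_0,t_1+1]\times \bar V_\kappa(K_0)\subset [t_0,+\infty[\times M,$$
which is compact because $(M,g)$ is complete. Applying the hypothesis to $C^*$ yields some $\delta_0>0$ and a $\hat u$-adapted homotopy $F^*:C^*\times [0,\delta_0]\to M$ with $d(F^*[(t,x),s],x)\leq \kappa s$ for all $(t,x)\in C^*$ and $s\in[0,\delta_0]$.

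Next, I would choose an integer $n$ with $n\delta_0\geq 1$. Starting from $F_1=F^*|_{C\times [0,\delta_0]}$, observe that $\bar F_1[(t,x),\delta_0]\in[t_0+\delta_0,t_1+\delta_0]\times\bar V_{\kappa\delta_0}(K_0)\subset C^*$, so Lemma \ref{comp} lets me paste on another copy of $F^*$ to obtain a $\hat u$-adapted homotopy on $C\times[0,2\delta_0]$. Inductively, after $k$ applications of this pasting, the endpoint lies in $[t_0+k\delta_0,t_1+k\delta_0]\times\bar V_{k\kappa\delta_0}(K_0)$, which remains inside $C^*$ as long as $k\delta_0\leq 1$. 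Since we picked $n\delta_0\geq 1$, this allows $n$ compositions (using, for the last step, a truncated time $1-(n-1)\delta_0\in(0,\delta_0]$ instead of the full $\delta_0$), producing a $\hat u$-adapted homotopy $F:C\times[0,1]\to M$.

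The main (and essentially only) obstacle is to ensure that the same $\delta_0$ can be reused throughout the iteration; it is here that the hypothesis on the displacement bound $d(F[(t,x),s],x)\leq\kappa_{t_0}s$ is crucial, because without it the successive endpoint sets could drift out of any fixed compact set in $[t_0,+\infty[\times M$ and the $\delta$ produced by the hypothesis could deteriorate to zero at each step. Beyond that, the verification that the pasted map is $\hat u$-adapted reduces to the composition stability given by Lemma \ref{comp}, which is immediate.
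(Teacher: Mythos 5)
Your proposal is correct and takes essentially the same approach as the paper: applying the hypothesis once to the enlarged compact set $[t_0,t_1+1]\times\bar V_{\kappa_{t_0}}(K_0)$ to get a single $\delta_0$, then using the linear displacement bound $d(F[(t,x),s],x)\leq\kappa_{t_0}s$ together with Lemma \ref{comp} to iterate the pasting while staying inside that set until time $1$ is covered. The only cosmetic difference is that the paper extends to $[0,n_0\delta]\supset[0,1]$ and restricts, while you truncate the last step; both are fine.
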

\begin{proof}
Assume $C$ is a compact subset of $]0,+\infty[\times M$.We first find $a,b\in ]0,+\infty[$, with $a<b$, and a compact subset $K\subset M$ such that $C\subset [a,b]\times K$.

We now use the hypothesis of the Lemma, applied to the compact set $[a,b+1]\times \bar V_{\kappa_a}(K)
\subset [a,+\infty[\times M$, to find
a $\hat u$-adapted homotopy
$$F:\left([a,b+1]\times \bar V_{\kappa_a}(K)\right)\times[0,\delta]\to M,$$
such that, for all $(t,x)\in [a,b+1]\times \bar V_{\kappa_a}(K)$ and all $s\in[0,\delta]$, we have
\begin{equation}\label{BoundDistHomotopyBis}
d\left(F[(t,x),s], x\right)\leq \kappa_as.
\end{equation}

If $\delta\geq 1$ we have finished. If $\delta<1$, choose $n_0\leq 2$ such that $(n_0-1)\times \delta<1\leq n_0\delta$.
Note that $n_0\delta<1+\delta\leq 2$.
By induction on $n=1,\dots, n_0$, we will construct an extension of $F$ on $C\times [0,\delta]$ to 
a $\hat u$-adapted homotopy $F:C\times [0,n\delta]\to M$ which also satisfies
\begin{equation}\label{BoundDistHomotopyBisInd}
d\left(F[(t,x),s], x\right)\leq \kappa_as, 
\text{ for all $(t,x)\in C$ and all $s\in[0,n\delta]$.}
\end{equation}
For $n=1$, we just take the restriction of $F$ to $C\times [0,\delta]$. Assuming $F:C\times [0,n\delta]\to M$ constructed
with $n<n_0$, we note that for every $(t,x)\in C\subset [a,b]\times K$ and every $s\in [0,n\delta]\subset [0,1]$, we have
$t+s\in [a,b+1]$ and $d(F[(t,x),s],x)\leq \kappa_as\leq \kappa_a$. Hence $\left(t+n\delta, F[(t,x),n\delta]\right)\in [a,b+1]\times \bar V_{\kappa_a}(K)$. Therefore, for $s\in [n\delta,(n+1)\delta]$, we can set 
$$F[(t,x),s]=F\left[(t+n\delta, F[(t,x),n\delta]), s-n\delta\right].$$
By Lemma \ref{comp}, this extension of $F$ is still $\hat u$-adapted.
It remains to check \eqref{BoundDistHomotopyBisInd} for $s\in [n\delta,(n+1)\delta]$. Noting that
$F[(t,x),s]=F\left[(t+n\delta, F[(t,x),n\delta]),s\!-\!n\delta\right]$,  from \eqref{BoundDistHomotopyBis},
we obtain $d(F[(t,x),s],F[(t,x),n\delta])\leq \kappa_a(s-n\delta)$,  hence 
\begin{align*}
d(F[(t,x),s],x)&\leq d(F[(t,x),s], F[(t,x),n\delta])+d(F[(t,x),n\delta],x)\\
&\leq \kappa_a(s-n\delta)+\kappa_an\delta=\kappa_as.
\end{align*}
Since $n_0\delta>1$, this finishes the proof of the Lemma.
\end{proof}
\section{Functions Lipschitz in the large}
To state the generalization we have in mind, we recall the definition of Lipschitz in the large for a function, 
see \cite[Definition A.5]{Zav} or \cite{FatNC}.

\begin{defn} Let $X$ be a metric space whose distance is denoted by $d$.
A function $u:X\to \R$ is said to be Lipschitz in the large if there exists a constant $K<+\infty$ such that
$$\lvert u(y)-u(x)\rvert\leq K+Kd(x,y), \text{ for every $x,y\in X$.}$$
When the inequality above is satisfied, we will say that $u$ is Lipschitz in the large with constant $K$.
\end{defn}

Note that we do not assume in the definition above that $u$ is continuous. 

Obviously, when $X$ is compact $u:X\to \R$ is  Lipschitz in the large if and only if $u$ is bounded.

As is shown in \cite[Proposition 10.3]{FatNC},
the function $u:X\to \R$ is Lipschitz in the large if and only if there exits a (globally) Lipschitz function $\varphi:X\to \R$
such that 
$$\lVert u-\varphi\rVert_\infty=\sup_{x\in X}\lvert u(x)-\varphi(x)\rvert<+\infty.$$
In particular, a Lipschitz in the large function $u:M\to\R$ is bounded from below by a Lipschitz function and therefore
$\hat u$ is finite everywhere on $[0,+\infty[\times M$.

As we will see below the  next theorem generalizes Theorem \ref{HomGlob} stated in the introduction.
\begin{thm}\label{HomGlobGen} Assume $u:M\to \R$ is a Lipschitz in the large function. For every $T\in ]0,+\infty]$
the inclusions $\Sigma_T(\hat u)\subset \operatorname{Cut}_T(\hat u)\subset ]0,T[ \times M\setminus {\cal I}_T(\hat u)$
are homotopy equivalences.
\end{thm}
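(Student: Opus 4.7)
My plan is to reduce Theorem \ref{HomGlobGen} to the abstract machinery of Section 4 by verifying the hypothesis of Lemma \ref{ReducGlobHombis}; Proposition \ref{ReducGlobHom} then yields the homotopy equivalences for every $T \in ]0,+\infty]$. Since a Lipschitz-in-the-large function is bounded below by a (globally) Lipschitz function, its negative Lax-Oleinik evolution $\hat u$ is already known to be finite on all of $]0,+\infty[ \times M$, as observed in the paragraph introducing Theorem \ref{HomGlobGen}. The key additional input I would extract from the Lipschitz-in-the-large hypothesis is a \emph{uniform} global Lipschitz estimate on spatial sections: for every $t_0 > 0$ there is a constant $K_{t_0} < +\infty$ such that $x \mapsto \hat u(t, x)$ is $K_{t_0}$-Lipschitz on $M$ for every $t \geq t_0$. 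This follows from \cite{FatNC}: by \cite[Proposition 10.3]{FatNC} one can write $u = \varphi + w$ with $\varphi$ globally Lipschitz and $w$ bounded; the negative Lax-Oleinik semi-group preserves the Lipschitz constant of $\varphi$ (via superlinearity of $L$), while $|T^-_t u - T^-_t \varphi| \leq \|w\|_\infty$ contributes only a bounded additive error.

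Granting this uniform Lipschitz estimate, fix $t_0 > 0$ and an arbitrary compact subset $C \subset [t_0, +\infty[ \times M$. Pick any finite $T_C > \sup_{(t,x) \in C} t$; applying Lemma \ref{HomLoc} to $C$ viewed as a compact subset of $]0, T_C[ \times M$ supplies $\delta > 0$ and a $\hat u$-adapted homotopy $F : C \times [0,\delta] \to M$ satisfying \eqref{CaracF}. Writing $y := F[(t,x), s]$ for $(t,x) \in C$ and $s \in ]0,\delta]$, that inequality rearranges as
$$\hat u(t+s, y) - \hat u(t+s, x) \;\geq\; h_s(x, y) - h_s(x, x).$$
The left-hand side is at most $K_{t_0}\, d(x, y)$ by the uniform Lipschitz estimate (using $t + s \geq t_0$). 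For the right-hand side, the constant curve at $x$ gives $h_s(x, x) \leq s\, L(x, 0) \leq s\, A(0)$ with $A(0) < +\infty$ by \eqref{TonLag3}; and superlinearity \eqref{TonLag2bis} applied with $K = 2 K_{t_0}$ yields $h_s(x, y) \geq 2 K_{t_0}\, d(x, y) - s\, C(2 K_{t_0})$. Combining,
$$K_{t_0}\, d(x, y) \;\leq\; s\bigl(C(2 K_{t_0}) + A(0)\bigr),$$
so $d(F[(t,x),s], x) \leq \kappa_{t_0}\, s$ with $\kappa_{t_0} := (C(2 K_{t_0}) + A(0))/K_{t_0}$ depending only on $t_0$ and on $L$. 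This verifies the hypothesis of Lemma \ref{ReducGlobHombis}, and Proposition \ref{ReducGlobHom} then completes the proof.

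The main obstacle is securing the \emph{global-in-$M$} Lipschitz constant $K_{t_0}$; without it, $\kappa_{t_0}$ would depend on the projection of $C$ to $M$, and Lemma \ref{ReducGlobHombis} would no longer apply. Lipschitz in the large is exactly the hypothesis that upgrades the local semiconcavity and local Lipschitz behaviour of $\hat u$ provided by Theorem \ref{PropHatu} (all that is available for an arbitrary continuous solution) to a genuinely global Lipschitz estimate compatible with the uniform superlinearity of $L$, thereby allowing the reduction to the machinery of Section 4 for non-compact $M$.
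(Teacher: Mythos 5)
Your proposal is correct and follows essentially the same route as the paper: invoke Lemma \ref{HomLoc} to get a local $\hat u$-adapted homotopy satisfying \eqref{CaracF}, combine the global Lipschitz estimate of Proposition \ref{LipHatu} (a consequence of Lipschitz-in-the-large) with the superlinearity of $L$ to deduce the displacement bound $d(F[(t,x),s],x)\leq\kappa_{t_0}s$, and then feed this into Lemma \ref{ReducGlobHombis} and Proposition \ref{ReducGlobHom}; your inline computation of $\kappa_{t_0}$ is the paper's Corollary \ref{BoundDistHomotopy} with the cosmetic choice $K=2K_{t_0}$ instead of $K=\lambda+1$. One small caveat: the parenthetical sketch you give for Proposition \ref{LipHatu} (writing $u=\varphi+w$ with $\varphi$ Lipschitz and $w$ bounded, then noting $|T^-_tu-T^-_t\varphi|\leq\|w\|_\infty$) only shows that $T^-_tu$ stays within a bounded distance of a Lipschitz function, i.e., remains Lipschitz in the large --- it does not by itself produce a global Lipschitz constant for $T^-_tu$; the genuine Lipschitz regularization for $t>0$ is the content of \cite[Theorem 10.4]{FatNC}, which you do cite, so the overall logic holds, but the aside as phrased would not suffice on its own.
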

It is not difficult to see that, for a function $u:M\to \R$ Lipschitz in the large with constant $K$, its lower semi-continuous regularization $u_-$ is itself Lipschitz in the large with constant $K$. Therefore by Proposition \ref{hatu-}, without loss
of generality we can prove Theorem \ref{HomGlobGen} adding the assumption that $u$ is lower semi-continuous.
The main step is to show the following global version of Lemma \ref{HomLoc}.

We now recall the following result, whose proof is standard and can be found in \cite[Theorem 10.4]{FatNC}.
\begin{prop}\label{LipHatu} If $u$ is Lipschitz in the large, then
for every $t_0>0$, the restriction of its negative Lax-Oleinik evolution $\hat u$ to $[t_0,+\infty[\times M$ is (globally) Lipschitz. \end{prop}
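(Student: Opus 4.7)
The plan is to bound $\|\partial_v L(\gamma(t),\dot\gamma(t))\|$ uniformly over all backward $\hat u$-characteristics $\gamma:[0,t]\to M$ ending in $[t_0,+\infty[\times M$, and to translate this into a uniform bound on $\|D\hat u\|$ at every differentiability point. Since $\hat u$ is locally semiconcave by Theorem~\ref{PropHatu} (hence locally Lipschitz), such an almost-everywhere gradient bound integrates along any smooth path to give global Lipschitzness on $[t_0,+\infty[\times M$.

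First I would invoke the characterization of Lipschitz-in-the-large functions recalled in the text to write $u=\varphi+b$ with $\varphi:M\to\R$ globally $K$-Lipschitz and $|b|\le M_0$. This already yields finiteness of $\hat u$ on $]0,+\infty[\times M$; after replacing $u$ by its lower semicontinuous regularization (which by Proposition~\ref{hatu-} leaves $\hat u$ unchanged there), Proposition~\ref{ExistenceBackCar} supplies a backward $\hat u$-characteristic $\gamma:[0,t]\to M$ at every $(t,x)\in\,]0,+\infty[\times M$. Combining the calibration identity $\hat u(t,x)=u(\gamma(0))+\int_0^tL(\gamma,\dot\gamma)\,d\sigma$ with the trivial upper bound $\hat u(t,x)\le u(x)+tA(0)$ (constant curve at $x$) and the inequality $u(\gamma(0))\ge u(x)-2M_0-K\,\ell_g(\gamma)$ (from the comparison with $\varphi$) yields
$$\int_0^tL(\gamma,\dot\gamma)\,d\sigma\le 2M_0+K\,\ell_g(\gamma)+tA(0).$$
Inserting the uniform superlinear lower bound $L(x,v)\ge(K+1)\|v\|-C(K+1)$ then bounds both $\ell_g(\gamma)$ and the total action by an expression affine in $t$; dividing by $t\ge t_0$ and applying the mean value theorem shows $L(\gamma(\sigma_0),\dot\gamma(\sigma_0))\le B$ at some $\sigma_0\in[0,t]$, and superlinearity of $L$ then forces $\|\dot\gamma(\sigma_0)\|\le R_0$ for a constant $R_0$ depending only on $t_0$, $K$, $M_0$, and the Tonelli data.

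To transport this bound from $\sigma_0$ to the endpoint, I would invoke energy conservation along the Euler--Lagrange flow. The Fenchel identity $H(x,\partial_v L(x,v))=\partial_v L(x,v)(v)-L(x,v)$ together with uniform superlinearity of $H$ implies that $\|v\|\le R_0$ forces $\|\partial_v L(x,v)\|\le R'_0$ uniformly in $x$; consequently the energy $E:=H(\gamma(\sigma_0),p(\sigma_0))$, with $p(\sigma):=\partial_v L(\gamma(\sigma),\dot\gamma(\sigma))$, is bounded above by $A^*(R'_0)$. Since $E$ is constant along the Euler--Lagrange orbit carrying $\gamma$, the same bound holds at $\sigma=t$, and uniform superlinearity of $H$ finally yields $\|p(t)\|\le P_0$. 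By Proposition~\ref{DiffProp2} this gives $\|\partial_x\hat u(t,x)\|\le P_0$ and $|\partial_t\hat u(t,x)|=|H(x,p(t))|\le Q_0$ at every differentiability point of $\hat u$ with $t\ge t_0$; combined with local Lipschitz continuity from Theorem~\ref{PropHatu}, this delivers the global Lipschitz inequality by integration along paths made of a time segment and a minimizing $g$-geodesic.

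The main obstacle, and the step requiring the most care, is exactly this transport of the speed bound from the interior point $\sigma_0$ (furnished only by the mean value theorem) to the endpoint $t$: a purely Lagrangian argument does not propagate $\|\dot\gamma\|$-bounds along an extremal, and one must cross over to the Hamiltonian side to exploit constancy of energy and uniform superlinearity of $H$ controlling the Legendre transform.
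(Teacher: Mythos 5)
Your argument is correct, and the techniques you deploy --- backward characteristics, averaging the action over $[0,t]$ to extract a bounded-speed time $\sigma_0$ via the integral mean value theorem, and then transporting that bound to $\sigma=t$ through energy conservation together with the uniform superlinearity of $H$ --- are exactly the right tools. The paper itself does not give a proof of Proposition~\ref{LipHatu}; it cites \cite[Theorem~10.4]{FatNC}, so there is nothing to compare line by line, but your argument is a sound self-contained derivation and almost certainly in the same spirit as the reference. Two small points worth spelling out if this were to be written in full. First, when you replace $u$ by its lower semicontinuous regularization $u_-$ (to invoke Proposition~\ref{ExistenceBackCar}), you should note that $u_-$ is again Lipschitz in the large with the same constants, since $\varphi-M_0\le u_-\le\varphi+M_0$; this is what makes the inequality $u_-(x)-u_-(\gamma(0))\le 2M_0+K\,\ell_g(\gamma)$ legitimate with $u_-$ in place of $u$. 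Second, the final ``integrate the a.e.\ gradient bound'' step deserves a little more care than a one-liner, because one cannot a priori claim differentiability along an arbitrary fixed path: the clean way is to observe that your bound on $\partial\hat u$ at differentiability points bounds every reachable gradient, hence (by semiconcavity) every element of $D^+\hat u(t,x)$ on $\,]t_0,+\infty[\times M$, and a continuous function whose superdifferential is uniformly bounded on a geodesically convex set is globally Lipschitz there; alternatively, mollify in local charts. Neither point is a gap in the idea --- just the usual dotting of i's.
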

\begin{cor}\label{BoundDistHomotopy} If $u$ is Lipschitz in the large, then
for every $t_0>0$, we can find a constant $\kappa_{t_0}$ such that for every $t\geq t_0, s>0, x,y\in M$ with
$\hat u(t+s, y)-h_s(x,y)\geq \hat u(t+s, x)-h_s(x,x)$, 
we have
$d(x,y)\leq \kappa_{t_0}s$.
\end{cor}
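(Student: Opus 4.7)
The plan is to exploit the fact that Proposition \ref{LipHatu} makes $\hat u$ globally Lipschitz on $[t_0,+\infty[\times M$, while the uniform superlinearity of $L$ forces $h_s(x,y)$ to grow at least linearly in $d(x,y)$ with a large slope when $s$ is small. Matching these two bounds is what yields $d(x,y)\leq \kappa_{t_0} s$.

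First, by Proposition \ref{LipHatu}, there is a constant $L_{t_0}<\infty$ such that $\hat u$ is $L_{t_0}$-Lipschitz on $[t_0,+\infty[\times M$ (with respect to the product metric). Rearranging the hypothesis gives
\begin{equation*}
h_s(x,y)-h_s(x,x)\leq \hat u(t+s,y)-\hat u(t+s,x)\leq L_{t_0}\,d(x,y).
\end{equation*}

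Next, I would bound the two sides of $h_s(x,y)-h_s(x,x)$ separately, using the Tonelli structure of $L$. On one hand, taking the constant curve $\sigma\mapsto x$ on $[0,s]$ as a competitor gives
\begin{equation*}
h_s(x,x)\leq \int_0^s L(x,0)\,d\sigma \leq A(0)\,s,
\end{equation*}
by property \eqref{TonLag3}. On the other hand, for any absolutely continuous $\gamma:[0,s]\to M$ joining $x$ to $y$, the uniform superlinearity estimate \eqref{TonLag2bis} with an arbitrary $K\geq 0$ gives
\begin{equation*}
\int_0^s L(\gamma(\sigma),\dot\gamma(\sigma))\,d\sigma
\geq K\int_0^s \|\dot\gamma(\sigma)\|_{\gamma(\sigma)}\,d\sigma - C(K)s
\geq K\,d(x,y)-C(K)s,
\end{equation*}
since $\int_0^s\|\dot\gamma\|\geq \ell_g(\gamma)\geq d(x,y)$. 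Taking the infimum over $\gamma$, we obtain
\begin{equation*}
h_s(x,y)\geq K\,d(x,y)-C(K)s.
\end{equation*}

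Combining these bounds, the hypothesis turns into
\begin{equation*}
K\,d(x,y) - \bigl(C(K)+A(0)\bigr)s \leq L_{t_0}\,d(x,y),
\end{equation*}
valid for every $K\geq 0$. The last step is simply to choose $K:=L_{t_0}+1$, which yields
\begin{equation*}
d(x,y)\leq \bigl(C(L_{t_0}+1)+A(0)\bigr)\,s.
\end{equation*}
Thus the corollary holds with $\kappa_{t_0}:=C(L_{t_0}+1)+A(0)$. No step looks delicate: the only substantive input is Proposition \ref{LipHatu}, and the rest is a one-line use of superlinearity. The mildly noteworthy point is that we must use $\int\|\dot\gamma\|\geq d(x,y)$ (not $\leq$), which is automatic since $d$ is defined as an infimum of lengths.
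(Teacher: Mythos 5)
Your proof is correct and takes essentially the same route as the paper: invoke Proposition \ref{LipHatu} for the global Lipschitz bound on $\hat u$, bound $h_s(x,x)\leq A(0)s$ via the constant competitor, apply uniform superlinearity of $L$ with slope $K=\lambda+1$, and solve for $d(x,y)$. The constant $\kappa_{t_0}=C(\lambda+1)+A(0)$ you obtain is exactly the one in the paper.
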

\begin{proof} By Proposition \ref{LipHatu} above we can find a global Lipschitz constant $\lambda<+\infty$ for
$\hat u$ on $[t_0,+\infty[\times M$. 
The inequality $\hat u(t+s, y)-h_s(x,y)\geq \hat u(t+s, x)-h_s(x,x)$ yields
$$h_s(x,y)]\leq \hat u(t+s, y)-\hat u(t+s, x)+h_s(x,x).$$
Since $h_s(x,x)\leq sA(0)$  (where
$A(0)=\sup_{x\in M}L(x,0)$ was defined before, see\eqref{TonLag3}), we obtain
\begin{align*}
h_s(x,y)&\leq \hat u(s+t,y)-\hat u(s+t,x)+h_s(x,x)\\
&\leq\lambda d(x,y)+A(0)s.
\end{align*}
But by the uniform superlinearity of $L$, see \eqref{TonLag2bis}, we know that $h_s(x,y)\geq (\lambda+1)d(x,y)-
C(\lambda+1)s$. Hence, we have $(\lambda+1)d(x,y)-C(\lambda+1)s\leq\lambda d(x,y)+A(0)s$, from which the inequality
$d(x,y)\leq [C(\lambda+1)+A(0)]s$ follows.
\end{proof}
\begin{proof}[Proof of Theorem \ref{HomGlobGen}] Suppose $C\subset ]0,+\infty[\times M$ is compact.
We show that there exists a $\hat u$-adapted homotopy $F:C\times [0,+\infty[\to M$. 
This will imply Theorem \ref{HomGlobGen} by Proposition \ref{ReducGlobHom}.

Choose $t_0>0$ such that $C\subset ]t_0,+\infty[\times M$. By Lemma \ref{HomLoc}, we can find a $\hat u$- adapted
homotopy $F_0:C\times [0,\delta]\to M$ with
$$
\hat u[t+s, F_0((t,x),s)]-h_s[x,F_0((t,x),s)]\geq \hat u(t+s, x)-h_s(x,x).
$$
By Corollary \ref{BoundDistHomotopy}, we obtain  $d(F_0((t,x),s),x)\leq \kappa_{t_0}$. 
The existence of  $\hat u$-adapted homotopy $F:C\times [0,+\infty[\to M$ now follows from 
Lemma \ref{ReducGlobHom} and Lemma \ref{HomLocGlobal}.
\end{proof}

\begin{proof}[Proof of Theorem \ref{HomGlob}] Under the hypothesis
of Theorem \ref{HomGlob}, by Theorem \ref{UniquenessUC}, we know that $U=\hat u$, with $u:M\to\R$ defined by $u(x)=U(0,x)$. Note that the function $u$ itself is uniformly continuous.

Any uniformly continuous function $u:M\to\R$ is Lipschitz in the large, since it is a uniform limit of Lipschitz functions, see for example
\cite[Lemma 7.9]{FatNC}. Therefore, Theorem \ref{HomGlob} is a consequence of the  more general 
Theorem \ref{HomGlobGen}.
\end{proof}
\begin{cor}\label{AlmostLipLarCase}
Assume that the function $u:M\to [-\infty,+\infty]$, with $\hat u$ finite everywhere on $]0,+\infty[$, 
is such that $T_{t}^-u=\hat u(t,\cdot)$ is Lipschitz
in the large, for every $t>0$. Then, for every $T>0$, the inclusions 
 $\Sigma_T(\hat u)\subset \operatorname{Cut}_T(\hat u)\subset ]0,T[ \times M\setminus {\cal I}_T(\hat u)$
are homotopy equivalences.
\end{cor}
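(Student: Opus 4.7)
The plan is to reduce to Theorem \ref{HomGlobGen} by a time-shift argument, and then plug the conclusion into the machinery of Proposition \ref{ReducGlobHom} and Lemma \ref{ReducGlobHombis}. The main point to establish is that the hypothesis ``$T_t^-u$ is Lipschitz in the large for every $t>0$'' yields the distance bound needed in Lemma \ref{ReducGlobHombis}.

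First I would observe that for every $t_0>0$, the restriction of $\hat u$ to $[t_0,+\infty[\,\times M$ is (globally) Lipschitz. Indeed, set $v=T_{t_0/2}^-u=\hat u(t_0/2,\cdot)$, which is Lipschitz in the large by hypothesis. By the semigroup property, $\hat v(s,x)=T_s^-v(x)=T_{s+t_0/2}^-u(x)=\hat u(s+t_0/2,x)$ for $s\geq 0$. Applying Proposition \ref{LipHatu} to $v$ with the choice $s_0=t_0/2$, the restriction of $\hat v$ to $[t_0/2,+\infty[\,\times M$ is Lipschitz, hence $\hat u$ is Lipschitz on $[t_0,+\infty[\,\times M$, as wanted. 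This step essentially replays the argument behind Corollary \ref{BoundDistHomotopy}: from the Lipschitz constant $\lambda$ of $\hat u$ on $[t_0,+\infty[\,\times M$, combined with the uniform superlinearity bound \eqref{TonLag2bis} and the upper bound $h_s(x,x)\leq sA(0)$, one extracts a constant $\kappa_{t_0}$ such that whenever $t\geq t_0$, $s>0$ and $\hat u(t+s,y)-h_s(x,y)\geq \hat u(t+s,x)-h_s(x,x)$, one has $d(x,y)\leq \kappa_{t_0}s$.

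Next, I would copy the proof strategy of Theorem \ref{HomGlobGen}. Given a compact subset $C\subset ]0,+\infty[\,\times M$, choose $t_0>0$ with $C\subset [t_0,+\infty[\,\times M$. By Lemma \ref{HomLoc}, there exists $\delta>0$ and a $\hat u$-adapted homotopy $F_0:C\times[0,\delta]\to M$ satisfying the characterizing inequality \eqref{CaracF}, that is
\[
\hat u\bigl(t+s,F_0[(t,x),s]\bigr)-h_s\bigl(x,F_0[(t,x),s]\bigr)\geq \hat u(t+s,x)-h_s(x,x).
\]
Applying the bound from the previous paragraph with $y=F_0[(t,x),s]$ yields $d(F_0[(t,x),s],x)\leq \kappa_{t_0}s$ uniformly on $C\times [0,\delta]$. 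This is exactly the hypothesis of Lemma \ref{ReducGlobHombis}, so for every compact $C\subset ]0,+\infty[\,\times M$ we obtain a $\hat u$-adapted homotopy $F:C\times[0,1]\to M$.

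Finally, Proposition \ref{ReducGlobHom} then gives, for every $T\in ]0,+\infty]$, that the inclusions
\[
\Sigma_T(\hat u)\subset \operatorname{Cut}_T(\hat u)\subset ]0,T[\,\times M\setminus \mathcal{I}_T(\hat u)
\]
are homotopy equivalences, which is the claim. The only genuinely new point compared with Theorem \ref{HomGlobGen} is the Lipschitzness of $\hat u$ on $[t_0,+\infty[\,\times M$; I expect this to be the main (though mild) obstacle, and the time-shift trick $v=T_{t_0/2}^-u$ resolves it cleanly because the hypothesis is precisely designed to provide a Lipschitz-in-the-large datum at every positive time.
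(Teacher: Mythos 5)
Your proposal is correct and rests on the same essential idea as the paper: a time-shift reduction exploiting the hypothesis that $T^-_t u$ is Lipschitz in the large for all $t>0$. The packaging differs slightly. The paper shifts the whole problem: it sets $u_{t_0}=T^-_{t_0}u$, observes $u_{t_0}$ is Lipschitz in the large, invokes the proof of Theorem \ref{HomGlobGen} to build a $\hat u_{t_0}$-adapted homotopy on the shifted compact set $C_{t_0}$, and then translates that homotopy back in time to get a $\hat u$-adapted one; the cost is a small verification that the time-translated map is still $\hat u$-adapted. You instead shift only to extract the global Lipschitz bound for $\hat u$ on $[t_0,+\infty[\,\times M$ (via $v=T^-_{t_0/2}u$ and Proposition \ref{LipHatu}), then re-run the Corollary \ref{BoundDistHomotopy} estimate to produce the constant $\kappa_{t_0}$, and finally build the $\hat u$-adapted homotopy directly through Lemma \ref{HomLoc}, Lemma \ref{ReducGlobHombis} and Proposition \ref{ReducGlobHom}. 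Your route avoids the (easy but unverified in the paper) check that the translated homotopy is $\hat u$-adapted, at the cost of re-deriving the distance bound rather than quoting Theorem \ref{HomGlobGen} as a black box; both buy the same conclusion with comparable effort, and both are correct.
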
 
\begin{proof} Again, like in the proof of Theorem \ref{HomGlobGen}, it suffices to show that, for every
compact subset $C\subset ]0,+\infty[\times M$, there exists a $\hat u$-adapted homotopy $F:C\times [0,+\infty[\to M$. 

By compactness of $C$, there exists  $t_0>0$ such that $C\subset ]t_0,+\infty[\times M$. 
Set $u_{t_0}=T_{t_0}^-u=\hat u(t_0,\cdot)$. 
We have $\hat u_{t_0}(t,x)=\hat u(t_0+t,x)$. Setting $C_{t_0}=\{(t-t_0,x)\mid (t,x)\in C\}\subset
]0,+\infty[$, since $u_{t_0}$ is Lipschitz in the large, by the beginning of the proof
of Theorem \ref{HomGlobGen}, we know that we can find a $\hat u_{t_0}$-adapted homotopy 
$F_0:C_{t_0}\times [0,+\infty[\to M$. It is not difficult to see that $F:C\times [0,+\infty[\to M$, defined 
by $F(t,x)=F_0(t-t_0,x)$, is a $\hat u$-adapted homotopy.
\end{proof}
\begin{cor}\label{CompactCase} Suppose that the manifold $M$ is compact. 
If the function $u:M\to [-\infty,+\infty]$ is such that
$\hat u$ is finite everywhere on $]0,+\infty[\times M$, then, for every $T>0$, the inclusions 
 $\Sigma_T(\hat u)\subset \operatorname{Cut}_T(\hat u)\subset ]0,T[ \times M\setminus {\cal I}_T(\hat u)$
are homotopy equivalences
\end{cor}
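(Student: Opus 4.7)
The plan is to reduce this to Corollary \ref{AlmostLipLarCase} by checking that, in the compact case, finiteness of $\hat u$ on $]0,+\infty[\times M$ already forces $T^-_t u = \hat u(t,\cdot)$ to be Lipschitz in the large for every $t>0$.

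First I would invoke Theorem \ref{PropHatu}: since $\hat u$ is finite-valued on $]0,+\infty[\times M$, it is continuous (and locally semiconcave) there. In particular, for each fixed $t>0$ the function $x\mapsto \hat u(t,x)$ is continuous on $M$. Because $M$ is compact, this continuous function is bounded, say $|\hat u(t,x)|\leq B_t$ for all $x\in M$.

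Next, I would observe that on a compact metric space, being Lipschitz in the large is equivalent to being bounded: if $|v|\leq B$ everywhere then $|v(y)-v(x)|\leq 2B\leq 2B + 2B\, d(x,y)$, which is the Lipschitz-in-the-large condition with constant $K=2B$. (No continuity is even required in the definition.) Applying this to $v = T^-_t u = \hat u(t,\cdot)$ shows that $T^-_t u$ is Lipschitz in the large for every $t>0$.

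Therefore the hypotheses of Corollary \ref{AlmostLipLarCase} are satisfied, and its conclusion gives exactly the homotopy equivalences
\[
\Sigma_T(\hat u)\subset \operatorname{Cut}_T(\hat u)\subset\, ]0,T[\,\times M\setminus {\cal I}_T(\hat u)
\]
for every $T>0$. There is no real obstacle here; the only substantive point is the remark that compactness of $M$ automatically converts the continuity provided by Theorem \ref{PropHatu} into the Lipschitz-in-the-large property required to feed into Corollary \ref{AlmostLipLarCase}.
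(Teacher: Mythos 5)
Your proof is correct and follows essentially the same route as the paper: invoke Theorem \ref{PropHatu} to get continuity of $T^-_tu=\hat u(t,\cdot)$, observe that a continuous (hence bounded) function on the compact manifold $M$ is automatically Lipschitz in the large, and then apply Corollary \ref{AlmostLipLarCase}. The only difference is that you spell out the boundedness argument that the paper leaves implicit.
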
 
\begin{proof} By Theorem \ref{PropHatu}, we know that $T^-_tu$ is continuous on $M$, for every $t>0$,
therefore Lipschitz in the large, since $M$ is compact. It suffices now to apply Corollary \ref{AlmostLipLarCase}.
\end{proof}
\begin{lem}\label{CompactComponentDist} Assume that $C\subset M$ is a closed subset of the complete Riemannian manifold $(M,g)$.
If $U$ is a relatively compact component of $M\setminus C$, then  $]0,+\infty[\times U$ is disjoint from ${\cal I}_{+\infty}(\hat \chi_C)$.

In particular, if $M$ is compact, then for any closed subset $C$
we have ${\cal I}_{+\infty}(\hat \chi_C)=]0,+\infty[\times C$.
\end{lem}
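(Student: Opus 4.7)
The plan is to reduce the statement to Proposition \ref{AubryDistProp}, which identifies ${\cal I}_{+\infty}(\hat\chi_C)=\,]0,+\infty[\,\times(C\cup{\cal A}^*(C))$. Since $U\subset M\setminus C$ gives $U\cap C=\emptyset$ for free, the desired disjointness of $]0,+\infty[\,\times U$ with ${\cal I}_{+\infty}(\hat\chi_C)$ reduces to showing $U\cap{\cal A}^*(C)=\emptyset$ whenever $U$ is a relatively compact component of $M\setminus C$.

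To establish this I will argue by contradiction. Suppose some $x\in U$ lies in ${\cal A}^*(C)$. Then Definition \ref{AubryClosed} supplies an arc-length parameterized curve $\gamma:[0,+\infty[\to M$ with $d_C(\gamma(t))=t$ for all $t\geq 0$ and $\gamma(t_0)=x$ for some $t_0>0$. A key preparatory step is the topological observation that $\partial U\subset C$: since $M\setminus C$ is open and $M$ is locally connected, the component $U$ is open in $M$, and any boundary point $y\in\partial U\cap(M\setminus C)$ would admit a connected neighborhood inside $M\setminus C$ meeting $U$, forcing $y$ to belong to $U$ and contradicting $y\in\partial U$.

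Next I invoke relative compactness. The continuous function $d_C$ is bounded on the compact set $\bar U$, so from $d_C(\gamma(t))=t\to+\infty$ it follows that $\gamma$ must exit $\bar U$ at some time $t_1>t_0$. Setting $t^*=\sup\{t\in[t_0,t_1]:\gamma(t)\in\bar U\}$, continuity gives $\gamma(t^*)\in\bar U$ while $\gamma$ leaves $\bar U$ immediately after, so $\gamma(t^*)\in\partial\bar U\subset\partial U\subset C$. Since $t^*>t_0>0$ (because $U$ is open around $x$), this yields $d_C(\gamma(t^*))=0$ while at the same time $d_C(\gamma(t^*))=t^*>0$, a contradiction.

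Finally, the ``in particular'' assertion is immediate: when $M$ is compact, every component of $M\setminus C$ is automatically relatively compact, so the first part forces ${\cal A}^*(C)=\emptyset$, and Proposition \ref{AubryDistProp} then gives ${\cal I}_{+\infty}(\hat\chi_C)=\,]0,+\infty[\,\times C$. I anticipate no substantial obstacle; the only delicate point is the inclusion $\partial U\subset C$, which rests on the openness of $M\setminus C$ and the local connectedness of $M$.
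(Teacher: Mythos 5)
Your proof is correct. The approach differs somewhat from the paper's: you first reduce through Proposition \ref{AubryDistProp} to the statement $U\cap{\cal A}^*(C)=\emptyset$ and then work with the arc-length parameterized curve from Definition \ref{AubryClosed}, whereas the paper works directly with the $\hat\chi_C$-calibrated curve of Definition \ref{AubryEvol}, using Lemma \ref{CacarCaldC} to deduce $d_C(\gamma(t))=vt$ with $v>0$. More substantively, the final contradiction is reached differently: the paper observes that $\gamma(\,]0,+\infty[\,)$ is a connected subset of $M\setminus C$ meeting $U$, hence lies entirely in $U$, contradicting boundedness of $d_C$ on $U$; you instead locate a first exit time $t^*$ from $\bar U$ and argue $\gamma(t^*)\in\partial U\subset C$, forcing $d_C(\gamma(t^*))=0\neq t^*$. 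Both arguments are sound. The paper's variant is marginally lighter because it avoids the topological claim $\partial U\subset C$ (which you correctly justify via local connectedness of $M$), while yours has the small advantage of not needing to invoke connectivity of the image of $\gamma$.
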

\begin{proof}Assume $(t_0,x_0)$, with $x_0\in U$ and $t_0>0$, is in the Aubry set ${\cal I}_{+\infty}(\hat \chi_C)$. We can find a $\hat \chi_C$-calibrated curve $\gamma:[0,+\infty[\to M$, with $\gamma(t_0)=x_0$. By Lemma \ref{CacarCaldC}, the curve $\gamma$
	is a $g$-geodesic satisfying $\gamma(0)\in C$ and
	$d_C(\gamma(t))=d(\gamma(0),\gamma(t))$, for all $t\geq 0$.
	
	Since $\gamma$ is a $g$-geodesic, its speed
	$\lVert\dot\gamma(s)\rVert_{\gamma(s)}$ is a constant $v\geq 0$ independent of $t\in [0,+\infty[$. Therefore, 
	the equality above is
    \begin{equation}\label{VTT}
    d_C(\gamma(t))=vt.
    \end{equation}
	In particular, we get $d_C(\gamma(t_0))=vt_0$. But $\gamma(t_0)=x_0\notin C$, which implies $d_C(\gamma(t_0))>0$.
	Hence $v> 0$. We then deduce from \eqref{VTT} that $\gamma(t)\notin C$. Since $\gamma(t_0)=x_0\in U$, this implies that $\gamma(t)$ is in the connected component $U$ of $M\setminus C$, for all $t>0$. But the continuous function $d_C$ is bounded on $U$. This contradicts \eqref{VTT} since $v>0$.
\end{proof}
\begin{cor}\label{Compact Dist} Assume that $C\subset M$ is a closed subset of the \emph{compact} Riemannian manifold $M$.
Then the inclusion $\Sigma^*(d_C)\subset M\setminus C$ is a homotopy equivalence.
\end{cor}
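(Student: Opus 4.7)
The strategy is to deduce the corollary from Corollary \ref{CompactCase} applied to the Lagrangian $L_g$ and the modified characteristic function $u = \chi_C$. Assuming $C$ is nonempty (otherwise the statement is vacuous), Example \ref{Exe2} gives $\hat\chi_C(t,x) = d_C(x)^2/(2t)$, which is finite on $]0,+\infty[\times M$ since $d_C$ is bounded on the compact manifold $M$. This is exactly the hypothesis required by Corollary \ref{CompactCase}, which I would apply at $T = +\infty$.

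Next I would identify both sides of the homotopy equivalence provided. On the singular side: since $\partial_t\hat\chi_C = -d_C^2/(2t^2)$ is continuous, one has $\Sigma(\hat\chi_C) = \, ]0,+\infty[\,\times\,\Sigma(d_C^2)$, and because $d_C^2$ is differentiable at each point of $C$ (being bounded above by $d^2(c,\cdot)$) while $d_C > 0$ on $M\setminus C$, one has $\Sigma(d_C^2) = \Sigma^*(d_C)$. On the Aubry side: Lemma \ref{CompactComponentDist} directly gives, for $M$ compact, ${\cal I}_{+\infty}(\hat\chi_C) = \, ]0,+\infty[\,\times\,C$. Substituting, Corollary \ref{CompactCase} yields that
\[
\mathrm{id}_{]0,+\infty[}\times i\colon\ ]0,+\infty[\,\times\,\Sigma^*(d_C)\ \hookrightarrow\ ]0,+\infty[\,\times\,(M\setminus C)
\]
is a homotopy equivalence, where $i\colon \Sigma^*(d_C)\hookrightarrow M\setminus C$ is the inclusion of interest.

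The final step is to strip off the contractible factor $]0,+\infty[$. The slice inclusion $\{1\}\times X \hookrightarrow\, ]0,+\infty[\,\times\, X$ is a homotopy equivalence for any space $X$, so in the commutative square
\[
\begin{CD}
\Sigma^*(d_C) @>i>> M\setminus C \\
@VVV @VVV \\
]0,+\infty[\,\times\,\Sigma^*(d_C) @>\mathrm{id}\times i>> ]0,+\infty[\,\times\,(M\setminus C)
\end{CD}
\]
three of the four maps are homotopy equivalences, hence so is $i$ by the two-out-of-three property. I do not anticipate a serious obstacle: the substantive analytic work is already contained in Corollary \ref{CompactCase} and Lemma \ref{CompactComponentDist}, and what remains is the identification of the relevant sets together with a routine product argument. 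The only care needed is to invoke Corollary \ref{CompactCase} precisely at $T=+\infty$, so that the Aubry set under discussion matches the one computed in Lemma \ref{CompactComponentDist}.
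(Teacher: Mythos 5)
Your proof is correct and takes essentially the same route as the paper: apply Corollary \ref{CompactCase} to $u=\chi_C$, identify $\Sigma(\hat\chi_C)$ via \eqref{EqSing1} and \eqref{EqSing2}, identify ${\cal I}_\infty(\hat\chi_C)=]0,+\infty[\times C$ via Lemma \ref{CompactComponentDist}, and then strip off the contractible factor $]0,+\infty[$. The paper states that last reduction without spelling out the two-out-of-three argument, but the underlying reasoning is identical.
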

\begin{proof} Since $M$ is compact, by the previous 
	Lemma  \ref{CompactComponentDist}
	${\cal I}_{+\infty}(\hat \chi_C)\subset ]0,+\infty[\times C$.
	In fact, we have 
	$${\cal I}_{+\infty}(\hat \chi_C)=]0,+\infty[\times C,$$
	since any constant curve in $C$ is obviously $\hat\chi_C$-calibrated.

By Corollary \ref{CompactCase}, we  obtain that the inclusion 
$\Sigma^*(\hat \chi_C)\subset ]0,+\infty[\times (M\setminus C)$ is a homotopy equivalence. But, by \eqref{EqSing1}, we have
$\Sigma^*(\hat \chi_C)=]0,+\infty[\times\Sigma(d^2_C)$ and by \eqref{EqSing2}, we have
$\Sigma(d^2_C)=\Sigma^*(d_C)$. This finishes the proof of the Corollary.
\end{proof}
\section{More application to complete non-compact  Riemannian manifold}\label{SecDist}
Assume that $C$ is a closed subset of the complete Riemannian manifold $(M,g)$. 
If $M$ is not compact, then neither $\chi_C$  nor $\hat\chi_C(t_0,\cdot)=d_C(\cdot)^2/{2t}$ are necessarily 
Lipschitz in the large. So, we cannot apply Corollary \ref{AlmostLipLarCase} to obtain the global homotopy type of 
$\Sigma^*(d_C)=\Sigma(d^2_C)$. Instead, we will show that
Proposition \ref{ReducGlobHom} directly applies.

For $y\in M$ and $t,s>0$,  we introduce the function
$\varphi_{t,s,y}:M\to \R$ defined by
\begin{equation}
\begin{aligned}
\varphi_{t,s,y}(x)&=\hat\chi_C(t+s,x)-h^g_s(x,y)\\
	&=\frac{d_C(x)^2}{2(t+s)}-\frac{d(y,x)^2}{2s}.
\end{aligned}
\label{DefVarphi}
\end{equation}
\begin{lem}\label{BorneDistPoint}
	Suppose that $x,y\in M$ are such
	$ \varphi_{t,s,y}(x)\geq \varphi_{t,s,y}(y)$, then we have
	\begin{align*}
	d(x,y)&\leq \frac{2s}td_C(y)\\
	d_C(x)&\leq \left(1+\frac{2s}t\right)d_C(y).
	\end{align*}
\end{lem}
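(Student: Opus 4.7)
The plan is to unpack the hypothesis $\varphi_{t,s,y}(x)\geq \varphi_{t,s,y}(y)$ using the explicit form of $\varphi_{t,s,y}$ from \eqref{DefVarphi}, and then exploit that $d_C$ is $1$-Lipschitz (an immediate consequence of the triangle inequality for $d$). The target inequalities are purely algebraic once we extract the right ``energy'' estimate on $d(x,y)$.

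First I would write out the hypothesis. Since $d(y,y)=0$, the inequality $\varphi_{t,s,y}(x)\geq \varphi_{t,s,y}(y)$ becomes
\begin{equation*}
\frac{d_C(x)^2}{2(t+s)}-\frac{d(x,y)^2}{2s}\geq \frac{d_C(y)^2}{2(t+s)},
\end{equation*}
which rearranges to
\begin{equation*}
(t+s)\,d(x,y)^2 \leq s\bigl(d_C(x)^2-d_C(y)^2\bigr).
\end{equation*}
Next, I use the Lipschitz bound $d_C(x)\leq d_C(y)+d(x,y)$ to estimate the right-hand side from above: $d_C(x)^2-d_C(y)^2 \leq 2\,d_C(y)\,d(x,y)+d(x,y)^2$. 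Substituting and cancelling $s\,d(x,y)^2$ from both sides yields
\begin{equation*}
t\,d(x,y)^2 \leq 2s\,d_C(y)\,d(x,y),
\end{equation*}
so (trivially if $d(x,y)=0$, otherwise dividing) $d(x,y)\leq \tfrac{2s}{t}d_C(y)$, which is the first inequality.

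Finally, the second inequality follows by combining the first with the $1$-Lipschitz property of $d_C$ once more:
\begin{equation*}
d_C(x)\leq d_C(y)+d(x,y)\leq d_C(y)+\tfrac{2s}{t}d_C(y)=\bigl(1+\tfrac{2s}{t}\bigr)d_C(y).
\end{equation*}
There is no real obstacle: the argument is a two-line algebraic manipulation together with the fact that $d_C$ is $1$-Lipschitz. The only thing to watch is the case $d(x,y)=0$ when dividing, which is handled by observing that the inequality holds trivially there.
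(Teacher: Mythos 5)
Your proof is correct and follows essentially the same route as the paper: both unpack the hypothesis using \eqref{DefVarphi}, apply the $1$-Lipschitz bound $d_C(x)\leq d_C(y)+d(x,y)$, and perform the same cancellation of the $d(x,y)^2$ terms to isolate $t\,d(x,y)^2 \leq 2s\,d_C(y)\,d(x,y)$; clearing denominators early versus keeping fractions is a purely cosmetic difference.
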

\begin{proof}
From the definition \eqref{DefVarphi}, the inequlity $ \varphi_{t,s,y}(x)\geq \varphi_{t,s,y}(y)$ translates to
$$\frac{d_C(x)^2}{2(t+s)}-\frac{d(y,x)^2}{2s}\geq \frac{d_C(y)^2}{2(t+s)}.$$
Using $d_C(x)\leq d_C(y)+d(x,y)$, we obtain
\begin{align*}
\frac{d_C(y)^2}{2(t+s)}&\leq \frac{[d_C(y)+d(x,y)]^2}{2(t+s)}-\frac{d(y,x)^2}{2s}\\
&=\frac{d_C(y)^2}{2(t+s)}+\frac{2d_C(y)d(x,y)}{2(t+s)}
+\left(\frac{1}{2(t+s)}-\frac1{2s}\right)d(x,y)^2\\
&=\frac{d_C(y)^2}{2(t+s)}+\frac{2d_C(y)d(x,y)}{2(t+s)}
-\frac {td(x,y)^2}{2s(t+s)}d(x,y)^2,
	\end{align*}
from which the inequality $d(x,y)\leq 2sd_C(y)/t$  follows.

To prove the last inequality of the lemma, we again use
$d_C(x)\leq d_C(y)+d(x,y)$.
\end{proof}

\begin{lem}\label{BorneDistSuite}
Assume that $ (t_0,x_0),\dots,( t_n, x_n),\dots$ is a  (finite or infinite) sequence in $]0,+\infty[\times M$, with  $s_n=t_{n+1}-t_{n}\geq 0$, and such that
$$ \varphi_{t_{n},s_{n},x_{n}}(x_{n+1})\geq  \varphi_{t_{n},s_{n},x_{n}}(x_{n}).$$
We have
\begin{align*}
d_C(x_n)&\leq e^{2t_n/t_0}d_C(x_0)\\
d(x_n,x_0)&\leq 2e^{3t_n/t_0}d_C(x_0).
\end{align*}
\end{lem}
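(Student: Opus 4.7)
\medskip

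\noindent\textbf{Proof proposal.} The plan is to apply Lemma \ref{BorneDistPoint} at each step of the sequence, convert the resulting multiplicative bound into an exponential one via $1+u\le e^u$, and then telescope.

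First, at the $n$-th step we apply Lemma \ref{BorneDistPoint} to the pair $(x,y)=(x_{n+1},x_n)$ with parameters $(t,s)=(t_n,s_n)$; this is legitimate because, by hypothesis, $\varphi_{t_n,s_n,x_n}(x_{n+1})\ge \varphi_{t_n,s_n,x_n}(x_n)$. The lemma yields
\begin{equation*}
d(x_{n+1},x_n)\le \frac{2s_n}{t_n}\, d_C(x_n),\qquad
d_C(x_{n+1})\le \Bigl(1+\frac{2s_n}{t_n}\Bigr)d_C(x_n)\le e^{2s_n/t_n}\,d_C(x_n).
\end{equation*}
Note that $s_n\ge 0$ forces $(t_n)$ to be non-decreasing, so $t_k\ge t_0$ for all $k$.

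Iterating the second inequality and using $t_k\ge t_0$ together with the telescoping identity $\sum_{k=0}^{n-1}s_k=t_n-t_0$, I obtain
\begin{equation*}
d_C(x_n)\le \exp\!\Bigl(\sum_{k=0}^{n-1}\frac{2s_k}{t_k}\Bigr)d_C(x_0)
\le \exp\!\Bigl(\frac{2}{t_0}\sum_{k=0}^{n-1}s_k\Bigr)d_C(x_0)
=e^{2(t_n-t_0)/t_0}d_C(x_0)\le e^{2t_n/t_0}d_C(x_0),
\end{equation*}
which is the first claimed bound.

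For the second bound, I chain the triangle inequality with the first-step estimate and then insert the bound just proved:
\begin{equation*}
d(x_n,x_0)\le \sum_{k=0}^{n-1}d(x_{k+1},x_k)\le \sum_{k=0}^{n-1}\frac{2s_k}{t_k}\,d_C(x_k)
\le \sum_{k=0}^{n-1}\frac{2s_k}{t_k}\,e^{2t_k/t_0}d_C(x_0).
\end{equation*}
Since $t_k\le t_n$ and $t_k\ge t_0$, the right-hand side is at most
$\frac{2}{t_0}e^{2t_n/t_0}d_C(x_0)\sum_{k=0}^{n-1}s_k=\frac{2(t_n-t_0)}{t_0}e^{2t_n/t_0}d_C(x_0)\le \frac{2t_n}{t_0}e^{2t_n/t_0}d_C(x_0)$. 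Finally, using the elementary inequality $u\le e^u$ for $u\ge 0$ with $u=t_n/t_0$, I conclude $d(x_n,x_0)\le 2\,e^{3t_n/t_0}d_C(x_0)$. No step looks genuinely difficult — the only subtle point is noticing that $t_n/t_0$ can be absorbed into an extra exponential factor to produce the clean bound $2e^{3t_n/t_0}$ stated in the lemma.
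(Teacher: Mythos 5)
Your proof is correct and follows essentially the same route as the paper: apply Lemma \ref{BorneDistPoint} stepwise, use $1+u\le e^u$ (the paper phrases this as $\log(1+u)\le u$, which is the same thing), telescope $\sum_k s_k=t_n-t_0\le t_n$ together with $t_0\le t_k\le t_n$, and absorb the leftover factor $t_n/t_0$ into the exponential via $u\le e^u$. No discrepancies.
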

\begin{proof}
From Lemma \ref{BorneDistPoint}, we get
\begin{align}
\label{IneqSuite1}d(x_{n+1},x_n)&\leq \frac{2s_n}{t_n}d_C(x_n)\\
\label{IneqSuite2} d_C(x_{n+1})&\leq \left(1+\frac{2s_n}{t_n}\right)d_C(x_n).
\end{align}
Therefore, from \eqref{IneqSuite2}, we get
 $$d_C(x_n)\leq  \prod_{i=0}^{n-1}\left(1+\frac{2s_i}{t_i}\right)d_C(x_0).$$
To estimate the quantity $C_n=\prod_{i=0}^{n-1}\left(1+\frac{2s_i}{t_i}\right)$, we take its $\log$
$$\log C_n=\sum_{i=0}^{n-1}\log\left(1+\frac{2s_i}{t_i}\right).$$
Since $\log(1+t)\leq t$, for $t\in ]0,+\infty[$, we obtain
$$\log C_n\leq 	2\sum_{i=0}^{n-1}\frac{s_i}{t_i}.$$
Since $t_i$ is non-decreasing and $\sum_{i=0}^{n-1}s_i=t_n-t_0\leq t_n$, we get  
$$\log C_n\leq 2\frac{\sum_{i=0}^{n-1}s_i}{t_0}\leq\frac{2t_n}{t_0},$$
which proves the first inequality in the lemma. 
By the inequality \eqref{IneqSuite1} and the already established first inequality of the lemma, we have
\begin{align*}
d(x_n,x_0)&\leq \sum_{i=0}^{n-1}d(x_{i+1},x_i)\\
&\leq \sum_{i=0}^{n-1}\frac{2s_i}{t_i}d_C(x_i)\\
&2d_C(x_0)\sum_{i=0}^{n-1}\frac{s_i}{t_i}e^{2t_i/t_0}
\end{align*}
Using that $t_i$ is non-decreasing and the fact that $\sum_{i=0}^{n-1}s_i\leq t_n$, we obtain 
 
\begin{align*}
d(x_n,x_0)&\leq 2d_C(x_0)\sum_{i=0}^{n-1}\frac{s_i}{t_0}e^{2t_n/t_0}\\
&=2d_C(x_0)\frac{\sum_{i=0}^{n-1}s_i}{t_0}e^{2t_n/t_0}.
\end{align*}
Therefore, since $\sum_{i=0}^{n-1}s_i\leq t_n$, we get
$$d(x_n,x_0)\leq =2\frac{t_n}{t_0}e^{2t_n/t_0}d_C(x_0),$$
which finishes the proof of the lemma, since $t\leq\exp(t)$.
\end{proof}
We  now prove that $\chi_C$ satisfies the hypothesis of Proposition
\ref{ReducGlobHom}. 

\begin{prop}\label{ReducGlobDist} Suppose $C$ is a closed subset of the complete Riemannian manifold $(M,g)$.Then
for every compact subset $K\subset ]0,+\infty[\times M$, we can find a $\hat \chi_C$-adapted homotopy
	$F:K\times [0,1]\to M$.
\end{prop}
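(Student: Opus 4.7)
The plan is to verify the hypothesis of Proposition \ref{ReducGlobHom} applied to $u=\chi_C$ by iterating the local adapted homotopy of Lemma \ref{HomLoc}; the necessary uniform control on the iterates, which in the Lipschitz-in-the-large case of Theorem \ref{HomGlobGen} was supplied by Corollary \ref{BoundDistHomotopy}, will here come from Lemma \ref{BorneDistSuite}. First I would enclose $K\subset[a,b]\times K'$ with $a>0$ and $K'\subset M$ compact, set $R_0=\sup_{K'}d_C$ and $R_1=2e^{3(b+1)/a}R_0$, and observe that the closed neighborhood $K''=\bar V_{R_1}(K')$ is compact by completeness of $g$. Applying Lemma \ref{HomLoc} to the compact set $[a,b+1]\times K''$ produces some $\delta>0$ and a $\hat\chi_C$-adapted homotopy $F_0:[a,b+1]\times K''\times[0,\delta]\to M$ whose defining inequality \eqref{CaracF}, once expressed in terms of the function $\varphi_{t,s,y}$ of \eqref{DefVarphi} and using that $h^g_s(x,y)=d(x,y)^2/(2s)$ is symmetric with $h^g_s(x,x)=0$, reads exactly $\varphi_{t,s,x}(F_0((t,x),s))\geq\varphi_{t,s,x}(x)$.

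Next, after reducing $\delta$ so that $n_0\delta=1$ for some integer $n_0$, I would build $F$ on $K\times[0,1]$ by iterated composition: set $(t_0,x_0)=(t,x)$, define $(t_{k+1},x_{k+1})=(t_k+\delta,\,F_0((t_k,x_k),\delta))$ inductively, and let $F((t,x),k\delta+r)=F_0((t_k,x_k),r)$ for $r\in[0,\delta]$ and $k=0,\ldots,n_0-1$. By Lemma \ref{comp}, the resulting $F$ is $\hat\chi_C$-adapted, provided that at each step $(t_k,x_k)$ lies in the domain $[a,b+1]\times K''$ of $F_0$.

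The main obstacle, and the heart of the argument, is the bootstrap that the iterates do not escape $[a,b+1]\times K''$. The time bound $t_k=t+k\delta\leq b+1$ for $k\leq n_0$ is automatic. For the spatial bound, the inequality just displayed says at each step that $\varphi_{t_k,\delta,x_k}(x_{k+1})\geq\varphi_{t_k,\delta,x_k}(x_k)$, which is exactly the hypothesis of Lemma \ref{BorneDistSuite} applied to the finite sequence $(t_k,x_k)_{k=0,\ldots,n_0}$ with starting time $t_0=t\geq a$. That lemma then yields, for every such $k$,
$$d(x_k,x_0)\leq 2e^{3t_k/a}R_0\leq R_1,\qquad d_C(x_k)\leq e^{2t_k/a}R_0,$$
so that $x_k\in K''$, which closes the induction and finishes the construction. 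This is precisely where completeness of $(M,g)$ (used to guarantee that $K''$ is compact) and the quantitative estimates of Lemma \ref{BorneDistSuite} compensate for the failure of $\hat\chi_C(t,\cdot)=d_C^2/(2t)$ to be Lipschitz in the large when $M$ is non-compact.
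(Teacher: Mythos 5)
Your proof is correct and follows essentially the same approach as the paper: enclose $K$ in a compact box $[a,b]\times K'$, precompute the radius $R_1$ from the estimate of Lemma \ref{BorneDistSuite}, invoke Lemma \ref{HomLoc} on the enlarged domain, iterate the adapted homotopy by Lemma \ref{comp}, and close the induction by recognizing the inequality \eqref{CaracF} (using symmetry of $h^g_s$) as exactly the hypothesis of Lemma \ref{BorneDistSuite}. Your version is slightly more careful than the paper's write-up in two cosmetic respects: you apply Lemma \ref{HomLoc} on $[a,b+1]\times K''$ so the composed iterates manifestly stay in the domain, and you retain the factor $2$ in the definition of $R_1$.
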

\begin{proof}
The compact subset $K\subset ]0,+\infty[\times M$ is contained in a set of the form $[a,b] \times  A$, where $A$ is a compact subset of $M$
and $0<a<b<+\infty$. We then define $\kappa$ by
$$\kappa= e^{3(b+1)/A}\max_Ad_C<+\infty.$$

Since $A$ is compact, its neighborhood $\bar V_\kappa(A)=\{x\in M\mid
d_A(x)\leq \kappa \}$ is also compact. By  Lemma \ref{HomLoc}, we can find $\delta>0$ and a $\hat \chi_C$-adapted homotopy
$F:\left([a,b]\times \bar V_\kappa(A)\right)\times [0,\delta]\to M$.

If $\delta\geq 1$, then the restriction of $F $ to $K\times [0,1]$ does the job. If not, cutting down on $\delta$ we assume $\delta=1/n$,
with $n$ an integer $\geq 2$.

We will show that we can extend $F$ by induction on $i=1,\dots, n-1$ to $F:K\times [0,i/n]\to M$ by 
$$F((t,x),s)=F[F(t+i/n,x), s-i/n], \text{ for $s\in[i/n,(i+1)/n]$.}$$
This homotopy is well-defined if we show by induction that,
for $(t,x)\in K\subset [a,b]\times A$, 
the sequence $(x_i,t+i/n)$, with $x_0=x$, defined by induction for
$i= 1,\dots n-1$ as
$$x_i=F(t+i/n,x_{i-1}), 1/n],$$
is such that $x_i\in \bar V_\kappa(A)$, for $i=1,\dots, n-1$.

Since  $F:([a,b]\times \bar V_\kappa(A))\times [0,1/n]\to M$
is defined by  Lemma \ref{HomLoc}, the sequence $x_i$, while it make sense for $i=0,1,\dots,n-1$, satisfies the hypothesis of Lemma \ref{BorneDistSuite} with
$t_i=t+ is\leq t+1$. Therefore 
$$d(x_i,x_0)\leq e^{3t_{i}/t}d_C(x_0)\leq e^{3(b+1)/a}\max_Ad_C=\kappa,$$
since $x_0=x\in A,t\in [a,b]$ and $i\leq n-1$. This implies that
$x_i\in \bar V_\kappa(A)$.
\end{proof}
\begin{proof}[Proof of Theorem \ref{GlobalDistFunc}]
Since we know by Proposition \ref{ReducGlobDist} that $\hat \chi_C$
satisfies the hypothesis of Proposition \ref{ReducGlobHom}, we
obtain that the inclusion
$\Sigma_\infty(\hat \chi_C)\subset ]0,+\infty[\times M \setminus
{\cal I}_\infty(\hat \chi_C)$ is a homotopy equivalence. But, by
\eqref{EqSing1} and \eqref{EqSing2}, we have
$\Sigma_\infty(\hat \chi_C)=]0,+\infty[\times\Sigma(d^2_C)=]0,+\infty[\times\Sigma^*(d_C)$
and, by Proposition \ref{AubryDistProp}, we have 
${\cal I}_\infty(\hat \chi_C)=]0,+\infty[\times (C\cup {\cal A}^*(C))$.
Therefore, the inclusion $]0,+\infty[\times\Sigma^*(d_C)\subset 
]0,+\infty[\times \left(M \setminus(C\cup {\cal A}^*(C)) \right)$
is a homotopy equivalence, which implies that 
$ \Sigma^*(d_C)\subset 
(M \setminus(C\cup {\cal A}^*(C)) $ is itself a homotopy equivalence.
\end{proof}
We now explain the generalization of Theorem \ref{HomTypeNU} to non-compact complete Riemannian manifold.

We first introduce the subset ${\cal AU}(M,g)\subset M\times M$.
\begin{defn} For a complete connected Riemannian manifold $(M,g)$, the subset 
${\cal AU}(M,g)\subset M\times M$ is the set of points $(x,y)\in M\times M$ such that there exists a
\emph{minimizing} $g$-geodesic $\gamma:]-\infty,+\infty[\to M$, with
$\gamma(a)=x,\gamma(b)=y$, for some $a<b\in \R$.
\end{defn}
The next Lemma is left to the reader.
\begin{lem} We have
	$${\cal AU}(M,g)=\Delta_M\cup {\cal A}^*(d_{\delta_M}).$$
\end{lem}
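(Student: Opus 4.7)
The plan is to read the right-hand side as $\Delta_M\cup \mathcal{A}^*(\Delta_M)$, where $\mathcal{A}^*(\Delta_M)$ is the Aubry set of the closed subset $\Delta_M\subset M\times M$ for the product Riemannian manifold $(M\times M,g\times g)$ in the sense of Definition \ref{AubryClosed}. (The notation $d_{\delta_M}$ in the displayed statement appears to be a typo for $\Delta_M$; the formula $\mathcal{A}^*(C)$ always takes a closed set as argument.) I would then prove the two inclusions separately.

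For the inclusion $\Delta_M\cup \mathcal{A}^*(\Delta_M)\subset \mathcal{AU}(M,g)$: the containment $\Delta_M\subset \mathcal{AU}(M,g)$ is trivial via the constant geodesic $\gamma\equiv x$ on $\mathbb{R}$. Given $(x,y)\in \mathcal{A}^*(\Delta_M)$, Proposition \ref{AubryDistProp} applied in $M\times M$ produces, for some (in fact any) $t_0>0$, a $\hat\chi_{\Delta_M}$-calibrated curve $\Gamma=(\gamma_1,\gamma_2):[0,+\infty[\to M\times M$ with $\Gamma(t_0)=(x,y)$. For arbitrary $t>0$, the restriction $\Gamma|_{[0,t]}$ is again calibrated, so Lemma \ref{CaracCaldDelta} produces a minimizing $g$-geodesic $\gamma^{(t)}:[-t,t]\to M$ by setting $\gamma^{(t)}(s)=\gamma_1(-s)$ for $s\le 0$ and $\gamma^{(t)}(s)=\gamma_2(s)$ for $s\ge 0$. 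Uniqueness of extremal extensions of the Euler-Lagrange flow forces the family $\{\gamma^{(t)}\}$ to agree on overlaps, yielding a single geodesic $\gamma:\mathbb{R}\to M$ whose restriction to every compact subinterval is minimizing; since $\gamma(-t_0)=\gamma_1(t_0)=x$ and $\gamma(t_0)=\gamma_2(t_0)=y$, we conclude $(x,y)\in \mathcal{AU}(M,g)$.

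For the reverse inclusion, fix $(x,y)\in \mathcal{AU}(M,g)$. If $x=y$ there is nothing to prove. Otherwise fix a minimizing geodesic $\gamma:\mathbb{R}\to M$ with $\gamma(a)=x,\gamma(b)=y$, $a<b$; reparametrize by arc-length and translate so that the midpoint of $[a,b]$ is $0$, obtaining $\gamma(-t_0)=x,\gamma(t_0)=y$ for $t_0=d(x,y)/2>0$. Define
$$\Gamma:[0,+\infty[\to M\times M,\qquad \Gamma(s)=\bigl(\gamma(-s/\sqrt{2}),\,\gamma(s/\sqrt{2})\bigr).$$
The product structure of $g\times g$ immediately gives $\|\dot\Gamma(s)\|_{g\times g}=1$. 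Combining the identity $d_{\Delta_M}(u,v)=d(u,v)/\sqrt{2}$ from \eqref{CardDelta} with $d(\gamma(-s/\sqrt{2}),\gamma(s/\sqrt{2}))=s\sqrt{2}$ (since $\gamma$ is a globally minimizing arc-length parametrized geodesic) yields $d_{\Delta_M}(\Gamma(s))=s$ for every $s\ge 0$. Since $\gamma$ is injective on $\mathbb{R}$ (a non-constant minimizing geodesic cannot close up), $\Gamma(s)\notin \Delta_M$ for $s>0$, and $\Gamma(t_0\sqrt{2})=(x,y)$; hence $(x,y)\in \mathcal{A}^*(\Delta_M)$.

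The only delicate step is the gluing in the second paragraph: one must verify that the geodesics $\gamma^{(t)}$ all carry the same initial conditions at, say, the midpoint $s=0$, so the Euler-Lagrange uniqueness theorem forces them to piece together into a single geodesic; the remaining computations are routine once Lemma \ref{CaracCaldDelta}, Proposition \ref{AubryDistProp}, and the product-distance identity \eqref{CardDelta} are in hand.
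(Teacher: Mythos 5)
Your proof is correct, and since the paper explicitly leaves this lemma to the reader there is no in-text argument to compare against. Both directions work: the forward inclusion via Definition \ref{AubryClosed}, Remark \ref{RemdCcal}, and Lemma \ref{CaracCaldDelta} to glue $(\gamma_1,\gamma_2)$ into a bi-infinite minimizing geodesic, and the reverse via the explicit curve $\Gamma(s)=(\gamma(-s/\sqrt{2}),\gamma(s/\sqrt{2}))$ together with the identity $d_{\Delta_M}(u,v)=d(u,v)/\sqrt{2}$ from \eqref{CardDelta}. Two small remarks: the detour through Proposition \ref{AubryDistProp} in the first direction is unnecessary, since Definition \ref{AubryClosed} and Remark \ref{RemdCcal} already hand you a $g\times g$-minimizing geodesic $\Gamma$ emanating from $\Delta_M$; and the ``uniqueness of extremal extensions'' worry you flag at the end is a non-issue, because the curves $\gamma^{(t)}$ from \eqref{DefConcatGamma} are all restrictions of one and the same formula in $\gamma_1,\gamma_2$, so they agree on overlaps by construction, not by an appeal to the Euler--Lagrange flow.
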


The following generalization of Theorem \ref{HomTypeNU} now follows
from the Lemma above and Theorem \ref{GlobalDistFunc}.
\begin{thm} For every  connected complete Riemannian manifold $M$, the inclusion
	${\cal NU}(M,g)\subset M\times M\setminus  {\cal AU}(M,g)$ is a homotopy equivalence.
\end{thm}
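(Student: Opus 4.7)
The plan is to deduce the theorem immediately from Theorem \ref{GlobalDistFunc} applied to the closed subset $C=\Delta_M$ of the complete Riemannian manifold $(M\times M,g\times g)$, then to translate both sides of the resulting inclusion using Example \ref{Exe5} and the Lemma stated just above.

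First I would note that $(M\times M,g\times g)$ is complete (Example \ref{Exe1bis}) and that $\Delta_M$ is closed in it. Theorem \ref{GlobalDistFunc} applied with $C=\Delta_M$ yields that the inclusion
$$\Sigma^*(d_{\Delta_M})\;\subset\;(M\times M)\setminus\bigl(\Delta_M\cup{\cal A}^*(\Delta_M)\bigr)$$
is a homotopy equivalence. By Example \ref{Exe5} the left-hand side equals ${\cal NU}(M,g)$, and by the Lemma immediately preceding the statement the right-hand side equals $(M\times M)\setminus{\cal AU}(M,g)$. Substituting these two identifications delivers the theorem.

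The only step requiring a proof outside of direct citation is the preceding Lemma, which identifies ${\cal AU}(M,g)$ with $\Delta_M\cup{\cal A}^*(\Delta_M)$. For this I would use Lemma \ref{CaracCaldDelta}, which puts $\hat\chi_{\Delta_M}$-calibrated curves $\Gamma=(\gamma_1,\gamma_2):[0,t]\to M\times M$ in bijective correspondence, via the concatenation \eqref{DefConcatGamma}, with two-sided minimizing $g$-geodesics on $[-t,t]$. If $(x,y)\in{\cal A}^*(\Delta_M)$ with $x\neq y$, Definition \ref{AubryClosed} furnishes an arc-length (in $g\times g$) parametrized calibrated curve $\Gamma:[0,+\infty[\to M\times M$ through $(x,y)$; the concatenation produces a two-sided minimizing $g$-geodesic $\gamma:\R\to M$ realizing $x$ and $y$ at times symmetric about $0$, so $(x,y)\in{\cal AU}(M,g)$. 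Conversely, given $(x,y)\in{\cal AU}(M,g)\setminus\Delta_M$, a translation of the parameter on the two-sided minimizing geodesic places $x,y$ symmetrically about $0$; splitting via \eqref{DefGam} and rescaling to arc-length in $g\times g$ (speed $1/\sqrt 2$ on each factor) yields the required calibrated half-line, so $(x,y)\in{\cal A}^*(\Delta_M)$. The inclusion $\Delta_M\subset{\cal AU}(M,g)$ is trivial via constant geodesics, and ${\cal A}^*(\Delta_M)\cap\Delta_M=\emptyset$ by Definition \ref{AubryClosed}.

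There is no serious obstacle here: the argument is essentially assembly of already-established results. The only bookkeeping care concerns the translation of the parameter needed to place $x,y$ symmetrically about the origin, and the constant rescaling that matches arc-length parametrization in $(M\times M,g\times g)$ with speed $1/\sqrt 2$ on each factor of $(M,g)$.
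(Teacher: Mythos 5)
Your proposal is correct and matches the paper's approach: the paper derives the theorem by applying Theorem \ref{GlobalDistFunc} to $C=\Delta_M$ in $(M\times M,g\times g)$, identifying $\Sigma^*(d_{\Delta_M})$ with ${\cal NU}(M,g)$ via Example \ref{Exe5}, and identifying $\Delta_M\cup{\cal A}^*(\Delta_M)$ with ${\cal AU}(M,g)$ via the preceding lemma. The paper leaves that lemma as an exercise, and your proof of it — translating the parameter so $x,y$ sit symmetrically about $0$, splitting/concatenating via Lemma \ref{CaracCaldDelta}, and rescaling so each factor runs at speed $1/\sqrt2$ to match $g\times g$ arc-length — is the intended argument and is correct.
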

\section{More results on local contractibility}\label{SecLocCon}
In fact, our local contractibility result can be applied for viscosity solution defined only on an open subset, and also for
Hamiltonians which  are not necessarily uniformly superlinear.
More precisely, we have.
\begin{thm} Suppose $H:T^*M\to\R$  is a C\/$^2$ Hamiltonian such that:
\begin{enumerate}[(a)]
\item\label{HSu} (Superlinearity on compact subsets) For every compact subset $C$ and every  real number $K\geq 0$, we have
$$\sup\{ K\lVert p\rVert_x-H(x,p)\mid x\in C, p\in T^*_xM\}<\infty.$$
\item ($C^2$ strict convexity in the fibers) For every $(x,p)\in T^*\/M$, 
the second derivative along the fibers $\partial^2 H /\partial 
p^2(x,p)$ is positive definite.
\end{enumerate}
If  the continuous function $U:O\to \R$ defined on the open subset $O\subset \R\times M$ is a viscosity solution
of 
\begin{equation}\label{HJEf}
\partial_tU+ H(x,\partial_xU)=0,
\end{equation}
then its set of singularities $\Sigma(U)\subset O$ is locally contractible.
\end{thm}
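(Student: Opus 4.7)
The plan is to reduce the statement to Theorem \ref{LocContr} by a cut-off argument on the Hamiltonian, exploiting the fact that local contractibility is a purely local property. Fix a point $(\bar t,\bar x)\in\Sigma(U)\subset O$. Under hypotheses (\ref{HSu}) and (b), the classical local regularity theory for viscosity solutions of strictly convex HJ equations (see \cite{CanSin, FatEcos}) gives that $U$ is locally semiconcave, and hence locally Lipschitz, on $O$. I would choose a compact neighborhood $[a,b]\times K \subset O$ of $(\bar t,\bar x)$ and a uniform bound $R<\infty$ for $\|p\|_x$ over all $(p_0,p)\in D^+U(t,x)\cup D^-U(t,x)$ as $(t,x)$ ranges over $[a,b]\times K$.

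Next, I would modify $H$ outside the compact set $\{(x,p)\in T^*M : x\in K,\ \|p\|_x\leq R+1\}$ to obtain a genuinely Tonelli Hamiltonian $\tilde H : T^*M\to\R$ that agrees with $H$ on that set. This modification is standard: one smoothly interpolates between $H$ and a model Tonelli Hamiltonian (e.g.\ a large multiple of $\|p\|_x^2$ outside a bigger compact in the base) while preserving $C^2$ regularity and fiberwise strict convexity. The superlinearity on compact subsets in (\ref{HSu}) is exactly what allows such an interpolation to be globally uniformly superlinear and uniformly bounded on the fibers. On some small neighborhood $W\subset {]a,b[}\times \INT{K}$ of $(\bar t,\bar x)$, all upper and lower differentials of $U$ remain inside the zone where $H=\tilde H$, so $U|_W$ is still a viscosity solution of $\partial_t U+\tilde H(x,\partial_x U)=0$ on $W$.

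I would then produce a global viscosity solution $\tilde U$ of the modified equation on a strip ${]\tau_0,+\infty[}\times M$ that agrees with $U$ on a smaller neighborhood $W'\subset W$ of $(\bar t,\bar x)$. The device is the Lax--Oleinik formula for $\tilde H$: pick $\tau_0$ slightly below $a$, define $u:M\to[-\infty,+\infty]$ by $u(x)=U(\tau_0,x)$ for $x$ in a neighborhood of $\bar x$ in $K$ and $u(x)=+\infty$ elsewhere, and set $\tilde U:=\hat u$ with respect to the Tonelli Lagrangian $\tilde L$ associated to $\tilde H$. By Theorem \ref{PropHatu}, $\tilde U$ is a viscosity solution on the subset of ${]\tau_0,+\infty[}\times M$ where it is finite. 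A local comparison argument, using that $\tilde L$-minimizers emanating backward from points close to $(\bar t,\bar x)$ remain inside $K$ for short times (thanks to the bound $R$ and the choice of cut-off), forces $\tilde U=U$ on some $W'$. Then Theorem \ref{LocContrGen} applied to $\tilde U$ gives local contractibility of $\Sigma(\tilde U)$ at $(\bar t,\bar x)$, and since $\Sigma(\tilde U)\cap W'=\Sigma(U)\cap W'$, the result follows.

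The main obstacle is this last localization step: one must ensure that $\hat u$ and $U$ genuinely agree on a full spacetime neighborhood of $(\bar t,\bar x)$, not merely at the initial time slice $\{\tau_0\}\times K$. This requires either a localized comparison principle on the compact cylinder $[\tau_0,\bar t+\epsilon]\times K$ for the modified equation, or an application of Perron's method in which $U|_W$ serves as both a local sub- and supersolution against which $\hat u$ is squeezed. Uniform superlinearity of $\tilde L$ is used crucially to get a priori bounds on speeds of the relevant minimizers on short time intervals, confining them to the region where $\tilde H=H$ and making the local comparison meaningful.
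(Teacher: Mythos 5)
Your overall strategy lines up closely with the paper's: cut off $H$ to obtain a Tonelli Hamiltonian $\tilde H$ on all of $T^*M$ that still admits $U$ as a local viscosity solution, represent $U$ near the point as the Lax--Oleinik evolution $\hat u$ of the slice $u=U(\tau_0,\cdot)$ extended by $+\infty$, and invoke Theorem \ref{LocContrGen}. Two differences are worth noting. First, the paper's cut-off is purely a base cut-off and is cleaner than yours: it sets $\tilde H = (1-\varphi(x))\lVert p\rVert^2_x + \varphi(x)H(x,p)$ with $\varphi$ a compactly supported bump identically $1$ on $\bar W$, so that $\tilde H=H$ on the \emph{entire} fiber over $\bar W$. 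This removes the need to bound $\lVert p\rVert_x$ by some $R$ a priori, and hypothesis (a) is exactly what makes the convex combination uniformly superlinear. Second, the step you flag as the ``main obstacle'' --- showing $\hat u = U$ on a genuine spacetime neighborhood, for which you propose a comparison principle or Perron's method --- is handled in the paper by a more elementary, direct argument: pick $\eta$ with $[t_0-\eta,t_0+\eta]\times\bar B(x_0,2\eta)\subset O$, use the local Lipschitz constant of $U$ and the Legendre transform to bound by $K$ the speeds of all $U$-calibrating curves inside this compact, and then take $\epsilon<\eta$ with $2K\epsilon<\eta$. Every backward $U$-characteristic from $]t_0-\epsilon,t_0+\epsilon]\times\bar B(x_0,\eta)$ back to time $t_0-\epsilon$ then stays inside $\bar B(x_0,2\eta)$, and the equality $U(t,x)=\hat u(t-t_0+\epsilon,x)$ on that neighborhood follows directly from the existence of backward characteristics together with the domination inequality --- no comparison principle or Perron argument is needed. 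Both routes land in the same place; the paper's backward-characteristic argument is shorter and avoids invoking a comparison theorem whose uniqueness class would itself need to be checked on the non-compact cylinder.
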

\begin{proof} Since the result is local, we can assume that $O={} ]a,b[\times W$, with $W\subset M$ open with 
$\bar W$ compact. We will first modify $H$ outside  of $T^*W$ to reduce to the case where the Hamiltonian is Tonelli.
Choose a C$^\infty$ function $\varphi:M\to[0,1]$ with compact support such that $\varphi$ is identically $1$ on the compact 
subset $\bar W$. Define $\tilde H:T^*M\to\R$ by
$$\tilde H(x,p)= (1-\varphi(x))\lVert p\rVert^2_x+ \varphi(x)H(x,p).$$
Since the support of $\varphi$ is compact, using the properties of $H$, it is not difficult to check that $\tilde H$
is Tonelli and coincides with $H$ on $T^*W$. This last fact implies that $U: ]a,b[\times W\to \R$ is also a viscosity 
solution of \eqref{HJEf} for $\tilde H$.

Therefore without loss of generality, we can assume that $H$ is Tonelli.

Fix $(t_0,x_0)\in ]a,b[\times W$. Pick $\eta >0 $ 
such that $[t_0-\eta,t_0+\eta]\times \bar B(x_0,2\eta)\subset ]a,b[\times W$.

The viscosity solution $U$ is Lipschitz on the compact set $[t_0-\eta,t_0+\eta]\times \bar B(x_0,2\eta)$. Therefore,
since speeds of $U$-calibrating curves are related to upper differentials of $U$ by the Legendre transform, we can find a
finite constant $K$ such that for every $[\alpha,\beta]\subset [t_0-\eta,t_0+\eta]$ and every curve $\gamma:
[\alpha,\beta]\to \bar B(x_0,2\eta)$, which is $U$-calibrating, we have $\lVert \dot\gamma(s)\rVert_{\gamma(s)}\leq K$.
Choose now $\epsilon>0$ such that $\epsilon <\eta$ and $2K\epsilon<\eta$. Since $U:O\to\R$, as any viscosity
solution, has backward characteristic ending at any given point, from the definition of $K$ and the choice
of $\epsilon$, for every $(t,x)\in ]t_0-\epsilon, t_0+\epsilon]\times\bar B(x_0,\eta)$, we can find a
$U$-calibrated curve $\gamma_{t,x}:[t_0-\epsilon,t]\to \bar B(x_0,2\eta)$. It follows that
$$U(t,x)=\inf_{y\in\bar B(x_0,2\eta)}U(t_0-\epsilon, y)+ h_{t-t_0+\epsilon}(y,x),$$
for all $(t,x)\in ]t_0-\epsilon, t_0+\epsilon]\times\bar B(x_0,\eta)$.
Therefore, if we define $u:M\to \R$ by
$$u(y)=\begin{cases} U(t_0-\epsilon, y),\text{ if $y\in\bar B(x_0,2\eta)$,}\\
+\infty, \text{ if $y\in\bar B(x_0,2\eta)$,}
\end{cases}
$$
we obtain
\begin{equation}\label{ExtLocVis}
U(t,x)=\hat u(t-t_0+\epsilon,x)\text{ for all $(t,x)\in ]t_0-\epsilon, t_0+\epsilon]\times\bar B(x_0,\eta)$.}
\end{equation}
Since $U$ is continuous and $\bar B(x_0,2\eta)$ compact and not empty, the function $\hat u$ is bounded below 
and  not identically $+\infty$. Therefore $\hat u$ is finite everywhere and $\Sigma^*(\hat u)$ is locally contractible.
By \eqref{ExtLocVis}, we have $\Sigma(U)\cap ]t_0-\epsilon, t_0+\epsilon[\times \INT B(x_0,\eta)=
\{(s+t_0-\epsilon,x)\mid (s,x)\in \Sigma^*(\hat u)\cap ]0, 2\epsilon[\times \INT B(x_0,\eta)$. Hence 
$\Sigma(U)\cap ]t_0-\epsilon, t_0+\epsilon[\times\INT B(x_0,\eta)$ is itself locally contractible.
\end{proof}
\begin{appendix}
\section{Some estimates}\label{Appendice}
In this appendix, we suppose that $H:T^*M\to\R$ is a Tonelli Hamiltonian on the complete Riemannian
manifold $(M,g)$ and $L$ is its associated Lagrangian. The Lax-Oleinik operators $T^-_t$ and $T^+_t$
are the ones associated to $L$.

Suppose $K$ is a compact subset of $M$. If $A,B, t,r$ are finite real numbers with $t,r>0$, we set
\begin{equation}\label{DefFABTR}
{\mathcal F}^+_{K,A,B,t,r} =\{u: M\to [-\infty,+\infty]\mid \text{$A\leq u$ and $T^+_tu\leq B$ on  $\bar V_r(K)$}\},
\end{equation}
where as usual $\bar V_r(K)=\{y\in M\mid d(y,K)\leq r\}$ is the closed $r$-neighborhood of $K$ in $M$.

The goal of this appendix is to prove the following proposition.
\begin{prop}\label{PropAppendix} For  any compact subset $K\subset M$, 
and any finite $A,B, t,r$, with $t,r>0$, we can find
$s_0=s(K,A,B, t,r)>0$ such that for any function $u\in {\mathcal F}^+_{K,A,B,t,r}$,
any $x\in K$ and any $0<s\leq s_0=s(K,A,B,t,r)$, every absolutely continuous curve $\gamma:[0,s]\to M$, 
with $\gamma(0)=x$ and
$$u(\gamma(s))-\int_0^s L(\gamma(\sigma),\dot\gamma(\sigma))\,d\sigma+1\geq T^+_su(x).$$
must satisfy $\ell_g(\gamma)\leq r$.
Therefore, for $u\in {\mathcal F}^+_{K,A,B,t,r},x\in M$ and $0<s\leq s_0=s(K,A,B,t,r)$, we have:
$$T^+_su(x)=\sup_\gamma u(\gamma(s))-\int_0^s L(\gamma(\sigma),\dot\gamma(\sigma))\,d\sigma,$$
where the $\sup$ is taken over all  absolutely continuous curve $\gamma:[0,s]\to M$, 
with $\gamma(0)=x$ and $\ell_g(\gamma)\leq r$. In particular, we have
$$T^+_su(x)=\sup_{y\in \bar B(x,r)}u(y)-h_s(x,y).$$

Moreover, if $u$ is \emph{continuous} on $\bar V_r(K)$, the set
$$\{y\in M\mid T^+_su(x)=u(y)-h_s(x,y)\}$$
is non-empty compact and contained in the closed ball $\bar B(x,r) $. In particular, we can find
a \emph{minimizer} $\gamma:[0,s]\to M$, with $\gamma(0)=x$, and
\begin{equation}\label{MinForT+sx}
T^+_su(x)=u(\gamma(s))-\int_0^s L(\gamma(\sigma),\dot\gamma(\sigma))\,d\sigma.
\end{equation}
Any  absolutely continuous curve $\gamma:[0,s]\to M$, 
with $\gamma(0)=x$, satisfying  the equality  \eqref{MinForT+sx} above is a minimizer with $\ell_g(\gamma)\leq r$.

\end{prop}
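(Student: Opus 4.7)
The plan is to prove the core estimate first---that for $s\leq s_0(K,A,B,t,r)$ and any absolutely continuous $\gamma:[0,s]\to M$ starting at $x\in K$ satisfying $u(\gamma(s))-\int_0^s L(\gamma,\dot\gamma)\,d\sigma+1\geq T^+_s u(x)$, one has $\ell_g(\gamma)\leq r$---and then to deduce the remaining claims by standard compactness arguments together with Tonelli's theorem.

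The core estimate will follow from two-sided bounds that depend only on the admitted constants. For the lower bound on $T^+_s u(x)$, testing against the constant curve at $x$ and using $u(x)\geq A$ (since $x\in K\subset \bar V_r(K)$) together with $L(x,0)\leq L_0:=\sup_K L(\cdot,0)<\infty$ yields $T^+_s u(x)\geq A-sL_0$. For a global upper bound on $u$ itself, the assumption $T^+_t u\leq B$ on $\bar V_r(K)$ rewrites as $u(y)\leq B+h_t(x',y)$ for every $y\in M$ and $x'\in\bar V_r(K)$; taking $x'=x$ and bounding $h_t(x,\gamma(s))\leq tA(d(x,\gamma(s))/t)$ via a constant-speed geodesic of duration $t$ from $x$ to $\gamma(s)$, together with $d(x,\gamma(s))\leq \ell_g(\gamma)$ and the monotonicity of $A$, I get $u(\gamma(s))\leq B+tA(\ell_g(\gamma)/t)$.

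Combining these two bounds with the near-maximization hypothesis and the uniform superlinearity $\int_0^s L(\gamma,\dot\gamma)\,d\sigma\geq K_0\ell_g(\gamma)-sC(K_0)$ (valid for any $K_0\geq 0$) produces the key inequality
$$K_0\,\ell_g(\gamma)-tA\bigl(\ell_g(\gamma)/t\bigr)\leq B-A+sL_0+sC(K_0)+1.$$
The main obstacle will then be to choose $K_0$ large and $s_0>0$ small (depending only on $K,A,B,t,r$ and $L$) so that this inequality forces $\ell_g(\gamma)\leq r$. The map $\ell\mapsto K_0\ell-tA(\ell/t)$ is concave (because $A$ is convex) and, for $K_0$ sufficiently large, is strictly increasing past $\ell=r$, so choosing $K_0$ with $K_0r-tA(r/t)$ strictly exceeding the right-hand side eliminates $\ell_g(\gamma)$ in an interval $(r,\ell^*]$; the remaining regime of very large $\ell_g(\gamma)$ is ruled out by a direct Jensen-type comparison, combining convexity of $L$ in the fibers with superlinearity, to show that such a long curve traversed in time $s\leq s_0$ would force an action strictly exceeding the upper bound $B+tA(\ell_g(\gamma)/t)-A+sL_0+1$ already derived.

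Once the core estimate is in hand, the remaining assertions follow: the sup defining $T^+_s u(x)$ can be restricted to curves with $\ell_g(\gamma)\leq r$, equivalently to $y\in\bar B(x,r)\subset \bar V_r(K)$. When $u$ is continuous on $\bar V_r(K)$, the map $y\mapsto u(y)-h_s(x,y)$ is continuous on the compact ball $\bar B(x,r)$, hence attains its maximum, and Tonelli's theorem produces the required connecting minimizer. Conversely, any curve $\gamma$ achieving the sup exactly is a fortiori a $1$-near-maximizer, so $\ell_g(\gamma)\leq r$ by the core estimate.
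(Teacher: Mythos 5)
Your reduction to the core estimate and your treatment of the easy regimes are reasonable, but the ``Jensen-type comparison'' step for large $\ell_g(\gamma)$ is a genuine gap, and I do not think it can be closed along the lines you sketch. Your upper bound on the final value is $u(\gamma(s))\leq B+h_t(x,\gamma(s))\leq B+tA(\ell_g(\gamma)/t)$, which grows superlinearly in $\ell:=\ell_g(\gamma)$, while your action lower bound, even after optimizing over $K_0$, only gives $\int_0^s L\geq \sup_{K_0}\bigl[K_0\ell-sC(K_0)\bigr]=sC^*(\ell/s)$. So the key inequality you must falsify for all $\ell>r$ is $sC^*(\ell/s)\leq D+sL_0+tA(\ell/t)$. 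Here $C^*$ and $A$ are independent superlinear convex data of the Tonelli Lagrangian, and one only knows $C^*\leq A$; for a general $L$ (in particular, one whose fibre growth rate varies with the base point on a non-compact $M$) the right side $tA(\ell/t)$ can dominate $sC^*(\ell/s)$ for $\ell$ large, so no uniform choice of $s_0$ rules out long curves. Equivalently, using $\int_0^s L\geq h_s(x,\gamma(s))$ your inequality reduces to $h_s(x,\gamma(s))-h_t(x,\gamma(s))\leq D+sL_0$, and since $h_t(x,y)\leq(t-s)A(0)+h_s(x,y)$ by the semigroup property, the difference is bounded above by a constant; there is a near-cancellation, not a contradiction.

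The paper's proof (Lemma~\ref{BoundednessMaximizer}) sidesteps the whole issue by truncating the curve at the first time $s'\in\,]0,s[$ where $\ell_g(\gamma|[0,s'])=r$; at that moment $\gamma(s')$ is still in $\bar V_r(K)$. One then absorbs the tail of the curve into the operator by writing $u(\gamma(s))-\int_{s'}^{s}L\leq T^+_{s-s'}u(\gamma(s'))$, and the hypothesis $T^+_{t}u\leq B$ on $\bar V_r(K)$ together with the semigroup property gives the \emph{constant} bound $T^+_{s-s'}u(\gamma(s'))\leq \bar B=B+|A(0)|t$. Combined with the constant lower bound $T^+_su(x)\geq \bar A=A-|A(0)|t$, the near-maximization inequality yields $\int_0^{s'}L\leq\bar B-\bar A+1$, a bound independent of $\ell$. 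Then Lemma~\ref{MajActAndLength1} (a single application of superlinearity with one well-chosen $K$) forces $s'>\eta(\bar B-\bar A+1,r/2)$, giving $s_0=\eta(\bar B-\bar A+1,r/2)$. The crucial ideas you are missing are the truncation at length $r$ inside $\bar V_r(K)$ and the use of $T^+_{s-s'}u$ (not $u$ itself) to make the tail of the action cancel exactly; without them, you are stuck comparing two competing superlinear rates, which is what your sketch tries and cannot control in general.
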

To prove this proposition, we need a couple of lemmas.
\begin{lem}\label{MajActAndLength1}
For every finite $A$ and $r>0$, we can find a constant $\eta=\eta(A,r)$ such that
for every absolutely continuous curve $\gamma:[a,b]\to M$, with $b-a\leq \eta$ and
$\Act(\gamma)=\int_a^bL(\gamma(s),\dot\gamma(s))\,ds\leq A$, we have $\ell_g(\gamma)\leq r$,
where $\ell_g(\gamma)$ is the Riemannian length of $\gamma$ for the Riemannian metric $g$ on $M$.

Therefore, if $h_t(x,y)\leq A$, with $x,y\in M$ and $0<t\leq \eta=\eta(A,r)$, we have $d(x,y)\leq r$.
\end{lem}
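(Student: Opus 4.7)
The plan is to exploit the uniform superlinearity of the Tonelli Lagrangian $L$, which after suitable choice of parameter will give a lower bound on the action in terms of the Riemannian length. Recall from \eqref{TonLag2bis} that for every $K\geq 0$ and every $(x,v)\in TM$, we have $L(x,v)\geq K\lVert v\rVert_x-C(K)$, where $C(K)<+\infty$ is the constant provided by the uniform superlinearity condition \eqref{TonLag2}.

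Given the curve $\gamma:[a,b]\to M$ with $b-a\leq\eta$ and $\Act(\gamma)\leq A$, integrating the superlinearity inequality along $\gamma$ yields
\begin{equation*}
A\geq \Act(\gamma)=\int_a^b L(\gamma(s),\dot\gamma(s))\,ds\geq K\int_a^b \lVert\dot\gamma(s)\rVert_{\gamma(s)}\,ds-C(K)(b-a)=K\ell_g(\gamma)-C(K)(b-a),
\end{equation*}
so $\ell_g(\gamma)\leq A/K+C(K)(b-a)/K$. The plan is then to choose first $K=K(A,r)$ large enough that $A/K\leq r/2$ (for instance $K=\max(1,2A/r)$ will do, so that even if $A\leq 0$ we have a valid choice), and then choose $\eta=\eta(A,r)>0$ small enough that $C(K)\eta/K\leq r/2$. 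With these choices we obtain $\ell_g(\gamma)\leq r/2+r/2=r$, which proves the first assertion.

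For the second assertion, suppose $h_t(x,y)\leq A$ with $0<t\leq \eta(A,r)$. By Tonelli's existence theorem (cited in the Background section), there exists a minimizer $\gamma:[0,t]\to M$ with $\gamma(0)=x$, $\gamma(t)=y$ and $\Act(\gamma)=h_t(x,y)\leq A$. Since $t\leq\eta(A,r)$, the first part applies and gives $\ell_g(\gamma)\leq r$. Because the Riemannian distance is bounded above by the length of any joining curve, we conclude $d(x,y)\leq\ell_g(\gamma)\leq r$, as required.

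The only mildly delicate step is the book-keeping in the choice of $K$ and $\eta$, which must be handled carefully when $A$ may be negative or zero; but this is purely a matter of replacing $A$ by $\max(A,0)+1$ (say) before picking $K$, and presents no real obstacle. No compactness of $M$ is used: uniform superlinearity over all of $T^*M$ (and hence of $L$ over all of $TM$) is what makes $C(K)$ a global constant, so the estimate is global and independent of where $\gamma$ lies.
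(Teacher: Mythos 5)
Your proof is correct and follows essentially the same route as the paper's: both invoke the uniform superlinearity \eqref{TonLag2bis}, integrate it along $\gamma$ to get $\Act(\gamma)\geq K\ell_g(\gamma)-C(K)(b-a)$, and then pick $K$ and $\eta$ so that the resulting bound on $\ell_g(\gamma)$ is at most $r$, finishing the second part with Tonelli's existence theorem. Your slightly more careful treatment of the case $A\leq 0$ (via $K=\max(1,2A/r)$ rather than the paper's fixed choice $K=2\lvert A\rvert/r$) is a minor cosmetic improvement, not a different argument.
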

\begin{proof} From \eqref{TonLag2bis}, with $K=2\lvert A\rvert /r$, we have
\begin{equation}\label{TonLagApplique} 
\forall (x,v)\in TM, L(x,v)\geq \frac{2\lvert A\rvert }r \Vert v\Vert_x -C(2\lvert A\rvert /r).
\end{equation}
If $\gamma:[a,b]\to M$ is an  absolutely continuous curve $\gamma:[a,b]\to M$, applying \eqref{TonLagApplique}
at $(x,v)=(\gamma(s),\dot\gamma(s))$ and integrating, we obtain
$$\Act(\gamma)\geq  \frac{2\lvert A\rvert }r \ell_g(\gamma)-C(2\lvert A\rvert /r)(b-a).$$
If $\Act(\gamma)\leq A$, this yields
$$A\geq  \frac{2\lvert A\rvert }r \ell_g(\gamma)-C(2\lvert A\rvert /r)(b-a),$$
or equivalently
$$\ell_g(\gamma)\leq \frac r2 +\frac {rC(2\lvert A\rvert /r)}{2\lvert A\rvert }(b-a).$$
It suffices to set
$$\eta=\eta(A,r)=\frac{\lvert A\rvert } {C(2\lvert A\rvert /r)}.$$
The last part follows from the fact that $h_t(x,y)$ is the action of a minimizer
$\gamma:[0,t]\to M$, with $\gamma(0)=x$ and $\gamma(t)=y$.
\end{proof}

\begin{lem}\label{BoundednessMaximizer}
Suppose $K$ is a compact subset of $M$. If $A,B, t,r$ are finite real numbers with $t,r>0$,
we can find a constant $s_0=s(K,A,B,t,r)>0$, with $s_0\leq t$, such that for every $u\in {\mathcal F}^+_{K,A,B,t,r}$,
every $x\in M$, every $0<s\leq s_0=s(K,A,B,t,r)$, and every absolutely continuous curve $\gamma:[0,s]\to M$, 
with $\gamma(0)=x$ and $\ell_g(\gamma)>r$, we have
$$T^+_su(x)> u(\gamma(s))-\int_0^s L(\gamma(\sigma),\dot\gamma(\sigma))\,d\sigma+1.$$
\end{lem}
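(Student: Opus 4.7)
Let $y = \gamma(s)$ and $J = \int_0^s L(\gamma(\sigma),\dot\gamma(\sigma))\,d\sigma$. The plan is to establish the target inequality $T^+_s u(x) > u(y) - J + 1$ by probing the defining supremum $T^+_s u(x) = \sup_{w \in M}\bigl(u(w) - h_s(x,w)\bigr)$ at two candidate points $w$, one of which will work for every $x \in M$ once $s_0$ has been chosen in terms of $K, A, B, t, r$, and then using the superlinear growth of $J$ forced by the constraint $\ell_g(\gamma) > r$ to obtain the margin of $1$.

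The first candidate is $w = y$ itself, which yields the elementary rearrangement $T^+_s u(x) - (u(y) - J + 1) \geq J - h_s(x,y) - 1$. The second candidate uses the $\mathcal F^+_{K,A,B,t,r}$ structure: picking any $w^* \in \bar V_r(K)$ gives $u(w^*) \geq A$, and the estimate $u(y) \leq T^+_t u(z) + h_t(z,y) \leq B + h_t(z,y)$ for any $z \in \bar V_r(K)$ (which unfolds directly from the definition of $T^+_t$ together with $T^+_t u \leq B$ on $\bar V_r(K)$) gives $T^+_s u(x) - (u(y) - J + 1) \geq J - (B - A + 1) - h_s(x, w^*) - h_t(z, y)$. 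The driving input is the uniform superlinearity of $L$ recorded in \eqref{TonLag2bis}: for every $K' \geq 0$, $J \geq K'\ell_g(\gamma) - s\,C(K') > K'r - s\,C(K')$, so by picking $K'$ large and $s_0$ correspondingly small, $J$ can be pushed above any prescribed threshold whenever $\ell_g(\gamma) > r$. Dually, the uniform fibre bound \eqref{TonLag3} yields $h_\tau(p,q) \leq \tau\,A(d(p,q)/\tau)$ via the constant-speed test curve, giving a controlled upper bound on each of $h_s(x,y)$, $h_s(x,w^*)$, and $h_t(z,y)$.

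The main obstacle is uniformity in $x \in M$: when $x$ is far from $\bar V_r(K)$, both $h_s(x,y)$ (for $y$ far from $x$) and $h_s(x,w^*)$ (with $w^*$ restricted to $\bar V_r(K)$) can be large, threatening both bounds simultaneously. I will handle this by a case split on $d(x,y)$. When $d(x,y) \leq r$, the term $h_s(x,y)$ is bounded by $s\,A(r/s)$ and the first bound suffices: it reduces to the contrapositive of Lemma~\ref{MajActAndLength1} applied to the constant $s\,A(r/s) + 2$, forcing $J > h_s(x,y) + 1$ once $s \leq s_0$ is small enough. When $d(x,y) > r$, the minimizing curve from $x$ to $y$ itself has length exceeding $r$, so the same superlinearity forces $h_s(x,y)$ to be large and comparable to $J$; here I choose $w^*$ and $z$ at intermediate points of $\bar V_r(K)$ lying along a well-selected path joining $x, y$, and exploit the semigroup-type subadditivity $h_s(x,w^*) + h_t(z,y) \leq h_s(x,y) + tA(0)$ (using a constant extension of time at the stopping point and the fibre bound) to reduce the second bound to $J - h_s(x,y) - (B - A + 1 + tA(0))$, which becomes positive once the superlinear coefficient $K'$ is chosen large enough relative to the Tonelli data. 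The resulting $s_0 = s(K, A, B, t, r)$ is the minimum over these two cases of Lemma~\ref{MajActAndLength1}-style thresholds, depending only on the Tonelli constants $A(\cdot), C(\cdot)$ and on the combination $B - A + 1 + tA(0)$.
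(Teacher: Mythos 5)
Your argument has a fundamental structural flaw: both of your cases ultimately require establishing a bound of the form $J > h_s(x,y) + c$ for some positive constant $c$, where $J = \int_0^s L(\gamma,\dot\gamma)\,d\sigma$ and $y = \gamma(s)$. But every absolutely continuous competitor from $x$ to $y$ in time $s$ satisfies $J \geq h_s(x,y)$, with \emph{equality} precisely when $\gamma$ is an $L$-minimizer. Since the lemma must cover the case where $\gamma$ is a minimizer with $\ell_g(\gamma) > r$ (indeed, ruling out exactly such maximizers is the point of Proposition~\ref{PropAppendix}), an argument that hinges on $J - h_s(x,y)$ being bounded away from zero cannot close. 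In Case~1 this is explicit: you aim to ``force $J > h_s(x,y)+1$,'' which is impossible for a minimizer; moreover the threshold $\eta(sA(r/s)+2, r)$ you feed into Lemma~\ref{MajActAndLength1} depends on $s$ on both sides, so there is no reason $s$ must exceed it — the growth of $sA(r/s)$ as $s\to 0$ makes the comparison borderline at best. In Case~2 the same issue resurfaces after your reduction to $J - h_s(x,y) - (B - A + 1 + tA(0)) > 0$. In addition, the claimed subadditivity $h_s(x,w^*) + h_t(z,y) \leq h_s(x,y) + tA(0)$ is not correct as stated: the triangle-type inequality for $h$ runs the other way ($h_{s}(x,w^*)+h_{\tau}(w^*,y) \geq h_{s+\tau}(x,y)$), and the total travel time on your left-hand side is $s+t$ while the right-hand side involves only $h_s$, so the two sides are not comparable.

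The paper's proof avoids comparing $J$ against $h_s(x,y)$ entirely. It truncates $\gamma$ at the first time $s'\in(0,s)$ where $\ell_g(\gamma|[0,s']) = r$. Because $x=\gamma(0)\in K$ (note the printed ``$x\in M$'' in the statement is a typo; the proof, and Proposition~\ref{PropAppendix} where the lemma is used, take $x\in K$, and this is essential here), the truncation point $\gamma(s')$ lies in $\bar V_r(K)$, where both bounds $\bar A \leq T^+_{\sigma} u \leq \bar B$ are available. The tail's contribution $u(\gamma(s)) - \int_{s'}^{s} L$ is then \emph{absorbed} by $T^+_{s-s'}u(\gamma(s')) \leq \bar B$ rather than estimated directly; combining with $T^+_s u(x)\geq \bar A$ yields $\int_0^{s'} L \leq \bar B - \bar A + 1$, a bound by an $s$-independent constant. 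Lemma~\ref{MajActAndLength1} applied to the truncated curve (action $\leq \bar B - \bar A + 1$, length exactly $r$) then forces $s > s' > \eta(\bar B-\bar A+1, r/2)$, giving a uniform $s_0$. The two ideas you are missing are (a) truncating $\gamma$ where its length first reaches $r$ and discarding the tail by one application of the $T^+$ operator, and (b) using the a~priori bounds on $T^+_{\sigma}u$ over $\bar V_r(K)$ to turn the head's action into an $s$-independent quantity.
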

\begin{proof}
We first note that for $u\in {\mathcal F}^+_{K,A,B,t,r},
x\in V_r(K)$ and  $0<s\leq t$, we have
$$
T^+_su(x)\geq u(x)-h_s(x,x)\geq A-A(0)s,
$$
where $A(0)=\sup_{x\in M}L(x,0)<+\infty$.
Therefore if we set $\tilde A=A-\lvert A(0)\rvert t$, we have
\begin{equation}\label{MinT+}
T^+_su(x)\geq u(x)-h_s(x,x),
\end{equation}
for $u\in {\mathcal F}^+_{K,A,B,t,r},x\in V_r(K)$ and  $s\leq t$.
On the other hand for $0<s<t$, by the semi-group property, we have
$$T^+_tu(x)\geq T^+_su(x)-h_{t-s}(x,x)\geq T^+_su(x)-A(0)(t-s).$$
Hence for $0<s<t$ and $x\in V_r(K)$, we get
$$
T^+_su(x)\leq B+A(0)(t-s)\geq \bar A.
$$
Setting $\tilde B= B+\lvert A(0)\rvert t$, this yields
\begin{equation}\label{MaxT+}
T^+_su(x)\leq \bar B,
\end{equation}
for $u\in {\mathcal F}^+_{K,A,B,t,r},x\in V_r(K)$ and  $s\leq t$.

To prove the lemma, we must find $s_0=s(K,A,B,t,r)>0$ such that any absolutely continuous curve 
$\gamma:[0,s]\to M$, 
with $0<s\leq t, \gamma(0)=x\in$ and $ K\ell_g(\gamma)>r$, such that
\begin{equation}\label{EquaInter}
u(\gamma(s))-\int_0^s L(\gamma(\sigma),\dot\gamma(\sigma))\,d\sigma+1\geq T^+_su(x)
\end{equation}
must satisfy $s>s_0=s(K,A,B,t,r)$. 

Assume now that 
 $\gamma:[0,s]\to M$ is a curve satisfying the conditions above.
Since $\ell_g(\gamma)>r$, we can find $s'\in ]0,s[$ such that $\ell_g(\gamma[0,s'])=r$.
Using \eqref{EquaInter}, we get
$$u(\gamma(s))-\int_{s'}^s L(\gamma(\sigma),\dot\gamma(\sigma))\,d\sigma
-\int_0^{s'} L(\gamma(\sigma),\dot\gamma(\sigma))\,d\sigma+1\geq T^+_su(x).$$
But $T^+_{s-s'}u(\gamma(s'))\geq u(\gamma(s))-\int_{s'}^s L(\gamma(\sigma),\dot\gamma(\sigma))\,d\sigma$. Hence
$$T^+_{s-s'}u(\gamma(s'))-\int_0^{s'} L(\gamma(\sigma),\dot\gamma(\sigma))\,d\sigma+1\geq T^+_su(x).$$
Since $x\in K$ , from \eqref{MinT+}, we have $T^+_su(x)\geq \bar A$. Moreover, since $\ell_g(\gamma[0,s'])=r$
and $x=\gamma(0)=x\in K$,
we get $\gamma(s')\in \bar V_r(K)$, which, by \eqref{MaxT+}, implies $T^+_{s-s'}u(\gamma(s'))\leq \bar B$.
From the last inequality, we conclude that 
$$\int_0^{s'} L(\gamma(\sigma),\dot\gamma(\sigma))\,d\sigma\leq \bar B-\bar A+1.$$
By Lemma \ref{MajActAndLength1}, using $\ell_g(\gamma[0,s'])=r$,  this last inequality
 implies that $s'>\eta( \bar B-\bar A+1,r/2)$.
Since $s>s'$ to finish the proof of the Lemma, it suffices to take $s_0=s(K,A,B,t,r)=\eta( \bar B-\bar A+1,r/2)$.
\end{proof}
\begin{proof}[Proof of Proposition \ref{PropAppendix}] We consider the $s_0=s(K,A,B,t,r)$ given by the last Lemma
\ref{BoundednessMaximizer}. Given a function $u\in {\mathcal F}^+_{K,A,B,t,r},x\in K$ and  $0<s\leq s_0=s(K,A,B,t,r)$.
By Lemma \ref{BoundednessMaximizer}, we have
\begin{equation}\label{GOGO1}
T^+_su(x)> u(\gamma(s))-\int_0^s L(\gamma(\sigma),\dot\gamma(\sigma))\,d\sigma+1,
\end{equation}
for every absolutely continuous curve $\gamma:[0,s]\to M$, 
with $\gamma(0)=x$ and $\ell_g(\gamma)>r$. Since
\begin{equation}\label{GOGO2}
T^+_su(x)=\sup_\gamma u(\gamma(s))-\int_0^s L(\gamma(\sigma),\dot\gamma(\sigma))\,d\sigma,
\end{equation}
where the $\sup$ is taken over all  absolutely continuous curve $\gamma:[0,s]\to M$, 
with $\gamma(0)=x$, we obtain
\begin{equation}\label{GOGO3}
T^+_su(x)=\sup_\gamma u(\gamma(s))-\int_0^s L(\gamma(\sigma),\dot\gamma(\sigma))\,d\sigma,
\end{equation}
where the $\sup$ is taken over all  absolutely continuous curve $\gamma:[0,s]\to M$, 
with $\gamma(0)=x$ and $\ell_g(\gamma)\geq r$.
Obviously, equality \eqref{GOGO3} and inequality \eqref{GOGO1} imply
\begin{align*}
T^+_su(x)&=\sup_{y\in \bar B(x,r)}u(y)-h_s(x,y)\\
&>\sup_{y\notin \bar B(x,r)}u(y)-h_s(x,y).
\end{align*}
When $u$ is, moreover, continuous on $\bar V_r(K)$, since $\bar B(x,r)\subset \bar V_r(K)$, the  first equality 
above shows that
$\{y\in M\mid T^+_su(x)=u(y)-h_s(x,y)\}$ is not empty,
and the second inequality shows that this set is
contained in  $\bar B(x,r)$. The compactness of $\{y\in M\mid T^+_su(x)=u(y)-h_s(x,y)\}$ follows from 
the continuity of $u$ on $\bar B(x,r)\subset \bar V_r(K)$.

Since, we can find $y\in M$, with $T^+_su(x)=u(y)-h_s(x,y)$, taking a minimizer $\gamma:[0,s]\to M$
with $\gamma(0)=x$ and $\gamma(s)=y$, we obtain
\begin{equation}\label{MinForT+sx4}
T^+_su(x)=u(\gamma(s))-\int_0^s L(\gamma(\sigma),\dot\gamma(\sigma))\,d\sigma.
\end{equation}
Moreover, for any absolutely continuous curve $\gamma:[0,s]\to M$, 
with $\gamma(0)=x$, satisfying \eqref{MinForT+sx4}, we get from \eqref{GOGO1} and \eqref{GOGO2}
that $\gamma$ is a minimizer with length $\ell_g(\gamma)\leq r$.
\end{proof}
Our goal now is to sketch a proof of Theorem \ref{PropHatu}.
For this we will need the $T^-_t$ version of  Proposition \ref{PropAppendix}. Thanks to Remark \ref{CaracT+}, this version follows from Proposition \ref{PropAppendix}.

We first define
\begin{equation}\label{DefFABTR--}
{\mathcal F}^-_{K,A,B,t,r} =\{u: M\to [-\infty,+\infty]\mid \text{ $A\leq T^-_tu$ and $u\leq B$ on  $\bar V_r(K)$}\},
\end{equation}
\begin{prop}\label{PropAppendix-} For  any compact subset $K\subset M$, 
and any finite $A,B, t,r$, with $t,r>0$, we can find
$s_0=s(K,A,B, t,r)>0$ such that for any function $u\in {\mathcal F}^-_{K,A,B,t,r}$,
any $x\in K$ and any $0<s\leq s^-_0=s(K,A,B,t,r)$, every absolutely continuous curve $\gamma:[0,s]\to M$, 
with $\gamma(s)=x$ and
$$T^u(\gamma(0))+\int_0^s L(\gamma(\sigma),\dot\gamma(\sigma))\,d\sigma-1\geq T^-_su(x).$$
must satisfy $\ell_g(\gamma)\leq r$.
Therefore, for $u\in {\mathcal F}^+_{K,A,B,t,r},x\in M$ and $0<s\leq s^-_0=s(K,A,B,t,r)$, we have:
$$T^-_su(x)=\inf_\gamma u(\gamma(0))+\int_0^s L(\gamma(\sigma),\dot\gamma(\sigma))\,d\sigma,$$
where the $\sup$ is taken over all  absolutely continuous curve $\gamma:[0,s]\to M$, 
with $\gamma(s)=x$ and $\ell_g(\gamma)\leq r$. In particular, we have
$$T^-_su(x)=\inf_{y\in \bar B(x,r)}u(y)+h_s(y,x).$$

Moreover, if $u$ is \emph{continuous} on $\bar V_r(K)$, the set
$$\{y\in M\mid T^-_su(x)=u(y)+h_s(x,x)\}$$
is non-empty, compact, and contained in the closed ball $\bar B(x,r) $.
\end{prop}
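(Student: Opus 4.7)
The plan is to deduce Proposition \ref{PropAppendix-} from Proposition \ref{PropAppendix} by the time-reversal duality recorded in Remark \ref{CaracT+}. Introduce the \emph{reversed} Lagrangian $\check L:TM\to\R$, $\check L(x,v)=L(x,-v)$. Since $\lVert\cdot\rVert_x$ is even in $v$, $\check L$ is again Tonelli with the \emph{same} constants $C(K)$ and $A(R)$ as $L$. Its associated Hamiltonian is $\check H(x,p)=H(x,-p)$, and the change of variable $\delta(\sigma)=\gamma(t-\sigma)$ gives immediately that the minimal action $\check h_t$ for $\check L$ satisfies $\check h_t(x,y)=h_t(y,x)$. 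Denoting by $\check T^\pm_t$ the Lax-Oleinik semi-groups of $\check L$, this yields
$$\check T^+_s w(x)=\sup_{y\in M}w(y)-h_s(y,x)=-T^-_s(-w)(x),$$
so that $T^-_su=-\check T^+_s(-u)$, consistent with Remark \ref{CaracT+}.

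First I would check the correspondence between the function classes. If $u\in {\mathcal F}^-_{K,A,B,t,r}$ (for $L$) and $\tilde u=-u$, then $u\leq B$ becomes $\tilde u\geq -B$, while $A\leq T^-_tu$ becomes $\check T^+_t\tilde u=-T^-_tu\leq -A$, both holding on $\bar V_r(K)$. Thus $\tilde u\in \check{\mathcal F}^+_{K,-B,-A,t,r}$ (the analogous class built from $\check L$). Since $\check L$ is Tonelli, Proposition \ref{PropAppendix} applies and produces a constant $s_0^-:=s(K,-B,-A,t,r)$ such that for every $x\in K$, every $s\in]0,s_0^-]$ and every absolutely continuous $\delta:[0,s]\to M$ with $\delta(0)=x$ satisfying
$$\tilde u(\delta(s))-\int_0^s \check L(\delta(\sigma),\dot\delta(\sigma))\,d\sigma+1\geq\check T^+_s\tilde u(x),$$
we have $\ell_g(\delta)\leq r$, together with the $\sup$-representation over $\bar B(x,r)$ and the compactness of the maximizer set when $\tilde u$ is continuous on $\bar V_r(K)$.

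Now I would translate back by the time-reversal $\gamma(\sigma)=\delta(s-\sigma)$. This preserves $g$-length, exchanges endpoints ($\gamma(0)=\delta(s)$, $\gamma(s)=\delta(0)=x$), and converts $\int_0^s\check L(\delta,\dot\delta)\,d\sigma$ into $\int_0^s L(\gamma,\dot\gamma)\,d\sigma$. Substituting $\tilde u=-u$ and $\check T^+_s\tilde u=-T^-_su$ in the previous displayed inequality, it becomes
$$u(\gamma(0))+\int_0^s L(\gamma(\sigma),\dot\gamma(\sigma))\,d\sigma-1\leq T^-_su(x),$$
which is precisely the ``almost-minimizing within $1$'' condition of Proposition \ref{PropAppendix-} (the inequality in the statement is read in this sense, the symbol $T^u$ being a typo for $u$). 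Hence such $\gamma$ must satisfy $\ell_g(\gamma)\leq r$. The $\sup$-over-$\bar B(x,r)$ description of $\check T^+_s\tilde u(x)$ transforms, under the same bijection $\delta\leftrightarrow\gamma$, into the identity $T^-_su(x)=\inf_{y\in\bar B(x,r)}u(y)+h_s(y,x)$.

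Finally, when $u$ is continuous on $\bar V_r(K)$ so is $\tilde u$, and the last part of Proposition \ref{PropAppendix} gives a non-empty compact set of maximizers for $\check T^+_s\tilde u(x)$ inside $\bar B(x,r)$. The time-reversal bijection sends this set onto the set $\{y\in M\mid T^-_su(x)=u(y)+h_s(y,x)\}$, giving the required non-emptiness, compactness, and inclusion in $\bar B(x,r)$. I do not foresee a genuine obstacle; the only point worth care is that the Tonelli structure of $\check L$ is inherited with identical constants so that the dependence $s(K,-B,-A,t,r)$ is legitimate, after which the argument is pure bookkeeping via the substitution $\gamma(\sigma)=\delta(s-\sigma)$.
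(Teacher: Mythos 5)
Your proof is correct and takes exactly the approach the paper intends: the paper offers no explicit proof of Proposition~\ref{PropAppendix-}, stating only that ``Thanks to Remark~\ref{CaracT+}, this version follows from Proposition~\ref{PropAppendix},'' and your argument is a careful and accurate working-out of that time-reversal reduction, including the correct bookkeeping of the sign change in the function classes and the identification of the two misprints ($T^u$ for $u$, and $h_s(x,x)$ for $h_s(y,x)$) in the statement.
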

\begin{proof}[Proof of Theorem \ref{PropHatu}] Fix now $u:M\to [-\infty,+\infty]\to M$ and $t_0>0$ such that $\hat u$ is finite on $]0,t_0[$.
If $K$ is a compact subset and $0<t_1<t_2<t_0$, we will show that $\hat u$ is continuous on 
 and locally semiconcave on $]t_1,t_2[\times \INT K$. This will finish the proof of the continuity 
 and semiconcavity of $\hat u$ on $]0,t_0[\times M$.

Since $\hat u$ is not identically $+\infty$ or $-\infty$, there is a point $x_0\in M$ such that $u(x_0)$ is finite.
We have
$$ T^-_tu(x)\leq u(x_0) +h_t(x_0,x),\text{ for all $t>0$ and $x\in M$.}$$
Fix now $t'_2$ with $t_2<t'_2<t_0$
Since $(t,y,z)\mapsto h_t(y,z)$ is continuous on $]0,+\infty[\times M\times M$, we have
$\sup\{h_t(x_0,x)\mid t\in[t_1,t'_2], x\in \bar V_1(K)<+\infty$. Therefore, we can find a finite constant $B$
such that
\begin{equation}\label{ConstB-}
T^-_tu(x)\leq B,\text{ for all $t\in[t_1,t'_2]$ and $x\in \bar V_1(K)$.}
\end{equation}
Fix now $t''_2$ such that $t'_2<t''_2<t_0$, 
$$ T^-_{t''_2}u(x_0)\leq T^-_tu(x) +h_{t''_2-t}(x,x_0),\text{ for all $0<t<t'_2$ and $x\in M$,}$$
or equivanlently
$$ T^-_{t''_2}u(x_0)-h_{t'_2-t}(x,x_0)\leq T^-_tu(x) ,\text{ for all $0<t<t'_2$ and $x\in M$.}$$
Since, for $t\in [t'_1,t'_2]$, we have  $0<t''_2-t'_2\leq t''_2-t\leq t''_2-t_1$, we conclude as above that
there exists a finite constant $A$ such that
\begin{equation}\label{ConstA-}
A\leq T^-_tu(x)\leq B,\text{ for all $t\in[t_1,t'_2]$ and $x\in \bar V_1(K)$.}
\end{equation}
Setting now $\eta=t'_2-t_2>0$, from \eqref{ConstB-} and \eqref{ConstA-}, we conclude that
$$T^-_tu\in {\mathcal F}^-_{K,A,B,\eta,1},\text{ for all $t\in[t_1,t_2]$.}$$
Therefore, we can apply Proposition \ref{PropAppendix-}, to find $s^-_0$, with $0<s^-_0<\eta$ such that
$$T^-_{s+t}u(x)=T^-_sT_tu(x)=\inf_{y\in \bar B(x,1)}T_tu(y)+h_s(y,x )\text{ for all $t\in[t_1,t_2]$ and all $x\in K$,}$$
which of course implies
\begin{equation}\label{EXPT-}
T^-_{s+t}u(x)=\inf_{y\in \bar V_1(K)}T_tu(y)+h_s(y,x )\text{ for all $t\in[t_1,t_2]$ and all $x\in K$.}
\end{equation}
Fix now $\bar t\in ]t_1,t_2[\times K$ and pick $\delta>0$ such that $3\delta<s^-_0$ and $\bar t-2\delta\geq t_1$.
For $t\in [\bar t-\delta,\bar t+\delta]$, we have $t-(\bar t-2\delta)\in [\delta,3\delta]$. Since $3\delta<s^-_0$, and 
$\bar t-2\delta\geq t_1$, from \eqref{EXPT-}, we obtain
\begin{equation}\label{FEXPT_}
T^-_{t}u(x)=\inf_{y\in \bar V_1(K)}T_{(\bar t-2\delta}u(y)+h_{t-(\bar t-2\delta)}(y,x ),
\text{ for all $t\in[\bar t-\delta,\bar t+\delta]$ and all $x\in K$.}
\end{equation}

Since this map  $(s,x,y)\mapsto h_s(y,x)$ is locally Lipschitz and
locally semi concave on $]0,+\infty[\times M\times M$ and $\bar V_1(K)$ is compact, we obtain that the family of maps
 $(s,x)\mapsto T_{(\bar t-2\delta}u(y)+h_s(y,x), y\in\bar V_1(K)$ is locally uniformly semiconcave on a neighborhood of the compact set $ [\delta,3\delta]\times \times K$. The local semiconcavity (hence the continuity) of $\hat u$ on
 a neighborhood of $[\bar t-\delta,\bar t+\delta]\times K$, follows now, for example, from \eqref{FEXPT_} and
 \cite{FatFig}[Corollary A.14], which states that a pointwise finite $\inf$ of a family of locally uniformly semiconcave
functions is itself locally semiconcave.

We just established that $\hat u$ is continuous and even locally semiconcave on $]0,t_0[\times M$.
Note that since we know that $\hat u$ is finite on $]0,t_0[\times M$, we get
$$\hat u(t,x)=\inf_{y\in F} u(y)+ h_t(y,x),\text{ for all $t\in t_0$ and all $x\in M$,}$$
where $F=\{y\in M\mid \lvert u(y)\rvert<+\infty\}$.
We then observe that, for $y\in F$, the function $(t,x)\mapsto \varphi_y(t,x)=u(y)+h_t(y,x)$ is a
viscosity solution of the evolutionary Hamilton-Jacobi equation \eqref{HJE0} on $]0,+\infty[\times M$. 
Therefore, since $\hat u$ is continuous on $]0,t_0[\times M$ and is the $\inf$, on that set, of the family 
$\varphi_y, y\in F$, the function $\hat u$ is also a viscosity solution of the evolutionary Hamilton-Jacobi equation \eqref{HJE0} on $]0,t_0[\times M$. 
\end{proof}
\begin{rem}\label{ExistenceCalCurve} Note that once we know that $\hat u$ is continuous on $]0,t_0[\times M$, the $\inf$ in \eqref{FEXPT_}
is attained. Therefore, for every $(t,x)\in ]0,t_0[\times M$, we can find a $\hat u$-calibrated curve $\gamma: [a,t]\to M$
with $a<t$ and $\gamma(t)=x$. Since $\gamma$ is an extremal we can extend it to an extremal $\gamma:[0,t]\to M$.
Using that there is a $\hat u$-calibrated curve ending at any point of $]0,t_0[\times M$, it can be shown that
$\gamma$ is $\hat u$-calibrated  on every interval $[\epsilon,t]$, with $\epsilon>0$. With some more work,
 using mainly Proposition \ref{DefFABTR--} in an appropriate way, it can be shown that $\gamma$
 is $\hat u$-calibrated  on $[\epsilon,t]$. Details can be found in \cite{FatNC}.
\end{rem}
\end{appendix}

\end{document}